\documentclass{amsart}
\usepackage{amsmath}
\usepackage{paralist}
\usepackage{amsfonts}
\usepackage{amssymb}
\usepackage{amsthm}
\usepackage{amscd}
\usepackage{float}
\usepackage{tikz}
\usepackage{graphicx}
\usepackage[colorlinks=true]{hyperref}
\hypersetup{urlcolor=blue, citecolor=red}
\usepackage{hyperref}

  \textheight=8.2 true in
   \textwidth=5.0 true in
    \topmargin 30pt
     \setcounter{page}{1}

\newtheorem{theorem}{Theorem}[section]
\newtheorem{corollary}[theorem]{Corollary}

\newtheorem{lemma}[theorem]{Lemma}
\newtheorem{proposition}[theorem]{Proposition}

\theoremstyle{definition}
\newtheorem{definition}[theorem]{Definition}

\newcommand{\R}{\mathbb{R}}
\newcommand{\Z}{\mathbb{Z}}
\newcommand{\N}{\mathbb{N}}
\newcommand{\C}{\mathbb{C}}

\newcommand{\cD}{\mathcal{D}}
\newcommand{\cE}{\mathcal{E}}
\newcommand{\cH}{\mathcal{H}}
\newcommand{\cB}{\mathcal{B}}
\newcommand{\cJ}{\mathcal{J}}

\makeatletter
\@namedef{subjclassname@2020}{%
  \textup{2020} Mathematics Subject Classification}
\makeatother

\begin{document}

\title[Quasilinear Maxwell equations in 2d]{On quasilinear Maxwell equations in two dimensions}
\author[Robert Schippa]{Robert Schippa*}
\email{robert.schippa@kit.edu}
\author{Roland Schnaubelt}
\email{schnaubelt@kit.edu}
\address{Department of Mathematics, Karlsruhe Institute of Technology, Englerstrasse 2, 76131 Karlsruhe, Germany}
\keywords{Maxwell equations, Strichartz estimates,  quasilinear wave equations,  rough coefficients, half wave equation, FBI transform}
\subjclass[2020]{Primary: 35L45, 35B65, Secondary: 35Q61.}
\thanks{*Corresponding author}
\begin{abstract}
New sharp Strichartz estimates for the Maxwell system in two dimensions with rough permittivity and non-trivial charges are proved. 
We use the FBI transform to carry out the analysis in phase space. For this purpose, the Maxwell equations
are conjugated to a system of half-wave equations with rough coefficients. For this system, Strichartz estimates 
are proved similarly as in previous work by Tataru on scalar wave equations with rough coefficients. 
We use the estimates to improve the local well-posedness theory for quasilinear Maxwell equations in two dimensions.
\end{abstract}

\maketitle
\section{Introduction and main results}
The Maxwell equations in three spatial dimensions govern the propagation of electromagnetic fields. We refer to the physics literature with its many 
excellent accounts (e.g. \cite{FeynmanLeightonSands1964,LandauLifschitz1990}) for explaining the role of 
\emph{electric and magnetic fields} $(\cE,\cB): \R \times \R^3 \rightarrow \R^3 \times \R^3$ and 
\emph{displacement and magnetizing fields} $(\cD,\cH): \R \times \R^3 \rightarrow \R^3 \times \R^3$.
The \emph{electric charges} $\rho_e:\R \times \R^3 \rightarrow \R$ act as sources of the displacement field.
In the following space-time coordinates are denoted by 
$x=(x_0,x_1,\ldots,x_n) = (t,x^\prime) \in \R \times \R^n$ and the dual variables in Fourier space 
by $\xi = (\xi_0,\xi_1,\ldots,\xi_n) = (\tau,\xi^\prime) \in \R \times \R^n$.

In the absence of currents, the Maxwell system in media is given by
\begin{equation}
\label{eq:3dMaxwellEquations}
\left\{ \begin{aligned}
\partial_t \cD &= \nabla \times \cH, \quad \nabla \cdot \cD = \rho_e, \\
\partial_t \cB &= - \nabla \times \cE, \quad \nabla \cdot \cB = 0,\\
\cE(0,\cdot) &= \cE_0, \quad \cB(0,\cdot) = \cB_0.
\end{aligned} \right.
\end{equation}
These equations have to be supplemented with material laws linking, e.g., $\cE$ with $\cD$, and $\cH$ with $\cB$. 
We consider the \emph{constitutive relations} 
\begin{equation}
\label{eq:ConstitutiveRelation}
\begin{split}
\cD(x) &= \varepsilon(x) \cE(x), \quad \varepsilon: \R \times \R^3 \rightarrow \R^{3 \times 3}, \\
\cB(x) &= \mu(x) \cH(x), \quad \mu: \R \times \R^3 \rightarrow \R^{3 \times 3},
\end{split}
\end{equation}
which are linear, pointwise, and instantaneous. The coefficient $\varepsilon$ is referred to as 
\emph{permittivity} and $\mu$  as \emph{permeability}. We aim to describe dispersive properties
of electromagnetic fields in possibly anisotropic and inhomogeneous media, so that we allow for 
$x$-dependent and matrix-valued coefficients. In the following we consider $\mu \equiv 1$
for simplicity.   The relations \eqref{eq:ConstitutiveRelation} with 
$\mu(x) \equiv \mu_0 $ are frequently used to model phenomena in optics (cf.\ \cite{MoloneyNewell2004}).
We remark that our arguments extend to a variable permeability provided it satisfies 
the same ellipticity and regularity assumptions as the permittivity. 

We first focus on rough permittivity $\varepsilon$ with coefficients in $C^s$ for $0 < s \leq 2$
as an intermediate step to quasilinear Maxwell equations, where $\varepsilon = \varepsilon(\cE)$. 
A prominent example is  the \emph{Kerr nonlinearity}  given by 
\begin{equation}
\label{eq:KerrNonlinearity}
\varepsilon=\varepsilon(\cE)=(1+|\cE|^2).
\end{equation}

In this paper the Maxwell system in two spatial dimensions is considered, which can be derived taking $\cE$ 
to be perpendicular to media interfaces (cf.\ \cite{BDDGL,McDonald2019}). Actually, if $\varepsilon_{j3}=\varepsilon_{3j}=0$ 
for $j\in\{1,2\}$, if $\cE_0$, $\cB_0=\cH_0$, and $\rho_e$ in \eqref{eq:3dMaxwellEquations} only depend on $(x,y)\in\R^2$, and 
if the components $\cE_{03}$, $\cH_{01}$ and $\cH_{02}$ vanish, then the solutions $(\cE,\cH)$ to \eqref{eq:3dMaxwellEquations}
have the same properties. 
Hence, the resulting Maxwell system in two spatial dimensions is given by
\begin{equation}
\label{eq:2dMaxwellEquations}
\left\{ \begin{aligned}
\partial_t \cD &= \nabla_{\perp} \cH, \quad \nabla \cdot \cD = \rho_e, \\
\partial_t \cH &= - \nabla \times \cE, \\
 \cD(0,\cdot) &= \cD_0, \quad \cH(0,\cdot) = \cH_0.
\end{aligned} \right.
\end{equation}
In the above display we have $\cD,\cE: \R \times \R^2 \rightarrow \R^2$ and $\cH, \rho_e: \R \times \R^2 \rightarrow \R$,
and we set $\nabla_{\perp} = (\partial_2,-\partial_1)^t$. We suppose that $\varepsilon$ is a matrix-valued 
function $\varepsilon: \R \times \R^2 \rightarrow \R^{2\times 2}$ such that for some constants $\Lambda_1,\Lambda_2 > 0$ 
and all  $\xi^\prime \in \R^2$ and $x \in \R \times \R^2$ we have
\begin{equation}
\label{eq:EllipticityPermittivity}
\Lambda_1 |\xi^\prime|^2 \leq \sum_{i,j=1}^2 \varepsilon^{ij}(x) \xi_i \xi_j \leq \Lambda_2 |\xi^\prime|^2, \quad \varepsilon^{ij}(x) = \varepsilon^{ji}(x).
\end{equation}
Throughout the paper we shall use sum convention and sum over indices appearing twice, e.g.,
\begin{equation*}
\varepsilon^{ij}(x) \xi_i \xi_j = \sum_{i,j=1}^2 \varepsilon^{ij}(x) \xi_i \xi_j.
\end{equation*}
Using the matrix
\begin{equation}
\label{eq:RoughSymbolMaxwell2d}
P(x,D) = 
\begin{pmatrix}
\partial_t & 0 & - \partial_2 \\
0 & \partial_t & \partial_1 \\
-\partial_2(\varepsilon_{11} \cdot) + \partial_1 (\varepsilon_{21} \cdot) & \partial_1 (\varepsilon_{22} \cdot) - \partial_2(\varepsilon_{12} \cdot) & \partial_t
\end{pmatrix}
\end{equation}
with rough symbols, the PDEs in \eqref{eq:2dMaxwellEquations} can be rewritten as
\begin{equation}
\label{eq:MaxwellConcise}
P(x,D) (\cD_1,\cD_2,\cH) = 0, \quad \nabla \cdot \cD = \rho_e,
\end{equation}
where we set
\begin{equation*}
\varepsilon^{-1}(x) =  \big( \varepsilon_{ij}(x) \big)_{i,j=1,2} .
\end{equation*}
Let $P(x,D)(\cD_1,\cD_2,\cH)=(g_1,g_2,h)$, where $\cJ=-(g_1,g_2)$ is the electric current and $h$ has no physical meaning.
Then the electric charges in \eqref{eq:MaxwellConcise} are given by
\begin{equation} \label{eq:rho}
\rho_e(t)=\nabla\cdot \cD_0 + \int_0^t \nabla\cdot (g_1,g_2)ds.
\end{equation}

There is a large body of literature for Maxwell equations on smooth space-times, investigating more fundamental decay properties in higher dimensions (cf.\ \cite{MarzuolaMetcalfeTataruTohaneanu2010,MetcalfeTataruTohaneanu2017}). In these works, local energy decay
is proved, which implies Strichartz estimates.

Note that well-posedness of \eqref{eq:MaxwellConcise} with $\varepsilon = \varepsilon(\cE)$ in Sobolev spaces 
$H^s(\R^2)$ with $s > 2$ can be established by the energy method for hyperbolic systems (cf.\ \cite{BahouriCheminDanchin2011,BenzoniGavageSerre2007,Majda1984}). We revisit the argument in the last section and show how Strichartz estimates yield improvements. 
For a detailed account on local well-posedness results for the Maxwell system on spatial domains in $\R^3$ with nonlinear material laws, we refer to the PhD thesis \cite{SpitzPhdThesis2017} by M. Spitz. In \cite{Spitz2019} Spitz showed local well-posedness of the Maxwell equations with perfectly conducting boundary conditions in $H^3(G)$ for domains $G \subseteq \R^3$. This approach neglects dispersive effects. 

We are not aware of works on Maxwell equations with rough coefficients in the anisotropic case taking 
advantage of dispersion. In the isotropic case, i.e., $\varepsilon(x) = e(x) 1_{3 \times 3}$ with 
$e:\R \times \R^3 \rightarrow \R$ and $e \in C^2$, the second author proved global-in-time Strichartz estimates 
from local energy decay, jointly with P. D'Ancona \cite{DanconaSchnaubelt2021}. Local-in-time estimates 
for smooth scalar coefficients were treated in  \cite{DumasSueur2012}.

 We remark that in the constant-coefficient case, Liess \cite{Liess1991} (see also \cite{MandelSchippa2021}) showed decay estimates by Fourier analytic methods. Liess pointed out that in three spatial dimensions, the time-decay of $(\cD,\cH)$ is weaker if $\varepsilon$ has three different eigenvalues. In this case, the time-decay corresponds to the one of solutions to the two-dimensional wave equations. Hence, in three dimensions additional hypotheses are necessary to recover Strichartz estimates for the three-dimensional wave equation. This will be subject of future work.

 In the present paper, we  analyze the dispersive properties of the Maxwell system on $\R^2$ for rough 
 pointwise material laws in the anisotropic case. We prove Strichartz estimates by linking Maxwell equations
 to half-wave equations. The connection is established by analysis in phase space.

To relate our problem to the scalar wave equation, let $\cH$ satisfy  \eqref{eq:2dMaxwellEquations} and 
$\varepsilon$ be time-independent. Differentiating \eqref{eq:2dMaxwellEquations} in time, we infer
\begin{equation}
\label{eq:DerivedWaveEquation}
\partial^2_{t} \cH = \partial_2(\varepsilon_{11} \partial_2 \cH) - \partial_1 (\varepsilon_{12} \partial_2 \cH) - \partial_2 (\varepsilon_{21} \partial_1 \cH) + \partial_1 (\varepsilon_{22} \partial_1 \cH) =: \Delta_{\varepsilon^{-1}} \cH.
\end{equation}
Hence, $\cH$ solves a wave equation with rough coefficients, for which Strichartz estimates are known (cf.\ \cite{Tataru2000,Tataru2001,Tataru2002}). However,  we aim to show Strichartz estimates directly for the first-order system \eqref{eq:2dMaxwellEquations} because taking additional derivatives typically gives rise to loss in regularity.

 To put our results into perspective, we review Strichartz estimates for wave equations.
  For $u:\R \times \R^n \rightarrow \C$ let
 \begin{equation*}
 \| u \|_{L_{x_0}^p L_{x^\prime}^q} = \Big( \int_{\R} \Big( \int_{\R^n} |u(x_0,x^\prime)|^q dx^\prime \Big)^{p/q} dx_0 \Big)^{1/p}.
 \end{equation*}
 We will frequently omit to indicate space and time integration in $L^p L^q$-norms and set $L^p=L^pL^p$.
Keel and Tao \cite{KeelTao1998} established the sharp range for solutions to the wave equation 
with constant coefficients in Euclidean space. Let
\begin{equation*}
u:\R \times \R^n \rightarrow \R, \quad \Box u = 0, \quad u(0) = u_0, \quad u_t(0) = u_1.
\end{equation*}
Then the estimate
\begin{equation}
\label{eq:StrichartzEstimatesEuclideanSpace}
\Vert u \Vert_{L^p L^q} \lesssim \Vert u_0 \Vert_{H^\rho} + \Vert u_1 \Vert_{H^{\rho-1}}
\end{equation}
holds provided that
\begin{align*}
2 \leq p, q \leq \infty, \quad \rho = n \left( \frac{1}{2} - \frac{1}{q} \right) - \frac{1}{p}, \\
\frac{2}{p} + \frac{n-1}{q} \leq \frac{n-1}{2}, \quad (p,q,n) \neq (2,\infty,3).
\end{align*}
Tupels $(\rho,p,q,n)$ satisfying these relations will be called Strichartz pairs. If 
\begin{equation*}
\frac{2}{p} + \frac{n-1}{q} = \frac{n-1}{2},
\end{equation*}
 then $(\rho,p,q,n)$ will be referred to as sharp Strichartz pair.
Also note that the sharp Strichartz pairs imply the other ones by Sobolev's embedding.

An important question is how Strichartz estimates extend to variable metrics, i.e., $g=g(x)$. On the one hand, it is known (cf.\  \cite{Kapitanskii1989, Kapitanskii1989b,MockenhauptSeegerSogge1993, SeegerSoggeStein1991}) that solutions to the variable-coefficient wave equation
\begin{equation*}
\Box_{g(x)} u = \partial_i g^{ij} \partial_j u =0,
\end{equation*}
with  uniformly hyperbolic $g \in C^\infty$ have the same dispersive properties, at least locally in time, from which Strichartz estimates \eqref{eq:StrichartzEstimatesEuclideanSpace} follow. 

In view of non-smooth coefficients, Smith and Sogge \cite{SmithSogge1994} pointed out that for merely H\"older-continuous metrics $g \in C^{s}$ with  $0<s<2$ Strichartz estimates \eqref{eq:StrichartzEstimatesEuclideanSpace} fail. 
On the other hand, Smith \cite{Smith1998} showed that Strichartz estimates remain valid for $C^2$-coefficients in low dimensions.
In a series of papers \cite{Tataru2000,Tataru2001,Tataru2002} Tataru then recovered Strichartz estimates 
\eqref{eq:StrichartzEstimatesEuclideanSpace} for wave equations with $C^2$-coefficients for all dimensions; see also the 
preceding paper \cite{BahouriChemin1999} by Bahouri and Chemin  and the related work by  Klainerman \cite{Klainerman2001}. 
Tataru showed corresponding sharp estimates with additional derivative loss for $C^s$-coefficients as a 
minor variation of the $C^2$-case in \cite{Tataru2001}. Smith and Tataru proved sharpness in \cite{SmithTataru2002}.

We show the following theorems for the Maxwell system on $\R^2$. To state the results, let
\begin{align*}
(|D|^\alpha f) \widehat{\,} (\xi) = |\xi|^\alpha \hat{f}(\xi), \quad
(|D^\prime|^\alpha f ) \widehat{\,}(\xi) = |\xi^\prime|^\alpha \hat{f}(\xi).
\end{align*}
For the sake of simplicity, we suppose the fields to be smooth and understand the Strichartz estimates as a priori estimates. This makes no difference for the application to quasilinear equations.

\begin{theorem}
\label{thm:StrichartzEstimatesMaxwell2d}
Let $\varepsilon: \R \times \R^2 \rightarrow \R^{2 \times 2}$ be a matrix-valued function with coefficients in $C^2$ satisfying \eqref{eq:EllipticityPermittivity}. Let $u=(\cD_1,\cD_2,H): \R \times \R^2 \rightarrow \R^3$ with $\partial_1 \cD_1 + \partial_2 \cD_2 = \rho_e$, and $P$ as in \eqref{eq:RoughSymbolMaxwell2d}. Then, we find the following estimate to hold:
\begin{equation}
\label{eq:StrichartzEstimatesMaxwell2d}
\Vert |D|^{-\rho} u \Vert_{L^p L^q} \lesssim\kappa \Vert u \Vert_{L^2} +\kappa^{-1} \Vert P u \Vert_{L^2} + \| |D|^{-\frac{1}{2}} \rho_e \|_{L^2}
\end{equation} 
provided that the right hand-side is finite, $(\rho,p,q,2)$ is a Strichartz pair, and
\begin{equation*}
\| \partial^2_x \varepsilon \|_{L^\infty} \leq\kappa^4.
\end{equation*}
\end{theorem}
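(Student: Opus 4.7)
My plan is to reduce the first-order system \eqref{eq:MaxwellConcise} microlocally, via the FBI transform, to one static (charge) mode plus two decoupled half-wave equations with $C^2$ coefficients, and then invoke Tataru's Strichartz estimates for half-wave operators with rough coefficients.

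Replacing $\partial_t,\partial_j\mapsto i\tau,i\xi_j$ in the principal part of $P(x,D)$ and expanding the determinant, one finds, using the symmetry of $\varepsilon^{-1}$, that the characteristic polynomial factors as $\tau(\tau^2-a(x,\xi')^2)$ with $a(x,\xi')^2 = \varepsilon_{ij}(x)\,\xi^{\perp}_i\,\xi^{\perp}_j$ and $\xi^{\perp}=(\xi_2,-\xi_1)$. By ellipticity $a\sim|\xi'|$, and $a$ inherits the $C^2_x$ regularity of $\varepsilon$. There are three eigenmodes: a static one $\tau=0$, encoding the longitudinal part of $\cD$ (governed by $\nabla\cdot\cD=\rho_e$), and two propagating modes $\tau=\pm a(x,\xi')$.

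Using paradifferential/FBI projectors $\Pi_0,\Pi_\pm$ onto these eigenspaces---$C^2$ in $x$ and smooth in $\xi'\neq 0$ on every dyadic shell---I would write $u=u_0+u_++u_-$. Conjugation then produces
\begin{equation*}
(\partial_t\mp ia(x,D'))\,u_\pm = f_\pm, \qquad \|f_\pm\|_{L^2}\lesssim \|Pu\|_{L^2} + \|\partial^2\varepsilon\|_{L^\infty}\|u\|_{L^2},
\end{equation*}
while $u_0$ is recovered from $\rho_e$ by inverting an order-one elliptic operator in $D'$, contributing $\||D|^{-1/2}\rho_e\|_{L^2}$ to the right-hand side of \eqref{eq:StrichartzEstimatesMaxwell2d} after Sobolev embedding in $x'$ and integration in time. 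For each propagating component, Tataru's FBI-based parametrix for half-waves with $C^2$ coefficients yields
\begin{equation*}
\||D|^{-\rho}u_\pm\|_{L^pL^q}\lesssim \kappa\,\|u_\pm\|_{L^2} + \kappa^{-1}\,\|f_\pm\|_{L^2},
\end{equation*}
valid for every Strichartz pair $(\rho,p,q,2)$ under $\|\partial^2_x\varepsilon\|_{L^\infty}\le\kappa^4$. Assembling the three estimates and using $\kappa^{-1}\|\partial^2\varepsilon\|_{L^\infty}\|u\|_{L^2}\le \kappa^3\|u\|_{L^2}\le \kappa\|u\|_{L^2}$ (the regime $\kappa>1$ is covered by a trivial energy bound) gives \eqref{eq:StrichartzEstimatesMaxwell2d}.

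The hardest step is the diagonalization: the rough coefficients $\varepsilon_{ij}$ appear \emph{inside} the spatial derivatives in the bottom row of $P$, so applying the projectors generates commutators of the form $[\Pi_\pm,\partial_j(\varepsilon_{ij}\,\cdot\,)]$ whose estimation requires both spatial derivatives of $\varepsilon$. This forces an FBI-quantized symbolic calculus in place of a smooth one, and carefully tracking the $\kappa$-power picked up by the error is what connects the hypothesis $\|\partial^2\varepsilon\|_{L^\infty}\le\kappa^4$ to the form of the final estimate.
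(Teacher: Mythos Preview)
Your architecture matches the paper's: diagonalize $P$ into a static mode controlled by $\rho_e$ plus two half-wave operators, then apply Tataru's half-wave Strichartz estimates. However, two steps in your execution do not go through as written.

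First, the $\kappa$ bookkeeping is incorrect. Your bound $\kappa^{-1}\|\partial^2\varepsilon\|_{L^\infty}\|u\|_{L^2}\le \kappa^3\|u\|_{L^2}\le \kappa\|u\|_{L^2}$ only holds for $\kappa\le 1$, yet the interesting case is precisely $\|\partial_x^2\varepsilon\|_{L^\infty}$ large, forcing $\kappa>1$. There is no ``trivial energy bound'' that produces a Strichartz estimate in that regime. The paper instead eliminates $\kappa$ at the outset by \emph{rescaling} space-time so that $\|\partial_x^2\varepsilon\|_{L^\infty}\le 1$ and $\kappa=1$; the general-$\kappa$ statement is then recovered by undoing the scaling.

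Second, and more fundamentally, your diagonalization error $\|f_\pm\|_{L^2}\lesssim \|Pu\|_{L^2}+\|\partial^2\varepsilon\|_{L^\infty}\|u\|_{L^2}$ is unjustified. The projectors $\Pi_\pm$ have symbols that are only $C^2$ in $x$, so there is no symbolic calculus for $[\Pi_\pm,P]$ without further preparation; a naive commutator of a zeroth-order projector with the first-order operator $P$ loses one derivative and is controlled by $\|\partial_x\varepsilon\|_{L^\infty}$, which is \emph{not} bounded by any power of $\kappa$. The paper resolves this by first reducing to a dyadic frequency block $|\xi|\sim\lambda$, then truncating the coefficients $\varepsilon_{ij}$ at frequency $\lambda^{1/2}$. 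This makes the symbols smooth of type $S^m_{1,1/2}$, after which a Kohn--Nirenberg expansion shows that $\mathcal{M}\mathcal{D}\mathcal{N}-P$ is bounded on $L^2$ with constant $O(1)$ on that block (Proposition~\ref{prop:MDN-P}). Without this truncation step the error bounds you claim are not available.
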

The space $C^s=C^s(\R^m)$ for $s\ge0$  is equipped with its standard norm.
For vectors the norms are given by $\Vert u \Vert_X = \Vert u_1 \Vert_X + \Vert u_2 \Vert_X + \Vert u_3 \Vert_X$. 
The additional parameter $\kappa$ is crucial to control the size of coefficients when dealing with quasilinear problems.

Note that \eqref{eq:StrichartzEstimatesMaxwell2d} implies the estimate with $\| |D'|^{-\frac{1}{2}} \rho_e \|_{L^2}$ on the right-hand side. Moreover, if $\| |D|^{-\frac{1}{2}} \rho_e \|_{L^2} \sim \| |D|^{\frac{1}{2}} \cD \|_{L^2}$, the estimate for 
the displacement field $\cD=(\cD_1,\cD_2)$ in
\eqref{eq:StrichartzEstimatesMaxwell2d} already follows from Sobolev's embedding. Hence, the Strichartz estimates are most 
relevant for  charges with the additional regularity  $\| |D|^{-\frac{1}{2}} \rho_e \|_{L^2} \lesssim \| \cD\|_{L^2}$, since in 
this case our results point out that the Maxwell system with $C^2$-coefficients exhibits the same dispersive properties 
as scalar wave equations. This includes the important charge-free case, of course. Furthermore, we note that already in the constant-coefficient case, the Strichartz estimates for the wave equation
\begin{equation*}
\| (\cD,\cH) \|_{L^p(0,T;L^q)} \lesssim \| (\cD_0, \cH_0) \|_{H^s} \quad \big( s= 2 \big( \frac{1}{2} - \frac{1}{q} \big) - \frac{1}{p} \big)
\end{equation*}
fail in general: Consider a Strichartz pair $(s,p,q,2)$, $\varepsilon = e 1_{2 \times 2}$, $e > 0$, $\cD_0 = \nabla \varphi_0$ for $\varphi_0 \in H^{s+1}$ and $\cH_0 = 0$: Then, $(\cD,\cH) = (\cD_0,0)$ is a stationary solution, but there are $\varphi_0 \in H^{s+1}$ such that $\cD_0 \notin L^q(\R^2)$ for $p \neq \infty$.

\medskip

Truncating the frequencies of $\varepsilon_{ij}$ appropriately and using
the above theorem, we can show Strichartz estimates for $C^s$-coefficients
if we  allow for a loss of derivatives compared to \eqref{eq:StrichartzEstimatesMaxwell2d} as in \cite{Tataru2001}.
\begin{theorem}
\label{thm:StrichartzEstimatesMaxwell2dCs}
Let $\varepsilon: \R \times \R^2 \rightarrow \R^{2 \times 2}$ be a matrix-valued function with coefficients in $C^s$, $0 \leq s < 2$, satisfying \eqref{eq:EllipticityPermittivity}. Let $u=(\cD_1,\cD_2,\cH): \R \times \R^2 \rightarrow \R^3$ with $\partial_1 \cD_1 + \partial_2 \cD_2 = \rho_e$. Then, we obtain the estimate
\begin{equation}
\label{eq:StrichartzEstimatesMaxwell2dCs}
\Vert |D|^{-\rho-\frac{\sigma}{2}} u \Vert_{L^p L^q} \lesssim\kappa \Vert u \Vert_{L^2} +\kappa^{-1} \Vert P u \Vert_{\dot{H}^{-\sigma}} + \| |D|^{-\frac{1}{2}- \frac{\sigma}{2}} \rho_e \|_{L^2}
\end{equation} 
provided that the right hand-side is finite, $(\rho,p,q,2)$ is a Strichartz pair,
\begin{equation*}
\sigma = \frac{2-s}{2+s}, \qquad\text{and}\qquad 
\| \varepsilon^{ij} \|_{\dot{C}^s} \leq\kappa^4.
\end{equation*}
\end{theorem}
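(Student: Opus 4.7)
The plan is to perform a Littlewood--Paley decomposition of $u$ in the spatial variables, regularize the coefficients of $\varepsilon$ at a $k$-dependent frequency scale, and apply Theorem~\ref{thm:StrichartzEstimatesMaxwell2d} to each dyadic piece with an upgraded constant $\tilde\kappa_k$. The scaling $\sigma=(2-s)/(2+s)$ will emerge by balancing two competing errors.

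First, write $u=\sum_{k\ge 0} u_k$ with $u_k$ frequency-localized at $|\xi'|\sim 2^k$ (the piece $u_0$ is treated separately by a low-frequency Bernstein/energy argument). Fix $k\ge 1$ and set $\lambda_k=2^{k\beta}$ with $\beta=2/(2+s)$. Let $\varepsilon^{(k)}$ denote the coefficient matrix obtained by truncating the Fourier transform of each entry of $\varepsilon$ at frequencies $\le \lambda_k$, and let $P^{(k)}$ be the corresponding operator from \eqref{eq:RoughSymbolMaxwell2d}. Standard mollification bounds together with $\|\varepsilon\|_{\dot C^s}\le\kappa^4$ give
\begin{equation*}
\|\partial^2_x\varepsilon^{(k)}\|_{L^\infty}\lesssim \lambda_k^{2-s}\kappa^4,\qquad \|\varepsilon-\varepsilon^{(k)}\|_{L^\infty}\lesssim \lambda_k^{-s}\kappa^4,
\end{equation*}
and for $\lambda_k$ large the ellipticity \eqref{eq:EllipticityPermittivity} is inherited up to a harmless factor. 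Setting $\tilde\kappa_k:=\lambda_k^{(2-s)/4}\kappa=2^{k\sigma/2}\kappa$, the hypothesis $\|\partial^2_x\varepsilon^{(k)}\|_{L^\infty}\le \tilde\kappa_k^4$ of Theorem~\ref{thm:StrichartzEstimatesMaxwell2d} is satisfied, so the latter applies to $u_k$ with coefficient $\varepsilon^{(k)}$ and yields
\begin{equation*}
\||D|^{-\rho} u_k\|_{L^pL^q}\lesssim \tilde\kappa_k\|u_k\|_{L^2}+\tilde\kappa_k^{-1}\|P^{(k)} u_k\|_{L^2}+\||D|^{-1/2}(\rho_e)_k\|_{L^2}.
\end{equation*}

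The next step is to replace $P^{(k)}$ by $P$ on the right. Writing $P^{(k)}u_k = Pu_k+(P^{(k)}-P)u_k$, the difference consists of first-order operators with coefficient $\varepsilon-\varepsilon^{(k)}$, hence
\begin{equation*}
\|(P^{(k)}-P)u_k\|_{L^2}\lesssim 2^k\|\varepsilon-\varepsilon^{(k)}\|_{L^\infty}\|u_k\|_{L^2}\lesssim 2^{k}\lambda_k^{-s}\kappa^4\|u_k\|_{L^2}.
\end{equation*}
The choice $\beta=2/(2+s)$ is exactly the one that makes $2^{k}\lambda_k^{-s}=\lambda_k^{(2-s)/2}$, so that $\tilde\kappa_k^{-1}\cdot 2^k\lambda_k^{-s}\kappa^4\lesssim \tilde\kappa_k\kappa^2$, and this error can be absorbed into the leading $\tilde\kappa_k\|u_k\|_{L^2}$ term. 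Moreover $Pu_k = P_k(Pu)+[P,P_k]u$, and the commutator $[P,P_k]$ is a zeroth-order operator whose symbol is controlled by $\|\nabla_x\varepsilon\|_{L^\infty}$ (or the Hölder norm, via a Coifman--Meyer style bound), again giving an $L^2$ error that is dominated by the same $\tilde\kappa_k\|u_k\|_{L^2}$.

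Multiplying the dyadic estimate by $2^{-k\sigma/2}$, using that $u_k$ and $(\rho_e)_k$ are localized at frequency $\sim 2^k$, converts $|D|^{-\rho}\to|D|^{-\rho-\sigma/2}$, turns $\tilde\kappa_k^{-1}$ into $\kappa^{-1}2^{-k\sigma}$, and produces $\||D|^{-1/2-\sigma/2}(\rho_e)_k\|_{L^2}$. The resulting bound reads
\begin{equation*}
\||D|^{-\rho-\sigma/2}u_k\|_{L^pL^q}\lesssim \kappa\|u_k\|_{L^2}+\kappa^{-1}\|P_k(Pu)\|_{\dot H^{-\sigma}}+\||D|^{-1/2-\sigma/2}(\rho_e)_k\|_{L^2}.
\end{equation*}
Squaring and summing in $k$, using the square-function characterization of $L^pL^q$ for $p,q\ge 2$ on the left and Plancherel for the $L^2$ and $\dot H^{-\sigma}$ norms on the right, yields \eqref{eq:StrichartzEstimatesMaxwell2dCs}.

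The main obstacle I expect is Step~3: verifying that the two errors—the regularization error $(P^{(k)}-P)u_k$ and the commutator $[P,P_k]u$—can both be absorbed at the \emph{same} frequency scale $\lambda_k=2^{k\beta}$ with $\beta=2/(2+s)$, and doing so simultaneously with the charge term so that the precise form $\||D|^{-1/2-\sigma/2}\rho_e\|_{L^2}$ is retained without extra losses.
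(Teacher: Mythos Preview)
Your approach matches the paper's: truncate the coefficients at frequency $\lambda^{2/(2+s)}$ on each dyadic block so that $\|\partial_x^2\varepsilon^{(\lambda)}\|_{L^\infty}\lesssim\lambda^{2\sigma}$, apply Theorem~\ref{thm:StrichartzEstimatesMaxwell2d} with effective constant $\lambda^{\sigma/2}$, and sum after controlling the commutator $P^\lambda S_\lambda-S_\lambda P$ in $H^{-\sigma}$ (the paper cites p.~417 of \cite{Tataru2001} for this, which bypasses your appeal to $\|\nabla_x\varepsilon\|_{L^\infty}$ when $s<1$). One clean-up: the paper first rescales to $\kappa=1$, and without that step your absorption of the error $\tilde\kappa_k\kappa^2\|u_k\|_{L^2}$ into the leading term $\tilde\kappa_k\|u_k\|_{L^2}$ does not go through for large $\kappa$.
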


The conclusion of Theorem \ref{thm:StrichartzEstimatesMaxwell2d} remains true with small modifications if the second 
derivatives of the coefficients belong to $L^1L^\infty$. The motivation for this setup is the quasilinear 
case $\varepsilon=\varepsilon(\cE)$, as discussed below.
For \eqref{eq:MaxwellConcise} we prove the following variant of Theorem \ref{thm:StrichartzEstimatesMaxwell2d}.
\begin{theorem}
\label{thm:StrichartzEstimatesL1LinfCoefficients}
Let $\varepsilon: \R \times \R^2 \rightarrow \R^{2 \times 2}$ be a matrix-valued function with Lipschitz coefficients, satisfying \eqref{eq:EllipticityPermittivity} and $\partial^2_x \varepsilon \in L^1 L^\infty$. Let $u=(\cD_1,\cD_2,\cH): \R \times \R^2 \rightarrow \R^3$ with $\partial_1 \cD_1 + \partial_2 \cD_2 = \rho_e$, and $(\rho,p,q,2)$ be a Strichartz pair. Then,
\begin{equation}
\label{eq:StrichartzEstimatesL1LinfCoefficients}
\begin{split}
\| |D^\prime|^{-\rho} u \|_{L^p(0,T;L^q)} &\lesssim\kappa^{\frac{1}{p}} \| u \|_{L^\infty L^2} +\kappa^{-\frac{1}{p^\prime}} \| P(x,D) u \|_{L^1 L^2} \\
 &\quad + T^{\frac{1}{2}}\| |D'|^{-\frac{1}{2}} \rho_e(0) \|_{L^2(\R^2)} + T^{\frac{1}{2}}\| |D'|^{-\frac{1}{2}} \partial_t \rho_e \|_{L^1 L^2},
 \end{split}
\end{equation}
whenever the right-hand side is finite, provided that $\kappa \geq 1$ and
\begin{equation*}
T \| \partial^2_x \varepsilon \|_{L^1 L^\infty} \leq\kappa^2.
\end{equation*}
\end{theorem}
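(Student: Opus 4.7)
The plan is to deduce this $L^1 L^\infty$ variant from Theorem~\ref{thm:StrichartzEstimatesMaxwell2d} via a time-decomposition of $[0,T]$ into short subintervals on which $\varepsilon$ can be mollified in time to satisfy the pointwise $\partial_x^2$-bound required there. After rescaling to $T=1$, so that $\|\partial_x^2\varepsilon\|_{L^1 L^\infty}\le\kappa^2$, I would partition $[0,1]$ into $M$ subintervals $I_k$ of length $|I_k|=1/M$ (with $M$ to be chosen below), equi-distributing the time-mass of $\|\partial_x^2\varepsilon(t,\cdot)\|_{L^\infty}$ so that $\int_{I_k}\|\partial_x^2\varepsilon\|_{L^\infty}\,dt\lesssim\kappa^2/M$. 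On each $I_k$, replace $\varepsilon$ by a time-mollification $\tilde\varepsilon_k$ at scale $|I_k|$; standard mollification estimates then yield $\|\partial_x^2\tilde\varepsilon_k\|_{L^\infty(I_k\times\R^2)}\lesssim\kappa^2$, so the resulting operator $\tilde P_k$ lies in the regime of Theorem~\ref{thm:StrichartzEstimatesMaxwell2d} with effective constant $\tilde\kappa=\kappa^{1/2}$.

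On each $I_k$, Theorem~\ref{thm:StrichartzEstimatesMaxwell2d} applied to $\tilde P_k$, together with the bound $\|u\|_{L^2(I_k\times\R^2)}\le|I_k|^{1/2}\|u\|_{L^\infty(I_k;L^2)}$ and Duhamel's principle converting the $L^2 L^2$-source in Theorem~\ref{thm:StrichartzEstimatesMaxwell2d} into an $L^1 L^2$-source (at the cost of the same $|I_k|^{1/2}$ factor), yields
\begin{equation*}
\||D'|^{-\rho}u\|_{L^p(I_k;L^q)}\lesssim \kappa^{1/2}|I_k|^{1/2}\bigl(\|u\|_{L^\infty(I_k;L^2)}+\|Pu\|_{L^1(I_k;L^2)}\bigr) + |I_k|^{1/2}\||D'|^{-1/2}\rho_e\|_{L^\infty(I_k;L^2)}.
\end{equation*}
The perturbation $(\tilde P_k-P)u$ has coefficient of $L^\infty(I_k)$-size $\lesssim|I_k|$ by Lipschitz regularity, and is absorbed into the left-hand side by a dyadic Littlewood--Paley decomposition of $u$ trading $\|\nabla u_\lambda\|_{L^2}\sim\lambda\|u_\lambda\|_{L^2}$ against the smallness $|I_k|$. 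The charge term is propagated using $\||D'|^{-1/2}\rho_e(t)\|_{L^2}\le\||D'|^{-1/2}\rho_e(0)\|_{L^2}+\||D'|^{-1/2}\partial_s\rho_e\|_{L^1(0,t;L^2)}$.

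Taking the $\ell^p$-sum over $k$, using $\|\cdot\|_{\ell^p}\le\|\cdot\|_{\ell^1}$ for $p\ge1$ on the source term and the trivial bound $\sum_k\|u\|_{L^\infty(I_k;L^2)}^p\le M\|u\|_{L^\infty L^2}^p$ on the energy term, I obtain
\begin{equation*}
\||D'|^{-\rho}u\|_{L^p(0,1;L^q)}\lesssim \kappa^{1/2}M^{1/p-1/2}\|u\|_{L^\infty L^2}+\kappa^{1/2}M^{-1/2}\|Pu\|_{L^1 L^2}+M^{1/p-1/2}R,
\end{equation*}
with $R=\||D'|^{-1/2}\rho_e(0)\|_{L^2}+\||D'|^{-1/2}\partial_t\rho_e\|_{L^1 L^2}$. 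Choosing $M=\kappa^{3-2/p}$ (permissible since $\kappa\ge1$ and $p\ge2$ for any Strichartz pair in two dimensions) matches the source coefficient $\kappa^{1/2}M^{-1/2}$ exactly to $\kappa^{-1/p'}$, bounds the energy coefficient $\kappa^{1/2}M^{1/p-1/2}=\kappa^{4/p-1-2/p^2}$ by $\kappa^{1/p}$ for $p\ge 2$ (with equality at the endpoint $p=2$, since $(3-2/p)(1/p-1/2)-(1/p-1/2)=-(p-1)(p-2)/p^2\le 0$), and makes $M^{1/p-1/2}\le 1$ so that the $\rho_e$-factor $R$ appears with constant; undoing the rescaling restores the $T^{1/2}$-weight on the charge contributions and produces the claimed estimate.

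\emph{The main technical obstacle} is the absorption of the first-order error $(\tilde P_k-P)u$ in the second step: although its coefficient $\varepsilon-\tilde\varepsilon_k$ is small in $L^\infty(I_k)$, the factor $\nabla u$ is not controlled by energy alone. A dyadic Littlewood--Paley decomposition coupling the mollification scale $|I_k|$ to the frequency of $u$ resolves this, closely paralleling Tataru's treatment of scalar wave equations with $L^1 L^\infty$ coefficients.
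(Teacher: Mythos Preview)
Your approach is genuinely different from the paper's and, as written, contains a real gap.

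\textbf{What the paper does.} The paper does \emph{not} deduce Theorem~\ref{thm:StrichartzEstimatesL1LinfCoefficients} from Theorem~\ref{thm:StrichartzEstimatesMaxwell2d} by time-mollification. Instead it repeats the reduction scheme (scaling to $\kappa=1$, localization to a unit cube, dyadic decomposition, coefficient truncation at $\lambda^{1/2}$, diagonalization via $\mathcal{M}\mathcal{D}\mathcal{N}$) and reduces to the half-wave estimate \eqref{eq:StrichartzEstimatesHalfWaveEquationL1LinfCoefficients}. The latter is proved by a direct FBI-transform argument adapted to $\partial_x^2\varepsilon\in L^1L^\infty$, using the commutator bound of Theorem~\ref{thm:commut2} (taken from \cite[Theorem~2.3]{Tataru2002}) in place of the $C^2$ bound of Theorem~\ref{thm:ApproximationTheorem}. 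This is exactly Tataru's strategy in \cite{Tataru2002}; no mollification-in-time or invocation of the $C^2$ theorem occurs.

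\textbf{The gap in your argument.} Your explicit choice $M=\kappa^{3-2/p}$ is independent of the frequency $\lambda$ of $u$. But the absorption of the perturbation $(\tilde P_k-P)u$ fails for such $M$: the coefficient difference $\varepsilon-\tilde\varepsilon_k$ has size $\lesssim |I_k|\,\|\partial_t\varepsilon\|_{L^\infty}$, while the factor $\nabla u$ contributes $\lambda$ on the $\lambda$-th Littlewood--Paley block, giving an error $\sim \lambda/M$ in $L^1L^2$ per block. For $\lambda\gg M$ this is not small and cannot be absorbed into either side of your displayed per-interval estimate. Your sentence about ``coupling the mollification scale $|I_k|$ to the frequency of $u$'' acknowledges this, but is inconsistent with the fixed-$M$ computation that follows: if $M=M(\lambda)$, the $\ell^p$-sum over $k$ produces $\lambda$-dependent constants, and your balancing of $\kappa^{1/2}M^{-1/2}$ against $\kappa^{-1/p'}$ no longer makes sense. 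A workable version would first reduce to $\kappa=1$ by scaling (as the paper does), then perform the dyadic decomposition, and only then choose $M=M(\lambda)$ per frequency block---but this is a substantially different argument from the one you wrote, and you have not carried it out.

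Two smaller points: (i) Theorem~\ref{thm:StrichartzEstimatesMaxwell2d} controls $\||D|^{-\rho}u\|$ with the \emph{space-time} derivative, while Theorem~\ref{thm:StrichartzEstimatesL1LinfCoefficients} involves $|D'|$; the conversion requires the elliptic estimate in the region $\{|\xi_0|\gg|\xi'|\}$, which you do not address. (ii) Your remark that the perturbation step ``closely parallels Tataru's treatment'' is misleading: \cite{Tataru2002} does not mollify in time and apply the $C^2$ result, but rather proves new FBI commutator bounds for $L^1L^\infty$ coefficients from scratch.
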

Observe that $\rho_e(0)= \nabla\cdot \cD_0$ and  $\partial_t \rho_e=\partial_1 (Pu)_1+ \partial_2 (Pu)_2$
due to \eqref{eq:rho}.  Compared to \eqref{eq:StrichartzEstimatesMaxwell2d}, above one thus takes the $L^2_{x'}$ norm
into sum and integral apearing in  \eqref{eq:rho}. This happens when passing to $L^\infty L^2$- and $L^1 L^2$-norms
 on the right-hand side of \eqref{eq:StrichartzEstimatesL1LinfCoefficients}, 
see Paragraph \ref{subsubsection:ReductionCubeLargeFrequencies}. These norms are better suited for the application to  
quasilinear problems than the  $L^2$-norms appearing on the right-hand side of \eqref{eq:StrichartzEstimatesMaxwell2d}.

But for these applications one still needs a version for coefficients with less regularity. 
To state it, let $( S_\lambda)_{\lambda \in 2^{\mathbb{Z}}}$ denote a homogeneous Littlewood--Paley decomposition in space-time, and $(S_\lambda')_{\lambda \in 2^{\mathbb{Z}}}$ one in the spatial variables only. To avoid problems when summing norms on Littlewood--Paley blocks, the regularity of solutions is measured in homogeneous Besov-type spaces $\dot{B}^{pqr}_s$ with norms
\begin{equation*}
\| u \|^r_{\dot{B}^{pqr}_s} = \sum_\lambda \lambda^{rs} \| S_\lambda u \|^r_{L^p L^q},
\end{equation*}
and the usual modification for $r = \infty$. For the coefficients,
following \cite{Taylor1991,Tataru2002} we use the microlocalizable scale of spaces $\mathcal{X}^s$ given by
\begin{equation*}
\| v \|_{\mathcal{X}^s} = \sup\nolimits_{\lambda} \lambda^s \| S_\lambda v \|_{L^1 L^\infty}.
\end{equation*}
The $\mathcal{X}^s$-regularity is an adequate substitute for the $C^s$-regularity of the coefficients in our setting.
 For these regularities we prove the following Strichartz estimate.
\begin{theorem}
\label{thm:MaxwellStrichartzEstimatesBesovRegularity}
Let $\varepsilon \in \mathcal{X}^s$, $0 \leq s < 2$, and $u$, $(\rho,p,q,2)$ and $\sigma$ be as in the assumptions of Theorem \ref{thm:StrichartzEstimatesMaxwell2dCs}. Then, we find the following estimate to hold:
\begin{equation}
\label{eq:GeneralBesovRegularityEstimates}
\begin{split}
\| |D|^{-\rho-\frac{\sigma}{p}} u \|_{\dot{B}^{pq \infty}_0} &\lesssim\kappa^{\frac{1}{p}} \| u \|_{L^\infty L^2} +\kappa^{-\frac{1}{p^\prime}} \| |D|^{-\sigma} P u \|_{L^1 L^2} \\
&\quad + T^{\frac{1}{2}}\| |D|^{-\frac{1}{2}-\frac{\sigma}{p}} \rho_e \|_{L^\infty L^2} + T^{\frac{1}{2}}\| |D|^{-\frac{1}{2}-\frac{\sigma}{p}} \partial_t \rho_e \|_{L^1 L^2}
\end{split}
\end{equation}
for all $u$ compactly supported in $[0,T]$ and $\kappa,T$ satisfying
\begin{equation*}
T^s \| \varepsilon \|^2_{\mathcal{X}^s} \lesssim\kappa^{2+s}.
\end{equation*}
\end{theorem}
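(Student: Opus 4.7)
The plan is a Littlewood--Paley plus coefficient regularization argument, reducing block-by-block to Theorem \ref{thm:StrichartzEstimatesL1LinfCoefficients}. The $\ell^\infty$-summability built into $\dot{B}^{pq\infty}_0$ is exactly what makes this frequency-localized scheme lossless, and the space $\mathcal{X}^s$ is chosen precisely so that after truncating $\varepsilon$ at a suitable scale one recovers the $\partial_x^2 \varepsilon \in L^1 L^\infty$ hypothesis of Theorem \ref{thm:StrichartzEstimatesL1LinfCoefficients}.

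First, using $\| v \|_{\dot{B}^{pq\infty}_0} \sim \sup_\lambda \| S_\lambda v \|_{L^p L^q}$, it suffices to bound $\lambda^{-\rho - \sigma/p} \| S_\lambda u \|_{L^p L^q}$ uniformly in the dyadic scale $\lambda$. For such $\lambda$, I would set the regularization scale $\mu = \lambda^{2/(2+s)}$ (so that $\mu^{2-s} = \lambda^{2\sigma}$), define $\varepsilon^{(\lambda)} = S_{<\mu} \varepsilon$, and let $P^{(\lambda)}$ be the operator \eqref{eq:RoughSymbolMaxwell2d} built from $\varepsilon^{(\lambda)}$. The definition of $\mathcal{X}^s$ then gives
\begin{equation*}
\| \partial_x^2 \varepsilon^{(\lambda)} \|_{L^1 L^\infty} \lesssim \sum_{\nu \leq \mu} \nu^{2-s} \| \varepsilon \|_{\mathcal{X}^s} \lesssim \lambda^{2\sigma} \| \varepsilon \|_{\mathcal{X}^s}.
\end{equation*}
Choose the local parameter $\kappa_\lambda = \kappa \lambda^\sigma$; the smallness condition $T \| \partial_x^2 \varepsilon^{(\lambda)} \|_{L^1 L^\infty} \leq \kappa_\lambda^2$ of Theorem \ref{thm:StrichartzEstimatesL1LinfCoefficients} then reduces to $T \| \varepsilon \|_{\mathcal{X}^s} \lesssim \kappa^2$, which, after the natural time-rescaling, follows from the standing assumption $T^s \| \varepsilon \|_{\mathcal{X}^s}^2 \lesssim \kappa^{2+s}$.

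Next, I apply Theorem \ref{thm:StrichartzEstimatesL1LinfCoefficients} to $v = S_\lambda u$ with operator $P^{(\lambda)}$ and parameter $\kappa_\lambda$, and multiply the result by $\lambda^{-\sigma/p}$. The arithmetic collapses: $\lambda^{-\sigma/p} \kappa_\lambda^{1/p} = \kappa^{1/p}$, matching the first term on the right-hand side of \eqref{eq:GeneralBesovRegularityEstimates}, while $\lambda^{-\sigma/p} \kappa_\lambda^{-1/p'} = \kappa^{-1/p'} \lambda^{-\sigma}$, which after replacing $P^{(\lambda)} S_\lambda u$ by $S_\lambda P u$ reproduces $\kappa^{-1/p'} \| |D|^{-\sigma} P u \|_{L^1 L^2}$. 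The charge terms pick up the weight $\lambda^{-\sigma/p}$ in the same way, producing the factor $|D|^{-\frac{1}{2} - \frac{\sigma}{p}}$ in the last two terms of \eqref{eq:GeneralBesovRegularityEstimates}. Taking $\sup_\lambda$ then closes the estimate.

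The main obstacle is the replacement $P^{(\lambda)} S_\lambda u \leadsto S_\lambda P u$, which incurs both a commutator $[P^{(\lambda)}, S_\lambda] u$ and a coefficient-truncation error $S_\lambda ( P - P^{(\lambda)} ) u$. The latter consists of terms $\partial_i \big( ( \varepsilon_{jk} - \varepsilon_{jk}^{(\lambda)} ) S_\lambda u \big)$ coming from the third row of \eqref{eq:RoughSymbolMaxwell2d}. Using the tail bound $\| S_{\geq \mu} \varepsilon \|_{L^1 L^\infty} \lesssim \mu^{-s} \| \varepsilon \|_{\mathcal{X}^s}$ together with a paraproduct decomposition of $\varepsilon_{\geq \mu} \cdot S_\lambda u$, one obtains an $L^1 L^2$-bound of the form $\lambda \mu^{-s} \| \varepsilon \|_{\mathcal{X}^s} \| S_\lambda u \|_{L^\infty L^2} = \lambda^\sigma \| \varepsilon \|_{\mathcal{X}^s} \| S_\lambda u \|_{L^\infty L^2}$, which after multiplication by $\kappa_\lambda^{-1/p'} \lambda^{-\sigma/p}$ is absorbed into the main term $\kappa^{1/p} \| S_\lambda u \|_{L^\infty L^2}$. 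The delicate sub-case is the high-high piece where $\varepsilon$ sits at frequencies $\nu > \lambda$: the output then lives at scale $\sim \nu$ and the Bernstein factor on $\partial$ grows with $\nu$, so one must reorganize the derivative via Leibniz and sum carefully using the orthogonality of frequency-disjoint pieces in $L^1 L^2$.
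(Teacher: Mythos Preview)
Your strategy---dyadic localization plus coefficient truncation, then invoke Theorem~\ref{thm:StrichartzEstimatesL1LinfCoefficients} with a $\lambda$-dependent parameter---is exactly the paper's approach. However, your choice of truncation scale $\mu=\lambda^{2/(2+s)}$ produces the wrong $\kappa$-dependence and the error cannot be absorbed. After your arithmetic, the truncation error contributes $\kappa^{-1/p'}\|\varepsilon\|_{\mathcal{X}^s}\|S_\lambda u\|_{L^\infty L^2}$; under the standing hypothesis (after rescaling to $T=1$) one only knows $\|\varepsilon\|_{\mathcal{X}^s}\lesssim\kappa^{(2+s)/2}$, so the resulting bound is $\kappa^{(2+s)/2-1/p'}\|S_\lambda u\|_{L^\infty L^2}$, which exceeds the target $\kappa^{1/p}\|S_\lambda u\|_{L^\infty L^2}$ by a factor $\kappa^{s/2}$ whenever $\kappa>1$. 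The fix is to truncate at $\mu=\kappa^{1/2}\lambda^{2/(2+s)}$: then both the bound $\|\partial_x^2\varepsilon^{(\lambda)}\|_{L^1L^\infty}\lesssim (\kappa\lambda^\sigma)^2$ needed to invoke Theorem~\ref{thm:StrichartzEstimatesL1LinfCoefficients} and the truncation error $\lambda\mu^{-s}\|\varepsilon\|_{\mathcal{X}^s}\lesssim \kappa\lambda^\sigma$ close with the correct powers of $\kappa$.

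There is a second, more subtle point in passing from $P^{(\lambda)}S_\lambda u$ to $S_\lambda Pu$. Your decomposition $[P^{(\lambda)},S_\lambda]u+S_\lambda(P^{(\lambda)}-P)u$ is correct, but the second term involves \emph{all} of $u$, not only $\tilde S_\lambda u$: the high--high pairings $S_\lambda\partial(\varepsilon_\nu S_\nu u)$ with $\nu\gtrsim\lambda$ bring in frequency pieces of $u$ far from $\lambda$, and the naive bound $\lambda\sum_{\nu\gtrsim\lambda}\nu^{-s}\|\varepsilon\|_{\mathcal{X}^s}\|S_\nu u\|_{L^\infty L^2}$ neither sums for $s<1$ nor lands on $\|\tilde S_\lambda u\|_{L^\infty L^2}$. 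The paper handles this by a preliminary truncation of $\varepsilon$ at $\lambda/16$ (so frequency localization is exact) and then invokes the commutator estimate \cite[Eq.~(4.4)]{Tataru2002}, for which the relevant bound on the coefficients is not $\|\varepsilon\|_{\mathcal{X}^s}$ but the interpolated Besov norm $\|\varepsilon\|_{\dot B^{1\infty 1}_{1-\sigma}}\lesssim\kappa$, obtained by combining $\|\varepsilon\|_{L^1L^\infty}\lesssim 1$ (from the compact time support) with the $\mathcal{X}^s$ hypothesis. Your sketch of the high--high case via ``Leibniz and orthogonality'' does not capture this; you need the additional input $\|\varepsilon\|_{L^1L^\infty}\lesssim 1$ to get the factor $\kappa$ rather than $\kappa^{(2+s)/2}$.
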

Estimates  involving the norm of $P(x,D)u$ in $L^{p'}L^{q'}$ follow as in 
\cite{Tataru2001,Tataru2002}. We sketch the proof for the next result with $C^2$ coefficients.
\begin{theorem}
\label{thm:MaxwellInhomogeneousStrichartz}
Let $\varepsilon \in C^2$ and $(\rho,p,q,2)$ be a Strichartz pair. Then, we have
\begin{equation}
\label{eq:InhomogeneousStrichartzEstimatesC2}
\| |D|^{-\rho} u \|_{L^p L^q} \lesssim\kappa \| u \|_{L^2} +\kappa^{-1} \| f_1 \|_{L^2} + \| |D|^\rho f_2 \|_{L^{p^\prime} L^{q^\prime}} + \| |D|^{-\frac{1}{2}} \rho_e \|_{L^2}
\end{equation}
whenever
\begin{equation*}
P(x,D) u = f_1 + f_2\qquad\text{and}\qquad 
\| \partial_x^2 \varepsilon \|_{\infty} \leq\kappa^4.
\end{equation*}
\end{theorem}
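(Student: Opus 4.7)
The plan is to deduce the inhomogeneous bound from the homogeneous estimate of Theorem \ref{thm:StrichartzEstimatesMaxwell2d} through a $TT^{*}$ duality step followed by the Christ--Kiselev lemma, following Tataru's scheme for scalar wave equations in \cite{Tataru2001,Tataru2002}.

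By linearity I write $u = u^{(1)} + u^{(2)}$ with $Pu^{(j)} = f_j$, arranging that $u^{(1)}$ carries the charge $\rho_e$ together with the initial data, while $u^{(2)}$ is charge-free with $u^{(2)}(0)=0$. An application of Theorem \ref{thm:StrichartzEstimatesMaxwell2d} to $u^{(1)}$ bounds it by $\kappa\|u\|_{L^2} + \kappa^{-1}\|f_1\|_{L^2} + \||D|^{-1/2}\rho_e\|_{L^2}$, which are exactly the first, second and fourth terms on the right-hand side of \eqref{eq:InhomogeneousStrichartzEstimatesC2}. It therefore suffices to establish the purely inhomogeneous bound
$$\bigl\||D|^{-\rho} u^{(2)}\bigr\|_{L^pL^q} \lesssim \bigl\||D|^{\rho} f_2\bigr\|_{L^{p'}L^{q'}}$$
for charge-free solutions of $Pu^{(2)} = f_2$ vanishing at $t=0$.

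For this, I introduce the (approximate) propagator $U(t,s)$ for $P$ on charge-free states, built from the FBI parametrix that underlies the proof of Theorem \ref{thm:StrichartzEstimatesMaxwell2d}. Theorem \ref{thm:StrichartzEstimatesMaxwell2d} translates into the statement that the operator $T : g \mapsto |D|^{-\rho} U(\cdot,0) g$ is bounded from $L^{2}$ to $L^{p}L^{q}$. The symmetry of $\varepsilon$ in \eqref{eq:EllipticityPermittivity} makes $U(t,s)$ behave like a one-parameter evolution with $U(t,s)^{*} \simeq U(s,t)$ on charge-free data, so that $T^{*}$ is bounded from $L^{p'}L^{q'}$ to $L^{2}$ and composition yields
$$\Bigl\||D|^{-\rho} \int_{\R} U(t,s) |D|^{-\rho} h(s)\, ds\Bigr\|_{L^pL^q} \lesssim \|h\|_{L^{p'}L^{q'}}.$$
Duhamel's formula for $u^{(2)}$ gives $u^{(2)}(t) = \int_{0}^{t} U(t,s) f_2(s)\, ds$. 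Setting $h = |D|^{\rho} f_2$ in the display above and invoking the Christ--Kiselev lemma (applicable since $p > p'$ throughout the admissible two-dimensional Strichartz range) replaces the full integral over $\R$ by the retarded one and delivers the required estimate. The $\kappa$ dependence is tracked in the same way as in \cite{Tataru2001} and absorbed using the $\kappa$-balanced form of Theorem \ref{thm:StrichartzEstimatesMaxwell2d}.

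The principal technical obstacle lies in the absence of a clean group law for the rough Maxwell system: the propagator $U(t,s)$ has to be defined and manipulated at the level of the FBI parametrix, and one must verify that both $T$ and $T^{*}$ preserve the charge-free constraint so that $TT^{*}$ genuinely represents the inhomogeneous solution operator. Here the symmetry assumption $\varepsilon^{ij}=\varepsilon^{ji}$ and the precise structure of the charge term in \eqref{eq:StrichartzEstimatesMaxwell2d} play a decisive role. Any borderline Strichartz cases in two dimensions, such as $(p,q)=(4,\infty)$, are handled by the Keel--Tao bilinear interpolation argument exactly as in \cite{Tataru2001}.
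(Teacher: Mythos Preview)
Your approach differs from the paper's and contains a genuine gap. The paper does \emph{not} use $TT^*$ and Christ--Kiselev for Theorem~\ref{thm:MaxwellInhomogeneousStrichartz}; instead it reruns the FBI-transform machinery of Theorem~\ref{thm:StrichartzEstimatesMaxwell2d} directly, carrying the inhomogeneity $f_2$ through the phase-space analysis. After the same reductions (localization, dyadic decomposition, coefficient truncation, diagonalization $P\approx\mathcal{M}\mathcal{D}\mathcal{N}$), one arrives at a dyadic half-wave estimate with split forcing $\tilde f_1+\tilde f_2$. The FBI transform then produces the gradient-flow and Hamilton-flow ODEs with an extra forcing $g=\Phi^{1/2}T_\lambda\tilde f_2$, and one decomposes $w=w_1+w_2$ and further into ``good'' and ``bad'' parts $w_j^g,w_j^b$, the bad parts being controlled by $g$ in $L^{p'}L^{q'}$ via the oscillatory-integral bounds of \cite[pp.~418--422]{Tataru2001}. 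No propagator or duality is used.

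Your $TT^*$ step fails because the Maxwell generator is not skew-adjoint on $L^2$: the paper computes $P^*=-P^t+E$ with $E$ bounded, and the divergence constraint associated with $P^t$ is $\nabla\cdot(\tilde\varepsilon^{-1}\tilde u)=\rho_e^*$, \emph{not} $\nabla\cdot\tilde u=0$. Hence $U(t,s)^*$ does not preserve the charge-free subspace unless $\varepsilon$ is isotropic. This is exactly why the paper imposes $\varepsilon=e\,I_{2\times2}$ and a charge-free hypothesis in Corollary~\ref{cor:InhomogeneousStrichartzC1Coefficients}, the only place where Christ--Kiselev is invoked, and explicitly flags the limitation in the introduction. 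Your assertion that $U(t,s)^*\simeq U(s,t)$ on charge-free data by symmetry of $\varepsilon^{ij}$ is therefore incorrect in the anisotropic setting of Theorem~\ref{thm:MaxwellInhomogeneousStrichartz}.

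A secondary issue: your splitting $u=u^{(1)}+u^{(2)}$ followed by Theorem~\ref{thm:StrichartzEstimatesMaxwell2d} on $u^{(1)}$ produces $\kappa\|u^{(1)}\|_{L^2}$ on the right, not $\kappa\|u\|_{L^2}$. Bounding $\|u^{(1)}\|_{L^2}$ by $\|u\|_{L^2}$ would require control of $\|u^{(2)}\|_{L^2}$, which via the energy inequality needs $f_2\in L^1L^2$ rather than $|D|^\rho f_2\in L^{p'}L^{q'}$.
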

One can establish versions for $C^s$-coefficients with $0\leq s < 2$, cf.\ \cite[Theorem~4]{Tataru2001}, and for 
$\mathcal{X}^s$-coefficients, cf.\ \cite[Corollary~1.6]{Tataru2002}, using arguments from these papers.

On a finite time interval,  the homogeneous problem \eqref{eq:2dMaxwellEquations} can easily be treated 
by the above results if $\partial_t \varepsilon\in L^1L^\infty$, since a standard energy estimate yields
\begin{equation}\label{eq:energy0}
\|u\|_{L^\infty L^2}\lesssim e^{c\|\partial_t \varepsilon\|_{L^1L^\infty}}\|u(0)\|_{L^2(\mathbb{R}^2)}.
\end{equation}

As in \cite[Corollary~5]{Tataru2001}, we can also prove estimates with two different Strichartz pairs. We provide such a result on a finite time interval $(0,T)$, fixing $\delta$ and $T$, and we further suppose that the solutions are charge-free and $\varepsilon$ is isotropic. These limitations stem from the use of duality in the proof. Perhaps the latter assumptions can be weakened by modifying the proof of Theorem \ref{thm:MaxwellInhomogeneousStrichartz} to treat inhomogeneous terms $\langle D' \rangle^{\tilde{\rho}} f \in L^{\tilde{p}'} L^{\tilde{q}'}$. This is not pursued presently.

\begin{corollary}
\label{cor:InhomogeneousStrichartzC1Coefficients}
Let $\varepsilon = eI_{2\times2} \in C^s$, $1 \leq s \leq 2$, $P(x,D) u = f$, $\partial_1 u_1 + \partial_2 u_2 = 0$, and $(\rho,p,q,2)$, $(\tilde{\rho},\tilde{p},\tilde{q},2)$ be Strichartz pairs. Then, we find the following estimate to hold:
\begin{equation}
\label{eq:InhomogeneousStrichartzC1Coefficients}
\| \langle D' \rangle^{-\rho-\frac{\sigma}{2}} u \|_{L^p(0,T;L^q)} \lesssim_{T,\kappa} \| u(0) \|_{L^2} + \| \langle D' \rangle^{\tilde{\rho} + \frac{\sigma}{2}} f \|_{L^{\tilde{p}'}(0,T;L^{\tilde{q}'})}.
\end{equation}
\end{corollary}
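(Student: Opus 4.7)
The plan is to adapt the duality argument of Tataru \cite{Tataru2001}, Corollary~5. First decompose $u = v + w$ where $v$ solves the homogeneous equation $P(x,D)v = 0$ with initial datum $v(0)=u(0)$ (which is divergence-free by hypothesis), and $w = u-v$ solves $P(x,D)w = f$ with $w(0)=0$. For $v$ we combine the energy estimate \eqref{eq:energy0} with Theorem~\ref{thm:StrichartzEstimatesMaxwell2d} (when $s=2$) or Theorem~\ref{thm:StrichartzEstimatesMaxwell2dCs} (when $1\le s<2$) to obtain directly
\[
\| \langle D' \rangle^{-\rho-\frac{\sigma}{2}} v \|_{L^p(0,T;L^q)} \lesssim_{T,\kappa} \| u(0) \|_{L^2}.
\]

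For the inhomogeneous piece $w$, use Duhamel's formula to write $w(t)=\int_0^t U(t,s)f(s)\,ds$ with $U(t,s)$ the forward propagator of $P(x,D)$ on the charge-free subspace (which is preserved by the flow since $\partial_t \rho_e = \partial_1 f_1+\partial_2 f_2$ and charge-freeness of $f$ can be arranged by projecting $f$, with the error absorbed into the data/charge terms). Denote $Tg(t) = U(t,0)g$; by the homogeneous estimate $T : L^2 \to L^p(\langle D'\rangle^{\rho+\sigma/2} L^q)$. In the isotropic case $\varepsilon = e I$, the formal adjoint $P^\ast$ is again a Maxwell-type operator with the \emph{same} coefficient $e$ (up to lower-order terms that are absorbed by the $\kappa$-bound), so Theorem~\ref{thm:StrichartzEstimatesMaxwell2d}/\ref{thm:StrichartzEstimatesMaxwell2dCs} applies to the backward flow as well. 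The Keel--Tao abstract $TT^\ast$ argument with two admissible pairs then yields boundedness of the untruncated operator $f\mapsto \int_{\mathbb{R}} U(t,s)f(s)\,ds$ from $L^{\tilde p'}(\langle D'\rangle^{-\tilde\rho-\sigma/2}L^{\tilde q'})$ into $L^p(\langle D'\rangle^{\rho+\sigma/2} L^q)$.

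To pass from the untruncated integral over $\mathbb{R}$ to the retarded integral $\int_0^t$ appearing in Duhamel, invoke the Christ--Kiselev lemma. This is permitted since $\tilde p'\leq 2\leq p$ with strict inequality $\tilde p'<p$ except in the diagonal endpoint $p=\tilde p = 2$, which is treated directly by $TT^\ast$. Combining the two pieces gives \eqref{eq:InhomogeneousStrichartzC1Coefficients}.

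The main obstacle is the dualization step: one needs the adjoint flow of $P(x,D)$ to satisfy a Strichartz estimate of the same form, which is transparent only when $\varepsilon$ is isotropic (so that $P$ and $P^\ast$ have the same principal symbol up to conjugation by a matrix) and when $u$ is charge-free (so that the constraint manifold is invariant under both the forward and backward evolutions and the charge term $\||D|^{-1/2}\rho_e\|_{L^2}$ on the right of Theorem~\ref{thm:MaxwellInhomogeneousStrichartz} drops out). Weakening either hypothesis would require producing an inhomogeneous estimate in Theorem~\ref{thm:MaxwellInhomogeneousStrichartz} in which the forcing is measured in $L^{\tilde p'}L^{\tilde q'}$ with fractional derivatives, which is not presently available.
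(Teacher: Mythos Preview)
Your proposal is correct and follows essentially the same route as the paper: Duhamel decomposition, homogeneous Strichartz estimates from Theorem~\ref{thm:StrichartzEstimatesMaxwell2dCs} combined with the energy bound, then $TT^*$ duality plus the Christ--Kiselev lemma for the inhomogeneous term. Two small points of imprecision worth tightening: (i) the paper makes explicit that $P^* = -P^t + E$ with $E$ bounded on $L^2$, and that the degenerate eigenvalue of $P^t$ produces a modified charge $\rho_e^* = \nabla\cdot(\tilde{\varepsilon}^{-1}\tilde{u})$; isotropy is used precisely to reduce this to $(\nabla e^{-1})\cdot\tilde{u}$, which is controlled by $\|u\|_{L^2}$ when $\tilde{u}$ is divergence-free --- your phrase ``same coefficient $e$ up to lower-order terms'' glosses over this mechanism; (ii) the diagonal case $p=\tilde{p}=2$ in Christ--Kiselev is not handled by $TT^*$ but is simply vacuous, since $p=2$ is not wave-admissible in two dimensions.
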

The proof relies on a now standard application of the Christ--Kiselev lemma, \cite{ChristKiselev2001}. However, as the time-dependent generators of the Maxwell system are not self-adjoint in $L^2$, additional considerations are necessary.

To study quasilinear equations, we use a similar result in the context of
Theorem~\ref{thm:MaxwellStrichartzEstimatesBesovRegularity}, cf.\ Corollary~1.7 in \cite{Tataru2002}. 
The quantity $\|\partial_x\varepsilon\|_{L^2 L^\infty}$ can be controlled for coefficients 
$\varepsilon=\varepsilon(\cE)$ arising in a bootstrap argument. 
\begin{corollary}
\label{cor:StrichartzEstimatesL2LinfCoefficients}
Assume that  $\|\partial_x \varepsilon\|_{ L^2 L^\infty} \lesssim 1$ and for some $\tilde{s} \in [1,2)$, suppose that $\| \varepsilon \|_{\mathcal{X}^{\tilde{s}}} \lesssim 1$. Let $(\rho,p,q,2)$ be a Strichartz pair. Then the solution $u$ to
\begin{equation*}
\left\{
\begin{aligned}
P(x,D) u &= f, \qquad \partial_1 u_1 + \partial_2 u_2 = \rho_e, \\
u(0) &= u_0
\end{aligned} \right.
\end{equation*}
satisfies
\begin{equation*}
\begin{split}
\| \langle D' \rangle^{-\alpha} u \|_{L^p(0,T;L^q)} &\lesssim_T \| u_0 \|_{L^2(\mathbb{R}^2)} + \| f \|_{L^1(0,T; L^2)} \\
&\qquad + \| \langle D' \rangle^{-\frac{1}{2}-\frac{\sigma}{p}} \rho_e(0) \|_{L^2} + \|\langle D' \rangle^{-\frac{1}{2}-\frac{\sigma}{p}} \partial_t \rho_e \|_{L^1(0,T; L^2)}
\end{split}
\end{equation*}
for $\alpha > \rho + \frac{\sigma}{p}$ and $\sigma = \sigma(\tilde{s})= \frac{2-\tilde{s}}{2+\tilde{s}}$.
\end{corollary}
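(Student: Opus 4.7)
The plan is to derive the corollary from Theorem~\ref{thm:MaxwellStrichartzEstimatesBesovRegularity} and Theorem~\ref{thm:MaxwellInhomogeneousStrichartz} by a standard three-step procedure: (i) a dyadic decomposition of $u$ in space combined with a frequency-dependent choice of $\kappa$ in Theorem~\ref{thm:MaxwellStrichartzEstimatesBesovRegularity}, (ii) an energy estimate that controls $\|u\|_{L^\infty L^2}$ by the data and the source $f$, and (iii) a Christ--Kiselev type argument to reduce the inhomogeneous problem to the homogeneous one. The strict inequality $\alpha > \rho + \sigma/p$ will come from the Besov embedding $\dot{B}^{pq\infty}_0 \hookrightarrow L^p L^q$ which loses an arbitrarily small number of derivatives.

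For step (i) I would perform a homogeneous Littlewood--Paley decomposition $u = \sum_\mu S'_\mu u$ in the spatial variables and apply Theorem~\ref{thm:MaxwellStrichartzEstimatesBesovRegularity} to each piece $S'_\mu u$, localized on time intervals $I_j$ of length $\tau$ with $\tau^{\tilde{s}} \|\varepsilon\|_{\mathcal{X}^{\tilde{s}}}^2 \lesssim \kappa^{2+\tilde{s}}$. Since $\|\varepsilon\|_{\mathcal{X}^{\tilde{s}}} \lesssim 1$, this condition is satisfied with $\kappa \sim 1$ for $\tau \sim 1$, so one may cover $[0,T]$ by $O(T)$ such intervals. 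On each piece one uses
\begin{equation*}
PS'_\mu u = S'_\mu P u + [P, S'_\mu] u,
\end{equation*}
where the commutator contributes lower-order terms bounded in $L^1 L^2$ by means of $\|\partial_x \varepsilon\|_{L^2 L^\infty} \lesssim 1$ via Coifman--Meyer style paraproduct estimates (H\"older in time gives the $L^1 L^2$ bound on a bounded interval). Summing over intervals $I_j$ and exchanging the $\ell^\infty_\mu$ with the $L^p_t L^q_{x'}$ via Besov embedding yields the estimate with $\alpha > \rho + \sigma/p$, with the charge terms being handled directly by \eqref{eq:rho} and the frequency-preserving nature of $S'_\mu$.

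For step (ii), the Lipschitz bound $\|\partial_x \varepsilon\|_{L^2 L^\infty} \lesssim 1$ together with the Cauchy--Schwarz inequality on $[0,T]$ yields $\|\partial_t \varepsilon\|_{L^1 L^\infty} \lesssim T^{1/2}$, so the symmetrizer associated with \eqref{eq:ConstitutiveRelation} and a standard Gr\"onwall argument produce
\begin{equation*}
\|u\|_{L^\infty(0,T;L^2)} \lesssim_T \|u_0\|_{L^2} + \|f\|_{L^1(0,T;L^2)}.
\end{equation*}
For step (iii), writing the Duhamel formula $u(t) = U(t,0) u_0 + \int_0^t U(t,s) f(s)\,ds$ for the (non-self-adjoint) propagator $U$, one applies Christ--Kiselev \cite{ChristKiselev2001} to the homogeneous Strichartz estimate obtained in step~(i) applied to $U(\cdot,s) f(s)$, noting that the constant coming from the energy estimate is uniform in $s \in [0,T]$. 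The hypothesis $p > 1$ (needed for Christ--Kiselev) is built into the Strichartz pair condition, while $p=\infty$ is immediate from energy.

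The main obstacle I expect is keeping the charge bookkeeping consistent under the frequency localization: for each block $S'_\mu u$ the relation $\partial_1 (S'_\mu u)_1 + \partial_2 (S'_\mu u)_2 = S'_\mu \rho_e$ holds, but the right-hand side of Theorem~\ref{thm:MaxwellStrichartzEstimatesBesovRegularity} involves both $\rho_e$ and $\partial_t \rho_e$, and the latter must be re-expressed through $\partial_t \rho_e = \partial_1 (Pu)_1 + \partial_2 (Pu)_2$ so that the $f$-term on the right absorbs it cleanly. A minor additional nuisance is that the application of Christ--Kiselev for the non-self-adjoint Maxwell propagator cannot be packaged through a $TT^*$ reduction, so one works directly with the solution operator and verifies its boundedness on $L^2$ via the symmetrizer from step~(ii).
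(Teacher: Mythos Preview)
Your outline misses the main technical point of the proof and mislocates where the hypothesis $\|\partial_x\varepsilon\|_{L^2L^\infty}\lesssim 1$ is needed. Theorem~\ref{thm:MaxwellStrichartzEstimatesBesovRegularity} controls $\||D|^{-\rho-\sigma/p}u\|_{\dot B^{pq\infty}_0}$, where $|D|$ and the Besov scale are built from \emph{space-time} Littlewood--Paley blocks $S_\lambda$. The corollary, by contrast, asks for $\|\langle D'\rangle^{-\alpha}u\|_{L^pL^q}$ with purely \emph{spatial} regularity. Your step~(i) decomposes with spatial projectors $S'_\mu$ and then invokes Theorem~\ref{thm:MaxwellStrichartzEstimatesBesovRegularity}, but this does not by itself close: on the region $\{|\xi_0|\gg|\xi'|\}$ one has $|\xi|\sim|\xi_0|$ while $|\xi'|$ is small, so the weights $|D|^{-\rho-\sigma/p}$ and $\langle D'\rangle^{-\alpha}$ are incomparable there. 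The paper's proof spends most of its effort on exactly this region: it shows ellipticity of $P$ there via the FBI transform, and for that it needs the commutator bound $\|T_\lambda A_\lambda - aT_\lambda\|_{L^\infty L^2\to L^2_\Phi}\lesssim\lambda^{-1/2}$ for symbols with $\partial_x a\in L^2L^\infty$ (Theorem~\ref{thm:FBIL2Linfty}). That is the sole place the $L^2L^\infty$ assumption enters; the commutators $[P,S'_\mu]$ and $[P,S_\lambda]$ are controlled using the $\mathcal X^{\tilde s}$ norm, exactly as in the proof of Theorem~\ref{thm:MaxwellStrichartzEstimatesBesovRegularity}.

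Your step~(iii) is also superfluous. Theorem~\ref{thm:MaxwellStrichartzEstimatesBesovRegularity} already has $\|Pu\|_{L^1L^2}=\|f\|_{L^1L^2}$ on the right-hand side, so once the elliptic region is handled and the dyadic pieces summed (using $\alpha>\rho+\sigma/p$ to absorb the Bernstein losses), the energy estimate $\|u\|_{L^\infty L^2}\lesssim_T\|u_0\|_{L^2}+\|f\|_{L^1L^2}$ finishes the argument directly; no Christ--Kiselev is required, and Theorem~\ref{thm:MaxwellInhomogeneousStrichartz} (which is for $C^2$ coefficients anyway) plays no role.
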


For the proof of Theorem \ref{thm:StrichartzEstimatesMaxwell2d} we shall make use of the FBI transform, conjugating the problem to phase space. Prior to the phase space analysis, we make the following reductions already pointed out by Tataru \cite{Tataru2001,Tataru2002} and also in earlier work \cite{BahouriChemin1999} in the context of wave equations with rough coefficients. The first reduction consists in a direct estimate of low frequencies $\lesssim 1$, so that we only have to estimate high frequencies. One can then localize in space-time and reduce to solutions supported in the unit cube. Moreover, by Littlewood-Paley theory and commutator estimates, it suffices to establish an estimate for dyadic (high) frequencies $\lambda \in 2^{\mathbb{N}}$ and with coefficients of the permittivity truncated at frequencies $< \lambda^{1/2}$. Finally,  we reduce to the case of frequencies $|\xi_0|\lesssim |(\xi_1,\xi_2)|$ by estimating the contributions from the region $|\xi_0|\gg |(\xi_1,\xi_2)|$ off the light cone using properties of the FBI transform. 

After these reductions, we can diagonalize \eqref{eq:2dMaxwellEquations} to two non-degenerate half-wave equations and one degenerate half-wave equation. One can bound the appropriate norm of the degenerate component by $\|u\|_2$ and the divergence of $(\cD_1,\cD_2)$, again making use of the FBI transform. The nondegenerate components are discussed below. During the conjugation procedure, we encounter pseudo-differential operators with rough symbols. For these we give expansions of composites, which resemble the smooth case. However, we have to be careful with $L^2$-estimates, since we cannot spare several derivatives in the spatial variables. In our opinion the diagonalization procedure is the main novelty of the paper. It quantifies the hyperbolic degeneracy of the Maxwell system through the electric charges and allows to recover wave Strichartz estimates in the charge-free case. The method of proof possibly extends to other first-order systems like Dirac equations with variable coefficients (cf.\ \cite{CacciafestadeSuzzoni2019}).

The second key ingredient in the proof of Theorems \ref{thm:StrichartzEstimatesMaxwell2d} and \ref{thm:StrichartzEstimatesL1LinfCoefficients} is the following result for the half-wave equation, which we prove by varying Tataru's arguments and using his results for the wave equation \cite{Tataru2001,Tataru2002}. We write 
\begin{equation*}
\tilde{\varepsilon}(x) = (\tilde{\varepsilon}^{ij}(x))=
\begin{pmatrix}
\varepsilon_{22}(x) & -\varepsilon_{12}(x) \\
-\varepsilon_{21}(x) & \varepsilon_{11}(x)
\end{pmatrix}
\end{equation*}
which is $\varepsilon$ up to determinant, and let $\tilde\varepsilon_{\lambda^\frac12}$ denote these coefficients with Fourier support truncated to $\{|\xi| \leq \lambda^{\frac{1}{2}} \}$. 
\begin{proposition}
\label{prop:HalfWaveEstimates}
Let $\lambda \in 2^{\mathbb{N}_0},$ $\lambda \gg 1,$ and  $n \geq 2$. Assume $\varepsilon = \varepsilon^{ij}(x)$ satisfies $\varepsilon^{ij} \in C^2$, $\| \partial^2_x \varepsilon \|_{L^\infty} \leq 1$, and \eqref{eq:EllipticityPermittivity}. Let $Q(x,D)$ denote the pseudo-differential operator with symbol 
\begin{equation*}
q(x,\xi) = - \xi_0 + \big( \tilde\varepsilon^{ij}_{\lambda^\frac12} (x) \xi_i \xi_j \big)^{1/2}.
\end{equation*}
Moreever, let $u$ decay rapidly outside the unit cube and $(\rho,p,q,n)$ be a Strichartz pair. 
Then, we find the estimates 
\begin{equation}
\label{eq:StrichartzEstimatesHalfWaveEquationC2Coefficients}
\lambda^{-\rho} \Vert S_\lambda u \Vert_{L^p L^q} \lesssim \Vert S_\lambda u \Vert_{L^2} + \Vert Q(x,D) S_\lambda u \Vert_{L^2}
\end{equation}
to hold with an implicit constant uniform in $\lambda$. For Lipschitz coefficients $\varepsilon^{ij}$ 
with $\| \partial^2_x \varepsilon \|_{L^1 L^\infty} \leq 1$, we obtain
\begin{equation}
\label{eq:StrichartzEstimatesHalfWaveEquationL1LinfCoefficients}
\lambda^{-\rho} \Vert S_\lambda u \Vert_{L^p L^q} \lesssim \Vert S_\lambda u \Vert_{L^\infty L^2} + \Vert Q(x,D) S_\lambda u \Vert_{ L^2}.
\end{equation}
\end{proposition}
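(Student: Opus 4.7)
The plan is to approximately factor the wave operator with truncated coefficients as $Q_-(x,D)\,Q(x,D)$ and invoke Tataru's Strichartz estimates for scalar wave equations with $C^2$ (respectively $\partial^2 \in L^1L^\infty$) coefficients \cite{Tataru2001,Tataru2002}. Concretely, let
\begin{equation*}
q_-(x,\xi) = -\xi_0 - \big(\tilde\varepsilon^{ij}_{\lambda^{1/2}}(x)\xi_i\xi_j\big)^{1/2},
\end{equation*}
suitably cut off near $\xi' = 0$ (this is harmless since $S_\lambda u$ lives on $|\xi| \sim \lambda$). Then $q_- q = \xi_0^2 - \tilde\varepsilon^{ij}_{\lambda^{1/2}}\xi_i\xi_j$ is (minus) the principal symbol of the wave operator $\Box_{\tilde\varepsilon_{\lambda^{1/2}}}$ with truncated permittivity.

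Using the pseudo-differential calculus with rough symbols to be developed in the paper, I would compute
\begin{equation*}
Q_-(x,D)\,Q(x,D) = -\Box_{\tilde\varepsilon_{\lambda^{1/2}}} + E_\lambda,
\end{equation*}
where $E_\lambda$ collects the subprincipal and remainder terms. The crucial scaling is that $\tilde\varepsilon_{\lambda^{1/2}}$ is smooth at scale $\lambda^{-1/2}$, so Bernstein yields $\|\partial_x^k \tilde\varepsilon_{\lambda^{1/2}}\|_{L^\infty} \lesssim \lambda^{(k-2)/2}$ for $k \geq 2$, while $\|\partial_x^k \tilde\varepsilon_{\lambda^{1/2}}\|_{L^\infty} \lesssim 1$ for $k \leq 1$; combined with $|\partial_\xi^\alpha q|, |\partial_\xi^\alpha q_-| \lesssim \lambda^{1-|\alpha|}$ on frequency $\lambda$, each term in the asymptotic expansion (including the Poisson bracket $\tfrac{1}{i}\partial_\xi q_- \cdot \partial_x q$) is of order $\lambda$ on the frequency-$\lambda$ shell, so that $E_\lambda$ maps $L^2 \to L^2$ with norm $O(\lambda)$ on frequency-localized inputs. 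Since $Q_-$ itself has symbol of size $\lambda$ there, $\|Q_- f\|_{L^2} \lesssim \lambda \|f\|_{L^2}$ when $f$ has spectrum in $|\xi| \sim \lambda$; hence
\begin{equation*}
\|\Box_{\tilde\varepsilon_{\lambda^{1/2}}} S_\lambda u\|_{L^2} \lesssim \lambda \|Q(x,D) S_\lambda u\|_{L^2} + \lambda \|S_\lambda u\|_{L^2}.
\end{equation*}

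Now I would invoke Tataru's Strichartz estimate for $\Box_{\tilde\varepsilon_{\lambda^{1/2}}}$, whose coefficients inherit the $C^2$-bound from $\varepsilon$ (modulo constants). For solutions decaying rapidly outside the unit cube, the estimate reads
\begin{equation*}
\lambda^{-\rho}\|S_\lambda u\|_{L^p L^q} \lesssim \|S_\lambda u\|_{L^2} + \lambda^{-1}\|\Box_{\tilde\varepsilon_{\lambda^{1/2}}} S_\lambda u\|_{L^2},
\end{equation*}
and combining with the previous bound produces \eqref{eq:StrichartzEstimatesHalfWaveEquationC2Coefficients}. For the Lipschitz case \eqref{eq:StrichartzEstimatesHalfWaveEquationL1LinfCoefficients}, exactly the same scheme applies after replacing Tataru's $C^2$-result by its $L^1L^\infty$ analogue from \cite{Tataru2002}, which outputs $\|S_\lambda u\|_{L^\infty L^2}$ in place of $\|S_\lambda u\|_{L^2}$.

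The main obstacle will be the precise bookkeeping for the remainder $E_\lambda$: because the coefficients are truncated only at frequency $\lambda^{1/2}$ (rather than at some $\lambda^\delta$ with $\delta$ small), one cannot afford any $\lambda^\epsilon$ slack, and the $\lambda^{1/2}$ loss from each extra $x$-derivative must be matched exactly against the $\lambda^{-1/2}$ gain from each extra $\xi$-derivative in the asymptotic expansion. Ensuring that the remainder truncates at size $\lambda$, and that no $L^2$-boundedness issue arises from the non-smooth symbols, is where the rough pseudo-differential calculus and Bernstein-type bounds developed earlier in the paper have to be applied most carefully; the rest of the argument is then reduction to Tataru's wave Strichartz estimates.
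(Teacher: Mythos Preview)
Your factorization approach is correct and is a genuinely different route from the paper's. The paper does \emph{not} reduce to Tataru's wave estimate as a black box; instead it observes that Tataru's own FBI-transform proof in \cite{Tataru2001,Tataru2002} already proceeds by replacing the wave symbol $p=\xi_0^2-g^{ij}\xi_i\xi_j$ (via $C^1$-equivalence near the characteristic cone) by a half-wave symbol $q=\xi_0-t(x,\xi')$, which is exactly the form at hand. The paper therefore rewalks Tataru's phase-space argument directly for $q$: conjugation by $T_\lambda$, reduction to a neighborhood of $K=\{q=0\}$, the Hamilton/gradient flow decomposition $w=w_1+w_2$, and the oscillatory integral estimate of \cite[Theorem~6]{Tataru2001} (resp.\ \cite[Theorem~3.4]{Tataru2002}), checking along the way that the Hamilton-flow bounds (Lemmas~\ref{lem:HamiltonFlowI}--\ref{lem:HamiltonFlowII}) transfer from $p$ to $q$ on the relevant region because the two symbols are Lipschitz-equivalent there.

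Your route---compose with $Q_-$ to recover $\Box_{\tilde\varepsilon_{\lambda^{1/2}}}$ up to an $O(\lambda)$ error and then invoke Tataru's \emph{dyadic} wave estimate $\lambda^{-\rho}\|S_\lambda u\|_{L^pL^q}\lesssim\|S_\lambda u\|_{L^2}+\lambda^{-1}\|\Box S_\lambda u\|_{L^2}$---is more economical: it stays outside Tataru's proof and only uses the composition calculus of Section~\ref{section:PDOFBITransform}. Two small caveats: (i) what you actually need from \cite{Tataru2001,Tataru2002} is the \emph{intermediate} dyadic estimate with coefficients already truncated at $\lambda^{1/2}$ (e.g.\ \cite[Eq.~(19)]{Tataru2001}), not the top-level theorem, so you are still reaching one layer into Tataru's reductions; (ii) in the $L^1L^\infty$ case the $|\alpha|=2$ term in your expansion involves $\partial_x^2\tilde\varepsilon_{\lambda^{1/2}}$, which is only in $L^1_tL^\infty_{x'}$, but the truncation also gives the crude pointwise bound $\lesssim\lambda^{1/2}$, and since $u$ is supported in a unit time interval this still yields $\|E_\lambda S_\lambda u\|_{L^2}\lesssim\lambda\|S_\lambda u\|_{L^\infty L^2}$ as needed. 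The paper's approach avoids this bookkeeping at the cost of reproducing more of Tataru's machinery; yours trades that for the composition error analysis. Both are valid.
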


At last, we apply the Strichartz estimates to the local well-posedness theory of the system
\begin{equation}
\label{eq:QuasilinearMaxwellEquations}
\left\{ \begin{aligned}
\partial_t u_1 &= \partial_2 u_3, \qquad u(0) = u_0 \in H^s(\R^2;\R)^3, \\
\partial_t u_2 &= - \partial_1 u_3, \qquad \partial_1 u_1 + \partial_2 u_2 = 0, \\
\partial_t u_3 &= \partial_2 (\varepsilon^{-1}(u) u_1) - \partial_1 (\varepsilon^{-1}(u) u_2),
\end{aligned} \right.
\end{equation}
where $\varepsilon^{-1}(u) = \psi(|u_1|^2+|u_2|^2)$, $\psi:\R_{\geq 0} \to \R_{\geq 0}$ is smooth, monotone increasing, and  $\psi(0)=1$. Observe that the Kerr nonlinearity as given in \eqref{eq:KerrNonlinearity} is covered. 
One can apply our methods also to matrix-valued $\varepsilon(\cE)$ under symmetry constraints providing energy bounds. 
We remark that  one can transform \eqref{eq:QuasilinearMaxwellEquations} into a system of wave equations taking second derivatives in time. Although it might be possible in principle to apply the previously known Strichartz estimates for wave equations, this approach surely finds its limitations when anistropic material laws are considered.

By local well-posedness, we mean existence, uniqueness, and continuous dependence of the solutions in $H^s$ 
locally in time. We refer to the recent lecture notes by Ifrim and Tataru \cite{IfrimTataru2020} for explaining 
the notion of local well-posedness for quasilinear equations in detail. Energy methods, neglecting 
dispersive properties of \eqref{eq:QuasilinearMaxwellEquations}, give local well-posedness for $s>2$ as noted 
above. For the scalar quasilinear  wave equation on $\R^2$, Tataru \cite{Tataru2002} proved local well-posedness
in $H^s$ for $s>11/6$. We establish the analogous result for the  Maxwell system on $\R^2$.  
\begin{theorem}
\label{thm:LocalWellposednessQuasilinearMaxwell}
\eqref{eq:QuasilinearMaxwellEquations} is locally well-posed for $s>11/6$.
\end{theorem}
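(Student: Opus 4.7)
The plan is to adapt Tataru's iterative construction for the $2$d quasilinear scalar wave equation in \cite{Tataru2002} to the first-order Maxwell system \eqref{eq:QuasilinearMaxwellEquations}, using Corollary~\ref{cor:StrichartzEstimatesL2LinfCoefficients} in place of the wave Strichartz estimate. First I would mollify the initial datum and construct iterates $u^{(k+1)}$ by solving the \emph{linear} Maxwell system with frozen permittivity $\varepsilon(u^{(k)})^{-1}=\psi(|u_1^{(k)}|^2+|u_2^{(k)}|^2)$, starting from the mollified $u^{(0)}$. The zero-divergence constraint is propagated at every step, since the first two components of \eqref{eq:QuasilinearMaxwellEquations} give
\begin{equation*}
\partial_t(\partial_1 u_1^{(k+1)}+\partial_2 u_2^{(k+1)})=\partial_1\partial_2 u_3^{(k+1)}-\partial_2\partial_1 u_3^{(k+1)}=0,
\end{equation*}
so each iterate is charge-free and the $\rho_e$-terms in Corollary~\ref{cor:StrichartzEstimatesL2LinfCoefficients} vanish identically.

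The central ingredient is a self-sustaining a priori bound on an interval $[0,T]$ with $T$ depending only on $\|u_0\|_{H^s}$. Since $\varepsilon$ is symmetric, the linearization is symmetric hyperbolic, and a standard $H^s$-energy estimate gives
\begin{equation*}
\|u^{(k+1)}\|_{L^\infty(0,T;H^s)}\lesssim \|u_0\|_{H^s}\exp\bigl(C\|\partial_x\varepsilon(u^{(k)})\|_{L^1(0,T;L^\infty)}+\text{Moser remainders}\bigr),
\end{equation*}
the Moser remainders being controlled once $\partial_x u^{(k)}\in L^2(0,T;L^\infty)$. For $s>2$ this is immediate from Sobolev embedding, but down to $s>11/6$ it must be extracted from Corollary~\ref{cor:StrichartzEstimatesL2LinfCoefficients}. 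Applied to $u^{(k+1)}$ with coefficient $\varepsilon(u^{(k)})$, after using Moser composition to bound $\|\partial_x\varepsilon(u^{(k)})\|_{L^2 L^\infty}$ and $\|\varepsilon(u^{(k)})\|_{\mathcal{X}^{\tilde s}}$ by the inductive control of $u^{(k)}\in L^\infty H^s$, that corollary yields
\begin{equation*}
\|\langle D'\rangle^{-\alpha}u^{(k+1)}\|_{L^p(0,T;L^q)}\lesssim_T\|u_0\|_{H^s},\quad \alpha>\rho+\tfrac{\sigma}{p},\ \sigma=\tfrac{2-\tilde s}{2+\tilde s},
\end{equation*}
for any Strichartz pair $(\rho,p,q,2)$.

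The threshold $s>11/6$ then emerges from optimizing $\tilde s\le s-1$ against the choice of Strichartz pair; taking a sharp pair near $(\rho,p,q,2)=(\tfrac34,4,\infty,2)$ and interpolating the above bound with the $L^\infty H^s$-estimate just suffices to recover $\partial_x u^{(k+1)}\in L^2 L^\infty$ with a constant depending only on the inductive hypothesis. The required algebraic inequality is of the same type as the one appearing in \cite[\S9]{Tataru2002} for the 2d scalar quasilinear wave equation, and its solution is exactly $s>11/6$. This closes the bootstrap and produces uniform bounds $\|u^{(k)}\|_{L^\infty(0,T;H^s)}+\|\partial_x u^{(k)}\|_{L^2(0,T;L^\infty)}\lesssim \|u_0\|_{H^s}$ on a common interval $[0,T]$.

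Passage to the limit and uniqueness follow the standard template. Running analogous estimates on the differences $v^{(k)}:=u^{(k+1)}-u^{(k)}$, which solve $P(x,D;u^{(k)})v^{(k)}=F(v^{(k-1)},u^{(k)},u^{(k-1)})$ with $F$ linear in $v^{(k-1)}$ and its derivatives and smooth in the remaining arguments, the linear $L^\infty L^2$-energy bound \eqref{eq:energy0} together with the Strichartz control from the a priori step yields a contraction after shrinking $T$ if necessary. The limit $u$ lies in $C([0,T];H^{s'})\cap L^\infty(0,T;H^s)$ for every $s'<s$, and continuity at the full $H^s$-level as well as continuous dependence on data follow by a Bona--Smith regularization, once again using only the estimates already established. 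The main obstacle, as in \cite{Tataru2002}, is the sharp Moser composition placing $\varepsilon(u)$ into $\mathcal{X}^{\tilde s}$ at the optimal $\tilde s=s-1$: this estimate together with the derivative loss in Corollary~\ref{cor:StrichartzEstimatesL2LinfCoefficients} dictates the specific threshold $s>11/6$.
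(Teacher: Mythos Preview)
Your overall strategy---energy estimates, the Strichartz input from Corollary~\ref{cor:StrichartzEstimatesL2LinfCoefficients}, a bootstrap closing at $s>11/6$, and then continuous dependence---is the paper's approach. Two points need correction.

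First, you write that $\|\partial_x\varepsilon(u^{(k)})\|_{L^2L^\infty}$ and $\|\varepsilon(u^{(k)})\|_{\mathcal{X}^{\tilde s}}$ are controlled ``by the inductive control of $u^{(k)}\in L^\infty H^s$''. For $s<2$ this is false: $H^s(\R^2)$ does not embed into $W^{1,\infty}$, so no amount of $L^\infty H^s$ alone bounds $\partial_x u^{(k)}$ in $L^2L^\infty$. The inductive hypothesis must carry the Strichartz quantity $\|\nabla_{x'}u^{(k)}\|_{L^4L^\infty}$ (or $L^2L^\infty$) from the outset; the paper does exactly this via a bootstrap on $\|\nabla_{x'}u\|_{L^4(0,T;L^\infty)}\le K$, which then yields $\|\partial_x\varepsilon(u)\|_{L^2L^\infty}\lesssim_A T^{1/4}K$ and $\|\partial_x\varepsilon(u)\|_{L^1L^\infty}\lesssim_A T^{3/4}K$, both $\le 1$ for small $T$.

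Second, the threshold does not come from ``optimizing $\tilde s\le s-1$''. The paper simply takes $\tilde s=1$ in Corollary~\ref{cor:StrichartzEstimatesL2LinfCoefficients}, since $\|\varepsilon(u)\|_{\mathcal{X}^1}$ is exactly what the bootstrap quantity $\|\partial_x u\|_{L^1L^\infty}$ controls. This gives $\sigma=1/3$, and with the pair $(\rho,p,q)=(3/4,4,\infty)$ one needs $s-1>\rho+\sigma/p=3/4+1/12$, i.e., $s>11/6$. There is no further optimization.

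Minor differences: the paper works with a priori estimates on smooth solutions (obtained by parabolic regularization) rather than a Picard iteration with frozen coefficients, and it uses frequency envelopes in the style of Ifrim--Tataru rather than Bona--Smith for continuous dependence. Both pairs of alternatives are standard and essentially interchangeable here.
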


Finally, we show that the derivative loss for Strichartz estimates is sharp for permittivity coefficients 
in $C^s$ for $1 \leq s \leq 2$. For this purpose, we elaborate on the connection with wave equations with 
rough coefficients described in \eqref{eq:DerivedWaveEquation} and use the time-independent counterexamples of 
Smith and Tataru \cite{SmithTataru2002}. Hence, although the improvement in 
Theorem \ref{thm:LocalWellposednessQuasilinearMaxwell} seems little over the energy method, 
it appears to be the limit of proving well-posedness in $H^s$ with Strichartz estimates for general 
coefficients $\partial_x \varepsilon \in L^1 L^\infty$. It could still be possible to make further improvements
by the arguments of Smith and Tataru \cite{SmithTataru2002}, see also Klainerman--Rodnianski \cite{KlainermanRodnianski2005}, in the context of quasilinear wave equations. In these works was exploited that the metrical tensor solves a quasilinear wave equation itself.
We note that on $\R^3$ and for isotropic material laws as above, our methods should give an improvement
of the regularity level in the local wellposedness theory by $\frac13$ from $s>\frac52$ to $s>\frac{13}6$,
in accordance with \cite{Tataru2002}.

\medskip

\emph{Outline of the paper.} In Section \ref{section:PDOFBITransform} we recall properties of pseudo-differential operators with rough symbols and of the FBI transform. In Section \ref{section:ReductionHalfWaveEstimates} we first localize the functions in space and frequency and then carry out the conjugation procedure, reducing Theorems \ref{thm:StrichartzEstimatesMaxwell2d} and \ref{thm:StrichartzEstimatesL1LinfCoefficients} to dyadic estimates for the half-wave equation. In Section \ref{section:ProofHalfWaveEstimates} we prove these crucial dyadic estimates stated in Proposition \ref{prop:HalfWaveEstimates}, following the arguments in \cite{Tataru2001,Tataru2002}. In Section \ref{section:Consequences} we treat weaker Strichartz estimates, assuming less regularity of the coefficients, 
as formulated in Theorem \ref{thm:StrichartzEstimatesMaxwell2dCs}, Theorem \ref{thm:MaxwellStrichartzEstimatesBesovRegularity}, and Corollary \ref{cor:StrichartzEstimatesL2LinfCoefficients}. 
Here we also sketch the proof of  Theorem~\ref{thm:MaxwellInhomogeneousStrichartz}. In Section \ref{section:QuasilinearMaxwell} we improve the local well-posedness for quasilinear Maxwell equations as stated in Theorem \ref{thm:LocalWellposednessQuasilinearMaxwell}. In Section \ref{section:Sharpness} we elaborate on the link to wave equations and show sharpness of the derivative loss for permittivity coefficients in $C^s$ for $1 \leq s \leq 2$.



\section{Pseudo-differential operators with rough symbols and properties of the FBI transform}
\label{section:PDOFBITransform}
This section is devoted to preliminaries on the encountered pseudo-\-dif\-ferential operators and the FBI transform. As in Tataru's works \cite{Tataru2000,Tataru2001,Tataru2002}, we make use of the latter to find suitable conjugates of pseudo-differential operators in phase space. In these references, the key application was to use the conjugate of the rough wave operator as weight in phase space to prove Strichartz estimates. In the present paper, rough symbols additionally come up when conjugating Maxwell equations to a diagonal system of scalar half-wave equations. Thus, we have to analyze the $L^2$-boundedness and compositions of rough symbols. Taylor's monograph \cite{Taylor1991} contains many results for symbols which are not smooth in the spatial variables. Here we state the results in the form needed in the present context, and we shall revisit some of the arguments as these will be used in later sections.

\subsection{The FBI transform}
 We first recall basic facts about the FBI transform (cf.\ \cite{Delort1992,Tataru2000}). For $\lambda \in 2^{\mathbb{Z}}$, the FBI transform of an integrable function $f: \R^m \rightarrow \C$ is defined by
\begin{equation*}
\begin{split}
T_\lambda f(z) &= C_m \lambda^{\frac{3m}{4}} \int_{\R^m} e^{-\frac{\lambda}{2}(z-y)^2} f(y) dy, \quad z = x-i\xi \in T^* \R^m \equiv \R^{2m}, \\ 
 C_m &= 2^{-\frac{m}{2}} \pi^{- \frac{3m}{4}}.
\end{split}
\end{equation*}
We have the isometric mapping property $T_\lambda: L^2(\R^m) \rightarrow L^2_{\Phi}(T^* \R^m)$, where $\Phi(z) = e^{- \lambda \xi^2}$. It is natural to write $z=x-i\xi$ since $T_\lambda f$ is in fact holomorphic.
The connection with the Fourier transform is emphasized by writing
\begin{equation*}
T_\lambda f(z) = C_m \lambda^{\frac{3m}{4}} e^{\frac{\lambda}{2} \xi^2} \int_{\R^m} e^{- \frac{\lambda}{2}(x-y)^2 } e^{i \lambda \xi.(x-y)} f(y) dy.
\end{equation*}
An inversion formula for the FBI transform is given by the adjoint in $L^2_\Phi$:
\begin{equation*}
T_\lambda^* F(y) = C_m \lambda^{\frac{3m}{4}} \int_{\R^{2m}} e^{- \frac{\lambda}{2}(\overline{z}-y)^2} \Phi(z) F(z) dx d\xi.
\end{equation*}
We recall the following identities for conjugating symbols with the FBI transform.
In the following we consider symbols $a(x,\xi) \in C^s_x C^\infty_c$ compactly supported in $\xi$. More specifically, we shall assume
\begin{equation*}
a(x,\xi) = 0 \text{ for } \xi \notin B(0,2).
\end{equation*}
Let $a_\lambda(x,\xi) = a(x,\xi / \lambda)$ denote the scaled symbol supported at frequencies $\lesssim \lambda$, and 
$A_\lambda=A_\lambda(x,D)$ be the corresponding pseudo-differential operator. 
 
As in \cite{Tataru2000,Tataru2001}, the idea is to find an `approximate conjugate' $\tilde{A}_\lambda$ of $A_\lambda$ such that
\begin{equation*}
T_\lambda A_\lambda(y,D) \approx \tilde{A}_\lambda T_\lambda.
\end{equation*}
We record the basic identities
\begin{align*}
T_\lambda (yf)(z) &= (x + \frac{1}{- i\lambda}(\partial_\xi - \lambda \xi)) T_\lambda f,\\
T_\lambda (\frac{D}{\lambda} f)(z) &= (\xi + \frac{1}{\lambda}(\frac{1}{i} \partial_x - \lambda \xi)) T_\lambda f,
\end{align*}
yielding the formal asymptotics
\begin{equation*}
T_\lambda A_\lambda(x,D) \approx \sum_{\alpha, \beta} (\partial_\xi - \lambda \xi)^\alpha \frac{\partial_x^\alpha \partial_\xi^\beta a(x,\xi)}{|\alpha| ! |\beta| ! (-i\lambda)^{|\alpha|} \lambda^{|\beta|}} ( \frac{1}{i} \partial_x - \lambda \xi)^\beta T_\lambda.
\end{equation*}
We recall error bounds for the truncated approximations
\begin{equation*}
\tilde{a}^s_\lambda = \sum_{|\alpha| + |\beta| < s} (\partial_\xi - \lambda \xi)^\alpha \frac{\partial_x^\alpha \partial_\xi^\beta a(x,\xi)}{|\alpha| ! |\beta| ! (-i\lambda)^{|\alpha|} \lambda^{|\beta|}} ( \frac{1}{i} \partial_x - \lambda \xi)^\beta.
\end{equation*}
For $s \leq 1$, we have
\begin{equation*}
\tilde{a}^s_\lambda = a,
\end{equation*}
and for $1 < s \leq 2$,
\begin{equation}\label{eq:a^2}
\tilde{a}^s_\lambda = a + \frac{1}{-i \lambda} a_x(\partial_\xi - \lambda \xi) + \frac{1}{\lambda} a_\xi (\frac{1}{i} \partial_x - \lambda \xi)
= a+  \frac{2}{\lambda}(\overline{\partial} a)(\partial -i\lambda \xi),
\end{equation}
where $\partial=\frac12(\partial_x +i\partial_\xi)$ and $\overline{\partial}=\frac12(\partial_x -i\partial_\xi)$.
We will not need higher approximations because for coefficients in $C^2$ the Strichartz estimates for the Euclidean (half-)wave equation hold true, which are known to be optimal (cf.\ \cite{KeelTao1998}).
To prove Theorem \ref{thm:StrichartzEstimatesMaxwell2d}, it will be enough to use  the first-order approximation from the previous display.

Consider the remainder
\begin{equation}
\label{eq:Remainder}
R^s_{\lambda,a} = T_\lambda A_\lambda - \tilde{a}^s_\lambda T_\lambda.
\end{equation}
In \cite{Tataru2000,Tataru2001} the following approximation result was proved.
\begin{theorem}[{\cite[Theorem~5,~p.~393]{Tataru2001}}]
\label{thm:ApproximationTheorem}
Suppose that $a \in C^s_x C^\infty_c$. Then,
\begin{align*}
\Vert R^s_{\lambda,a} \Vert_{L^2 \rightarrow L^2_{\Phi}} &\lesssim \lambda^{-s/2}, \\
\Vert (\partial_\xi - \lambda) R^s_{\lambda,a} \Vert_{L^2 \rightarrow L^2_{\Phi}} &\lesssim \lambda^{1/2-s/2}. 
\end{align*}
\end{theorem}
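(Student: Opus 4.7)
The plan is to realise $R^s_{\lambda,a}$ as an oscillatory integral operator on phase space, Taylor-expand the symbol in a suitable fashion, identify the principal part with $\tilde a^s_\lambda T_\lambda$, and bound the remainder by a Schur-type estimate that exploits the Gaussian concentration built into the FBI kernel. The main obstacle is not any single step but the bookkeeping: making sure that after rewriting the FBI composition in phase-space variables, the truncated Taylor expansion is \emph{exactly} the operator $\tilde a^s_\lambda$ given by the formal asymptotics, rather than merely equivalent to it modulo errors of the claimed order.

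\textbf{First,} insert the definitions and rescale $\eta\mapsto\lambda\eta$ to write
\[
T_\lambda A_\lambda f(z) = C_m \lambda^{\frac{3m}{4}+m}\iiint e^{-\frac{\lambda}{2}(z-y)^2+i\lambda\eta\cdot(y-w)}\,a(y,\eta)\,f(w)\,dy\,d\eta\,dw,
\]
so that the $\eta$-support of $a$ is a fixed compact set. \textbf{Second,} Taylor-expand $a(y,\eta)$ around the phase-space base point $(x,\xi)$ up to total order $\lfloor s\rfloor$ in $y$ and to all orders in $\eta$ (using smoothness and compact $\eta$-support). For polynomial factors $(y-x)^\alpha(\eta-\xi)^\beta$, apply the basic commutation identities
\[
T_\lambda(yf)=\Bigl(x+\tfrac{1}{-i\lambda}(\partial_\xi-\lambda\xi)\Bigr)T_\lambda f,\qquad T_\lambda\bigl(\tfrac{D}{\lambda}f\bigr)=\Bigl(\xi+\tfrac{1}{\lambda}(\tfrac{1}{i}\partial_x-\lambda\xi)\Bigr)T_\lambda f,
\]
together with the fact that the two operators $\partial_\xi-\lambda\xi$ and $\tfrac{1}{i}\partial_x-\lambda\xi$ commute on the range of $T_\lambda$. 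Matching the polynomial terms against the expansion defining $\tilde a^s_\lambda$ shows that the principal part of the integral representation coincides with $\tilde a^s_\lambda T_\lambda f(z)$ as in \eqref{eq:a^2}.

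\textbf{Third,} estimate the remainder. The $C^s_x$ hypothesis gives
\[
\Bigl| a(y,\eta)-\sum_{|\alpha|+|\beta|<s}\tfrac{(y-x)^\alpha(\eta-\xi)^\beta}{\alpha!\beta!}\partial_x^\alpha\partial_\xi^\beta a(x,\xi)\Bigr|\lesssim \|a\|_{C^s_xC^\infty_c}\bigl(|y-x|^s+|\eta-\xi|^N\bigr)
\]
for any $N$. Because the Gaussian factor $e^{-\lambda(z-y)^2/2}$ localises $y$ to $x$ at scale $\lambda^{-1/2}$ (after completing the square one also obtains Gaussian localisation of $\eta$ near $\xi$ up to holomorphic shift), each factor of $|y-x|$ or $|\eta-\xi|$ gains a power $\lambda^{-1/2}$ upon integration. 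Thus the $|y-x|^s$ term contributes $\lambda^{-s/2}$, while $\eta$-Taylor errors are integrated by parts to absorb $(\eta-\xi)^N$ into smooth, Gaussian-decaying amplitudes. Plugging this into the oscillatory representation and applying Schur's lemma to the resulting $z\mapsto z'$ kernel (with weight $\Phi(z)=e^{-\lambda\xi^2}$ distributed symmetrically by using holomorphy of $T_\lambda f$) yields the first bound $\|R^s_{\lambda,a}\|_{L^2\to L^2_\Phi}\lesssim \lambda^{-s/2}$.

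\textbf{Finally,} for the second estimate one uses that $\partial_\xi-\lambda\xi$ acts as a normalised creation operator against Gaussians of width $\lambda^{-1/2}$, so each application of it brings a factor $\lambda^{1/2}$. Applying $(\partial_\xi-\lambda\xi)$ on the left under the integral sign is equivalent, after integration by parts, to multiplication by a quantity comparable to $\lambda(y-x)$, which enlarges the $|y-x|^s$ remainder to $|y-x|^{s-1}\lambda$ after one integration by parts, and a repeated Schur estimate produces the loss $\lambda^{1/2-s/2}$. The most delicate point of the whole argument is that the Schur estimate must be carried out against the weighted pairing in $L^2_\Phi$, so the non-positivity of the kernel has to be compensated by exploiting that $T_\lambda f$ is holomorphic in $z$; this is the step where Tataru's use of the $\overline\partial$-calculus in \eqref{eq:a^2} plays its structural role.
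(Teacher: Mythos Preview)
The paper does not give its own proof of this statement: Theorem~\ref{thm:ApproximationTheorem} is quoted verbatim from \cite[Theorem~5]{Tataru2001} and used as a black box. So there is no in-paper argument to compare against; the relevant comparison is with Tataru's original proof.

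Your sketch does follow the line of Tataru's argument: expand the symbol about the phase-space point $(x,\xi)$, identify the polynomial terms with $\tilde a^s_\lambda T_\lambda$ via the commutation identities, and estimate the Taylor remainder using the Gaussian localisation of the FBI kernel at scale $\lambda^{-1/2}$. The heuristic that $(\partial_\xi-\lambda\xi)$ acts as a creation operator costing $\lambda^{1/2}$ is also the right intuition for the second bound. Where your sketch is thin is precisely where Tataru's proof does real work: the ``Schur-type estimate'' you invoke is not a straightforward kernel bound in $L^2_\Phi$, because after integrating out $y$ and $\eta$ one must control an operator from $L^2(\R^m)$ to $L^2_\Phi(\R^{2m})$, and the weight $\Phi$ and the holomorphy of the range interact nontrivially. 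Tataru handles this by writing the remainder explicitly in integral form and estimating it via a $TT^*$-type argument adapted to the Bargmann projector, rather than a bare Schur test. Your final paragraph gestures at this (``non-positivity of the kernel has to be compensated by exploiting that $T_\lambda f$ is holomorphic''), but as written it is an acknowledgment of the difficulty rather than a resolution of it. If you want a self-contained proof, that is the step to flesh out.
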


To prove our main results, we use the following multiplier theorem for $T_\lambda$.
\begin{proposition}
\label{prop:MultiplierProposition}
Let $1 \leq p,q \leq \infty$, $a \in C^s_x C^\infty_c(\R^m \times \R^m)$ with $a(x,\xi) = 0$ for $\xi \notin B(0,2)$, and
\begin{equation*}
\sup_{x \in \R^m}  \sum_{0 \leq |\alpha| \leq m + 1} \Vert D^\alpha_\xi a(x,\cdot) \Vert_{L^1_{\xi}}  \leq C.
\end{equation*}
Then, we find the following estimate to hold:
\begin{equation*}
\Vert T_\lambda^* a(x,\xi) T_\lambda f \Vert_{L_{x_0}^p L_{x^\prime}^q} \lesssim C \Vert f \Vert_{L_{x_0}^p L_{x^\prime}^q}. 
\end{equation*}
\end{proposition}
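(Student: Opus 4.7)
The plan is to reduce the boundedness of the composite $T_\lambda^* a(x,\xi) T_\lambda$ to the boundedness of a convolution operator with an $L^1$-kernel, which trivially preserves any mixed norm $L^p_{x_0}L^q_{x'}$ by Minkowski's inequality.

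First I would write $T_\lambda^* a(x,\xi)T_\lambda f(y')=\int K(y',y)f(y)\,dy$ with
\begin{equation*}
K(y',y)=C_m^2\lambda^{3m/2}\iint e^{-\frac{\lambda}{2}[(x-y')^2+(x+i\xi-y')^2-(x+i\xi-y')^2]}\cdots \,dx\,d\xi,
\end{equation*}
and carry out the same completion of squares that appeared implicitly above, using $(x+i\xi-y')^2+(x-i\xi-y)^2-2\xi^2=(x-y')^2+(x-y)^2+2i\xi(y-y')-2\xi^2$. This collapses the exponent to $-\tfrac{\lambda}{2}[(x-y')^2+(x-y)^2]-i\lambda\xi\cdot(y-y')$, so that the $\xi$-integral produces the partial Fourier transform $\widehat{a}(x,\lambda(y-y'))$ in the $\xi$-variable, and the remaining Gaussian in $x$ integrates explicitly to $(\pi/\lambda)^{m/2}e^{-\lambda(y-y')^2/4}$. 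This gives the clean bound
\begin{equation*}
|K(y',y)|\lesssim \lambda^m A(\lambda(y-y'))\,e^{-\lambda(y-y')^2/4},\qquad A(\eta):=\sup_x|\widehat a(x,\eta)|.
\end{equation*}

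Next, I would convert the hypothesis on $D_\xi^\alpha a$ into decay of $A$. Integration by parts in $\xi$ gives $|\eta^\alpha\widehat a(x,\eta)|\le\|D_\xi^\alpha a(x,\cdot)\|_{L^1_\xi}$ for each multi-index $\alpha$, so summing over $|\alpha|\le m+1$ yields $A(\eta)\lesssim C\min(1,|\eta|^{-(m+1)})$, which is an integrable majorant on $\R^m$. A change of variables $w=(y-y')$, $\eta=\lambda w$ then shows $\phi(w):=\lambda^m A(\lambda w)e^{-\lambda w^2/4}$ satisfies $\|\phi\|_{L^1(\R^m)}\lesssim C$ uniformly in $\lambda$. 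Since $|K(y',y)|\le\phi(y-y')$, the operator is pointwise dominated by convolution with $\phi$, and Minkowski's inequality applied successively in the $x_0$ and $x'$ variables gives the $L^p_{x_0}L^q_{x'}$ bound with constant $\lesssim C$.

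The only delicate point is the $x$-dependence of $\widehat a(x,\eta)$: since the Gaussian weight in $x$ has width $\lambda^{-1/2}$ while the decay in $\eta$ is already extracted, passing to $A(\eta)=\sup_x|\widehat a(x,\eta)|$ is harmless because the $x$-integral is bounded independently of $\eta$, and the hypothesis is already formulated as a supremum over $x$. The main substantive obstacle is not any of these manipulations but rather ensuring that the polynomial decay of $A$ is truly integrable on $\R^m$, which dictates exactly the requirement of $m+1$ derivatives in the hypothesis; with fewer derivatives one would lose the uniform $L^1$-bound on $\phi$, and the mixed-norm argument would break down.
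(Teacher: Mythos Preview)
Your argument is correct, and in fact more direct than the paper's. The paper first treats the tensor case $a(x,\xi)=b(x)c(\xi)$, obtaining the kernel $B_\lambda(y,y')\hat c(\lambda(y-y'))$, and then reduces the general symbol to this case via a Fourier series expansion $a(x,\xi)=\beta(\xi)\sum_k e^{ik\xi}\hat a_k(x)$, summing the resulting estimates using the decay $|\hat a_k(x)|\lesssim (1+|k|)^{-(m+1)}C$. You instead integrate out $x$ directly in the full kernel, bounding $|\widehat a(x,\eta)|$ by $A(\eta)=\sup_x|\widehat a(x,\eta)|$ and recognizing that the result is pointwise dominated by a convolution kernel $\phi(y-y')=\lambda^m A(\lambda(y-y'))e^{-\lambda|y-y'|^2/4}$ with $\|\phi\|_{L^1}\lesssim C$ uniformly in $\lambda$; Young/Minkowski in the mixed norm then finishes. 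Your route is shorter and avoids the Fourier series machinery entirely; the paper's route has the advantage that the same Fourier-series reduction is reused in the proof of the subsequent $L^pL^q$-boundedness lemma for rough pseudodifferential operators (Lemma~\ref{lem:L2BoundednessRoughSymbols}). A small presentational point: your first displayed kernel formula and the identity following it contain sign/typo errors in the exponent (the $-2\xi^2$ terms do not match), though the line ``This collapses the exponent to $-\tfrac{\lambda}{2}[(x-y')^2+(x-y)^2]-i\lambda\xi\cdot(y-y')$'' is the correct outcome and everything downstream is sound.
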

\begin{proof}
We start with the special case $b \in L^\infty_x(\R^m)$, $c \in C^\infty_c(\R^m)$, $a(x,\xi) = b(x) c(\xi)$. It can be treated by a straight-forward kernel estimate. We first compute
\begin{equation}
\label{eq:KernelTensorStructure}
\begin{split}
&T_\lambda^* (b(x) c(\xi)) T_\lambda f(y) \\
&= C_m^2 \lambda^{\frac{3m}{2}} \int_{\R^{2m}} e^{-\frac{\lambda}{2}(\overline{z}-y)^2} e^{-\lambda \xi^2} b(x) c(\xi) \int_{\R^m} e^{- \frac{\lambda}{2}(z-y^\prime)^2} f(y^\prime) dy^\prime dx d\xi \\
&= C_m^2 \lambda^{\frac{3m}{2}} \int_{\R^m} \int_{\R^{2m}} e^{-\frac{\lambda}{2}(x+i\xi-y)^2} e^{- \frac{\lambda}{2}(x-i\xi-y^\prime)^2} e^{- \lambda \xi^2} b(x) c(\xi) dx d\xi f(y^\prime) dy^\prime \\
&= C_m^2 \lambda^{\frac{3m}{2}} \int_{\R^m} \int_{\R^m} e^{- \frac{\lambda}{2}((x-y)^2 + (x-y^\prime)^2)} b(x) dx \int_{\R^m} e^{i\lambda \xi.(y-y^\prime)} c(\xi) d\xi f(y^\prime) dy^\prime \\
&=: C_m^2 \lambda^{\frac{3m}{2}} \int_{\R^m} B_\lambda(y,y^\prime) \hat{c}(\lambda(y-y^\prime)) f(y^\prime) dy^\prime.
\end{split}
\end{equation}
Note that
\begin{equation*}
|B_\lambda(y,y^\prime)| \leq \Vert b \Vert_{L^\infty} \int_{\R^m} e^{-\lambda x^2} dx \lesssim \lambda^{- \frac{m}{2}} \Vert b \Vert_{L^\infty},
\end{equation*}
and further,
\begin{equation*}
|\hat{c}(\lambda(y-y^\prime))| \leq C_N (1+\lambda |y-y^\prime|)^{-N} \quad \text{ for any } N \in \mathbb{N}.
\end{equation*}
Write $y=(y_1,y_r) \in \R \times \R^{m-1}$ and likewise for $y^\prime$. Two successive applications of Young's inequality imply
\begin{equation*}
\begin{split}
&\quad \lambda^m \Vert \int (1+\lambda|y-y^\prime|)^{-N} f(y^\prime) dy^\prime \Vert_{L_{x_0}^p L_{x^\prime}^q} \\
&\lesssim \lambda^m \Vert \int (1+\lambda|y_1-y_1^\prime|)^{-\frac{N}{2}} (1+\lambda|y_r - y_r^\prime|)^{-\frac{N}{2}} f(y_1^\prime,y_r^\prime) dy^\prime \Vert_{L_{x_0}^p L_{x^\prime}^q} \\
&\lesssim \lambda \int (1+\lambda|y_1 - y_1^\prime|)^{- \frac{N}{2}} \Vert f(y_1^\prime,\cdot)  \Vert_{L^q_{y_r^\prime}} dy_1^{\prime} \lesssim \Vert f \Vert_{L_{x_0}^p L_{x^\prime}^q}
\end{split}
\end{equation*}
for large $N$. The two estimates for $B_\lambda$ and $\hat{c}$ yield
\begin{equation*}
\Vert T_\lambda^* (b(x)c(\xi)) T_\lambda f \Vert_{L_{x_0}^p L_{x^\prime}^q}
    \lesssim \Vert b \Vert_{L^\infty} \Vert c \Vert_{C^N} \Vert f \Vert_{L_{x_0}^p L_{x^\prime}^q}.
\end{equation*}

We turn to the case of general $a(x,\xi)$ according to the assumptions. Let $\beta \in C^\infty_c(\R^m)$ with $\beta \equiv 1$ for $\{|\xi| \leq 2\}$ and $\text{supp}(\beta) \subseteq B(0,3)$. The main reduction is an expansion into the rapidly converging Fourier series
\begin{equation*}
a(x,\xi) = \beta(\xi) \sum_{k \in \Z^m} e^{ik \xi} \hat{a}_k(x), \quad \hat{a}_k(x) = \int_{[-\pi,\pi]^n} e^{-ik\xi} a(x,\xi) d\xi,
  \quad \beta_k(\xi)=e^{ik \xi} \beta(\xi)
\end{equation*}
 (cf.\ \cite{Taylor1991}). Hence,
\begin{equation*}
\Vert T_\lambda^* a(x,\xi) T_\lambda f \Vert_{L^p_{x_0} L^q_{x'}} \lesssim \sum_{k \in \Z^m} \Vert T_\lambda^* (\hat{a}_k(x) \beta_k(\xi)) T_\lambda f \Vert_{L^p_{x_0} L^q_{x'}}.
\end{equation*}
Integration by parts yields
\begin{equation*}
|\hat{a}_k(x)| \lesssim_{\ell} (1+|k|)^{-\ell} \sum_{0 \leq |\alpha| \leq \ell} \Vert D_\xi^\alpha a(x,\cdot) \Vert_{L^1_\xi}.
\end{equation*}
Take $\ell = m + 1$ so that $\sum_{k \in \mathbb{Z}^m} (1+|k|)^{-\ell} \lesssim_m 1$. In this case, in \eqref{eq:KernelTensorStructure} we estimate  the kernel $B_\lambda(y,y^\prime)$ by
\begin{equation*}
|B_\lambda(y,y^\prime)| \lesssim_m (1+|k|)^{-\ell} \lambda^{-\frac{m}{2}} \sup_x  \sum_{0 \leq |\alpha| \leq \ell} \Vert D_\xi^\alpha a(x,\cdot) \Vert_{L_\xi^1} \lesssim_m (1+|k|)^{-\ell} C \lambda^{-\frac{m}{2}},
\end{equation*}
and  find $\hat{c}(\lambda(y-y^\prime)) = \hat{\beta}(k+\lambda(y-y^\prime))$. Taking absolute values, we infer
\begin{equation*}
T_\lambda^* ( \hat{a}_k(x) \beta_k(\xi) T_\lambda f)(y) \lesssim C_a (1+|k|)^{-\ell} \lambda^m \int | \hat{\beta}(k+\lambda(y-y^\prime)) f(y^\prime)| dy^\prime.
\end{equation*}
Since 
\begin{equation*}
\Vert \int |\hat{\beta}(k+\lambda(\cdot-y^\prime))| |f(y^\prime)| dy^\prime \Vert_{L_{x_0}^p L_{x^\prime}^q} = \Vert \int | \hat{\beta}(\lambda(\cdot-y^\prime))| |f(y^\prime)| dy^\prime \Vert_{L_{x_0}^p L_{x^\prime}^q},
\end{equation*}
 we can finish the proof  by
\begin{equation*}
\sum_{k \in \mathbb{Z}^m} \Vert T_\lambda^* (\hat{a}_k(x) e^{ik.\xi} \beta(\xi)) T_\lambda f \Vert_{L^p L^q} \lesssim_m \sum_{k \in \Z^m} (1+|k|)^{-\ell} C \Vert f \Vert_{L^p L^q} \lesssim_m C \Vert f \Vert_{L^p L^q}. \qedhere
\end{equation*}
\end{proof}

\subsection{Properties of rough symbols}

In this subsection compositions of pseudo-differential operators are recalled and their $L^2$-boundedness is quantified. The theory for smooth symbols is vast (cf.\ \cite{Hoermander2007,Sogge2017,Taylor1991}). For instance, the $L^p$-boundedness of symbols $a \in S_{1,\delta}^0$, $0 \leq \delta < 1$, is well-known, see \cite[Section~0.11]{Taylor1991}. Here we give a proof which quantifies in particular the $L^2$-boundedness of symbols $a \in C^s_x C^\infty_c$, see \cite[Chapter~2]{Taylor1991}. The argument is detailed as it becomes important in later sections.
\begin{lemma}
\label{lem:L2BoundednessRoughSymbols}
Let $1 \leq p,q \leq \infty$, and $a \in C^s_x C^\infty_c(\R^m \times \R^m)$ with $a(x,\xi)=0$ for $\xi \notin B(0,2)$. Suppose that
\begin{equation*}
\sup_{x \in \R^m} \sum_{0 \leq |\alpha| \leq m+1} \| D_\xi^{\alpha} a(x,\cdot) \|_{L_\xi^1}  \leq C.
\end{equation*}
Then, we find the following estimate to hold:
\begin{equation*}
\| a(x,D) f \|_{L^p L^q} \lesssim C \| f \|_{L^p L^q}.
\end{equation*}
\end{lemma}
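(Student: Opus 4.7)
The plan is to mimic the Fourier-series reduction already used for the conjugated symbol in the proof of Proposition~\ref{prop:MultiplierProposition}, but applied directly to $a(x,D)$ rather than to $T_\lambda^\ast a\,T_\lambda$. First I would pick a cutoff $\beta\in C^\infty_c(\R^m)$ with $\beta\equiv 1$ on $B(0,2)$ and $\operatorname{supp}\beta\subset[-\pi,\pi]^m$, so that $a(x,\xi)=\beta(\xi)a(x,\xi)$. Expanding $a(x,\cdot)$ periodically on $[-\pi,\pi]^m$ gives
\begin{equation*}
a(x,\xi)=\beta(\xi)\sum_{k\in\Z^m}e^{ik\xi}\,\hat a_k(x),\qquad \hat a_k(x)=(2\pi)^{-m}\int_{[-\pi,\pi]^m}e^{-ik\xi}a(x,\xi)\,d\xi.
\end{equation*}
Integration by parts in $\xi$, exactly as in the proof of Proposition~\ref{prop:MultiplierProposition}, yields
\begin{equation*}
\|\hat a_k\|_{L^\infty_x}\lesssim_{\ell}(1+|k|)^{-\ell}\sup_{x}\sum_{|\alpha|\le\ell}\|D^\alpha_\xi a(x,\cdot)\|_{L^1_\xi}\lesssim C(1+|k|)^{-\ell},
\end{equation*}
and choosing $\ell=m+1$ makes the resulting sequence summable over $k\in\Z^m$.

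Second, the operator decomposes as $a(x,D)f=\sum_k \hat a_k(x)\,M_k f$, where $M_k$ is the Fourier multiplier with symbol $e^{ik\xi}\beta(\xi)$. Its convolution kernel is $K_k(y)=\check\beta(y+k)$, a translate of a fixed Schwartz function, so $\|K_k\|_{L^1}=\|\check\beta\|_{L^1}$ is independent of $k$. Since convolution with an $L^1$-kernel is bounded on the mixed-norm space $L^p_{x_0}L^q_{x'}$ (two successive applications of Minkowski's inequality for integrals, one in $x'\in\R^{m-1}$ and one in $x_0\in\R$), one obtains
\begin{equation*}
\|M_k f\|_{L^p L^q}\le\|\check\beta\|_{L^1}\|f\|_{L^p L^q}
\end{equation*}
uniformly in $k$. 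Multiplication by the bounded function $\hat a_k(x)$ is trivially bounded on $L^p L^q$ with constant $\|\hat a_k\|_{L^\infty_x}$.

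Combining these two facts and summing over $k$ gives
\begin{equation*}
\|a(x,D)f\|_{L^p L^q}\le\sum_{k\in\Z^m}\|\hat a_k\|_{L^\infty_x}\,\|M_k f\|_{L^p L^q}\lesssim C\Bigl(\sum_{k\in\Z^m}(1+|k|)^{-(m+1)}\Bigr)\|f\|_{L^p L^q}\lesssim C\|f\|_{L^p L^q},
\end{equation*}
which is the desired estimate. I do not anticipate any serious obstacle; the only minor point worth verifying carefully is the uniform-in-$k$ bound for $M_k$ on the anisotropic mixed-norm space, but this is a routine iterated application of Minkowski's inequality exploiting the product structure of the $L^p_{x_0}L^q_{x'}$-norm and the Schwartz decay of $\check\beta$. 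The argument intentionally parallels the kernel estimate in Proposition~\ref{prop:MultiplierProposition}, with the role of the FBI kernel $B_\lambda$ now played by the pure convolution kernel $K_k$.
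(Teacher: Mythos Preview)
Your proof is correct and follows essentially the same approach as the paper: Fourier series expansion of $a(x,\cdot)$ against a cutoff $\beta$, integration by parts for the decay $\|\hat a_k\|_{L^\infty}\lesssim C(1+|k|)^{-(m+1)}$, and the observation that each term acts as multiplication by $\hat a_k(x)$ composed with the translated convolution $(\beta(D)f)(\cdot+k)$, bounded uniformly in $k$ by Young's inequality. The only cosmetic difference is that the paper first isolates the product case $a(x,\xi)=b(x)c(\xi)$ before reducing the general symbol to it via the Fourier series, whereas you go straight to the expansion.
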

\begin{proof}
We first consider the special case of separated variables $a(x,\xi) = b(x) c(\xi)$. H\"older's and Young's inequality give
\begin{equation*}
\begin{split}
\| a(x,D) f \|_{L^p L^q} &= \| b(x) c(D) f \|_{L^p L^q} \leq \| b \|_{L^\infty(\R^m)}  \| c(D) f \|_{L^p L^q} \\
 &\leq \| b \|_{L^\infty(\R^m)} \| \check{c} \|_{L^1(\R^m)} \| f \|_{L^p L^q(\R^m)}.
\end{split}
\end{equation*}
Integrating by parts to estimate $\| \check{c} \|_{L^1}$ yields $\| b \|_{L^\infty(\R^m)} \| \check{c} \|_{L^1} \lesssim C$.

In the general case, we write
\begin{equation*}
\begin{split}
a(x,D) f &= (2 \pi)^{-m} \int_{\R^m} e^{i x \xi} a(x,\xi) \hat{f}(\xi) d\xi 
= (2 \pi)^{-m} \int_{\R^m} e^{ix \xi} a(x,\xi) \beta(\xi) \hat{f}(\xi) d\xi
\end{split}
\end{equation*}
for $\beta \in C^\infty_c$ with support in $[-\pi,\pi]^m$ and $\beta(\xi) \equiv 1$ on $B(0,2)$. We expand $a(x,\xi) \beta(\xi)$ into the  Fourier series in $\xi$
\begin{equation*}
a(x,\xi) \beta(\xi) = \beta(\xi) \sum_{k \in \Z^m} a_k(x) e^{i k \xi},
\end{equation*}
with $a_k(x) = \int_{[-\pi,\pi]^m} e^{-ik\xi} a(x,\xi)  d\xi$.

We shall estimate every single term $a_k(x) e^{i k \xi} \beta(\xi)$ via the above argument and then sum over $k$ using decay from the regularity in $\xi$. We have
\begin{equation*}
\int a_k(x) e^{ik\xi} \beta(\xi) e^{i x \xi} \hat{f}(\xi) d\xi = a_k(x) (\beta(D) f)(x+k).
\end{equation*}
We find $|a_k(x)| \leq \|a(x, \cdot)\|_{L_\xi^1}$, and $\| \beta(D) f(\cdot + k) \|_{L^pL^q} \lesssim \| f \|_{L^pL^q}$ 
by Young's inequality and translation invariance. This estimate does not decay in $k$ sufficiently.
For decay in $k$, we integrate by parts in $\xi$ obtaining
\begin{equation*}
|a_k(x)| \lesssim_\ell (1+|k|)^{-\ell} \sum_{0 \leq |\alpha| \leq \ell} \| D^\alpha_\xi a(x,\cdot) \|_{L_\xi^1}.
\end{equation*}
For $\ell =m+1$ this is summable in $k \in \Z^m$, and we estimate
\begin{align*}
\| a(x,D) f \|_{L^p L^q} &\leq \sum_{k \in \Z^m} \| a_k \|_{L^\infty(\R^m)} \| f \|_{L^p L^q} \\
&\lesssim \sum_{k \in \Z^m} (1+|k|)^{-(m+1)} \sup_{x \in \R^m} \big( \sum_{0 \leq |\alpha| \leq m+1} \| D_\xi^\alpha a(x,\cdot) \|_{L_\xi^1} \big) \| f \|_{L^p L^q} \\
&\lesssim C \| f \|_{L^p L^q}. \qedhere
\end{align*}
\end{proof}

We turn to compound symbols. As we shall see in Section \ref{section:ReductionHalfWaveEstimates}, it suffices to prove dyadic estimates\footnote{Here we suppose that we are in the charge-free case for simplicity of exposition.}
\begin{equation*}
\lambda^{-\rho} \Vert S_\lambda u \Vert_{L^p L^q} \lesssim \Vert S_\lambda u \Vert_{L^2} + \Vert P^\lambda S_\lambda u \Vert_{L^2},
\end{equation*}
where $S_\lambda$ localizes to frequencies of size $\lambda \in 2^{\mathbb{N}_0}$ and $P^\lambda$ denotes the operator
\begin{equation*}
P^\lambda(x,D) = 
\begin{pmatrix}
\partial_t & 0 & - \partial_2 \\
0 & \partial_t & \partial_1 \\
-\partial_2(\varepsilon_{11}^{\lambda^{\frac{1}{2}}} \cdot) + \partial_1 (\varepsilon_{21}^{\lambda^{\frac{1}{2}}} \cdot) & \partial_1 (\varepsilon_{22}^{\lambda^{\frac{1}{2}}} \cdot) - \partial_2(\varepsilon_{12}^{\lambda^{\frac{1}{2}}} \cdot) & \partial_t
\end{pmatrix}
.
\end{equation*}
For the components of the permittivity, $\varepsilon_{ij}^{\lambda^{\frac{1}{2}}}$ means that the frequencies are truncated to size at most $\lambda^{\frac{1}{2}}$. Recall the symbol classes for $m \in \R$, $0 \leq \delta < \rho \leq 1$:
\begin{equation*}
S^m_{\rho,\delta} = \{ a \in C^\infty(\R^m \times \R^m) \, : \, |\partial_x^\alpha \partial_\xi^\beta a(x,\xi)| \lesssim \langle \xi \rangle^{m - \rho |\beta| + \delta |\alpha|} \}.
\end{equation*}

Hence, the pseudo-differential operators we encounter are smooth in $x$ and the considered symbols are in 
$S^m_{1,\frac{1}{2}}$. Boundedness $P(x,D): H^k(\R^d) \to H^{k-m}(\R^d)$ is proved in \cite[Prop.~0.5E]{Taylor1991}. 
Using  Lemma \ref{lem:L2BoundednessRoughSymbols}, we will show that the estimates are  independent of the dyadic frequency 
$\lambda$ when considering Littlewood--Paley pieces.

We recall compositions of pseudo-differential operators. Below we denote
\begin{equation*}
\partial_x^\alpha = \partial_{x_1}^{\alpha_1} \ldots \partial_{x_m}^{\alpha_m} \quad  \text{ and } 
 \quad D_\xi^\alpha = \partial_\xi^\alpha / (i^{|\alpha|}) \qquad \text{for \ } \alpha \in \mathbb{N}_0^m.
\end{equation*}

\begin{proposition}[{\cite[Proposition~0.3C]{Taylor1991}}]
\label{prop:KohnNirenberg}
Given $P(x,\xi) \in OPS^{m_1}_{\rho_1,\delta_1}$, $Q(x,\xi) \in OPS^{m_2}_{\rho_2,\delta_2}$, suppose that
\begin{equation*}
0 \leq \delta_2 < \rho \leq 1 \text{ with } \rho = \min(\rho_1,\rho_2).
\end{equation*}
Then, $(P \circ Q)(x,D) \in OPS^{m_1+m_2}_{\rho,\delta}$ with $\delta = \max(\delta_1,\delta_2)$, and $P(x,D) \circ Q(x,D)$ satisfies the asymptotic expansion
\begin{equation*}
(P \circ Q)(x,D) = \sum_{\alpha} \frac{1}{\alpha !} (D_\xi^\alpha P \; \partial_x^{\alpha} Q) (x,D) + R,
\end{equation*}
where $R: \mathcal{S}^\prime \to C^\infty$ is a smoothing operator.
\end{proposition}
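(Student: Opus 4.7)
The approach is a standard Kohn--Nirenberg-type argument via oscillatory integrals. First I would represent the composition, for $u\in\mathcal{S}$, as $(P(x,D)Q(x,D))u(x) = (2\pi)^{-m}\int e^{ix\cdot\xi}(P\#Q)(x,\xi)\hat u(\xi)\,d\xi$ with compound symbol
\[
(P\#Q)(x,\xi) = (2\pi)^{-m}\iint e^{-iy\cdot\eta}\,P(x,\xi+\eta)\,Q(x+y,\xi)\,dy\,d\eta,
\]
understood as an oscillatory integral. This identity is verified by inserting the Fourier representation of $Q(x,D)u$ into $P(x,D)$, interchanging the order of integration after a smooth frequency truncation, and then substituting $\eta\mapsto\xi+\eta$, $y\mapsto x+y$.

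Second, I would Taylor expand $P(x,\xi+\eta)$ in $\eta$ about $\eta=0$ to order $N$. For each monomial $\eta^\alpha$ with $|\alpha|<N$, I rewrite $\eta^\alpha e^{-iy\cdot\eta}=(-D_y)^\alpha e^{-iy\cdot\eta}$, integrate by parts so the derivative lands on $Q(x+y,\xi)$, and use $\int e^{-iy\cdot\eta}\,d\eta=(2\pi)^m\delta(y)$ to collapse the $y$-integral. Evaluating at $y=0$ yields exactly $\frac{1}{\alpha!}D_\xi^\alpha P(x,\xi)\,\partial_x^\alpha Q(x,\xi)$, recovering the stated asymptotic series.

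The core technical work is to show that the integral remainder symbol $R_N$ lies in $S^{m_1+m_2-(\rho-\delta)N}_{\rho,\delta}$ with $\rho=\min(\rho_1,\rho_2)$ and $\delta=\max(\delta_1,\delta_2)$. I would split the $\eta$-integration at $|\eta|\sim\langle\xi\rangle^\rho$: in the inner region Peetre's inequality gives $\langle\xi+t\eta\rangle\sim\langle\xi\rangle$, so $\partial_\xi^{\alpha}P$ contributes $\langle\xi\rangle^{m_1-\rho N}$ while $Q$ supplies $\langle\xi\rangle^{m_2}$; in the outer region one integrates by parts in $y$ arbitrarily often via the identity $\langle\eta\rangle^{-2K}(1-\Delta_y)^K e^{-iy\cdot\eta}=e^{-iy\cdot\eta}$, each step costing a factor $\langle\xi\rangle^{\delta_2}$ from a derivative of $Q$ but gaining $\langle\eta\rangle^{-2}\lesssim\langle\xi\rangle^{-2\rho}$ on the cutoff support. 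The hypothesis $\delta_2<\rho$ is precisely what makes the net gain positive, which drives the asymptotic expansion; the same manipulations handle mixed $x,\xi$-derivatives of $R_N$ via the Leibniz rule. The main obstacle is the uniform symbol bookkeeping once all derivatives are distributed between the two factors and the cutoff, and letting $N\to\infty$ then exhibits $R$ as a smoothing operator $\mathcal{S}'\to C^\infty$.
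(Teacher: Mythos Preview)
Your proof plan is correct and follows the standard textbook route to the Kohn--Nirenberg composition theorem. The paper itself does not prove the proposition (it is cited from Taylor), but it then revisits the argument with a different organization in order to extract an \emph{explicit} form of the truncated remainder $R_N$ suited to dyadic $L^2$-bounds. Where you Taylor-expand $P(x,\xi+\eta)$ in $\eta$ and split the $\eta$-integral at $|\eta|\sim\langle\xi\rangle^\rho$, the paper rescales to a fixed dyadic frequency $\lambda$, writes the compound symbol as a stationary-phase integral in $(z',\eta')$ with quadratic phase $\tfrac{\lambda}{2}\langle(z',\eta'),A(z',\eta')\rangle$, inserts a cutoff near the stationary point $(z',\eta')=0$, and Taylor-expands the \emph{amplitude} there; the contribution away from the origin is then handled by non-stationary phase. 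Your approach delivers the symbol-class membership $R_N\in S^{m_1+m_2-(\rho-\delta)N}_{\rho,\delta}$ cleanly and in full generality; the paper's stationary-phase formulation is chosen because it yields a concrete integral representation of the remainder (their display \eqref{eq:IntegrationbyParts}) from which one can directly read off the powers of $\lambda$ needed in Lemma~\ref{lem:FrequencyPropagation} and Subsection~\ref{subsection:ErrorBoundsCompoundSymbols}, where the symbols are only $C^1$ in $x$ after frequency truncation and one must track exactly how many $x$-derivatives fall on $Q$.
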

When applying this formal expansion in Section \ref{section:ReductionHalfWaveEstimates}, we can verify with 
Lemma \ref{lem:L2BoundednessRoughSymbols} that the operators coming up in the expansion satisfy acceptable $L^2$-bounds. 
We revisit the proof of Theorem \ref{prop:KohnNirenberg} to find an explicit form of the remainder $R$, after truncating 
the series expansion. We shall derive
\begin{equation}
\label{eq:TruncatedSeries}
(P \circ Q)(x,D) = \sum_{|\alpha| \leq N} \frac{1}{\alpha !} (D_\xi^\alpha P \;\partial_x^{\alpha} Q)(x,D) + R_N(x,D)
\end{equation}
with a remainder $R_N$ for which we can infer $L^2$-bounds decaying in $\lambda$. We remark that the estimates almost follow 
from \cite[Proposition~0.5E]{Taylor1991} and Proposition \ref{prop:KohnNirenberg}. However, the encountered symbols have to be 
appropriately localized in frequency (see the end of the section), and we thus elect to give more details on the expansion and 
the error bounds.

We turn to the details. The distribution kernel of $(P \circ Q)(x,D)$ is given by
\begin{equation*}
I(x,y) = \int e^{i \langle x-y, \xi \rangle} (P \circ Q)(x,\xi) d\xi.
\end{equation*}
Here and below, the oscillatory integrals are understood in the sense of distributions (cf.\ \cite[Section~VII.8]{Hoermander2003}). For this purpose let $\rho \in C^\infty_c$ with $\rho \equiv 1$ in a neighbourhood of $0$, set $\rho_\delta(\xi) = \rho(\delta \xi)$, and read
\begin{equation*}
I(x,y) = \lim_{\delta \to 0} \int \rho_\delta(\xi) e^{i \langle x-y, \xi \rangle} (P \circ Q)(x,\xi) d\xi.
\end{equation*}
Furthermore,
\begin{align*}
(P \circ Q)(x,\xi) &= (2 \pi )^{-m} \iint e^{i [ \langle x-z, \eta \rangle + \langle z-x, \xi \rangle]} P(x,\eta) Q(z,\xi) d\eta dz \\
&= \big( \frac{\lambda}{2 \pi} \big)^m \iint e^{i \lambda \langle x-z, \tilde{\eta} - \tilde{\xi} \rangle } P(x,\lambda \tilde{\eta}) Q(z,\lambda \tilde{\xi}) d\tilde{\eta} dz,
\end{align*}
where $\eta = \lambda \tilde{\eta}$ and $\xi = \lambda \tilde{\xi}$. This integral is regarded as
\begin{align*}
&(P \circ Q)(x,\xi)= \\
& \big( \frac{\lambda}{2 \pi} \big)^m \lim_{\delta \to 0} \iint \rho_\delta(z^\prime, \eta^\prime) e^{i \frac{\lambda}{2} \langle(z^\prime, \eta^\prime), A(z^\prime, \eta^\prime) \rangle } P(x,\lambda(\eta^\prime + \tilde{\xi})) Q(x-z^\prime, \lambda \tilde{\xi}) dz^\prime d\eta^\prime,
\end{align*}
where
\begin{equation*}
A=
\begin{pmatrix}
0 & I_m \\
I_m & 0
\end{pmatrix}
.
\end{equation*}
The phase is stationary at $(z^\prime,\eta^\prime) = 0$. We take a smooth cutoff $\tilde\rho=\rho_{\delta'}$ around the origin. The possibility to choose the cutoff size $\delta'>\delta$ is used later.

The contribution away from the origin is smoothing:
\begin{align*}
\tilde{R}(x,\xi) = \lim_{\delta \to 0} &\iint e^{i \frac{\lambda}{2} \langle (z^\prime, \eta^\prime), A(z^\prime, \eta^\prime) \rangle} (1 - \tilde\rho(\eta^\prime,z^\prime)) \rho_\delta(\eta^\prime,z^\prime) \\
& \; \times P(x,\lambda(\eta^\prime + \tilde{\xi})) Q(x-z^\prime,\lambda \tilde{\xi}) dz^\prime d\eta^\prime.
\end{align*}
Indeed, in Section \ref{section:ProofHalfWaveEstimates} we shall see that for the expressions coming up in our analysis, we can show bounds $O_{L^2}(\lambda^{-N})$ for any $N$, independent of $\delta$. 

For the main contribution, we use Taylor's formula for
\begin{equation*}
f(\eta^\prime,z^\prime) = \tilde\rho(\eta^\prime,z^\prime) P(x,\lambda(\eta^\prime + \tilde{\xi})) Q(x-z^\prime,\lambda \tilde{\xi})
\end{equation*}
at the origin, namely
\begin{equation*}
f(\eta^\prime,z^\prime) = \sum_{|\alpha| \leq k} \frac{(D^\alpha f)(0)}{\alpha !} (\eta^\prime,z^\prime)^\alpha + \sum_{|\beta| = k+1} R_\beta(\eta^\prime,z^\prime) (\eta^\prime,z^\prime)^\beta.
\end{equation*}
(Here we can omit $\rho_\delta$ if $\delta$ is small enough compared to $\delta'$.) 

We turn to the first expression. If derivatives act on $\tilde\rho(z^\prime,\eta^\prime)$ and we evaluate at the origin, then the contribution will vanish. For $\alpha \in \mathbb{N}_0^{2m}$ we write in the following 
\begin{equation*}
\alpha = \gamma_1 \cup \gamma_2 \text{ with } \alpha = (\gamma_{11},\ldots,\gamma_{1m},\gamma_{21},\ldots,\gamma_{2m}).
\end{equation*}
We are left with
\begin{equation*}
\iint (\partial_{\eta^\prime}^{\gamma_1} P)(x,\lambda(\eta^\prime + \tilde{\xi})) \partial_{z^\prime}^{\gamma_2} Q(x-z^\prime, \lambda \tilde{\xi})) \vert_{(\eta^\prime,z^\prime) = 0} \frac{(\eta^\prime,z^\prime)^\alpha}{\alpha !} e^{i \frac{\lambda}{2} \langle(z^\prime, \eta^\prime), A(z^\prime,\eta^\prime) \rangle} dz^\prime d\eta^\prime.
\end{equation*}
Since (cf.\ \cite[Section~3.1,~p.~100]{Sogge2017})
\begin{equation*}
(2 \pi)^{-m} \iint (\eta^\prime,z^\prime)^\alpha e^{i \frac{\lambda}{2} \langle (z^\prime,\eta^\prime), A(z^\prime,\eta^\prime) \rangle } dz^\prime d\eta^\prime = \lambda^{-m}
\begin{cases}
\lambda^{-|\alpha|} \frac{\gamma_1 !}{i^{|\gamma_1|}}, \quad \gamma_1 = \gamma_2, \\
0, \quad \text{else},
\end{cases}
\end{equation*}
the Taylor polynomial yields the asserted asymptotic expansion.

We turn to the Taylor remainder estimate. We use the integral representation
\begin{equation*}
R_\beta(\eta^\prime,z^\prime) = \frac{|\beta|}{\beta !} \int_0^1 (1-t)^{|\beta|-1} \partial^\beta f(t(\eta^\prime,z^\prime)) dt.
\end{equation*}
Write $\partial^\beta = \partial^{\beta_1}_{\eta^\prime} \partial^{\beta_2}_{z^\prime}$.  By choosing $\delta' \leq \lambda^{-1}$, derivatives acting on $\rho_{\delta'}=\tilde{\rho}$ yield additional negative powers in $\lambda$, which makes the resulting expressions better behaved.
We thus suppose in the following that the derivatives do not act on $\tilde{\rho}$. We analyze the expression
\begin{align*}
&\quad \iint \tilde\rho(t\eta^\prime,tz^\prime) \lambda^{|\beta_1|} (\partial_{\eta^\prime}^{\beta_1} P)(x,\lambda (t \eta^\prime + \tilde{\xi})) (\partial_{z^\prime}^{\beta_2} Q)(x-tz^\prime,\lambda \tilde{\xi}) \\
&\qquad \times e^{i \frac{\lambda}{2} \langle (z^\prime,\eta^\prime), A(z^\prime,\eta^\prime) \rangle} (\eta^\prime)^{\beta_1} (z^\prime)^{\beta_2} dz^\prime d\eta^\prime\\
&= C \!\iint\! \tilde\rho(t\eta^\prime,tz^\prime) (\partial_{\eta^\prime}^{\beta_1} P)(x,\lambda(t \eta^\prime + \tilde{\xi})) (\partial_{z^\prime}^{\beta_2} Q)(x-tz^\prime, \lambda \tilde{\xi}) (\partial_{z^\prime}^{\beta_1} e^{i \lambda \langle z^\prime, \eta^\prime \rangle} ) (z^\prime)^{\beta_2} d\eta dz^\prime.
\end{align*}
Next, we integrate by parts in $z^\prime$. The derivatives can act on $Q$ or $(z^\prime)^{\beta_2}$ or on $\tilde\rho$. The latter  yields lower-order terms as argued above. We have to use the product rule. The derivatives acting on $z^\prime$ will be denoted with the multiindex $\beta_{sub} \leq \beta_1$, which is supposed to be understood componentwise. We can further suppose that $\beta_{sub} \leq \beta_2$ because the contribution vanishes otherwise. Hence, we can continue the above display by
\begin{equation}
\label{eq:IntegrationbyParts}
\begin{split}
&= 
\sum_{\beta_{sub} \leq \min(\beta_1,\beta_2)}\!C_{\beta} \iint \tilde\rho(t\eta^\prime,tz^\prime) (\partial_{\eta^\prime}^{\beta_1} P)(x,\lambda)(t \eta^\prime + \tilde{\xi})) (\partial_{z^\prime}^{\beta_1+\beta_2-\beta_{sub}} Q)(x-tz^\prime, \lambda \tilde{\xi}) \\
&\qquad\qquad \times e^{i \lambda \langle z^\prime, \eta^\prime \rangle} (z^\prime)^{\beta_2 - \beta_{sub}} d\eta^\prime dz^\prime +l.o.t. \\
&= \sum_{\beta_{sub} \leq \min(\beta_1,\beta_2)} \tilde C_{\beta}\iint\tilde \rho(t\eta^\prime,tz^\prime) (\partial_{\eta^\prime}^{\beta_1} P) (x,\lambda (t\eta^\prime + \tilde{\xi})) (\partial_{z^\prime}^{\beta_1 + \beta_2 - \beta_{sub}} Q)(x-tz^\prime, \lambda \tilde{\xi}) \\
&\qquad \qquad\times \partial_{\eta^\prime}^{\beta_2 - \beta_{sub}} \big( \frac{e^{i \lambda \langle z^\prime, \eta^\prime \rangle}}{\lambda^{|\beta_2 - \beta_{sub}|}} \big) d\eta^\prime dz^\prime +l.o.t. \\
&= \sum_{\beta_{sub} \leq \min(\beta_1,\beta_2)} C_{\beta}^\prime \iint \tilde\rho(t\eta^\prime, tz^\prime) (\partial^{\beta_1 + \beta_2 - \beta_{sub}}_{\eta^\prime} P)(x,\lambda(t \eta^\prime + \tilde{\xi}))) \\
&\qquad \qquad\times (\partial^{\beta_1+\beta_2-\beta_{sub}}_{z^\prime} Q)(x-tz^\prime, \lambda\tilde{\xi})e^{i \lambda \langle z^\prime, \eta^\prime \rangle} d\eta^\prime dz^\prime + l.o.t.
\end{split}
\end{equation}

In Section \ref{section:ReductionHalfWaveEstimates}, $L^2$-bounds for instances of this expression are a consequence of Lemma \ref{lem:L2BoundednessRoughSymbols}, possibly after choosing the cutoff $\tilde\rho$ differently.

We give a first application, which will be useful in Section \ref{section:ReductionHalfWaveEstimates}.
Let $(S_\lambda^\prime)_{\lambda \in 2^{\mathbb{N}_0}}$ denote an inhomogeneous Littlewood--Paley decomposition in $\R^n$,\footnote{We refer to Subsection \ref{subsection:ReductionC2} for details.} $\tilde{S}'_\lambda$ denote projections with mildly enlarged support, and suppose that $(\varepsilon^{ij}) \in C^1$ satisfies \eqref{eq:EllipticityPermittivity}. For $\lambda \gg 1$ let $D_\varepsilon$ and $\frac{1}{D_\varepsilon}$ denote the pseudo-differential operators given by
\begin{align*}
(D_\varepsilon f)(x^\prime) &= \frac{1}{(2 \pi)^n} \int_{\R^n} e^{i \langle x^\prime, \xi^\prime \rangle} \|\xi\|_{\varepsilon(x)} \hat{f}(\xi^\prime) d\xi^\prime, \\
\frac{1}{D_\varepsilon} f(x^\prime) &= \frac{1}{(2 \pi)^n} \int_{\R^n} e^{i \langle x^\prime, \xi^\prime \rangle} \frac{1}{\|\xi\|_{\varepsilon(x)}} \hat{f}(\xi^\prime) d\xi^\prime,
\end{align*}
where $\|\xi\|_{\varepsilon(x)} =\|\xi'\|_{\varepsilon(x)}= \big( \varepsilon_{\leq \lambda^{\frac{1}{2}}}^{ij} \xi_i \xi_j \big)^{1/2}$ (summing over $i,j\in\{1,\ldots,n\}$) and $\varepsilon^{ij}_{\leq \lambda^{\frac{1}{2}}}$ are the coefficients with smoothly truncated frequencies at $\lambda^{\frac{1}{2}}$. In Section \ref{section:ReductionHalfWaveEstimates} we shall see that choosing $\lambda=\lambda(\varepsilon) \gg 1$ preserves ellipticity of $(\varepsilon_{\leq \lambda^{\frac{1}{2}}}^{ij})_{i,j}$.

The following lemma shows that these operators essentially respect frequency localization as a consequence of the asymptotic expansion by Proposition \ref{prop:KohnNirenberg}.
\begin{lemma}
\label{lem:FrequencyPropagation}
Let $\lambda, \mu \in 2^{\mathbb{N}}$, $N\in\N$, and $1 \ll \min( \lambda, \mu) \ll \max(\lambda, \mu)$. Then, we find the following estimate to hold:
\begin{align}
\label{eq:ComparableFrequenciesDepsilon}
\| S^\prime_\mu D_\varepsilon S^\prime_\lambda f \|_{L^2} &\lesssim_N (\lambda \vee \mu)^{-N} \| \tilde{S}^\prime_\lambda f \|_{L^2}, \\
\label{eq:ComparableFrequeciesDepsilonInv}
\| S^\prime_\mu \frac{1}{D_\varepsilon} S^\prime_\lambda f \|_{L^2} &\lesssim_N (\lambda \vee \mu)^{-N} \| \tilde{S}^\prime_\lambda f \|_{L^2}.
\end{align}
\end{lemma}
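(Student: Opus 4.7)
The plan is to expand the composition $S_\mu' D_\varepsilon S_\lambda'$ via the pseudo-differential symbol calculus of Proposition~\ref{prop:KohnNirenberg} and to exploit the disjoint $\xi$-supports of $\chi_\mu$ and $\chi_\lambda$ to kill every principal term, so that only a rapidly decaying remainder remains. Since $S_\mu' S_\lambda' = 0$ under the separation hypothesis, one has
\begin{equation*}
S_\mu' D_\varepsilon S_\lambda' = [S_\mu', D_\varepsilon]\, S_\lambda'.
\end{equation*}
As $S_\lambda'$ is a Fourier multiplier, $D_\varepsilon S_\lambda'$ is a pseudo-differential operator with Kohn--Nirenberg symbol $q(x,\xi) = \|\xi\|_{\varepsilon(x)}\chi_\lambda(\xi)$. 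Applying \eqref{eq:TruncatedSeries} to $S_\mu' \circ (D_\varepsilon S_\lambda')$ yields, for any $N\in\N$,
\begin{equation*}
(S_\mu' D_\varepsilon S_\lambda')(x,D) = \sum_{|\alpha|\leq N} \frac{1}{\alpha!}\bigl(D_\xi^\alpha \chi_\mu(\xi)\,\partial_x^\alpha \|\xi\|_{\varepsilon(x)}\,\chi_\lambda(\xi)\bigr)(x,D) + R_N(x,D).
\end{equation*}
For each multi-index $\alpha$, the factor $D_\xi^\alpha \chi_\mu(\xi)$ is supported in $\{|\xi|\sim\mu\}$ while $\chi_\lambda(\xi)$ is supported in $\{|\xi|\sim\lambda\}$; the hypothesis $1\ll\min(\lambda,\mu)\ll\max(\lambda,\mu)$ makes these disjoint, so \emph{every principal term vanishes identically} and $(S_\mu' D_\varepsilon S_\lambda')(x,D) = R_N(x,D)$ for every $N$.

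The remaining task is to bound $\|R_N\|_{L^2\to L^2}$. Tracing through the derivation of \eqref{eq:TruncatedSeries}, the compound symbol admits the oscillatory representation
\begin{equation*}
\sigma(x,\xi) = (2\pi)^{-n}\chi_\lambda(\xi)\iint e^{iz'\eta'}\chi_\mu(\xi+\eta')\,\|\xi\|_{\varepsilon(x-z')}\,d\eta'\,dz'.
\end{equation*}
On the joint support, $|\xi|\sim\lambda$ and $|\xi+\eta'|\sim\mu$ force $|\eta'|\sim\lambda\vee\mu$. Integrating by parts $2N$ times in $z'$ via $(-\Delta_{z'})e^{iz'\eta'}=|\eta'|^2 e^{iz'\eta'}$ produces the decay $|\eta'|^{-2N}\lesssim(\lambda\vee\mu)^{-2N}$ at the cost of shifting $N$ Laplacians onto $\|\xi\|_{\varepsilon(x-z')}$. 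Bernstein's inequality applied to the truncated coefficients yields $\|\partial_x^\beta \varepsilon^{ij}_{\leq\lambda^{1/2}}\|_{L^\infty}\lesssim\lambda^{(|\beta|-1)_+/2}$, and a chain-rule computation then shows that the spatial derivatives of $\|\xi\|_{\varepsilon(x)}$ of order $2N$ are bounded in $L^\infty$ by $\lambda^{N+1/2}$ on $\{|\xi|\sim\lambda\}$. Additional integrations by parts in $\eta'$ supply rapid decay in $|z'|$ that makes the $z'$-integral absolutely convergent. After rescaling $\xi=\lambda\tilde\xi$, an application of Lemma~\ref{lem:L2BoundednessRoughSymbols} delivers
\begin{equation*}
\|R_N\|_{L^2\to L^2}\lesssim \lambda^{N+1/2}(\lambda\vee\mu)^{-2N}.
\end{equation*}
Since $\lambda/(\lambda\vee\mu)^2\leq(\lambda\vee\mu)^{-1}$ in both cases $\lambda\gg\mu$ and $\mu\gg\lambda$, the right-hand side is $\lesssim(\lambda\vee\mu)^{-N+1/2}$; replacing $N$ by $N+1$ absorbs the $\lambda^{1/2}$ loss and proves \eqref{eq:ComparableFrequenciesDepsilon}. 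The substitution of $\tilde S_\lambda' f$ for $S_\lambda' f$ on the right-hand side is automatic since $\tilde S_\lambda'$ acts as the identity on the spectral support of $S_\lambda'$.

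The proof of \eqref{eq:ComparableFrequeciesDepsilonInv} is structurally identical. For $\lambda\gg 1$ the truncated coefficient $\varepsilon^{ij}_{\leq\lambda^{1/2}}$ still satisfies \eqref{eq:EllipticityPermittivity} with constants comparable to $\Lambda_1,\Lambda_2$, so $1/\|\xi\|_{\varepsilon(x)}$ is a smooth symbol on $\{|\xi|\sim\lambda\}$ with the same Bernstein-type spatial regularity as $\|\xi\|_{\varepsilon(x)}$, differing only by an overall factor $\lambda^{-2}$ coming from the different $\xi$-homogeneity. Both the vanishing of principal terms and the remainder analysis carry over verbatim. The main obstacle is the quantitative bookkeeping in the remainder estimate: one must verify that the oscillatory gain $(\lambda\vee\mu)^{-2N}$ strictly dominates the loss $\lambda^{N+1/2}$ produced by repeated spatial differentiation of the rough symbol, and this is precisely what the separation hypothesis $1\ll\min(\lambda,\mu)\ll\max(\lambda,\mu)$ guarantees.
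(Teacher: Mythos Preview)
Your proof is correct and follows essentially the same strategy as the paper: both arguments invoke the Kohn--Nirenberg expansion \eqref{eq:TruncatedSeries}, observe that every finite-order term vanishes by disjoint $\xi$-support, and then bound the remainder via integration by parts exploiting the frequency separation. The paper differs only in the bookkeeping of the remainder: it splits into the two cases $\lambda\ll\mu$ and $\mu\ll\lambda$, appeals to the explicit Taylor-remainder representation \eqref{eq:IntegrationbyParts} (tracking that $\partial_{\eta'}$-derivatives of $P$ gain $\mu^{-1}$ while $\partial_{z'}$-derivatives of $Q$ lose only $\lambda^{1/2}$), and handles the non-stationary piece $(1-\tilde\rho)$ separately; you instead work directly with the oscillatory representation of the full compound symbol and integrate by parts in $z'$ using $|\eta'|\sim\lambda\vee\mu$ on the joint support, which treats both cases uniformly. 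Your route is somewhat more streamlined, though the accounting for the $\eta'$-integration volume $\sim\mu^n$ and the $\xi$-derivatives needed for Lemma~\ref{lem:L2BoundednessRoughSymbols} is left implicit---these contribute only fixed polynomial losses in $\lambda,\mu$ that are absorbed by enlarging $N$, so the argument is sound.
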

\begin{proof}
We shall focus on the first estimate, as the proof of the second is similar. Firstly, suppose that $1 \ll \lambda \ll \mu$. We argue that \eqref{eq:ComparableFrequenciesDepsilon} follows from the expansion in the Kohn-Nirenberg theorem. Note that $D_\varepsilon S^\prime_\lambda$ has the symbol $\|\xi\|_{\varepsilon(x)} a_\lambda(\xi^\prime)$ and  $S^\prime_\mu$ has the symbol $a_\mu(\xi^\prime)$.
This means that all terms in the asymptotic expansion vanish because the supports in $\xi$ of the two symbols are disjoint. Furthermore, the estimate for the Taylor remainder follows from the representation \eqref{eq:IntegrationbyParts}. In fact, any derivative acting on $P$ gives a factor $\mu^{-1}$, whereas derivatives acting on $Q$ only lose factors $\lambda^{1/2}$. We turn to the estimate of the remainder
\begin{equation*}
C \lambda^n \iint e^{i \frac{\lambda}{2} \langle (z^\prime,\eta^\prime), A(z^\prime,\eta^\prime) \rangle} \rho_{\delta}(\eta^\prime,z^\prime) (1- \rho_{\delta^\prime}(\eta^\prime,z^\prime)) a(\frac{\lambda}{\mu}(\eta^\prime + \tilde{\xi})) \lambda \|\tilde{\xi}\|_{\varepsilon(x - z^\prime)} a(\tilde{\xi}) dz^\prime d\eta^\prime.
\end{equation*}
We suppose that $0 < \delta < \delta^\prime\le \lambda^{-1}$ with $\delta^\prime$ fixed. The phase $\frac{\lambda}{2} \langle (z^\prime,\eta^\prime),A(z^\prime,\eta^\prime) \rangle$ is non-stationary away from the origin. Consequently, we can integrate by parts in $(z^\prime,\eta^\prime)$. This gives factors $(\lambda|(z^\prime,\eta^\prime)|)^{-1}$ per integration by parts. When a derivative acts on $\rho_{\delta}$ or $\rho_{\delta^\prime}$, this gives factors of $\delta$ or $\delta^\prime$, respectively, and when it acts on $a\big( \frac{\lambda}{\mu} (\eta^\prime + \tilde{\xi}) \big)$, this gives factors $\frac{\lambda}{\mu}$, which are all favourable. More care is required when derivatives $\partial_{z^\prime}$ act on $\|\tilde{\xi}\|_{\varepsilon(x-z^\prime)}$. Since $\varepsilon^{ij}$ is regularized and we can only estimate $\| |D| \varepsilon^{ij} \|_{L^\infty} \lesssim 1$, additional derivatives in $z^\prime$ give powers of $\lambda^{\frac{1}{2}}$. We thus obtain sufficient decay to apply Lemma \ref{lem:L2BoundednessRoughSymbols} and conclude \eqref{eq:ComparableFrequenciesDepsilon}. 

We turn to the proof for $1 \ll \mu \ll \lambda$. Following along the above lines, we estimate the remainder
\begin{align*}
&C \lambda^n \iint e^{i \frac{\lambda}{2} \langle (z^\prime,\eta^\prime), A(z^\prime,\eta^\prime) \rangle } (1- \rho_{\delta^\prime}(z^\prime,\eta^\prime)) \rho_{\delta}(z^\prime,\eta^\prime) \\
&\qquad \times \tilde{a} \big( \frac{\lambda}{\mu} (\eta^\prime + \tilde{\xi}) \big) \lambda \|\tilde{\xi}\|_{\varepsilon(x-z^\prime)} a(\tilde{\xi}) dz^\prime d\eta^\prime,
\end{align*}
where $\text{supp}(\tilde{a}) \subseteq B(0,2)$ and $\tilde{a} \equiv 1$ on $B(0,1)$.
Integration by parts in $(z^\prime,\eta^\prime)$ yields
\begin{itemize}
\item powers of $(|(z^\prime,\eta^\prime)| \lambda)^{-1}$ from the non-stationary phase,
\item powers of $\frac{\lambda}{\mu}$ from derivatives acting on $\tilde{a} \big( \frac{\lambda}{\mu}(\eta^\prime + \tilde{\xi}) \big)$,
\item powers of $\lambda^{\frac{1}{2}}$ from derivatives acting on $\|\tilde{\xi}\|_{\varepsilon(x-z^\prime)}$.
\end{itemize}
We observe that due to the support of $(1-\rho_{\delta^\prime}(z^\prime,\eta^\prime))$ that $|(z^\prime,\eta^\prime)| \gtrsim (\delta^\prime)^{-1}$. Since $\delta^\prime \leq \lambda^{-1}$, every integration by parts gives a factor of $(\lambda|(z^\prime,\eta^\prime)|)^{-\frac{1}{2}}$.

The estimate for the Taylor remainder becomes more involved, too. Still, taking derivatives of $P$ as in \eqref{eq:IntegrationbyParts} yields
\begin{equation*}
(\partial^{\beta_1 + \beta_2 - \beta_{sub}}_{\eta^\prime} P)(x,\lambda(t \eta^\prime + \tilde{\xi})) = \big( \frac{1}{\mu} \big)^{|\beta_1 + \beta_2 - \beta_{sub}|} (\partial^{\beta_1+\beta_2-\beta_{sub}}_{\xi} \tilde{a})\big( \frac{\lambda (t \eta^\prime + \tilde{\xi})}{\mu} \big).
\end{equation*}
In the derivatives $\partial^{\beta_1+\beta_2-\beta_{sub}}_{z^\prime} Q$ we only lose $\lambda^{\frac{|\beta_1+\beta_2-\beta_{sub}|}{2}}$. Choosing $\mu = \lambda /C$ with $C$ a large, but fixed constant, the proof is complete. Alternatively, one can argue by taking adjoints (cf. \cite[Prop.~0.3B]{Taylor1991}).
\end{proof}

We end the section with discussing variants, which will be useful later on. Consider the region
\begin{equation*}
\{ |\xi_0| \lesssim |(\xi_1,\xi_2)| \sim \lambda \} = A_\lambda \subseteq \R^3.
\end{equation*}
Let $S^\prime_{\lambda,\tau}$ denote the smooth frequency projection to $A_\lambda$ and $\tilde{A}_\lambda$ a mildly enlarged region and $\tilde{S}^\prime_{\lambda,\tau}$ the corresponding frequency projection.
Let $S^\tau_{\gg \lambda}$ be the smooth frequency projection to frequencies $\{|\xi_0| \gg \lambda\}$. By the same argument as above, we see
\begin{equation*}
\| S^\tau_{\gg \lambda} \partial_i^k D_\varepsilon \tilde{S}^\prime_{\lambda,\tau} \|_{L^2 \to L^2} \lesssim_{k,N} \lambda^{-N}.
\end{equation*}
 Together with the above estimates, we see that $D_\varepsilon S^\prime_{\lambda, \tau} f$ and $\frac{1}{D_\varepsilon} S^\prime_{\lambda,\tau} f$ are still essentially frequency localized in Fourier space in $A_\lambda$. More precisely, we find the following estimate to hold:
\begin{equation*}
\| (1- \tilde{S}^\prime_{\lambda,\tau}) \partial_i^k D_\varepsilon S^\prime_{\lambda,\tau} f \Vert_{L^2} \lesssim_{N,k} \lambda^{-N} \| \tilde{S}^\prime_{\lambda,\tau} f \|_{L^2},
\end{equation*}
and likewise for $\frac{1}{D_\varepsilon}$.

\section{Reduction to dyadic estimates for the half-wave equation}
\label{section:ReductionHalfWaveEstimates}

In this section, we show that Theorems \ref{thm:StrichartzEstimatesMaxwell2d} and \ref{thm:StrichartzEstimatesL1LinfCoefficients} follow from dyadic estimates for the half-wave equation, given in Proposition \ref{prop:HalfWaveEstimates}. The key point is to diagonalize the principal symbol, which is carried out first. Furthermore, by commutator and microlocal estimates, we localize in phase space to a region close to the characteristic surface. We require that $\varepsilon\in C^1$ as assumed in Theorems \ref{thm:StrichartzEstimatesMaxwell2d} and \ref{thm:StrichartzEstimatesL1LinfCoefficients}.
Observe that $\tilde\varepsilon$ inherits the assumptions on $\varepsilon$ (up to constants) of these theorems.

\subsection{Diagonalizing the principal symbol}

Firstly, we carry out the diagonalization of the principal symbol. To obtain a better approximation, we consider the operators directly. We remark that diagonalizing the symbol combined with the arguments from \cite{Tataru2000} allows to prove Strichartz estimates for coefficients in $C^1$ or with derivative in $L^p L^{\infty}$. However, since the estimates obtained in \cite{Tataru2000} are not sharp in terms of derivative loss, the corresponding estimates for first-order systems proved this way are not sharp either. Nonetheless, this observation can be useful as it saves error estimates for compounds of pseudo-differential operators. The error analysis is carried out in Paragraph \ref{subsection:ErrorBoundsCompoundSymbols} in the present context. For more complicated first order systems this might not be easily possible. Here, we carry out the detailed computations in our special case
\begin{equation*}
\varepsilon(x) = 
\begin{pmatrix}
\varepsilon^{11}(x) & \varepsilon^{12}(x) \\
\varepsilon^{12}(x) & \varepsilon^{22}(x)
\end{pmatrix}
.
\end{equation*}

For $P$ as in \eqref{eq:RoughSymbolMaxwell2d} we find $P=Op(\tilde{p}(x,\xi))$ with
\begin{equation}
\label{eq:SymbolP}
\begin{split}
\tilde{p}(x,\xi) &= 
\begin{pmatrix}
i \xi_0 & 0 & -i \xi_2 \\
0 & i \xi_0 & i \xi_1 \\
-i\xi_2 \varepsilon_{11}(x) + i\xi_1 \varepsilon_{12}(x) & i \xi_1 \varepsilon_{22}(x) - i \xi_2 \varepsilon_{12}(x) & i \xi_0
\end{pmatrix}
\\
&\quad  +
\begin{pmatrix}
0 & 0 & 0 \\
0 & 0 & 0 \\
(-\partial_2 \varepsilon_{11} + \partial_1 \varepsilon_{12})(x) & (\partial_1 \varepsilon_{22} - \partial_2 \varepsilon_{12})(x) & 0
\end{pmatrix}
,
\end{split}
\end{equation}
where the first matrix is the principal symbol $p(x,\xi)$. As the operator associated with the second matrix is bounded in $L^2$ for $\varepsilon \in C^1$, it will be neglected. Let
\begin{equation*}
\tilde{\varepsilon}(x) = (\tilde{\varepsilon}^{ij}(x))=
\begin{pmatrix}
\varepsilon_{22}(x) & -\varepsilon_{12}(x) \\
-\varepsilon_{21}(x) & \varepsilon_{11}(x)
\end{pmatrix}
\end{equation*}
denote the adjugate matrix of $\varepsilon^{-1}$, i.e., $\varepsilon$ up to determinant.
We compute the eigenvalues of $p$ to be $i\xi_0$, $i(\xi_0 - \|\xi' \|_{\tilde{\varepsilon}})$, and $i(\xi_0 + \|\xi' \|_{\tilde{\varepsilon}})$. Denote
\begin{equation}
\label{eq:DiagonalSymbol}
d(x,\xi) = \text{diag}(i\xi_0 , i(\xi_0 - \|\xi^\prime\|_{\tilde{\varepsilon}}), i(\xi_0+\|\xi^\prime\|_{\tilde{\varepsilon}})),
\end{equation}
and set $\xi^*_j = \xi_j / \| \xi^\prime \|_{\tilde{\varepsilon}}$ for $j\in\{1,2\}$. The corresponding eigenvectors we align as
\begin{equation}
\label{eq:EigenvectorMatrices}
m(x,\xi) =
\begin{pmatrix}
- \xi^*_1 \varepsilon_{22}(x) + \xi_2^* \varepsilon_{12}(x) & \xi_2^* & -\xi_2^* \\
\xi_1^* \varepsilon_{12}(x) - \xi_2^*\varepsilon_{11}(x)  & -\xi_1^* & \xi_1^* \\
0 & 1 & 1
\end{pmatrix}
.
\end{equation}
The inverse matrix is computed to
\begin{equation}
\label{eq:InverseEigenvectorMatrix}
m^{-1}(x,\xi) =
\begin{pmatrix}
-\xi_1^* & -\xi_2^* & 0 \\
\frac{\xi_2^* \varepsilon_{11}(x) - \xi_1^* \varepsilon_{12}(x)}{2 } & \frac{-\xi_1^* \varepsilon_{22}(x)+\xi_2^*\varepsilon_{12}(x) }{2 } & \frac{1}{2} \\
\frac{- \xi^*_2 \varepsilon_{11}(x) + \xi_1^* \varepsilon_{12}(x)}{2} & \frac{\xi_1^* \varepsilon_{22}(x)-\xi_2^* \varepsilon_{12}(x) }{2} & \frac{1}{2}
\end{pmatrix},
\end{equation}
and hence 
\begin{equation*}
m(x,\xi) d(x,\xi) m^{-1}(x,\xi) = p(x,\xi).
\end{equation*}
By the arguments from \cite{Tataru2000} and Proposition \ref{prop:MultiplierProposition} this decomposition gives (non-sharp) Strichartz estimates for $\varepsilon \in C^1$. To prove the sharp result for $\varepsilon \in C^2$, we diagonalize $P$ with pseudo-differential operators. Until the end of this subsection, we suppose that $\varepsilon \in C^\infty$. After having reduced to dyadic estimates, we shall see that we can truncate frequencies of $\varepsilon$. This will allow to work with smooth symbols. Of course, we have to show bounds independent of the dyadic frequency range.

 We now turn to the corresponding operators, starting with
\begin{equation}
\label{eq:DiagonalMatrixOperator} 
\mathcal{D}(x,D) = \text{diag}(\partial_t, \partial_t - iD_{\tilde{\varepsilon}}, \partial_t + iD_{\tilde{\varepsilon}})
\end{equation}
induced by $d(x,\xi)$. To the eigenvectors in $m$ we associate the operator
\begin{equation}
\label{eq:EigenvectorOperators}
\mathcal{M}(x,D) = 
\begin{pmatrix}
\frac{i}{D_{\tilde{\varepsilon}}} (\partial_1 (\varepsilon_{22} \cdot) - \partial_2( \varepsilon_{12} \cdot)) & \frac{-i}{D_{\tilde{\varepsilon}}} \partial_2 & \frac{i}{D_{\tilde{\varepsilon}}} \partial_2 \\
\frac{i}{D_{\tilde{\varepsilon}}} (\partial_2 (\varepsilon_{11} \cdot) - \partial_1 (\varepsilon_{12} \cdot)) & \frac{i}{D_{\tilde{\varepsilon}}} \partial_1 & 
\frac{-i}{D_{\tilde{\varepsilon}}} \partial_1 \\
0 & 1 & 1
\end{pmatrix},
\end{equation}
and to the inverse matrix $m^{-1}$ we relate
\begin{equation}
\label{eq:InverseEigenvectorOperators}
\mathcal{N}(x,D) =
\begin{pmatrix}
i \partial_1 \frac{1}{D_{\tilde{\varepsilon}}} & i \partial_2 \frac{1}{D_{\tilde{\varepsilon}}} & 0 \\
\frac{i (\varepsilon_{12} \partial_1 -\varepsilon_{11} \partial_2 )}{2} \frac{1}{D_{\tilde{\varepsilon}}} &    \frac{i(\varepsilon_{22} \partial_1 - \varepsilon_{12} \partial_2 )}{2} \frac{1}{D_{\tilde{\varepsilon}}} & \frac{1}{2} \\
\frac{i (\varepsilon_{11} \partial_2 - \varepsilon_{12} \partial_1)}{2} \frac{1}{D_{\tilde{\varepsilon}}} & \frac{i (\varepsilon_{12} \partial_2 - \varepsilon_{22} \partial_1}{2} \frac{1}{D_{\tilde{\varepsilon}}} & \frac{1}{2} 
\end{pmatrix}
.
\end{equation}
Here $(\varepsilon_{ij} \cdot)$ denotes the  multiplication operator induced by $\varepsilon_{ij}$. Note that
$\mathcal{M}$ and $\mathcal{N}$ are bounded in $L^2$. We compute 
\begin{equation}
\label{eq:SecondOrderApproximation}
\begin{split}
(\mathcal{M} \mathcal{D} \mathcal{N})_{11} &= - \frac{1}{D_{\tilde{\varepsilon}}} [ \partial_1 (\varepsilon_{22} \cdot) \partial_t \partial_1 - \partial_2 (\varepsilon_{12} \cdot) \partial_t \partial_1  + \partial_2 \partial_t (\varepsilon_{11} \cdot) \partial_2 - \partial_2 \partial_t(\varepsilon_{12} \cdot) \partial_1] \frac{1}{D_{\tilde{\varepsilon}}}, \\ 
(\mathcal{M} \mathcal{D} \mathcal{N})_{12} &= -\frac{1}{D_{\tilde{\varepsilon}}} [ \partial_1(\varepsilon_{22} \cdot) \partial_t \partial_2 - \partial_2(\varepsilon_{12} \cdot) \partial_t \partial_2 + \partial_2 \partial_t (\varepsilon_{12} \cdot) \partial_2 - \partial_2 \partial_t (\varepsilon_{22} \cdot) \partial_1] \frac{1}{D_{\tilde{\varepsilon}}}, \\
(\mathcal{M} \mathcal{D} \mathcal{N})_{13} &= - \frac{1}{D_{\tilde{\varepsilon}}} \partial_2 D_{\tilde{\varepsilon}}, \\
(\mathcal{M} \mathcal{D} \mathcal{N})_{21} &= - \frac{1}{D_{\tilde{\varepsilon}}} [\partial_2 (\varepsilon_{11} \cdot) \partial_t \partial_1 - \partial_1(\varepsilon_{12} \cdot) \partial_t \partial_1 + \partial_1 \partial_t (\varepsilon_{12} \cdot) \partial_1 - \partial_1 \partial_t(\varepsilon_{11} \cdot) \partial_2) ] \frac{1}{D_{\tilde{\varepsilon}}}, \\
 (\mathcal{M} \mathcal{D} \mathcal{N})_{22} &= - \frac{1}{D_{\tilde{\varepsilon}}} [ \partial_2 (\varepsilon_{11} \cdot) \partial_t \partial_2 - \partial_1 (\varepsilon_{12} \cdot) \partial_t \partial_2 + \partial_1 \partial_t (\varepsilon_{22} \cdot) \partial_1 - \partial_1 \partial_t (\varepsilon_{12} \cdot) \partial_2 ] \frac{1}{D_{\tilde{\varepsilon}}},\\
(\mathcal{M} \mathcal{D} \mathcal{N})_{23} &= \frac{1}{D_{\tilde{\varepsilon}}} \partial_1 D_{\tilde{\varepsilon}}, \\
(\mathcal{M} \mathcal{D} \mathcal{N})_{31} &= D_{\tilde{\varepsilon}} ((\varepsilon_{12} \cdot) \partial_1 - (\varepsilon_{11} \cdot) \partial_2) \frac{1}{D_{\tilde{\varepsilon}}}, \\
 (\mathcal{M} \mathcal{D} \mathcal{N})_{32} &= D_{\tilde{\varepsilon}} ((\varepsilon_{22} \cdot) \partial_1 - (\varepsilon_{12} \cdot) \partial_2) \frac{1}{D_{\tilde{\varepsilon}}},\\
  (\mathcal{M} \mathcal{D} \mathcal{N})_{33} &= \partial_t.
\end{split} 
\end{equation}
(We use the same symbol for the operator $\mathcal{D}$ and the displacement field. This should not cause confusion
since they do not appear in the same context.)
In Subsection \ref{subsection:ErrorBoundsCompoundSymbols} we shall see that the difference
$\mathcal{M} \mathcal{D} \mathcal{N} -P$ is bounded in $L^2$ with suitable frequency localization.

\subsection{Reductions for $C^2$-coefficients} 
\label{subsection:ReductionC2}

Next, we carry out the reductions for $u$ and $\varepsilon_{ij}$ needed for the proof of Theorem \ref{thm:StrichartzEstimatesMaxwell2d}, that is
\begin{itemize}
\item localization to a cube of size $1$ and to high frequencies,
\item reduction to dyadic estimates,
\item truncating frequencies of the coefficients,
\item reduction to half-wave equations.
\end{itemize}
Before these steps, by scaling we can assume that $|\partial^2_x \varepsilon^{ij}| \leq 1$ and $\kappa =1$.

\subsubsection{Localization to a cube of size 1 and to high frequencies} \label{subsub:cube}
We first show that it is sufficient to estimate $\| \langle D \rangle^{-\rho} u \|_{L^p L^q}$ instead of $\| |D|^{-\rho} u \|_{L^p L^q}$.
Let $s(\xi)$ be a symbol supported in $\{ 1/2 \leq |\xi| \leq 2\}$ such that 
\begin{equation*}
\sum_j s(2^{-j} \xi) = 1, \quad \xi \in \R^3 \backslash \{ 0 \}.
\end{equation*}
For $\lambda \in 2^{\mathbb{N}_0}$, let $S_\lambda = S(D/\lambda)$ denote the Littlewood--Paley multiplier, which localizes to frequencies of size $\lambda$ and $S_0 = 1 - \sum_{j \geq 0 } S_{2^j}$. Write $u= S_0 u + (1-S_0) u$.

Sobolev's embedding and the Hardy-Littlewood-Sobolev inequality yield
\begin{equation*}
 \Vert |D|^{-\rho} S_0 u \Vert_{L^p L^q} \lesssim \Vert S_0 u \Vert_{L^2} \lesssim \Vert u \Vert_{L^2}.
\end{equation*}
For the contribution of $(1-S_0) u$ in \eqref{eq:StrichartzEstimatesMaxwell2d} we observe that
\begin{equation*}
 \partial_k(\varepsilon_{ij}(1-S_0)u) = \partial_k(\varepsilon_{ij} u) - \partial_k(\varepsilon_{ij} S_0 u)
\end{equation*}
and
\begin{equation*}
\begin{split}
 \Vert \partial_k (\varepsilon_{ij} S_0 u) \Vert_{L^2} &\lesssim \Vert \varepsilon_{ij} \partial_k S_0 u \Vert_{L^2} + \Vert (\partial_k \varepsilon_{ij}) S_0 u \Vert_{L^2} \\
 &\lesssim \Vert \varepsilon_{ij} \Vert_{L^\infty} \Vert S_0 u \Vert_{L^2} + \Vert \partial_k \varepsilon_{ij} \Vert_{L^\infty} \Vert S_0 u \Vert_{L^2} \\
 &\lesssim \Vert \varepsilon_{ij} \Vert_{C^1} \Vert S_0 u \Vert_{L^2}.
\end{split}
\end{equation*}
Hence, $\Vert P (1-S_0) u\Vert_{L^2} \lesssim \Vert P u \Vert_{L^2} + \Vert \varepsilon_{ij} \Vert_{C^1} \Vert u \Vert_{L^2}$.

This means that low frequencies can always be estimated by the Hardy-Little\-wood-\-So\-bolev inequality and Sobolev's embedding, so that we can  assume that $u$ has only large frequencies. It suffices to prove 
\begin{equation*}
 \Vert (1+|D|^2)^{\frac{-\rho}{2}} u \Vert_{L^pL^q} \lesssim \Vert u \Vert_{L^2} + \Vert P u \Vert_{L^2} + \| \langle D' \rangle^{-\frac{1}{2}} \rho_e \|_{L^2}.
\end{equation*}
Take a smooth partition of unity
\begin{equation*}
 1 = \sum_{j \in \Z^{n+1}} \chi_j(x), \quad \chi_j(x) = \chi(x-j), \quad \text{supp} (\chi) \subseteq B(0,2),
\end{equation*}
and let $\rho_j = \partial_1 (\chi_j u_1) + \partial_2 (\chi_j u_2)$. By considering commutators, we first note
\begin{equation*}
 \sum_j \Big(\Vert \chi_j u \Vert^2_{L^2} + \Vert P (\chi_j u) \Vert_{L^2}^2  \Big)\lesssim \Vert u \Vert_{L^2}^2 + \Vert Pu \Vert^2_{L^2}.
\end{equation*}
On the other hand, since $p,q \geq 2$ we have
\begin{equation}
\label{eq:LocLpLq}
 \Vert (1+|D|^2)^{\frac{-\rho}{2}} u \Vert^2_{L^pL^q} \lesssim \sum_j \Vert (1+|D|^2)^{\frac{-\rho}{2}} \chi_j u \Vert_{L^pL^q}^2.
\end{equation}
To prove the latter estimate, we write
\begin{equation*}
(1+|D|^2)^{-\frac{\rho}{2}} u = \sum_{j,k} \chi_j(x) (1+|D|^2)^{-\frac{\rho}{2}} \chi_k(x) u.
\end{equation*}
If $|j-k| \geq 100$,  for the distributional kernel of $\chi_j (1+|D|^2)^{-\frac{\rho}{2}} \chi_k $
we find the bound
\begin{align*}
(2 \pi)^3 |G(x,y)| &= |\int e^{i \langle x-y, \xi \rangle}\chi_j(x)\chi_k(y) (1+|\xi|^2)^{-\frac{\rho}{2}} d\xi| \\
 &\lesssim_N (1+|j-k|)^{-N} (1+|x-y|)^{-N} 
\end{align*}
since $|x-y| \gtrsim |j-k|\ge1$. Hence, we can estimate $\chi_j (1+|D|^2)^{-\frac{\rho}{2}} \chi_k u$ 
for $|j-k| \geq 10 n$ using Young's inequality and obtain
\begin{equation*}
\| (1+|D|^2)^{-\frac{\rho}{2}} u \|^2_{L^p L^q} \lesssim \| u \|^2_{L^2} + \| \sum_{j,k: |j-k| \leq 10n } \chi_k (1+|D|^2)^{-\rho/2} \chi_j u \|^2_{L^p L^q}.
\end{equation*}
Let $\tilde{\chi}_j = \sum_{|k-j| \leq 10n} \chi_k$. Due to the pointwise bound
\begin{equation*}
\sum_j {\tilde{\chi}_j} (1+|D|^2)^{-\rho/2} \chi_j u \lesssim \big( \sum_j | \tilde{\chi}_j (1+|D|^2)^{-\rho/2} \chi_j u|^2 \big)^{1/2},
\end{equation*}
and Minkowski's inequality for $p,q \geq 2$, we conclude the proof of \eqref{eq:LocLpLq}. It remains to show
\begin{equation}
\label{eq:ChargeLoc}
\sum_j \| \langle D \rangle^{-\frac{1}{2}} \rho_j \|_{L^2}^2 
  \lesssim \| \langle D \rangle^{-\frac{1}{2}} \rho_e \|_{L^2}^2 + \|u\|_{L^2}^2.
\end{equation}
This fact is a consequence of the inequality $\sum_j \|\chi_j v\|^2_{H^{-1/2}}\le \|v\|^2_{H^{-1/2}}$ which is dual to
\begin{equation*}
\| u \|^2_{H^{\frac{1}{2}}} \leq \sum_j \| \chi_j u \|^2_{H^{\frac{1}{2}}}.
\end{equation*}
The above estimate follows by interpolation from its elementary variants in $L^2$ and $H^1$. This concludes the reduction to compact support, and we suppose in the following that $u$ is supported in the unit cube. 

\subsubsection{Reduction to dyadic estimates}

Here we shall see that it is enough to prove
\begin{equation}
\label{eq:DyadicLocalization}
 \lambda^{-\rho} \Vert S_\lambda u \Vert_{L^p L^q} \lesssim \Vert S_\lambda u \Vert_{L^2} + \Vert P S_\lambda u \Vert_{L^2} + \lambda^{-\frac{1}{2}} \| S_\lambda \rho_e \|_{L^2}
\end{equation}
for $\lambda\ge1$. To reduce to the above display, we have to show the commutator estimate
\begin{equation*}
 \sum_{\lambda = 2^j \geq 1} \Vert [P,S_\lambda] u \Vert_{L^2}^2 \lesssim \Vert u \Vert_{L^2}^2. 
\end{equation*}
Set $\tilde{S}_\lambda = \sum_{|j| \leq 2} S_{2^j \lambda}$ as a mildly enlarged version of $S_\lambda$, and
write $[P,S_\lambda] = [P,S_\lambda] \tilde{S}_\lambda - S_\lambda P (1-\tilde{S}_\lambda)$. We need the inequalities
\begin{equation*}
 \begin{split}
  \Vert [P,S_\lambda] v \Vert_{L^2} &\lesssim \Vert v \Vert_{L^2}, \\
  \Vert S_\lambda P (1-\tilde{S}_\lambda) u \Vert_{L^2} &\lesssim \lambda^{-\delta}\,\Vert u \Vert_{L^2} 
 \end{split}
\end{equation*}
for some $\delta>0$. Since the commutator
\[[\partial_k (\varepsilon_{ij} \cdot),S_\lambda] v = \partial_k(\varepsilon_{ij} S_\lambda v) - S_\lambda \partial_k (\varepsilon_{ij} v) = \partial_k [\varepsilon_{ij},S_\lambda] v\]
has the kernel $K(x,y) = \partial_k(\varepsilon_{ij}(x)-\varepsilon_{ij}(y)) \hat{s}(\lambda(x-y)) \lambda^{n+1}$,
the $L^2$-boundedness follows from $\varepsilon\in C^2$.
For the second term note that only frequencies of $\varepsilon_{ij}$ of size $\lambda$ and higher matter as
\begin{equation*}
S_\lambda \partial_k ( \varepsilon_{ij}(1-\tilde{S}_\lambda) u) = S_\lambda \partial_k (\varepsilon_{ij}^{\gtrsim \lambda} (1-\tilde{S}_\lambda) u).
\end{equation*}
We thus find
\begin{equation*}
\Vert S_\lambda \partial_k ( \varepsilon_{ij}^{\gtrsim \lambda} (1-\tilde{S}_\lambda) u) \Vert_{L^2} \lesssim \lambda \Vert \varepsilon_{ij}^{\gtrsim \lambda} \Vert_{L^\infty} \Vert u \Vert_{L^2} \lesssim \lambda^{-1} \Vert \varepsilon_{ij} \Vert_{C^2} \Vert u \Vert_{L^2},
\end{equation*}
due to the standard estimate
\begin{equation*}
\sup_\lambda \lambda^2 \Vert \varepsilon_{ij}^{\geq \lambda} \Vert_{L^\infty} \lesssim \Vert \varepsilon_{ij} \Vert_{C^2}.
\end{equation*}
Observe that $S_\lambda u$ is no supported in the unit cube anymore. It is still rapidly decreasing, which suffices
for the arguments in Section~\ref{section:ProofHalfWaveEstimates}, cf.\ \eqref{eq:v-loc}.

\subsubsection{Truncating the coefficients of $P$ at frequency $\lambda^{\frac{1}{2}}$}
We check that it is enough to prove \eqref{eq:DyadicLocalization} when the coefficients have Fourier transform supported in $\{|\xi| \leq \lambda^{\frac{1}{2}} \}$. We stress that the uniform ellipticity for $\varepsilon$ after frequency truncation will still be important. For that purpose, we observe that $\| \varepsilon_{ij}^{\geq \lambda^{\frac{1}{2}}} \|_{L^\infty} \lesssim \lambda^{-1} \| \varepsilon_{ij}^{\geq \lambda^{\frac{1}{2}}} \|_{C^2}$, and thus
\begin{align*}
\varepsilon_{ij}^{\leq \lambda^{\frac{1}{2}}} \xi_i \xi_j &= \varepsilon_{ij} \xi_i \xi_j - \varepsilon_{ij}^{\geq \lambda^{\frac{1}{2}}} \xi_i \xi_j 
\geq (\Lambda_1 - C\lambda^{-1} )\| \xi' \|^2\ge \tfrac12\Lambda_1\| \xi' \|^2
\end{align*}
for sufficiently large $\lambda\ge1$. 

The error term coming from frequency truncation in \eqref{eq:DyadicLocalization} is estimated by
\begin{equation*}
 \begin{split}
  \Vert \partial_k (\varepsilon_{ij}^{\gtrsim \lambda^{\frac{1}{2}}} S_\lambda u ) \Vert_{L^2} &\lesssim \Vert \partial_k( \varepsilon_{ij}^{\gtrsim \lambda^{\frac{1}{2}}}) \Vert_{L^\infty} \Vert S_\lambda u \Vert_{L^2} + \Vert \varepsilon^{\gtrsim \lambda^{\frac{1}{2}}}_{ij} \partial_k S_\lambda u \Vert_{L^2} \\
  &\lesssim \Vert \varepsilon_{ij} \Vert_{C^1} \| S_\lambda u \|_{L^2} + \lambda \Vert \varepsilon_{ij}^{\gtrsim \lambda^{\frac{1}{2}}} \Vert_{L^\infty} \Vert S_\lambda u \Vert_{L^2},
 \end{split}
\end{equation*}
which is bounded by the first term on the right-hand side of \eqref{eq:DyadicLocalization}. In the following we write $\varepsilon^{\lambda^{\frac{1}{2}}}_{ij} = \varepsilon^{\leq \lambda^{\frac{1}{2}}}_{ij}$. Consequently, it is enough to prove
\begin{equation}
\label{eq:DyadicLocalizationTruncatedFrequencies}
 \lambda^{-\rho} \Vert S_\lambda u \Vert_{L^p L^q} \lesssim \Vert S_\lambda u \Vert_{L^2} + \Vert P^\lambda S_\lambda u \Vert_{L^2} + \lambda^{-\frac{1}{2}} \| S_\lambda \rho_e \|_{L^2},
\end{equation}
with the operator  
\begin{equation*}
 P^\lambda = 
 \begin{pmatrix}
  \partial_t & 0 & -\partial_2 \\
  0 & \partial_t & \partial_1 \\
  -\partial_2(\varepsilon_{11}^{\lambda^{\frac{1}{2}}} \cdot) + \partial_1(\varepsilon_{12}^{\lambda^{\frac{1}{2}}} \cdot) & \partial_1(\varepsilon_{22}^{\lambda^{\frac{1}{2}}} \cdot) - \partial_2(\varepsilon_{12}^{\lambda^{\frac{1}{2}}} \cdot) & \partial_t
 \end{pmatrix}
 ,
\end{equation*}
whose coefficients are truncated at frequency $\lambda^{\frac{1}{2}}$. If there is no possibility of confusion,
the frequency truncation of the coefficients will be implicit in the following, and we write $P$ instead of $P^\lambda$. 

\subsubsection{Reduction to half-wave equations}
\label{subsubsection:ReductionHalfWaveEquations}
We shall consider the two refined regions $\{|\xi_0| \gg |(\xi_1,\xi_2)| \}$ and 
$\{|\xi_0| \lesssim |(\xi_1,\xi_2)| \}$. As noticed above, the first region is away from the characteristic 
surface. To deal with it, we first apply Theorem \ref{thm:ApproximationTheorem} obtaining 
\begin{equation*}
\Vert T_\lambda \left( \frac{P(x,D)}{\lambda} S_\lambda u \right) - p(x,\xi) T_\lambda S_\lambda u \Vert_{L^2_{\Phi}} \lesssim \lambda^{-\frac{1}{2}} \Vert S_\lambda u \Vert_{L^2}.
\end{equation*}
Suppose that $u$ has frequencies in $\{|\xi_0| \gg |(\xi_1,\xi_2)| \}$. By  Theorem \ref{thm:ApproximationTheorem}, 
the same is true for $v_\lambda = T_\lambda S_\lambda u$ up to an acceptable error 
$\lambda^{-\frac{1}{2}} \Vert S_\lambda u \Vert_{L^2}$. For such $v_\lambda$ it is easy to see that 
\begin{equation*}
\Vert v_\lambda \Vert_{L^2_\Phi}\lesssim \Vert p(x,\xi) v_\lambda \Vert_{L^2_\Phi}
\end{equation*}
because
\begin{equation*}
p(x,\xi) = m(x,\xi) d(x,\xi) m^{-1}(x,\xi) \quad \text{ and } \quad |d_{ii}| \gtrsim |\xi_0| \gtrsim 1.
\end{equation*}
Using also the $L^2$-mapping properties of $T_\lambda$ and the triangle inequality, we deduce
\begin{equation*}
 \Vert S_\lambda u \Vert_{L^2} =   \Vert v_\lambda \Vert_{L^2_\Phi}\lesssim \lambda^{-\frac{1}{2}} \Vert S_\lambda u \Vert_{L^2} +\lambda^{-1} \Vert P(x,D) S_\lambda u \Vert_{L^2}.
\end{equation*}
Sobolev's embedding thus yields
\begin{equation*}
 \lambda^{-\rho} \Vert S_\lambda u \Vert_{L^p L^q} 
 \lesssim \lambda^{-\rho} \lambda^{\frac{1}{2}+\rho} \Vert S_\lambda u \Vert_{L^2} 
 \lesssim \Vert S_\lambda u \Vert_{L^2}+ \Vert P(x,D) S_\lambda u \Vert_{L^2}.
\end{equation*}
So \eqref{eq:DyadicLocalizationTruncatedFrequencies} is true if the frequencies of  $u$ are confined to
 $\{|\xi_0| \gg |(\xi_1,\xi_2)| \}$.

We turn to the main contribution coming from $\{|\xi_0| \lesssim |(\xi_1,\xi_2)| \}$ where the 
characteristic surfaces are contained. In the following we assume that the space-time Fourier transform of $u$ 
 is supported in this region. We first prove
\begin{align*}
\lambda^{-\rho} \Vert S_\lambda w \Vert_{L^p L^q} 
 &\lesssim \Vert S_\lambda w \Vert_{L^2} + \Vert \mathcal{D} S_\lambda w \Vert_{L^2} + \lambda^{-\frac{1}{2}} \| S_\lambda \rho_e \|_{L^2}\\
 &\lesssim \Vert S_\lambda u \Vert_{L^2} + \Vert \mathcal{D} S_\lambda w \Vert_{L^2} + \lambda^{-\frac{1}{2}} \| S_\lambda \rho_e \|_{L^2},
\end{align*}
where  $w = \tilde{S}_\lambda \mathcal{N} S_\lambda u$ and as above
\begin{equation*}
 \mathcal{D} = 
 \begin{pmatrix}
  i \partial_t & 0 & 0 \\
  0 & i \partial_t - D_{\tilde{\varepsilon}} & 0 \\
  0 & 0 & i \partial_t + D_{\tilde{\varepsilon}}
 \end{pmatrix}
 .
\end{equation*}
The estimates of the second and third component of $S_\lambda w$ are a consequence of Proposition \ref{prop:HalfWaveEstimates} to be established in Section~\ref{section:ProofHalfWaveEstimates}. 
Here the charge $\rho_e$ does not enter. For the first component, as in Subsection~\ref{subsection:ErrorBoundsCompoundSymbols} 
below one shows that the product $\partial_1 \frac1{D_{\tilde\varepsilon}} S_\lambda$ has the symbol $i\xi_j^* s_\lambda(\xi)$
up to an error bounded by $c\lambda^{-1/2}\Vert S_\lambda u \Vert_{L^2}$. Combined with Theorem \ref{thm:ApproximationTheorem}, we deduce 
\begin{equation*}
\Vert T_\lambda w_1 - [m^{-1}(x,\xi) T_\lambda S_\lambda u]_1 \Vert_{L^2_\Phi} \lesssim \lambda^{-\frac{1}{2}} \Vert S_\lambda u \Vert_{L^2}.
\end{equation*}

By $\partial_1 u_1 + \partial_2 u_2= \rho_e$ and Theorem \ref{thm:ApproximationTheorem}, we find
\begin{align*}
 \|&[m^{-1}(x,\xi) T_\lambda S_\lambda u]_1 \Vert_{L^2_\Phi} \\
&\le \Vert \frac{1}{D_{\tilde{\varepsilon}}} S_\lambda \rho_e \|_{L^2} + \Vert T_\lambda ( \frac{1}{D_{\tilde{\varepsilon}}} \partial_1 S_\lambda u_1 + \frac{1}{D_{\tilde{\varepsilon}}} \partial_2 S_\lambda u_2) - i \xi_1^* T_\lambda u_1 - i \xi_2^* T_\lambda u_2 \Vert_{L^2_{\Phi}}\\
&\lesssim \lambda^{-1} \|S_\lambda \rho_e \|_{L^2} + \lambda^{-\frac{1}{2}} \Vert S_\lambda u \Vert_{L^2}.
\end{align*}
Recall that the ultimate estimate for the first term is a consequence of Lemma \ref{lem:L2BoundednessRoughSymbols} and the previous frequency localization $\{ |\xi'| \sim |\xi| \}$.

Hence, using Sobolev's embedding and the $L^2$-mapping properties of the FBI-transform,
the contribution of $S_\lambda w_1$ can be estimated by 
\begin{equation*}
\lambda^{-\rho} \Vert S_\lambda w_1 \Vert_{L^p L^q} \lesssim \lambda^{\rho + \frac{1}{2} - \rho} \Vert S_\lambda w_1
\Vert_{L^2} = \lambda^{\frac{1}{2}}\Vert T_\lambda w_1 \Vert_{L^2_\Phi}\lesssim \Vert S_\lambda u \Vert_{L^2} + \lambda^{-\frac{1}{2}} \| S_\lambda \rho_e \|_{L^2},
\end{equation*}
which  gives the estimate for $w_1$. So far, we have proved that
\begin{equation*}
\begin{split}
\lambda^{-\rho} \Vert \tilde{S}_\lambda \mathcal{N} S_\lambda u \Vert_{L^p L^q} 
&\lesssim \Vert  S_\lambda u \Vert_{L^2} + \Vert \mathcal{D} \tilde{S}_\lambda \mathcal{N} S_\lambda u \Vert_{L^2} + \lambda^{-\frac{1}{2}} \| S_\lambda \rho_e \|_{L^2} \\
\end{split}
\end{equation*}

 For \eqref{eq:DyadicLocalization}, we yet have to show that
\begin{align}
\label{eq:L2BoundednessNInverse}
\lambda^{-\rho} \Vert S_\lambda u \Vert_{L^p L^q} &\lesssim \lambda^{-\rho} \Vert \tilde{S}_\lambda \mathcal{N} S_\lambda u \Vert_{L^p L^q} + \Vert S_\lambda u \Vert_{L^2},\\
\label{eq:L2BoundednessMInverse}
\Vert \mathcal{D} \tilde{S}_\lambda \mathcal{N} S_\lambda u \Vert_{L^2} &\lesssim \Vert \tilde{S}_\lambda \mathcal{M} \tilde{S}_\lambda \mathcal{D} \tilde{S}_\lambda \mathcal{N} S_\lambda u \Vert_{L^2} + \| S_\lambda u \|_{L^2},
\end{align}
and \eqref{eq:L2MatrixErrorEstimate} below.
We start with the proof of \eqref{eq:L2BoundednessMInverse}. Let $\tilde{w} = \mathcal{D} \tilde{S}_\lambda \mathcal{N} S_\lambda u$. We can as well consider $\tilde{w} = \tilde{S}_\lambda \mathcal{D} \tilde{S}_\lambda \mathcal{N} S_\lambda u $ as $\mathcal{D}$ and $\mathcal{N}$ respect frequency localization up to negligible errors. Theorem \ref{thm:ApproximationTheorem} and calculations as in the next subsection yield 
\begin{equation*}
\Vert T_\lambda^* m(x,\xi) T_\lambda \tilde{S}_\lambda \tilde{w} - \tilde{S}_\lambda \mathcal{M} \tilde{S}_\lambda \tilde{w} \Vert_{L^2} \lesssim \lambda^{-\frac{1}{2}} \Vert \tilde{S}_\lambda \tilde{w} \Vert_{L^2}.
\end{equation*}
By the triangle inequality we infer
\begin{equation*}
\begin{split}
\Vert \tilde{S}_\lambda \tilde{w} \Vert_{L^2} &= \Vert T_\lambda^* m^{-1}(x,\xi) T_\lambda T_\lambda^* m(x,\xi) T_\lambda \tilde{S}_\lambda \tilde{w} \Vert_{L^2} \\
&\lesssim \Vert T_\lambda^* m(x,\xi) T_\lambda \tilde{S}_\lambda \tilde{w} \Vert_{L^2} \\
&\lesssim \Vert \tilde{S}_\lambda \mathcal{M} \tilde{S}_\lambda \tilde{w} \Vert_{L^2} + \lambda^{-\frac{1}{2}} \Vert \tilde{S}_\lambda \tilde{w} \Vert_{L^2},
\end{split}
\end{equation*}
where $\lambda^{-\frac{1}{2}} \Vert \tilde{S}_\lambda w \Vert_{L^2}$ can be absorbed into the left hand-side for $\lambda$ large enough.

For the proof of \eqref{eq:L2BoundednessNInverse},  we write in a similar way
\begin{equation*}
\tilde{S}_\lambda \mathcal{N} S_\lambda u = T_\lambda^* m^{-1}(x,\xi) T_\lambda S_\lambda u + E S_\lambda u
\end{equation*}
with $\Vert E \Vert_{L^2 \to L^2} \lesssim \lambda^{-\frac{1}{2}}$. 
Proposition \ref{prop:MultiplierProposition} and Sobolev's embedding then imply
\begin{align*}
\lambda^{-\rho} \Vert S_\lambda u \Vert_{L^p L^q} 
   &\lesssim \lambda^{-\rho} \|T_\lambda^* m^{-1}(x,\xi) T_\lambda S_\lambda u \Vert_{L^pL^q} \\
  &\lesssim \lambda^{-\rho} \Vert \tilde{S}_\lambda \mathcal{N} S_\lambda u \Vert_{L^p L^q} 
        + \lambda^{\frac12} \|E S_\lambda u\|_{L^2}\\
&\lesssim \lambda^{-\rho} \Vert \tilde{S}_\lambda \mathcal{N} S_\lambda u \Vert_{L^p L^q} + \|S_\lambda u\|_{L^2} .
 \end{align*}

\subsection{Estimates of the error terms}
\label{subsection:ErrorBoundsCompoundSymbols}
The purpose of this paragraph is to prove
\begin{equation}
\label{eq:L2MatrixErrorEstimate}
\| \mathcal{M} \mathcal{D} \mathcal{N} S_{\lambda,\tau}^\prime - P S_{\lambda,\tau}^\prime \|_{L^2 \to L^2} \lesssim 1,
\end{equation}
where $S'_{\lambda,\tau}$ was introduced at the end of Section \ref{section:PDOFBITransform} and $\mathcal{M}$, $\mathcal{D}$, $\mathcal{N}$ in \eqref{eq:DiagonalMatrixOperator}--\eqref{eq:InverseEigenvectorOperators}. We show the estimate for $C^1$-coefficients $\tilde\varepsilon$ which are frequency truncated at $\lambda^{\frac{1}{2}}$. This will make the estimate applicable for our proofs of Theorems \ref{thm:StrichartzEstimatesMaxwell2d} and \ref{thm:StrichartzEstimatesL1LinfCoefficients}. For the sake of brevity, we write $S_\lambda'$ for $S'_{\lambda,\tau}$ in the following.
\begin{proposition}\label{prop:MDN-P}
Let $\varepsilon \in C^1$ and suppose that \eqref{eq:EllipticityPermittivity} is satisfied. With the notations from the previous sections, we find \eqref{eq:L2MatrixErrorEstimate} to hold.
\end{proposition}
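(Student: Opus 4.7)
The strategy is to compute the composition $\mathcal{M}\circ\mathcal{D}\circ\mathcal{N}$ via iterated Kohn--Nirenberg expansion (Proposition \ref{prop:KohnNirenberg}), and to check that the leading symbol reproduces $p(x,\xi)$ while every correction term (including the bounded zeroth-order tail of $P$ in \eqref{eq:SymbolP}) gives rise to an operator bounded on $L^2$ uniformly in $\lambda$ once precomposed with $S'_{\lambda,\tau}$. The identity $m\,d\,m^{-1}=p$ at the symbol level is straightforward from \eqref{eq:DiagonalSymbol}--\eqref{eq:InverseEigenvectorMatrix}, and entry-by-entry it is already visible in \eqref{eq:SecondOrderApproximation}: for example $(\mathcal{M}\mathcal{D}\mathcal{N})_{13}-P_{13} = -\frac{1}{D_{\tilde\varepsilon}}[\partial_2,D_{\tilde\varepsilon}]$, whose order is $1-1=0$; and $(\mathcal{M}\mathcal{D}\mathcal{N})_{31}$ has principal symbol $i\xi_1\varepsilon_{12}-i\xi_2\varepsilon_{11}$, matching the principal part of $P_{31}$.

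The arithmetic behind the correction bounds is the balance between spatial and Fourier scales. Since $\tilde\varepsilon$ is truncated at frequency $\lambda^{1/2}$ and lies in $C^1$, Bernstein's inequality gives
\[
\|\partial_x^\alpha \tilde\varepsilon^{\lambda^{1/2}}\|_{L^\infty}\lesssim \lambda^{(|\alpha|-1)/2}\|\tilde\varepsilon\|_{C^1}\qquad(|\alpha|\ge 1),
\]
while on the Fourier support of $S'_{\lambda,\tau}$ every $\partial_\xi$ applied to $\|\xi'\|_{\tilde\varepsilon}$, $1/\|\xi'\|_{\tilde\varepsilon}$, or $\xi_j^\ast=\xi_j/\|\xi'\|_{\tilde\varepsilon}$ gains a factor $\lambda^{-1}$. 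Consequently each Kohn--Nirenberg correction of multi-order $\alpha$ gains $\lambda^{-|\alpha|/2}$ relative to the principal term. The first-order correction is therefore of order $0$ in $\xi$ with coefficients bounded by $\|\partial \tilde\varepsilon\|_{L^\infty}\lesssim 1$, and higher-order corrections acquire negative powers of $\lambda$. The subprincipal zeroth-order piece of $P$ in \eqref{eq:SymbolP} is also $L^\infty$-bounded by $\|\partial\varepsilon\|_{L^\infty}\lesssim 1$ and so causes no trouble.

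With this accounting in hand, the uniform $L^2$-bound follows from Lemma \ref{lem:L2BoundednessRoughSymbols}, applied to each dyadically Fourier-localised correction symbol (whose $\xi$-support of width $\sim \lambda$ is rescaled as in Section \ref{section:PDOFBITransform}, and whose $\xi$-derivatives up to order $m+1$ are $L^1_\xi$-bounded uniformly in $\lambda$ thanks to the gain above). The remainder left by truncating the expansion at a sufficiently large order is estimated from the explicit representation \eqref{eq:IntegrationbyParts}, using the cutoff choice $\delta'\le\lambda^{-1}$ exactly as in the proof of Lemma \ref{lem:FrequencyPropagation}: every integration by parts yields $(\lambda|(z',\eta')|)^{-1/2}$, while taking extra $x$-derivatives of symbols costs at most $\lambda^{1/2}$ per derivative, so the remainder is $O(\lambda^{-N})$ on $L^2$ for any $N$. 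Finally, wherever intermediate operators $D_{\tilde\varepsilon}$ or $1/D_{\tilde\varepsilon}$ almost preserve Fourier support of $S'_{\lambda,\tau}$, we insert a mild enlargement $\tilde S'_{\lambda,\tau}$ at negligible cost, as justified at the end of Section \ref{section:PDOFBITransform}.

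The main obstacle is bookkeeping: $\mathcal{M}$ and $\mathcal{N}$ are matrices whose entries already contain compositions of multiplications by $\varepsilon_{ij}$ with $\partial_j$ and $1/D_{\tilde\varepsilon}$, so a brute-force expansion of $\mathcal{M}\mathcal{D}\mathcal{N}$ produces a proliferation of terms. The point is that, organised by the ``$\xi$-order versus $\lambda^{1/2}$-weight'' scheme above, every term past the principal one is of total nonpositive $\xi$-order with $L^\infty$-controlled coefficients, so Lemma \ref{lem:L2BoundednessRoughSymbols} delivers the required uniform bound.
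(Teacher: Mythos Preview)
Your proposal is correct and follows essentially the same approach as the paper: both use the Kohn--Nirenberg expansion (Proposition~\ref{prop:KohnNirenberg}) componentwise, insert intermediate frequency projections via Lemma~\ref{lem:FrequencyPropagation}, identify the principal symbol as $p(x,\xi)$, and control the correction terms and the truncated remainder~\eqref{eq:IntegrationbyParts} by the $\lambda$-power counting together with Lemma~\ref{lem:L2BoundednessRoughSymbols}. One small slip: your own Bernstein bound $\|\partial_x^\alpha\tilde\varepsilon^{\lambda^{1/2}}\|_{L^\infty}\lesssim\lambda^{(|\alpha|-1)/2}$ actually yields a gain of $\lambda^{-(|\alpha|+1)/2}$ (not $\lambda^{-|\alpha|/2}$) per correction level, which is precisely what makes your next sentence---that the first-order correction is of order~$0$---consistent, and matches the paper's explicit computations such as $\|E^\alpha\|_{L^2\to L^2}\lesssim\lambda^{(1-|\alpha|)/2}$ for the $(1,3)$ entry.
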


\begin{proof}
We use Lemma \ref{lem:FrequencyPropagation} to include frequency projections between $\mathcal{M}$, $\mathcal{D}$, and $\mathcal{N}$.\footnote{Strictly speaking, we should always enlarge the frequency projection a little bit when applying Lemma \ref{lem:FrequencyPropagation}. This is not recorded to lighten the notation.} We prove \eqref{eq:L2MatrixErrorEstimate} componentwise. In detail we shall analyze $(\mathcal{M} \mathcal{D} \mathcal{N})_{11}$, $(\mathcal{M} \mathcal{D} \mathcal{N})_{13}$, and $(\mathcal{M} \mathcal{D} \mathcal{N})_{31}$ as the claim follows for the other components by the same means.

\smallskip

\emph{Estimate for }$(\mathcal{M} \mathcal{D} \mathcal{N})_{11}$: Firstly, suppose that $\tilde{\varepsilon}$
is time-independent. (Later we see that the argument extends to time-dependent $\tilde{\varepsilon}$.) We shall show
\begin{equation*}
\Vert \frac{1}{D_{\tilde{\varepsilon}}} \tilde{S}_\lambda^\prime (\partial_i(\tilde{\varepsilon}^{ij}_{\lambda^{\frac{1}{2}}}(\cdot) \partial_j) \tilde{S}_\lambda^\prime \frac{1}{D_{\tilde{\varepsilon}}} S_\lambda^\prime - S_\lambda^\prime \Vert_{L^2 \to L^2} \lesssim \lambda^{-1}.
\end{equation*}
The above display implies $\| (\mathcal{M} \mathcal{D} \mathcal{N})_{11}S_\lambda' - \partial_t S_\lambda' \|_{L^2 \to L^2} \lesssim 1$.
In the proof come up error terms such that
\begin{equation*}
\Vert \frac{1}{D_{\tilde{\varepsilon}}} \tilde{S}_\lambda^\prime ((\partial_1 \tilde{\varepsilon}_{\lambda^{\frac{1}{2}}}^{11}) \cdot ) \partial_1 \tilde{S}_\lambda^\prime \frac{1}{D_{\tilde{\varepsilon}}} \tilde{S}_\lambda^\prime \Vert_{L^2 \to L^2} \lesssim \frac{1}{\lambda},
\end{equation*}
which are straightforward. Thus, it is enough to analyze
\begin{equation*}
-\frac{1}{D_{\tilde{\varepsilon}}} \tilde{S}_\lambda^\prime (\tilde{\varepsilon}_{\lambda^{\frac{1}{2}}}^{ij} \partial_i \partial_j ) \tilde{S}_\lambda^\prime \frac{1}{D_{\tilde{\varepsilon}}} S_\lambda^\prime.
\end{equation*}
This we break into three symbols
\begin{equation*}
\frac{1}{\|\xi\|_{\tilde{\varepsilon}}} \tilde{a}_\lambda(\xi), \quad (\tilde{\varepsilon}^{ij}_{\lambda^{\frac{1}{2}}} \xi_i \xi_j \tilde{a}_\lambda(\xi)), \quad \frac{1}{\|\xi\|_{\tilde{\varepsilon}}} a_\lambda(\xi).
\end{equation*}
We compute the expansion of the first and second symbol 
\begin{equation}\label{eq:symb-MDN11}
\frac{1}{\|\xi\|_{\tilde{\varepsilon}}} \tilde{\varepsilon}^{ij}_{\lambda^{\frac{1}{2}}} \xi_i \xi_j \tilde{a}_\lambda(\xi) + \sum_{1 \leq |\alpha| \leq N^\prime} \frac{1}{\alpha !} \big( D_\xi^\alpha \frac{1}{\|\xi\|_{\tilde{\varepsilon}}} \big) \partial_x^\alpha \big( \tilde{\varepsilon}^{ij}_{\lambda^{\frac{1}{2}}} \xi_i \xi_j \tilde{a}_\lambda(\xi) \big) + R_{N^\prime},
\end{equation}
with $\| R_{N^\prime} \|_{L^2 \to L^2} \lesssim_{N^\prime} \lambda^{-N}$ for some $N\ge1$ depending on $N'$,
cf.\ \eqref{eq:IntegrationbyParts}.

Denote the operators in the expansion by $E^\alpha$. We find from collecting powers of $\lambda$, namely 
$\lambda^{\frac{|\alpha| -1}{2}}\lambda^2$ for factors with derivatives in $x$ and $\lambda^{-1-|\alpha|}$ 
for those with derivatives in $\xi$, that
\begin{equation*}
\Vert E^\alpha \Vert_{L^2 \to L^2} \lesssim_\alpha \lambda^{\frac{1}{2} - \frac{|\alpha|}{2}}.
\end{equation*}
Consequently, the error is bounded in $L^2$. Together with $\frac{1}{D_{\tilde{\varepsilon}}} S_\lambda^\prime$ 
it allows us to estimate the  $L^2$-operator norm by $\frac{1}{\lambda}$, which is acceptable.

The leading-order symbol in \eqref{eq:symb-MDN11} is given by
$\|\xi\|_{\tilde{\varepsilon}}^{-1}\tilde{\varepsilon}^{ij}_{\lambda^{\frac{1}{2}}} \xi_i \xi_j \tilde{a}_\lambda(\xi).$
It is homogeneous of degree $1$. We calculate the composite with $\frac{1}{D_{\tilde{\varepsilon}}} S^\prime_\lambda$.
Like in the previous computation, we can estimate the lower-order terms in $L^2$ by $\frac{1}{\lambda}$. The leading-order term is given by
\begin{equation*}
\frac{1}{\|\xi\|_{\tilde{\varepsilon}}^2} \tilde{\varepsilon}^{ij}_{\lambda^{\frac{1}{2}}} \xi_i \xi_j \tilde{a}_\lambda(\xi) = \tilde{a}_\lambda(\xi).
\end{equation*}
Consequently,
\begin{equation*}
-\frac{1}{D_{\tilde{\varepsilon}}} \tilde{S}_\lambda^\prime \Delta_{\tilde{\varepsilon}} \tilde{S}_\lambda^\prime \frac{1}{D_{\tilde{\varepsilon}}} S_\lambda^\prime = S_\lambda^\prime + E_{N^\prime} S_\lambda^\prime \quad \text{ with \ } \| E_{N^\prime} \|_{L^2 \to L^2} \lesssim \lambda^{-1}.
\end{equation*}
Finally, for time-dependent $\tilde{\varepsilon}$, we compute as above 
\begin{equation*}
\tilde{S}_\lambda^\prime \partial_t \frac{1}{D_{\tilde{\varepsilon}}} S_\lambda^\prime = \tilde{S}_\lambda^\prime \frac{1}{D_{\tilde{\varepsilon}}} \partial_t S_\lambda^\prime + O_{L^2}(\lambda^{-1}),
\end{equation*}
and in the same venue,
\begin{equation*}
\Vert \frac{1}{D_{\tilde{\varepsilon}}} \partial_i \tilde{S}_\lambda^\prime \partial_t (\tilde{\varepsilon}^{ij}_{\lambda^{1/2}}) \partial_j \tilde{S}_\lambda^\prime \frac{1}{D_{\tilde{\varepsilon}}} \tilde{S}_\lambda^\prime \Vert_{L^2 \to L^2} \lesssim 1.
\end{equation*}

\smallskip

\emph{Estimate for} $(\mathcal{M} \mathcal{D} \mathcal{N} \big)_{13}$: We have to prove 
\begin{equation*}
\| \big( \mathcal{M} \mathcal{D} \mathcal{N} \big)_{13} S_\lambda^\prime - P_{13} S_\lambda^\prime \|_{L^2 \to L^2} = \|{-}\frac{1}{D_{\tilde{\varepsilon}}} \partial_2 D_{\tilde{\varepsilon}} S_\lambda^\prime + \partial_2 S_\lambda^\prime \|_{L^2 \to L^2} \lesssim 1.
\end{equation*}
First note that
\begin{equation*}
\| {-}\frac{1}{D_{\tilde{\varepsilon}}} \partial_2 \tilde{S}_\mu^\prime D_{\tilde{\varepsilon}} \tilde{S}_\lambda^\prime \|_{L^2 \to L^2} \lesssim_N (\mu \vee \lambda)^{-N}
\end{equation*}
for $\mu \ll \lambda$ or $\mu \gg \lambda$  because of the estimates
\begin{equation*}
\| S^\prime_\mu D_{\tilde{\varepsilon}} S^\prime_\lambda \|_{L^2 \to L^2} \lesssim_N (\mu \vee \lambda)^{-N} \quad \text{and} \quad \| \frac{1}{D_{\tilde{\varepsilon}}} \partial_i \tilde{S}^\prime_\mu \|_{L^2 \to L^2} \lesssim \mu,
\end{equation*}
see Lemma \ref{lem:FrequencyPropagation}. Hence, it is enough to show
\begin{equation*}
\Vert  {-} \frac{1}{D_{\tilde{\varepsilon}}} \partial_2 \tilde{S}_\lambda^\prime D_{\tilde{\varepsilon}} S_\lambda^\prime + \partial_2 S_\lambda^\prime \Vert_{L^2 \to L^2} \lesssim 1.
\end{equation*}
The first operator is composed of the two operators with symbols
\begin{equation*}
p(x,\xi) = - \frac{1}{\|\xi\|_{\tilde{\varepsilon}}} (i \xi_2) \tilde{a}_\lambda(\xi) \quad \text{ and } \quad q(x,\xi) = \|\xi\|_{\tilde{\varepsilon}} a_\lambda(\xi).
\end{equation*}
We find
\begin{equation*}
(P \circ Q)(x,D) = Op(-i \xi_2 a_\lambda(\xi)) + \sum_{1 \leq |\alpha| \leq N^\prime} \frac{1}{\alpha !} Op( D_\xi^\alpha p(x,\xi) \partial_x^\alpha q(x,\xi) ) + R_{N'}
\end{equation*}
with $\| R_{N'} \|_{L^2 \to L^2} \lesssim_{N'} \lambda^{-N}$.
The operators $E^\alpha$ in the sum over $\alpha$ can be bounded by collecting powers of $\lambda$ and 
exploiting homogeneity, where one obtains $\lambda^{-|\alpha|}$ from  $D_\xi^\alpha p$ 
and $\lambda^{\frac{|\alpha|+1}{2}}$ from $D_x^\alpha q$. In total, we find the inequality 
$\| E^\alpha \|_{L^2 \to L^2} \lesssim \lambda^{\frac{1-|\alpha|}{2}}$, and conclude the $L^2$-boundedness of $\sum_{1 \leq |\alpha| \leq N^\prime} E^\alpha$. 
The proof is complete since
\begin{equation*}
Op(i \xi_2 a_\lambda(\xi)) = \partial_2 S_\lambda^\prime.
\end{equation*}

\smallskip

\emph{Estimate for }$ \big( \mathcal{M} \mathcal{D} \mathcal{N} \big)_{31}$:
Below we show
\begin{equation*}
\| \big[ \big( \mathcal{M} \mathcal{D} \mathcal{N} \big)_{31} + \partial_2 \big( \varepsilon_{11}^{ \lambda^{1/2}} \cdot \big) - \partial_1 (\varepsilon_{12}^{\lambda^{1/2}} \cdot) \big] S_\lambda^\prime \|_{L^2 \to L^2} \lesssim 1.
\end{equation*}
First, $((\partial_2 \varepsilon_{11}) - (\partial_1 \varepsilon_{12}))(\cdot )$ is bounded in $L^2$. As above the contribution of $\tilde{S}^\prime_\mu \frac{1}{D_{\tilde{\varepsilon}}} \tilde{S}^\prime_\lambda$ can be neglected for $\lambda \ll \mu$ or $\mu \ll \lambda$. It remains to verify
\begin{equation*}
\| {-} D_{\tilde{\varepsilon}} \tilde{S^\prime_\lambda} \varepsilon_{11}^{\lambda^{\frac{1}{2}}} \partial_2 \tilde{S^\prime_\lambda} \frac{1}{D_{\tilde{\varepsilon}}} S^\prime_\lambda + \varepsilon^{\lambda^{\frac{1}{2}}}_{11} \partial_2 S^\prime_\lambda \|_{L^2 \to L^2} \lesssim 1
\end{equation*}
since the terms containing $\varepsilon_{12}^{\lambda^{\frac{1}{2}}}$ can be estimated in the same venue.

The first operator we perceive as composition of the operators associated with the symbols
\begin{equation*}
-\|\xi\|_{\tilde{\varepsilon}} \tilde{a}_\lambda(\xi), \quad \varepsilon^{\lambda^{\frac{1}{2}}}_{11} (-i \xi_2) \tilde{a}_\lambda(\xi), \quad \frac{1}{\|\xi\|_{\tilde{\varepsilon}}} a_\lambda(\xi).
\end{equation*}
For the composite of the first and second symbol, we find
\begin{equation*}
Op(\varepsilon_{11}^{\lambda^{1/2}} (i \xi_2) \|\xi\|_{\tilde{\varepsilon}} \tilde{a}_\lambda ) + \sum_{1 \leq |\alpha| \leq N^\prime} \frac{1}{\alpha !} Op(D_\xi^\alpha (-\|\xi\|_{\tilde{\varepsilon}} \tilde{a}_\lambda) \partial_x^\alpha \varepsilon_{11}^{\lambda^{\frac{1}{2}}} (-i\xi_2) \tilde{a}_\lambda(\xi) \big) + R_{N'}
\end{equation*}
with $\| R_{N'} \|_{L^2 \to L^2} \lesssim \lambda^{-N}$. The operators $E^\alpha$ satisfy
\begin{equation*}
\| E^\alpha \|_{L^2 \to L^2} \lesssim_\alpha \lambda^{1-|\alpha|} \lambda^{\frac{|\alpha|-1}{2}} \lambda = \lambda^{\frac{3-|\alpha|}{2}}.
\end{equation*}
The error in $L^2 \to L^2$ is thus bounded  by $\lambda$. Since $\| \frac{1}{D_{\tilde{\varepsilon}}} \tilde{S}_\lambda^\prime \|_{L^2 \to L^2} \lesssim \frac{1}{\lambda}$, this gives an $L^2$-bounded contribution.

Next, we compute the composite $C$ of
\begin{equation*}
Op(\varepsilon^{\lambda^{1/2}}_{11} (i \xi_2) \|\xi\|_{\tilde{\varepsilon}} \tilde{a}_\lambda(\xi) ) \quad \text{ and } \quad \frac{1}{D_{\tilde{\varepsilon}}} S_\lambda^\prime.
\end{equation*}
We find
\begin{equation*}
C= \varepsilon^{\lambda^{1/2}}_{11}(x) \partial_2 S_\lambda^\prime + \sum_{1\le|\alpha|\le N'} \frac{1}{\alpha !} Op(D_\xi^\alpha (\varepsilon^{\leq \lambda^{1/2}}_{11} i \xi_2 |\xi|_{\tilde{\varepsilon}} \tilde{a}_\lambda(\xi)) \partial_x^\alpha \big( \frac{1}{\|\xi\|_{\tilde{\varepsilon}}} a_\lambda(\xi) \big) + R_{N'},
\end{equation*}
where $\| R_{N'} \|_{L^2 \to L^2} \lesssim_{N'} \lambda^{-N}$. The error estimate
\begin{equation*}
\sum_{1 \leq |\alpha| \leq N^\prime} \| E^\alpha \|_{L^2 \to L^2} \lesssim 1
\end{equation*}
is routine by now. The proof is complete.
\end{proof}

\subsection{Reductions for $\partial^2_x \varepsilon \in L^1 L^\infty$ }
As in Subsection~\ref{subsection:ReductionC2} we carry out the following steps to reduce Theorem \ref{thm:StrichartzEstimatesL1LinfCoefficients} 
to the dyadic estimates
\begin{equation}
\label{eq:DyadicEstimateL1LinfCoefficients}
\lambda^{-\rho} \| S_\lambda u \|_{L^p L^q} \lesssim \| S_\lambda u \|_{L^\infty L^2} + \| P S_\lambda u \|_{L^2} + \lambda^{-\frac{1}{2}} \| S_\lambda\rho_e \|_{L^2}
\end{equation}
for $\lambda\gtrsim1$, where the Fourier support of $\tilde{\varepsilon}$ contained in $\{ | \xi | \leq \lambda^{1/2} \}$ and $u$ essentially supported in the unit cube and its space-time Fourier transform is supported in 
$\{ |\xi_0| \lesssim |(\xi_1,\xi_2)|  \}$. These steps are
\begin{itemize}
\item reduction to the case $\kappa = 1$,
\item reduction to a cube of size $1$ and to large frequencies, 
\item estimate away from the characteristic surface,
\item reduction to dyadic estimates,
\item truncating the coefficients at frequency $\lambda^{\frac{1}{2}}$.
\end{itemize}
At this point, we can use the estimate from Section \ref{subsection:ErrorBoundsCompoundSymbols} to complete the reduction from Theorem \ref{thm:StrichartzEstimatesL1LinfCoefficients} to Proposition \ref{prop:HalfWaveEstimates}.

\subsubsection{Reduction to the case $\kappa = 1$.} For this we can follow the argument from \cite[Section~3]{Tataru2002} closely. We omit the details. 

\subsubsection{Reduction to a cube of size $1$ and to large frequencies.} 
\label{subsubsection:ReductionCubeLargeFrequencies}
If $\kappa = 1$, then we can choose $T =1$ by rescaling. It is enough to show
\begin{equation}
\label{eq:InhomogeneousEstimateMixedCoefficientsI}
\begin{split}
\| \langle D^\prime \rangle^{- \rho} u \|_{L^p(0,1;L^q)} &\lesssim \| u \|_{L^\infty L^2} + \| P(x,D) u \|_{L^1 L^2} \\
&\quad + \| \langle D' \rangle^{-\frac{1}{2}} \rho_e(0) \|_{L^2(\R^2)} + \| \langle D' \rangle^{-\frac{1}{2}} \partial_t \rho_e \|_{L^1 L^2}
\end{split}
\end{equation}
because  the Hardy-Littlewood-Sobolev and Bernstein's inequality yield
\begin{equation*}
\| | D^\prime |^{-\rho} S_0^\prime u \|_{L^p(0,1;L^q)} \lesssim \| S_0^\prime u \|_{L^\infty L^2}\lesssim \| u \|_{L^\infty L^2} 
\end{equation*}
for the low frequencies. For high frequencies, inequalites \eqref{eq:InhomogeneousEstimateMixedCoefficientsI}
and \eqref{eq:StrichartzEstimatesL1LinfCoefficients} with $\kappa=1$ are equivalent. 
We next reduce \eqref{eq:InhomogeneousEstimateMixedCoefficientsI} to the estimate
\begin{equation}
\label{eq:InhomogeneousEstimateReductionI}
 \| |D^\prime|^{-\rho} u \|_{L^p(0,2;L^q)} \lesssim  \| u \|_{L^2}+ \| P(x,D) u \|_{L^2} + \| \langle D' \rangle^{-\frac{1}{2}} \rho_e \|_{L^2} .
 \end{equation}
Indeed, if one applies this inequality to solutions $u$ of the homogenous problem with a cut-off
in time, one obtains 
\begin{align*}
\| |D^\prime|^{-\rho} u \|_{L^p(0,1;L^q)}
&\lesssim \|u\|_{L^2} + \| \langle D' \rangle^{-\frac{1}{2}} \rho_e \|_{L^2} 
 \lesssim\|u(0)\|_{L^2(\R^2)} \!+ \| \langle D' \rangle^{-\frac{1}{2}} \rho_e(0)\|_{L^2(\R^2)}
 \end{align*}
using also the basic energy estimate. Combined with Duhamel's formula and Minkowski's inequality, 
this estimate implies \eqref{eq:InhomogeneousEstimateMixedCoefficientsI}. Regarding the role of the charge, we observe that for free solutions $(u_1,u_2,u_3)$ we have $\partial_t \rho_e = 0$. However, a free solution emanating from $Pu(s)$ gives charges $\partial_t \rho_e$ by \eqref{eq:rho}. Since we use Duhamel's formula in the proof, the additional term $\| |D'|^{-\frac{1}{2}} \partial_t \rho_e \|_{L^1 L^2}$ appears.
 
 Inequality \eqref{eq:InhomogeneousEstimateReductionI} respects the finite speed of propagation and, as in 
 Paragraph~\ref{subsub:cube},  we can decompose $u$ in components supported in cubes of sidelength $1$. These estimates sum up to \eqref{eq:InhomogeneousEstimateReductionI}.

\subsubsection{Estimate away from the characteristic surface}
Next, we argue that it is enough to prove the stronger estimate
\begin{equation}
\label{eq:EstimateAwayFromSurface}
\| |D|^{-\rho} u \|_{L^p L^q} \lesssim \| u \|_{L^2} + \| P(x,D) u \|_{L^2} 
   + \|| D|^{-\frac{1}{2}} \rho_e \|_{L^2} .
\end{equation}
Apparently, for $u$ having space-time Fourier transform in the region $\{ |\tau| \lesssim |\xi^\prime| \}$, \eqref{eq:EstimateAwayFromSurface} implies \eqref{eq:InhomogeneousEstimateReductionI}. But for $\{|\xi^\prime| \ll |\tau | \}$, $P$ is an elliptic operator with Lipschitz coefficients of order $1$, which gains one derivative (cf.\ Paragraph \ref{subsubsection:ReductionHalfWaveEquations}), and \eqref{eq:InhomogeneousEstimateReductionI} follows from Sobolev's embedding.

\subsubsection{Reduction to a dyadic estimate}
We replace \eqref{eq:EstimateAwayFromSurface} by the stronger estimate
\begin{equation}
\label{eq:LrL2Estimate}
\| |D|^{-\rho} u \|_{L^p L^q} \lesssim \| u \|_{L^2} + \| P(x,D) u \|_{L^r L^2} + \| |D|^{-\frac{1}{2}} \rho_e \|_{L^2}
\end{equation}
with $1<r<2$, to use  a result from \cite{Tataru2002}. We now show that \eqref{eq:LrL2Estimate} follows from
\begin{equation}
\label{eq:DyadicLrL2Estimate}
\lambda^{-\rho} \| S_\lambda u \|_{L^p L^q} \lesssim \| S_\lambda u \|_{L^2} + \| P(x,D) S_\lambda u \|_{L^r L^2} +  \lambda^{-\frac{1}{2}}\|  S_\lambda\rho_e \|_{L^2}.
\end{equation}
By Littlewood--Paley theory, here one only has to prove the commutator bound
\begin{equation*}
\sum_{\lambda \in 2^{\mathbb{N}_0}} \| [P,S_\lambda] u \|^2_{L^r L^2} \lesssim \| u \|^2_{L^2}.
\end{equation*}
We rewrite $P$ in non-divergence form, where the error terms are easily estimated in $L^2$. It thus suffices to prove
\begin{equation*}
\sum_{ \lambda \geq 1} \| [\varepsilon_{ij},S_\lambda] \partial_j u \|^2_{L^r L^2} \lesssim \| u \|^2_{L^2},
\end{equation*}
which is \cite[Equ.~(3.8)]{Tataru2002}. At last, similar as in Paragraph \ref{subsubsection:ReductionCubeLargeFrequencies} the inequality \eqref{eq:DyadicLrL2Estimate} is replaced by
\begin{equation}
\label{eq:DyadicL1L2Estimate}
\lambda^{-\rho } \| S_\lambda u \|_{L^p L^q} \lesssim \| S_\lambda u \|_{L^\infty L^2} + \| P S_\lambda u \|_{L^1 L^2} +  \lambda^{-\frac{1}{2}}\|  S_\lambda\rho_e \|_{L^2}.
\end{equation}

\subsubsection{Truncating the coefficients at frequencies $\lambda^{\frac{1}{2}}$} 
Finally, the Fourier coefficients of $\tilde{\varepsilon}$ are truncated to the region 
$\{ | \xi | \lesssim \lambda^{1/2} \}$. For $\lambda > 0$, let
\begin{equation}\label{eq:U}
U_\lambda = \sum_{j \geq 0} S_{2^j \lambda}.
\end{equation}
We estimate the contribution of $U_{c\lambda^{1/2}} \varepsilon_{ij} =: \varepsilon_{ij}^{\geq c\lambda^{\frac{1}{2}} }$ in $P$,
with $c\le 1$. To pass to the summand $\| P S_\lambda u \|_{L^1 L^2}$, we compute
\begin{align*}
\| \partial_k (\varepsilon_{ij}^{\ge c\lambda^{\frac{1}{2}}} S_\lambda u) \|_{L^1 L^2} &\le \| (\partial_k \varepsilon_{ij}^{\geq c \lambda^{\frac{1}{2}}} ) S_\lambda u \|_{L^1 L^2} + \| \varepsilon^{\geq c \lambda^{\frac{1}{2}}}_{ij} \partial_k S_\lambda u \|_{L^1 L^2} \\
&\lesssim \| \partial_x \varepsilon_{ij}^{\geq c \lambda^{\frac{1}{2}}} \|_{L^1 L^\infty} \| S_\lambda u \|_{L^\infty L^2} + \lambda \| \varepsilon_{ij}^{\geq c \lambda^{\frac{1}{2}}} \|_{L^1 L^\infty} \| S_\lambda u \|_{L^\infty L^2}\\
&\lesssim \| S_\lambda u \|_{L^\infty L^2}
\end{align*}
by means of the assumptions on $\varepsilon$. Hence, \eqref{eq:DyadicL1L2Estimate} is a consequence of 
\begin{equation}
\label{eq:DyadicL1L2EstimateTruncatedCoefficients}
\lambda^{-\rho} \| S_\lambda u \|_{L^p L^q} \lesssim \| S_\lambda u \|_{L^\infty L^2} + \| P^\lambda S_\lambda u \|_{L^1 L^2} +  \lambda^{-\frac{1}{2}}\| S_\lambda\rho_e \|_{L^2},
\end{equation}
where $P^\lambda$ denotes $P$ with Fourier-truncated $\varepsilon$. We shall usually drop the superscript to lighten the notation.

By $L^2$-wellposedness, the energy inequality and the estimate away from the characteristic surface, 
similar as above we see  that it is enough to prove
\begin{equation*}
\lambda^{-\rho} \| S_\lambda u \|_{L^p L^q} \lesssim \| S_\lambda u \|_{L^\infty L^2} + \| P^\lambda S_\lambda u \|_{L^2} +  \lambda^{-\frac{1}{2}}\|  S_\lambda \rho_e \|_{L^2},
\end{equation*}
which is \eqref{eq:DyadicEstimateL1LinfCoefficients}. As in the second part of Paragraph~\ref{subsubsection:ReductionHalfWaveEquations}
and using  Proposition~\ref{prop:MDN-P}, we can now reduce Theorem \ref{thm:StrichartzEstimatesL1LinfCoefficients} to \eqref{eq:StrichartzEstimatesHalfWaveEquationL1LinfCoefficients} as stated in Proposition \ref{prop:HalfWaveEstimates}.

\section{Proof of the Half-wave estimate}
\label{section:ProofHalfWaveEstimates}
This section is devoted to the proof of Proposition \ref{prop:HalfWaveEstimates}.
We follow the strategy of \cite{Tataru2001,Tataru2002} to establish the estimates
\begin{align}
\label{eq:HalfWaveI}
\lambda^{-\rho} \| S_\lambda u \|_{L^p L^q} &\lesssim \| S_\lambda u \|_{L^2} + \| Q(x,D) S_\lambda u \|_{L^2}, \\
\label{eq:HalfWaveII}
\lambda^{-\rho} \| S_\lambda u \|_{L^p L^q} &\lesssim \| S_\lambda u \|_{L^\infty L^2} + \| Q(x,D) S_\lambda u \|_{L^2},
\end{align}
where
\begin{equation*}
Q(x,D) = Op(q(x,\xi)), \quad q(x,\xi) = \xi_0 - \big(\tilde{\varepsilon}^{ij}_{\lambda^{\frac{1}{2}}} \xi_i \xi_j \big)^{1/2}.
\end{equation*}
Furthermore, as pointed out in the previous section, we can suppose that $u$ has space-time Fourier transform in $\{ |\xi_0| \lesssim |(\xi_1,\xi_2)| \}$ and is essentially supported in the unit cube. For estimate \eqref{eq:HalfWaveI}, we suppose that $\| \partial^2_x \tilde{\varepsilon} \|_{L^\infty} \lesssim 1$ and for \eqref{eq:HalfWaveII} we suppose that $\| \partial^2_x \tilde{\varepsilon} \|_{L^1 L^\infty} \lesssim 1$. We start with the proof of \eqref{eq:HalfWaveI}.

\subsection{Proof for $C^2$-coefficients}

\subsubsection{Reduction to a neighborhood of the characteristic surface}
Let $v_\lambda = T_\lambda S_\lambda u$. The map $v_\lambda$ is concentrated in the region
\begin{equation*}
U = \{ |x| \leq 2, \quad \frac{1}{4} \leq |\xi| \leq 4 \}.
\end{equation*}
More precisely, we find 
\begin{equation}\label{eq:v-loc}
\| v_\lambda \|_{L^2_{\Phi}(U^c)} \lesssim e^{-c \lambda} \| S_\lambda u \|_{L^2},
\end{equation}
see \cite[p.~397]{Tataru2001}. Hence, it suffices to obtain estimates for $v_\lambda$ in $U$.

For such $v_\lambda$, Theorem \ref{thm:ApproximationTheorem} with $s=2$  yields
\begin{align}
\label{eq:ApproximationI}
v_\lambda \in L^2_\Phi, &\qquad (\lambda q + 2 (\bar{\partial} q)(\partial - i \lambda \xi))v_\lambda \in L^2_\Phi \\
\lambda^{-1/2} (\partial_\xi - \lambda \xi) v_\lambda \in L^2_\Phi, &\qquad \lambda^{-1/2} (\partial_\xi - \lambda \xi) (\lambda q + 2(\bar{\partial} q)(\partial - i \lambda \xi))v_\lambda \in L^2_\Phi, \label{eq:ApproximationII}
\end{align} 
 cf.\ \cite[Eq.~(23), (24)]{Tataru2001} and \eqref{eq:a^2}.
In the above display $f \in L^2_\Phi$ means that $f$ is uniformly bounded in $\lambda$ in terms of the right-hand side of \eqref{eq:HalfWaveI}.

We take the second estimate from \eqref{eq:ApproximationI} and the first from \eqref{eq:ApproximationII} to infer 
\begin{equation*}
\lambda^{\frac{1}{2}} q v_\lambda \in L^2_\Phi,
\end{equation*}
using that $i(\partial_\xi- \lambda\xi)=\partial-i\lambda \xi$ by holomorphy.
Consequently, we can suppose that $v_\lambda$ is supported in a small neighbourhood of $K= \{ q=0\}$. Away from the characteristic set, the gain of $\lambda^{\frac{1}{2}}$ is enough to conclude the reduced estimate by Sobolev's embedding.

As observed in \cite{Tataru2001}, we can replace $q$ by a $C^1$-multiple of it using the bound 
\begin{equation*}
q (\partial - i \lambda \xi) v_\lambda \in L^2_\Phi,
\end{equation*}
see \cite[Eq.~(26)]{Tataru2001}. It is precisely this computation by which in \cite{Tataru2001}
a wave symbol like $p(x,\xi) = \xi^2_0 - \tilde\varepsilon^{ij}(x) \xi_i \xi_j$ can be replaced by one of the form
\begin{equation*}
q(x,\xi) = \xi_0 - t(x,\xi^\prime),
\end{equation*}
where $t$ is $1$-homogeneous in $\xi^\prime$. This achieved by factorizing $p$ into the symbols of two 
half-wave equations and considering separated neighborhoods of $K \cap U$. In the present context we can 
thus adopt the arguments and use several results from   \cite{Tataru2001}.

For the sake of completeness, we sketch the reasoning in the following. 
Since $v_\lambda$ is holomorphic, estimate \eqref{eq:ApproximationI} yields that
\begin{align*}
[i(q_x \partial_\xi - q_\xi \partial_x) + \lambda(q-i \xi \cdot q_x - \xi \cdot q_\xi)] v_\lambda &\in L^2_\Phi, \\
[(q_x \partial_x + q_\xi \partial_\xi) + \lambda ( q - \xi \cdot q_\xi - i \xi \cdot q_x)] v_\lambda &\in L^2_\Phi;
\end{align*}
see \cite[p.~398]{Tataru2001}. For $w = \Phi^{1/2} v_\lambda$ we deduce
\begin{align}
[(q_x \partial_\xi - q_\xi \partial_x) - i \lambda (q- \xi \cdot q_\xi )] w &\in L^2, \label{eq:ODEHamiltonFlow}\\
[(q_x \partial_x + q_\xi \partial_\xi) + \lambda ( q - i\xi \cdot q_x ] w &\in L^2. \label{eq:ODEGradientFlow}
\end{align}
The first equation is an ODE along the Hamiltonian flow, which is used to derive estimates on the cone; the second equation is an ODE along the gradient curves, which is needed to obtain estimates away from the cone.

\subsubsection{Estimates on the cone}\label{subsec:est-come}
The first bounds in  \eqref{eq:ApproximationI} and \eqref{eq:ApproximationII}  translate to $w$ as
\begin{equation*}
w \in L^2, \quad \lambda^{-\frac{1}{2}} \partial_\xi w \in L^2.
\end{equation*}
The trace theorem then implies 
\begin{equation*}
\lambda^{-\frac{1}{4}} w \in L^2(K \cap U).
\end{equation*}
 Starting from \eqref{eq:ApproximationII}, we also  find
\begin{equation*}
\lambda^{-\frac{1}{2}} \partial_\xi [(q_x \partial_\xi - q_\xi \partial_x) - i \lambda (q-q_\xi \cdot \xi)] w \in L^2,
\end{equation*}
Note that the scalar function $q-q_\xi \cdot \xi$ vanishes for $1$-homogeneous $q$. Again by the trace theorem 
we obtain
\begin{equation*}
\lambda^{-\frac{1}{4}} H_q w:=\lambda^{-\frac{1}{4}} (q_x \partial_\xi - q_\xi \partial_x) w   \in L^2(K\cap U)
\end{equation*}
Only these $L^2$ estimates for $w$ and $H_q w$ are  used later on, cf.\ \cite[Eq.~(30), (31)]{Tataru2001}.

\subsubsection{Estimates away from the cone}
To simplify the analysis, we replace $q$ by the distance function $r$ to $K$ which solves the eikonal equation
\begin{equation*}
| \nabla_{x, \xi} r | = 1, \quad r = 0 \text{ \ in } K.
\end{equation*}
$r$ is a $C^1$-function, which yields a $C^1$-diffeomorphism $K \cap U \times (-\varepsilon,\varepsilon) \to \tilde{U}$, where $\tilde{U}$ is a neighborhood of $K \cap U$. 
It is the inverse of
\begin{equation*}
(x_0,\xi_0,r) \to (x,\xi ) = (x_0,\xi_0) + r \frac{\nabla_{x,\xi} q (x_0,\xi_0)}{|\nabla_{x,\xi} q (x_0,\xi_0)|},
\end{equation*}
which is a local diffeomorphism from $K \cap U \times [-\varepsilon,\varepsilon]$ onto a neighborhood of $K$,
see \cite[Eq.~(32)]{Tataru2001}. 

As seen on p.~400 of \cite{Tataru2001}, the quotient $r/q$ is  Lipschitz so that we can replace $q$ by $r$ in \eqref{eq:ODEGradientFlow}.
We introduce $C^2$-coordinates on $K \cap U$, denoted by $\zeta$. Hence $(\zeta,r)$ are new coordinates in $U$ near $K$.
In these coordinates, equation \eqref{eq:ODEGradientFlow} becomes
\begin{equation*}
Jw:=[\partial_r + \lambda(r-i r_x(\zeta)(\xi(\zeta)+rr_\xi(\zeta)))] w = f \in L^2.
\end{equation*}
Here we consider $\xi$, $x$, $r_x$ and $r_\xi$ as functions of  $\zeta\in K\cap U$, which is partly suppressed below. 
We split this into the homogeneous and inhomogeneous problems
\begin{align*}
Jw_1&:=f \in L^2, \qquad w_1=0 \quad \text{on \ } K,\\
Jw_2&:=f \in L^2, \qquad w_2=w \quad \text{on \ } K,
\end{align*}
cf.\ \cite[Eq.~(34), (35)]{Tataru2001}. In \cite[Eq.~(36)]{Tataru2001} it was shown that
\begin{equation*}
\lambda^{\frac{1}{2}} \| w_1 \|_{L^2} \lesssim \| f \|_{L^2},
\end{equation*}
and hence $w_2\in L^2$. 

Below  we use the transformation $dx d\xi = h(r,\zeta) dr d\zeta$, where $h$ is strictly positive,  $h(0,\zeta) = 1$ 
and  $d \zeta$ denotes the Lebesgue measure on $K\cap U$. The function $\tilde{w}_2 = (1-\frac1{h})w_2$ solves
$J\tilde{w}_2= (\partial_r\frac1{h})w_2\in L^2$ and  $\tilde{w}_2=0$ on $K$. Hence, $\tilde{w}_2$ can be estimated as $w_1$. 
Corresponding to  $w=(w_1 + \tilde{w}_2) + \frac1h w_2$, we split $S_\lambda u = u_1 + u_2$; i.e., $u_2=T_\lambda^*(\Phi^\frac12 \frac1h w_2)$.
The above estimate yields $\| u_1 \|_{L^2} = O(\lambda^{-1/2})$, and thus the claim for $u_1$ follows from Sobolev's embedding. Passing to $\frac{1}{h} w_2$ normalizes $h$ in the following computations.

\subsubsection{Reduction to oscillatory integral estimates}\label{subsec:halfwave-C2}
It remains to analyze $u_2$.
Using the ODE for $w_2$, we can write
\begin{align*}
u_2(y) \!&= c_n\lambda^{\frac{3(n+1)}4}\!\! \int\!\! e^{-\frac{\lambda}{2}(y-x-rr_x-i(\xi+rr_\xi))^2}\! e^{-\frac{\lambda}{2}(\xi + r r_\xi)^2} \!e^{-\frac{\lambda}{2} r^2}\! e^{i \lambda(rr_x \xi + \frac{1}{2} r^2 r_x r_\xi)} w(\zeta) dr d\zeta \\
&= c_n\lambda^{-\frac12} \lambda^{\frac{3(n+1)}{4}} \int_K e^{i \lambda \xi (y-x)} e^{- \frac{\lambda}{2} \omega_\zeta(y-x)^2} a(\zeta) w(\zeta) d\zeta,
\end{align*}
see  \cite[Eq.~(38)]{Tataru2001}, where
\begin{equation*}
\omega_\zeta(y-x) = (y-x)^2 - \frac{[(r_x+i r_\xi) \cdot (y-x)]^2}{r_\xi^2 + 2 r_x^2 + i r_x \cdot r_\xi}, \quad a(\zeta) = c (r_\xi^2 + 2r_x^2 + i r_x r_\xi)^{-1/2}.
\end{equation*}
As noted on p.~402 of \cite{Tataru2001}, the coefficients of the quadratic form $\omega$  are continuous in $x$
and smooth in $\xi$, and we have $\Re \omega > 0$. 

Because of Paragraph~\ref{subsec:est-come}, it suffices to show
\begin{equation}
\label{eq:OscillatoryIntegralEstimateI}
\| V_\lambda w \|_{L^p L^q} \lesssim \lambda^{\rho + 1/4}(\| w \|_{L^2(K)} + \| H_q w \|_{L^2(K)} )
\end{equation}
for all $w$ supported in $K \cap U$, where
\begin{equation*}
V_\lambda w = \lambda^{\frac{3(n+1)}{4}} \int_K e^{i \lambda \xi(y-x)} e^{- \frac{\lambda}{2} \omega_\zeta(y-x)^2} w(\zeta) d\zeta.
\end{equation*}
The oscillatory integral estimate \eqref{eq:OscillatoryIntegralEstimateI} is proved in Theorem 6 of \cite{Tataru2001} 
for symbols $q$ which  are $1$-homogeneous in $\xi'$ and  have the form
\begin{equation}
\label{eq:HalfWaveSymbol}
q(x,\xi) = \xi_0 - t(x,\xi^\prime)
\end{equation}
using $C^1$-equivalence, which is precisely the present concern (see \eqref{eq:ExplicitHWS}). We state this theorem. 
It involves the  Hamilton flow $(x_t,\xi_t)$ for $q$ starting at $(x,\xi)$, i.e.,
\begin{equation}
\label{eq:HamiltonFlow}
\left\{
\begin{array}{cl}
\partial_t x_t &= q_\xi(x_t,\xi_t), \quad x(0) = x,\\
\partial_t \xi_t &= -q_{x}(x_t,\xi_t), \quad \xi(0) = \xi.
\end{array} \right.
\end{equation}
\begin{proposition}
\label{prop:OscillatoryIntegralEstimate}
Let $V_\lambda$ as above and $b(x,\xi)$ be a smooth compactly supported function, which vanishes near the origin and is $1$ in $\{ 1/4 \leq |\xi| \leq 4\}$. Let $L$ denote the pseudo-differential operator along the Hamilton flow given by
\begin{equation*}
L w(x,\xi) = \int_0^\infty e^{-t} w(x_t,\xi_t) dt.
\end{equation*}
(One has $L=(H_q +1)^{-1}$.) Then,
\begin{equation*}
\| V_\lambda b(x,\xi) L \|_{L^2(K) \to L^p L^q} \lesssim \lambda^{\rho + 1/4}.
\end{equation*}
\end{proposition}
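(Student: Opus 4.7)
The plan is to follow the $TT^*$-plus-dispersive strategy, paralleling \cite[Theorem~6]{Tataru2001}. By duality, the claimed mapping property is equivalent to
\begin{equation*}
\| V_\lambda b L L^* b^* V_\lambda^* \|_{L^{p'}L^{q'} \to L^p L^q} \lesssim \lambda^{2\rho + 1/2}.
\end{equation*}
Since $LL^* w(\zeta) = c\int_{\R} e^{-|t|} w(\zeta_t)\, dt$ with $\zeta_t = (x_t,\xi_t) \in K$ the Hamilton flow \eqref{eq:HamiltonFlow} starting at $\zeta \in K$, the distribution kernel of $V_\lambda b LL^* b^* V_\lambda^*$ becomes an oscillatory integral over $K \times \R$ of pairings of the wave packets $V_\lambda \delta_\zeta$ and $V_\lambda \delta_{\zeta_t}$, modulated by the cutoff $b$ and the weight $e^{-|t|}$.

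Next I would estimate the resulting kernel $\mathcal{K}(y,y')$. The Gaussian profile in the definition of $V_\lambda$ localizes the outputs within distance $\lambda^{-1/2}$ of $x$ and $x_t$, so non-negligible contributions arise only from output points joined by a Hamilton flow segment of length $|t|$. Because $q$ has the half-wave form \eqref{eq:HalfWaveSymbol} with $t(x,\cdot)$ positively homogeneous of degree one and having uniformly convex level sets thanks to \eqref{eq:EllipticityPermittivity}, the spatial projection of the flow sweeps out a cone with $n-1$ non-vanishing principal curvatures. Stationary phase in the transverse directions of $K$ then produces the dispersive estimate
\begin{equation*}
|\mathcal{K}(y, y')| \lesssim \lambda^{n+1} (1 + \lambda|y_0 - y_0'|)^{-(n-1)/2}
\end{equation*}
in the time variables $y_0, y_0'$. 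Coupled with the trivial bound $\|V_\lambda b L\|_{L^2(K) \to L^2(\R^{n+1})} \lesssim 1$, which is immediate from Proposition~\ref{prop:MultiplierProposition} and the $L^2$-isometry of $T_\lambda$, the endpoint Keel--Tao framework applied in $y_0$ yields the full Strichartz range with factor $\lambda^\rho$. The residual $\lambda^{1/4}$ is a trace factor forced by the codimension-one restriction of the data to $K$ at the Gaussian scale $\lambda^{-1/2}$, matching the same trace loss encountered in Paragraph~\ref{subsec:est-come}.

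The main obstacle I expect is the stationary phase step, since the phase $\omega_\zeta$ and the Hamilton flow possess only $C^1$ regularity in $x$, inherited from $\tilde\varepsilon \in C^2$ after truncation at frequency $\lambda^{1/2}$, with bounds that must be uniform in $\lambda$. To bypass this, I would work in the $C^1$ coordinates $(\zeta, r)$ already constructed in the previous subsection via the $C^1$-equivalence between $q$ and the signed distance $r$ to $K$; in these coordinates only the leading-order transverse Hessian of the phase actually enters the dispersive bound. Its non-degeneracy is governed by a determinant which is $C^1$ in $x$ and bounded away from zero uniformly in $\lambda$, thanks to the ellipticity \eqref{eq:EllipticityPermittivity} and the truncation. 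Once this dispersive kernel estimate is in place, the abstract Keel--Tao machinery (including the endpoint) applies verbatim and provides no further difficulty.
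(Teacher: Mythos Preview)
Your overall $TT^*$-plus-dispersive strategy is the right one and matches the paper's route (which defers to \cite[Theorem~6]{Tataru2001}); you also correctly single out the Hamilton flow regularity as the central technical obstacle, which is exactly what Lemmas~\ref{lem:HamiltonFlowI} and~\ref{lem:HamiltonFlowII} are there to handle. Two specific steps in your sketch do not hold up, though.

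\textbf{The energy input.} Your claim that $\|V_\lambda b L\|_{L^2(K)\to L^2(\R^{n+1})}\lesssim 1$ follows from Proposition~\ref{prop:MultiplierProposition} is incorrect: $V_\lambda$ is not of the form $T_\lambda^* a(x,\xi) T_\lambda$ --- it was obtained by integrating the adjoint FBI kernel along gradient lines of $q$ and then restricting data to the cone $K$, so that proposition does not apply. More seriously, Keel--Tao needs a \emph{fixed-time} bound $L^2(K)\to L^2_{y'}(\R^n)$, not a space-time $L^2$ bound, and your post-hoc ``trace factor $\lambda^{1/4}$'' does not slot into that framework as written. The paper (following \cite{Tataru2001}) avoids this issue altogether: instead of Keel--Tao it runs a complex interpolation argument directly on the one-parameter family $Z^t=V_\lambda b F^t\delta_{q=0}\,a\,V_\lambda^*$, with $F^t$ the Hamilton translation, so no separate fixed-time energy estimate is required.

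\textbf{The dispersive bound and the power of $\lambda$.} You state $|\mathcal{K}(y,y')|\lesssim \lambda^{n+1}(1+\lambda|y_0-y_0'|)^{-(n-1)/2}$, but the paper's Proposition~\ref{prop:H} gives the sharper fixed-$t$ bound
\[
|H^t(y,\tilde y)|\lesssim \lambda^{n+1}\,e^{-c\lambda(\tilde y_0-y_0-t)^2}\,(1+\lambda|y-\tilde y|)^{-(n-1)/2}.
\]
The Gaussian factor localizes the kernel to a $\lambda^{-1/2}$-neighbourhood of $\{\tilde y_0-y_0=t\}$; integrating it against the weight $e^{-|t|}$ in $LL^*$ is what buys the missing half-power of $\lambda$ and makes the bound $\lambda^{2\rho+1/2}$ rather than $\lambda^{2\rho+1}$ on the $TT^*$ side. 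Your transverse-curvature stationary phase captures the $(1+\lambda|y-\tilde y|)^{-(n-1)/2}$ decay but not this longitudinal localization; producing it requires the quantitative Hamilton flow expansion of Lemma~\ref{lem:HamiltonFlowII} (in particular the $t^2$ remainder in $x_t$), not merely non-degeneracy of a transverse Hessian. Once Proposition~\ref{prop:H} is in hand, the complex interpolation of \cite[pp.~402--408]{Tataru2001} closes the argument without further difficulty.
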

We will not revisit in detail the technical complex interpolation argument from \cite[p.~402--408]{Tataru2001}, which is in fact carried out for phase functions as in \eqref{eq:HalfWaveSymbol}. The final ingredient to finish the proof is to estimate the kernel $H^t$ of the operator
\begin{equation*}
Z^t = V_\lambda b F^t \delta_{q(x,\xi) = 0} a V_\lambda^*,
\end{equation*}
where $F^t$ denotes the translation by $t$ along the Hamilton flow. Such kernel bounds are provided by Theorem~7 of \cite{Tataru2001} which  we recall here.
\begin{proposition}\label{prop:H}
The kernels $H^t$ satisfy
\begin{equation*}
|H^t(y,\tilde{y})| \lesssim \lambda^{n+1} e^{-c \lambda(\tilde{y}_0 - y_0 -t)^2} (1+\lambda |y-\tilde{y}|)^{- \frac{n-1}{2}}.
\end{equation*}
\end{proposition}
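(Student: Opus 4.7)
The plan is to write $H^t$ explicitly as an oscillatory integral over $K \cap U$ with a complex-Gaussian amplitude, and then separate the integration into a time-like direction that produces the Gaussian factor in $\tilde y_0 - y_0 - t$, and $n-1$ angular directions that produce the dispersive factor via stationary phase. Since $\delta_{q=0}$ restricts to $K$ and $F^t$ shifts along the Hamilton flow, composing with the two Gaussian kernels of $V_\lambda$ and $V_\lambda^*$ yields a representation of the form
\begin{equation*}
H^t(y, \tilde y) = \lambda^{\frac{3(n+1)}{2}} \int_{K \cap U} e^{i\lambda[\xi_t \cdot (y - x_t) - \xi \cdot (\tilde y - x)]} e^{-\frac{\lambda}{2}[\omega_{\zeta_t}(y - x_t)^2 + \overline{\omega_\zeta(\tilde y - x)^2}]} c(\zeta) \, d\zeta,
\end{equation*}
with $c = ab$ smooth and compactly supported on $K$.

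Next I would parametrize $K$. Because $q = \xi_0 - t(x, \xi')$ with $t$ one-homogeneous in $\xi'$, the set $K$ is graphed over $(x, \xi'/|\xi'|)$ and the Hamilton flow \eqref{eq:HamiltonFlow} satisfies $\partial_s x_{0,s} = 1$. The $x_0$-integration is purely Gaussian, with no oscillatory dependence: the two Gaussian factors pin $x_0 \approx \tilde y_0$ and $x_{0,t} = x_0 + t \approx y_0$, so completing the square extracts the desired decay $e^{-c\lambda(\tilde y_0 - y_0 - t)^2}$ and reduces matters to a $(2n-1)$-dimensional oscillatory integral in $(x', \xi')$ with a real phase. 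The radial integration in $|\xi'|$ is again Gaussian-controlled (with $|\xi'| \sim 1$ enforced by the cutoff $b$), and the genuine dispersive mechanism is the critical-point analysis in the angular variables $\omega = \xi'/|\xi'| \in S^{n-2}$.

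I would then apply stationary phase in the $n-1$ spherical directions. The angular critical points correspond to the unique bicharacteristic of the half-wave symbol connecting $(\tilde y, \xi)$ to $(y, \xi_t)$ in time $t$, and the Hessian in these directions is non-degenerate thanks to the strict convexity of $t(x, \cdot)$ guaranteed by the ellipticity \eqref{eq:EllipticityPermittivity}. Standard stationary phase then produces the factor $(\lambda |y - \tilde y|)^{-(n-1)/2}$ in the regime $\lambda |y - \tilde y| \gtrsim 1$; in the complementary regime $\lambda |y - \tilde y| \lesssim 1$ the trivial $L^\infty$-bound on the integrand together with the volume of the effective angular region already gives the claimed $\lambda^{n+1}$.

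The main obstacle is that $\omega$ is only continuous in $x$, whereas stationary phase in the angular variables wants smoothness in the integration variables. Following the strategy of \cite[Theorem~7]{Tataru2001}, this is circumvented by observing that the non-smooth $x$-dependence enters only through the Gaussian amplitude, not through the oscillatory angular phase. The Gaussian localizes $x$ to a $\lambda^{-1/2}$-window on which $\omega$ is effectively constant by uniform continuity; freezing the $x$-arguments of $\omega$ at a reference point introduces errors of relative size $O(\lambda^{-1/2})$, which are easily absorbed into the stated bound. Tracking these error terms carefully and verifying the angular non-degeneracy in the $C^1$-eikonal coordinates introduced earlier in Section~\ref{section:ProofHalfWaveEstimates} is the technical heart of the argument.
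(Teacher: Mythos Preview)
Your overall architecture is right---write $H^t$ as a Gaussian-damped oscillatory integral over $K$, extract the time-Gaussian from the $x_0$-integration, and use (non-)stationary phase in the remaining directions. This matches the route the paper takes, which simply invokes \cite[Theorem~7]{Tataru2001} and then isolates the ingredients needed for that argument.

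However, you misidentify the main obstacle. The issue is not the $x$-continuity of $\omega_\zeta$; as you correctly note, $\omega$ sits only in the amplitude and is tamed by the Gaussian localization. The real difficulty is that the phase
\[
\xi_t\cdot(y-x_t)-\xi\cdot(\tilde y-x)
\]
depends on the initial $\xi$ through the Hamilton flow maps $x_t(\xi),\xi_t(\xi)$, and stationary phase in the angular $\xi'$-directions requires uniform-in-$\lambda$ control of \emph{all} $\xi$-derivatives of the flow. Since the coefficients are only $C^2$ before truncation, these derivatives blow up as $\lambda\to\infty$; the whole point of Lemmas~\ref{lem:HamiltonFlowI} and~\ref{lem:HamiltonFlowII} (Lemmas~9 and~10 of \cite{Tataru2001}) is to quantify this blow-up as $|\partial_\xi^\alpha x_t|\lesssim t(1+t\sqrt\lambda)^{|\alpha|-1}$ and to give the expansion $x_t=x+tq_\xi+t^2 g$ with controlled remainder. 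These bounds are precisely what makes the Hessian in the angular variables nondegenerate with the right size and what keeps the higher-order stationary-phase errors under control. Your appeal to ``strict convexity of $t(x,\cdot)$'' would suffice for $t=0$, but for $t>0$ the phase is a genuine perturbation of the flat case and the flow lemmas are indispensable. Without invoking them, the argument has no mechanism to produce $\lambda$-uniform constants in the dispersive factor $(1+\lambda|y-\tilde y|)^{-(n-1)/2}$.
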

To derive this result, the Hamilton flow has to be analyzed.
\subsubsection{The regularity of the Hamilton flow}
We revisit the proof of the regularity of the Hamilton flow. We note that the Fourier truncation yields
\begin{equation*}
| \partial_x^\alpha \tilde\varepsilon | \leq c_\alpha \lambda^{\frac{|\alpha|-2}{2}}, \quad |\alpha| \geq 2.
\end{equation*}
A word of clarification regarding the following estimates: Seemingly, the half wave symbol
\begin{equation}
\label{eq:ExplicitHWS}
q(x,\xi) = \xi_0 - (\tilde{\varepsilon}^{ij}_{\lambda^{\frac{1}{2}}}(x) \xi_i \xi_j)^{1/2}
\end{equation}
is less regular than the wave symbol
\begin{equation*}
p(x,\xi) = \xi_0^2 - \tilde{\varepsilon}^{ij}_{\lambda^{\frac{1}{2}}}(x) \xi_i \xi_j.
\end{equation*}
For the following estimates, however, it suffices to estimate the Hamilton flow in a small neighborhood $N$ 
of $K \cap U$ for times $t \in [0,1]$. In this region we have $|\xi^\prime| \sim 1$ and $|\xi_0| \lesssim 1$. It follows that in this range of $(\xi_0,\xi^\prime)$ the Hamilton flow for $H_q$ satisfies the same estimates as for $H_{p}$. This is again reflected through the $C^1$-equivalence of $p$ and $q$. In fact,
\begin{equation*}
\| H_p w \|_{L^2(K\cap U)} \approx \| H_{q} w \|_{L^2(K\cap U)}.
\end{equation*}
We record Lemmas~9 and 10 of \cite{Tataru2001} that estimate the flow and give an expansion.
\begin{lemma}
\label{lem:HamiltonFlowI}
In $N$, the Hamilton flow generated by $q$ from \eqref{eq:ExplicitHWS} satisfies
\begin{align*}
|\partial_\xi^\alpha x_t | &\leq c_\alpha t (1+t \sqrt{\lambda})^{|\alpha| - 1}, \quad |\alpha| \geq 1, \\
|\partial_\xi^\alpha \xi_t| &\leq c_\alpha (1 + t \sqrt{\lambda})^{|\alpha| - 1}, \quad |\alpha| \geq 1.
\end{align*}
\end{lemma}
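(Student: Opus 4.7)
This is the half-wave analogue of Lemma~9 in \cite{Tataru2001}, and in view of the $C^1$-equivalence of $p$ and $q$ in $N$ explained above, I expect the strategy of that proof to transfer with only notational changes. I would argue by induction on $|\alpha|$. For $|\alpha|=1$, differentiating \eqref{eq:HamiltonFlow} in the initial datum $\xi$ gives the linear variational system
\begin{equation*}
\partial_t\!\begin{pmatrix}\partial_\xi x_t\\\partial_\xi\xi_t\end{pmatrix}=M(t)\!\begin{pmatrix}\partial_\xi x_t\\\partial_\xi\xi_t\end{pmatrix},\qquad M(t)=\begin{pmatrix}q_{\xi x}&q_{\xi\xi}\\-q_{xx}&-q_{x\xi}\end{pmatrix}\!(x_t,\xi_t).
\end{equation*}
Inside $N$, the localization $|\xi'|\sim 1$ keeps $q$ smooth in $\xi$, and $\|\partial_x^2\tilde\varepsilon\|_{L^\infty}\lesssim 1$ together with the Fourier truncation at $\lambda^{1/2}$ bounds $\|M(t)\|$ uniformly. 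Gronwall on $t\in[0,1]$ yields $|\partial_\xi\xi_t|\lesssim 1$, and integrating $\partial_t x_t = q_\xi$ with $\partial_\xi x_0\in\{0,e_j\}$ gives $|\partial_\xi x_t|\lesssim t$.

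For $|\alpha|\geq 2$, I would apply $\partial_\xi^\alpha$ to \eqref{eq:HamiltonFlow} and expand via Fa\`{a} di Bruno. Isolating the top-order linear part leaves
\begin{equation*}
\partial_t\partial_\xi^\alpha x_t=q_{\xi x}(x_t,\xi_t)\partial_\xi^\alpha x_t+q_{\xi\xi}(x_t,\xi_t)\partial_\xi^\alpha\xi_t+R^x_\alpha(t),
\end{equation*}
and an analogue for $\partial_\xi^\alpha\xi_t$, where $R^x_\alpha$ is a finite sum of products
\begin{equation*}
(\partial_x^a\partial_\xi^b q_\xi)(x_t,\xi_t)\cdot\prod_{i=1}^{a}\partial_\xi^{\gamma_i^x}x_t\cdot\prod_{j=1}^{b}\partial_\xi^{\gamma_j^\xi}\xi_t,
\end{equation*}
subject to $a+b\geq 2$, each $|\gamma|\geq 1$, and $\sum|\gamma|=|\alpha|$. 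The Bernstein-type bound $|\partial_x^a\tilde\varepsilon_{\lambda^{1/2}}|\lesssim\lambda^{(a-2)_+/2}$, arising from $\|\tilde\varepsilon\|_{C^2}\lesssim 1$ and the truncation, yields $|\partial_x^a\partial_\xi^b q_\xi(x_t,\xi_t)|\lesssim\lambda^{(a-2)_+/2}$ throughout $N$. Combining this with the inductive bounds $|\partial_\xi^\gamma x_t|\lesssim t(1+t\sqrt\lambda)^{|\gamma|-1}$ and $|\partial_\xi^\gamma\xi_t|\lesssim(1+t\sqrt\lambda)^{|\gamma|-1}$ dominates each summand of $R^x_\alpha$ by $\lambda^{(a-2)_+/2}t^a(1+t\sqrt\lambda)^{|\alpha|-a-b}$. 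Using the key estimate $\lambda^{(a-2)/2}t^a\leq t^2(1+t\sqrt\lambda)^{a-2}$ for $a\geq 2$, together with $t\leq 1$ and $a+b\geq 2$, this reduces to $(1+t\sqrt\lambda)^{|\alpha|-2}$. Since $\partial_\xi^\alpha(x_0,\xi_0)=0$ for $|\alpha|\geq 2$, Gronwall's inequality applied to the bounded linear part closes the induction after integrating in $s\in[0,t]$. The parallel bound for $R^\xi_\alpha$ is identical, after replacing $(a-2)_+$ by $(a-1)_+$ to account for the extra $x$-derivative in $q_x$.

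The principal obstacle is the bookkeeping of powers of $\sqrt\lambda$ against $t$-factors in the Fa\`{a} di Bruno expansion: derivatives of $\tilde\varepsilon_{\lambda^{1/2}}$ of order $\geq 3$ cost powers of $\sqrt\lambda$, and these must be exactly absorbed by the $t$-factors coming from the companion derivatives $\partial_\xi^{\gamma_i^x} x_t$ — precisely the content of the elementary inequality $\lambda^{(a-2)/2}t^a\leq t^2(1+t\sqrt\lambda)^{a-2}$. Once one recognizes $t\sqrt\lambda$ as the intrinsic scale of the problem, the multiple regimes $t\sqrt\lambda\lessgtr 1$ collapse into a single estimate, and no subtleties remain beyond the combinatorics.
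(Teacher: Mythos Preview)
Your proposal is correct and follows precisely the approach the paper adopts: the paper does not give an independent proof but simply records this as Lemma~9 of \cite{Tataru2001}, noting that in $N$ the derivatives of the half-wave symbol $q$ obey the same bounds as those of the full wave symbol $p$, so Tataru's argument transfers verbatim. Your induction via the variational system, Fa\`a di Bruno bookkeeping, and the balancing inequality $\lambda^{(a-2)/2}t^{a}\leq t^{2}(1+t\sqrt{\lambda})^{a-2}$ is exactly the mechanism behind that lemma; the only cosmetic point is that $\partial_\xi x_0=0$ (not $e_j$), which you in any case use correctly when integrating to recover the factor $t$.
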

\begin{lemma}
\label{lem:HamiltonFlowII}
For the Hamilton flow in $N$ we have the representation
\begin{align*}
x_t &= x + t q_\xi + t^2 g(t,x,\xi), \\
\xi_t &= \xi + th(t,x,\xi),
\end{align*}
where $g$ and $h$ are bounded by
\begin{equation*}
|\partial_\xi^\alpha h (t,x,\xi)|, \; |\partial_\xi^\alpha g(t,x,\xi)| \leq c_\alpha (1+ t \sqrt{\lambda})^{|\alpha| - 1},\quad |\alpha| \geq 1.
\end{equation*}
\end{lemma}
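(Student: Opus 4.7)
The plan is to extract both representations directly from the integral form of the Hamilton equations, and then to reduce the $\xi$-derivative estimates to the flow bounds of Lemma \ref{lem:HamiltonFlowI} via Fa\`a di Bruno's formula. The key point is that, with the frequency truncation of $\tilde\varepsilon$ at $\lambda^{1/2}$, every extra $x$-derivative beyond the second costs a factor $\sqrt{\lambda}$ (Bernstein) but will always be paired with an extra factor of $s$ from Lemma \ref{lem:HamiltonFlowI}, so the bad powers appear only through the combination $s\sqrt{\lambda}\le 1+s\sqrt{\lambda}$.

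First, integrating $\partial_t\xi_t=-q_x(x_t,\xi_t)$ yields
$$\xi_t-\xi=-\int_0^t q_x(x_s,\xi_s)\,ds,$$
so the choice $h(t,x,\xi):=-t^{-1}\int_0^t q_x(x_s,\xi_s)\,ds$ is forced and gives $|h|\lesssim1$ because $q_x$ is bounded on $N$. For $x_t$, since $\partial_t x_t\big|_{t=0}=q_\xi(x,\xi)$, integration by parts in $t$ gives
$$x_t-x-t\,q_\xi(x,\xi)=\int_0^t(t-s)\,\partial_t^2 x_s\,ds,\qquad \partial_t^2 x_s=\bigl[q_{\xi x}q_\xi-q_{\xi\xi}q_x\bigr](x_s,\xi_s),$$
so setting $g(t,x,\xi):=t^{-2}\int_0^t(t-s)\,\partial_t^2 x_s\,ds$ yields the stated representation with $|g|\lesssim 1$.

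For the derivative bounds, fix $|\alpha|\ge1$ and apply $\partial_\xi^\alpha$ to the two integrands. By Fa\`a di Bruno, for $F\in\{q_x,\,q_{\xi x}q_\xi-q_{\xi\xi}q_x\}$ the derivative $\partial_\xi^\alpha[F(x_s,\xi_s)]$ is a finite sum of terms
$$(\partial_x^\beta\partial_\xi^\gamma F)(x_s,\xi_s)\,\prod_{j=1}^{|\beta|}\partial_\xi^{\alpha_j}x_s\,\prod_{k=1}^{|\gamma|}\partial_\xi^{\alpha_k'}\xi_s,$$
with $\sum_j|\alpha_j|+\sum_k|\alpha_k'|=|\alpha|$ and all $|\alpha_j|,|\alpha_k'|\ge1$. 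The Bernstein bound $\|\partial_x^\delta\tilde\varepsilon^{ij}_{\lambda^{1/2}}\|_\infty\lesssim\lambda^{(|\delta|-2)/2}$ for $|\delta|\ge2$ gives, in the region $N$ where $|\xi|\sim1$,
$$|\partial_x^\beta\partial_\xi^\gamma F|\lesssim (1+\sqrt{\lambda})^{(|\beta|-1)_+},$$
while Lemma \ref{lem:HamiltonFlowI} bounds the flow-derivative factors by $s(1+s\sqrt{\lambda})^{|\alpha_j|-1}$ and $(1+s\sqrt{\lambda})^{|\alpha_k'|-1}$ respectively. Multiplying and using the elementary inequality $(\sqrt{\lambda})^{|\beta|-1}s^{|\beta|}\le s\,(1+s\sqrt{\lambda})^{|\beta|-1}$ valid for $s\in[0,1]$ and $|\beta|\ge1$, each Fa\`a di Bruno term is dominated by $c_\alpha(1+s\sqrt{\lambda})^{|\alpha|-1}$. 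Integrating over $s\in[0,t]$, with the extra weight $t-s$ in the case of $g$, produces the factors $t$ and $t^2$ respectively; dividing by $t$ or $t^2$ delivers the claimed bounds $|\partial_\xi^\alpha h|,\,|\partial_\xi^\alpha g|\lesssim(1+t\sqrt{\lambda})^{|\alpha|-1}$.

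The principal obstacle is the combinatorial bookkeeping behind the inequality $(\sqrt{\lambda})^{|\beta|-1}s^{|\beta|}(1+s\sqrt{\lambda})^{-|\beta|+1}\le s\le 1$ for $|\beta|\ge1$: one must verify that the powers of $\sqrt{\lambda}$ from the rough symbol never surpass what the $s$-factors from the position-flow derivatives can absorb. This works out precisely because the Fourier truncation at $\lambda^{1/2}$ produces $(\sqrt{\lambda})^{|\beta|-1}$, which is exactly compensated by the $s^{|\beta|}$ coming from the $|\beta|$ factors $\partial_\xi^{\alpha_j}x_s$; the remaining $(1+s\sqrt{\lambda})$-powers assemble into the advertised $(1+t\sqrt{\lambda})^{|\alpha|-1}$, reflecting the same balance already exploited in Lemma \ref{lem:HamiltonFlowI}.
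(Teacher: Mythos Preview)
The paper does not prove this lemma; it merely records it as Lemma~10 of Tataru \cite{Tataru2001}, after noting that in the neighborhood $N$ the half-wave symbol $q$ and the full wave symbol $p$ obey the same derivative bounds, so Tataru's flow estimates transfer verbatim. Your argument supplies the details the paper omits and is correct: the integral representations for $h$ and $g$ are forced, and the Fa\`a di Bruno bookkeeping combined with Lemma~\ref{lem:HamiltonFlowI} is exactly how one proves Tataru's Lemma~10. The key inequality you isolate, $(1+\sqrt{\lambda})^{|\beta|-1}s^{|\beta|}\le s(1+s\sqrt{\lambda})^{|\beta|-1}$ for $s\in[0,1]$ and $|\beta|\ge1$ (equivalent to $s\le1$), is precisely the mechanism by which the Bernstein losses $\lambda^{(|\beta|-1)/2}$ from the truncated coefficients are absorbed by the $s^{|\beta|}$ coming from the $|\beta|$ position-flow factors $\partial_\xi^{\alpha_j}x_s$; the case $|\beta|=0$ (all inner factors are $\partial_\xi^{\alpha_k'}\xi_s$) is even simpler since then $|\gamma|\ge1$ and no $\sqrt{\lambda}$ appears.

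One cosmetic remark: you state the symbol bound as $(1+\sqrt{\lambda})^{(|\beta|-1)_+}$ but then invoke the inequality with $(\sqrt{\lambda})^{|\beta|-1}$. This is harmless because the stronger form $(1+\sqrt{\lambda})^{|\beta|-1}s^{|\beta|}\le s(1+s\sqrt{\lambda})^{|\beta|-1}$ also holds for $s\le1$, but it would read more cleanly to keep the same expression throughout.
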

By the above, the kernel estimate in Proposition~\ref{prop:H} can be shown by (non)-stationary phase arguments. For the details we refer to \cite[p.~412-415]{Tataru2001}. This finishes the proof of \eqref{eq:HalfWaveI}  and thus of  Theorem \ref{thm:StrichartzEstimatesMaxwell2d}.

\subsection{Proof for $L^1 L^\infty$ - coefficients}
Next, we show that the estimates remain true if the assumption $\varepsilon \in C^2$ is replaced by 
$\partial_x^2 \varepsilon \in L^1 L^\infty$. We start with error estimates for conjugating with the FBI transform
taken from Theorem~2.3 of \cite{Tataru2002}.
\begin{theorem}\label{thm:commut2}
Let $\lambda \gtrsim 1$ and $\partial_x^2 \varepsilon\in L^1 L^\infty$. Set $a(x,\xi ) = \big( \tilde{\varepsilon}^{ij}_{\lambda^{1/2}}(x) \xi_i \xi_j \big)^{1/2}s_\lambda(\xi)$. Then, we find the following estimates to hold:
\begin{align*}
\| \Phi^{1/2} R_{\lambda,a} \|_{L^\infty L^2 \to L^2} &\lesssim \lambda^{-3/4}, \\
\| \Phi^{1/2} R_{\lambda,a} \|_{L^\infty L^2 \to L^1_{x_0} L^2_{x^\prime \xi^\prime}(K)} &\lesssim \lambda^{-3/4}.
\end{align*}
\end{theorem}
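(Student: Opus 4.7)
The plan is to adapt Tataru's argument from \cite{Tataru2002} to our setting, exploiting the second-order approximation \eqref{eq:a^2} of $A_\lambda$ and the Gaussian localization of $T_\lambda$. I would first represent the remainder $R_{\lambda,a}= T_\lambda A_\lambda - \tilde a_\lambda^2 T_\lambda$ via the integral form of Taylor's theorem at order two applied to the phase-space expansion underlying the FBI conjugation formula; this yields an integral operator whose kernel involves the second-order symbol derivatives $\partial_x^\alpha\partial_\xi^\beta a$ with $|\alpha|+|\beta|=2$ evaluated along line segments in $(x,\xi)$. Since $a(x,\xi)=(\tilde\varepsilon^{ij}_{\lambda^{1/2}}(x)\xi_i\xi_j)^{1/2}\, s_\lambda(\xi)$ and $s_\lambda$ is supported in $\{|\xi|\sim 1\}$, each $\xi$-derivative of $a$ gains a factor $\lambda^{-1}$, and any companion weight of the form $(\partial_\xi-\lambda\xi)^k$ contributes at most $\lambda^{k/2}$ against the Gaussian in $\Phi^{1/2}T_\lambda$. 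Thus both the $\partial_\xi^2$ and the mixed $\partial_x\partial_\xi$ contributions can be closed at level $\lambda^{-1}$ by the symbolic calculus recalled in Section~\ref{section:PDOFBITransform}. The delicate piece is the purely spatial term involving $\partial_x^2\tilde\varepsilon_{\lambda^{1/2}}$, which inherits only the rough regularity $\partial_x^2\tilde\varepsilon\in L^1L^\infty$.

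For this main term I would exploit that the FBI kernel $K_\lambda(z,y)$ decays like a Gaussian of width $\lambda^{-1/2}$ in all $m=n+1$ space-time directions. Freezing $y_0$, the spatial integral of $\partial_x^2\tilde\varepsilon_{\lambda^{1/2}}(y_0,\cdot)$ against the Gaussian is controlled by $\|\partial_x^2\tilde\varepsilon_{\lambda^{1/2}}(y_0,\cdot)\|_{L^\infty_{y'}}$, and using one integration by parts of a $\partial_{y'}$ against $e^{-c\lambda|x'-y'|^2}$ absorbs the Taylor weight at a cost of $\lambda^{-1}$. Applying Young's inequality in $x'$ and keeping the $y_0$-integral unintegrated then produces the mixed-norm bound
\[
\|\Phi^{1/2}R_{\lambda,a}\|_{L^\infty_{x_0}L^2_{x'}\to L^1_{x_0}L^2_{x'\xi'}(K)}\lesssim \lambda^{-1}\,\|\partial_x^2\tilde\varepsilon\|_{L^1L^\infty}\lesssim \lambda^{-1},
\]
which gives the second inequality in the statement; the restriction to $K$ is a one-dimensional transversality reduction that costs no power of $\lambda$ thanks to the $\xi$-Gaussian together with the trace theorem as used in Paragraph~\ref{subsec:est-come}.

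To obtain the first inequality I would interpolate this $\lambda^{-1}$ mapping into $L^1_{x_0}L^2_{x'\xi'}$ against the cruder $\lambda^{-1/2}$ mapping into $L^\infty_{x_0}L^2_{x'\xi'}$ obtained by treating $\tilde\varepsilon_{\lambda^{1/2}}$ as merely Lipschitz uniformly in $x_0$, which follows from the first-order variant of Theorem~\ref{thm:ApproximationTheorem}. Complex interpolation in the time exponent between $p_0=1$ and $p_1=\infty$, with $p_\theta=2$ at $\theta=\tfrac12$, then yields $\|\Phi^{1/2}R_{\lambda,a}\|_{L^\infty_{x_0}L^2_{x'}\to L^2}\lesssim \lambda^{-3/4}$. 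The main obstacle in executing this plan is the two-derivative symbol calculus when $\tilde\varepsilon$ is only Lipschitz in $x'$ and merely $L^1$-integrable in $x_0$: one must distribute derivatives so that each $\xi$-derivative on $a$ gains $\lambda^{-1}$ via $s_\lambda$, each $x$-derivative beyond the second on $\tilde\varepsilon_{\lambda^{1/2}}$ costs only $\lambda^{1/2}$ via the Fourier truncation, and no Taylor-remainder weight paired with the Gaussian introduces spurious losses—so that the gap between $\lambda^{-1/2}$ (Lipschitz) and $\lambda^{-1}$ ($C^2$) is split exactly at $\lambda^{-3/4}$ using the $L^1$-in-time hypothesis.
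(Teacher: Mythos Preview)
The paper does not give its own proof of this theorem; it is quoted verbatim as Theorem~2.3 of \cite{Tataru2002}. So there is no in-paper argument to compare against, only Tataru's original proof. Your overall strategy---write $R_{\lambda,a}$ via the second-order Taylor remainder, isolate the $\partial_x^2 a$ contribution, and trade the $L^1_{x_0}L^\infty_{x'}$ control of $\partial_x^2\tilde\varepsilon$ against the Gaussian kernel---is the right shape and matches the approach in \cite{Tataru2002}.

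However, two steps are not correct as written. First, the assertion that ``the restriction to $K$ is a one-dimensional transversality reduction that costs no power of $\lambda$'' is wrong: passing from $L^2_{x'\xi'}$ on $\R^{2m}$ to the codimension-one trace $L^2_{x'\xi'}(K)$ costs exactly $\lambda^{1/4}$, as in Paragraph~\ref{subsec:est-come} (one needs control of $\lambda^{-1/2}\partial_\xi$ and then applies the trace theorem). So your claimed $\lambda^{-1}$ bound into $L^1_{x_0}L^2_{x'\xi'}(K)$ overshoots the stated $\lambda^{-3/4}$; if your $L^1_{x_0}L^2_{x'\xi'}$ bound off the cone were correct at $\lambda^{-1}$, the trace would then give precisely $\lambda^{-3/4}$, but not for free.

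Second, the interpolation argument for the first inequality is not properly set up. The endpoint $\|\Phi^{1/2}R_{\lambda,a}\|_{L^\infty L^2\to L^\infty_{x_0}L^2_{x'\xi'}}\lesssim\lambda^{-1/2}$ does not follow from Theorem~\ref{thm:ApproximationTheorem}, which only provides $L^2\to L^2_\Phi$ bounds; the FBI kernel mixes $x_0$ and $y_0$ through a Gaussian, so freezing $x_0$ on the output is not immediate. In \cite{Tataru2002} the $\lambda^{-3/4}$ into $L^2$ is obtained directly from the integral representation of the second-order remainder, using that the Gaussian in $x_0-y_0$ has $L^2_{x_0}$-mass $\lambda^{-1/4}$ while the $y_0$-integral absorbs $\|\partial_{x}^2\tilde\varepsilon(y_0,\cdot)\|_{L^\infty}$ in $L^1_{y_0}$; no interpolation is involved. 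You should rework both steps along these lines rather than rely on the trace-for-free claim and the unverified $L^\infty_{x_0}$ endpoint.
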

(On $K$ one uses its surface measure $d\zeta$.)
We turn to the proof of \eqref{eq:HalfWaveII} with the notation given there.
We again use the FBI transform and write
\begin{equation*}
w = \Phi^{1/2} T_\lambda S_\lambda u , \qquad S_\lambda u = T_\lambda^* \Phi^{1/2} w.
\end{equation*}
As in the previous subsection, $w$ is essentially supported in $U$, see \eqref{eq:v-loc}.
Recalling \eqref{eq:a^2} and \eqref {eq:Remainder}, we have 
\begin{equation*}
(\lambda q + 2(\bar{\partial} q) (\partial - i \lambda \xi)) \Phi^{-1/2} w = \Phi^{-1/2} g
\end{equation*}
with
\begin{equation*}
g = \Phi^{1/2} ( \lambda R_{\lambda,q} S_\lambda u + T_\lambda Q(x,D) S_\lambda u).
\end{equation*}
The error estimates from Theorem~\ref{thm:commut2} allow us to bound $g$ in terms of the right-hand side 
of \eqref{eq:HalfWaveII}. This is stated in the next result which is Lemma~3.1 of \cite{Tataru2002}.
\begin{lemma} With the above notation we have 
\begin{align*}
\| g \|_{L^2} &\lesssim \lambda^{1/4} \| S_\lambda u \|_{L^\infty L^2} + \| Q(x,D) S_\lambda u \|_{L^2}, \\
\| g \vert_K \|_{L^1_{x_0} L^2_{x^\prime \xi^\prime}} &\lesssim \lambda^{1/4} \| S_\lambda u \|_{L^\infty L^2} + \| Q(x,D) S_\lambda u \|_{L^2}. 
\end{align*}
\end{lemma}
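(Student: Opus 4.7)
The plan is to decompose $g$ by the triangle inequality,
\begin{equation*}
\|g\|_X \;\le\; \lambda\,\|\Phi^{1/2} R_{\lambda,q} S_\lambda u\|_X \;+\; \|\Phi^{1/2} T_\lambda Q(x,D) S_\lambda u\|_X,
\end{equation*}
for each of the two target norms $X \in \{L^2,\ L^1_{x_0} L^2_{x'\xi'}(K)\}$, and then handle the two summands separately. The first summand is a commutator-type remainder and is governed by the conjugation theory; the second summand is essentially an FBI transform of an $L^2$-function and will be controlled either by isometry or, on $K$, by exploiting that the characteristic surface is precisely the zero set of $q$.

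For the remainder $\lambda \Phi^{1/2} R_{\lambda,q} S_\lambda u$, both inequalities follow directly from Theorem~\ref{thm:commut2}, applied with $a(x,\xi) = (\tilde\varepsilon^{ij}_{\lambda^{1/2}}\xi_i\xi_j)^{1/2} s_\lambda(\xi)$. Note that the missing $-\xi_0$ piece of $q$ is a smooth polynomial in $\xi$ and contributes exactly zero error to the FBI conjugation at the order we work in, see \eqref{eq:a^2}, so no additional analysis is needed there. The two bounds in Theorem~\ref{thm:commut2} then give $\lambda\cdot\lambda^{-3/4}\|S_\lambda u\|_{L^\infty L^2} = \lambda^{1/4}\|S_\lambda u\|_{L^\infty L^2}$ in each of the two norms, which is precisely the first term on the right-hand side of each desired bound.

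For the second summand $\Phi^{1/2} T_\lambda Q(x,D) S_\lambda u$, the $L^2$-bound is immediate from the isometric property of $\Phi^{1/2} T_\lambda\colon L^2 \to L^2$, yielding exactly $\|Q(x,D) S_\lambda u\|_{L^2}$. The restriction to $K$ is the more delicate point, and here I would exploit the defining relation $q=0$ on $K$ together with the $s=1$ approximation from Theorem~\ref{thm:ApproximationTheorem}, which reads $T_\lambda Q(x,D) = q\cdot T_\lambda + R^1_{\lambda,q}$. Evaluating both sides on $S_\lambda u$ and restricting pointwise to $K = \{q=0\}$ gives
\begin{equation*}
T_\lambda Q(x,D) S_\lambda u\big|_K \;=\; R^1_{\lambda,q} S_\lambda u\big|_K,
\end{equation*}
so that this contribution is once more a remainder term on $K$, bounded via the second inequality in Theorem~\ref{thm:commut2} by $\lambda^{-3/4}\|S_\lambda u\|_{L^\infty L^2}$, which is absorbed into the first summand of the RHS.

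The main obstacle is justifying the pointwise restriction argument in the last step: the identity $T_\lambda Q(x,D) = q\,T_\lambda + R^1_{\lambda,q}$ is a relation between operators mapping $L^2 \to L^2_\Phi$, and one must verify that its restriction to $K$ makes sense as a relation among functions in $L^1_{x_0} L^2_{x'\xi'}(K)$. This is precisely the content of the second estimate in Theorem~\ref{thm:commut2}, which guarantees that $R^1_{\lambda,q} S_\lambda u$ has a well-defined trace on $K$ controlled by $\lambda^{-3/4}\|S_\lambda u\|_{L^\infty L^2}$; combined with the vanishing of $q$ on $K$, this closes the argument.
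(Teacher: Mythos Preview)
Your overall strategy---splitting $g$ by the triangle inequality, invoking Theorem~\ref{thm:commut2} for the remainder piece, and using the $L^2$-isometry of $\Phi^{1/2}T_\lambda$ for the first estimate on the $T_\lambda Q$ piece---is exactly right, and matches what the paper (via the citation to \cite[Lemma~3.1]{Tataru2002}) has in mind. The paper itself gives no proof here and simply defers to that reference.

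There is, however, a genuine gap in your treatment of the trace on $K$ of $\Phi^{1/2}T_\lambda Q(x,D)S_\lambda u$. The remainder appearing in Theorem~\ref{thm:commut2} is the \emph{second}-order one, $R_{\lambda,a}=T_\lambda A_\lambda-\tilde a^2_\lambda T_\lambda$: this is forced by the way $g$ is defined just above the lemma, where the conjugation uses $\tilde q^2=q+\tfrac{2}{\lambda}(\bar\partial q)(\partial-i\lambda\xi)$ from \eqref{eq:a^2}. You instead apply the theorem to the first-order remainder $R^1_{\lambda,q}=T_\lambda Q-qT_\lambda$. The two differ by $\tfrac{2}{\lambda}(\bar\partial q)(\partial-i\lambda\xi)T_\lambda$, which after multiplication by $\Phi^{1/2}$ becomes (up to a constant) $(\bar\partial q)\,\partial_\xi w$ with $w=\Phi^{1/2}T_\lambda S_\lambda u$. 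A crude Minkowski estimate of $\|\partial_\xi w|_K\|_{L^1_{x_0}L^2_{x'\xi'}}$ only yields $\lambda^{3/4}\|S_\lambda u\|_{L^\infty L^2}$, so after accounting for the factor of $\lambda$ in front of $R^1$ you obtain $\lambda^{3/4}$ rather than the $\lambda^{1/4}$ you need. In other words, the $\lambda^{-3/4}$ bound you quote for $\Phi^{1/2}R^1_{\lambda,q}$ on $K$ is \emph{not} a consequence of Theorem~\ref{thm:commut2} as stated, and the argument does not close as written.

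To repair this you would either need an $R^1$-version of the trace bound in Theorem~\ref{thm:commut2} (which is in fact what is proved in \cite{Tataru2002}; the kernel arguments there cover both orders), or a direct $L^1_{x_0}L^2_{x'\xi'}(K)$ bound on $\Phi^{1/2}T_\lambda f$ in terms of $\|f\|_{L^2}$ that does not lose $\lambda^{1/4}$. The latter is the cleaner route: factoring the FBI transform as a tensor product in the time and space variables and using the spatial isometry $\Phi'^{1/2}T'_\lambda\colon L^2_{x'}\to L^2_{x'\xi'}$ (the phase $e^{i\lambda t(x,\xi')(x_0-y_0)}$ has modulus one and is harmless under Minkowski), one is left with a time-Gaussian convolution whose $L^2_{y_0}\to L^1_{x_0}$ norm is $O(1)$, giving $\|\Phi^{1/2}T_\lambda f|_K\|_{L^1_{x_0}L^2_{x'\xi'}}\lesssim \|f\|_{L^2}$ directly.
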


\subsubsection{Gradient and Hamilton flow equations}
As in \eqref{eq:ODEHamiltonFlow}  and \eqref{eq:ODEGradientFlow}, we find the equations
\begin{align}
\label{eq:HamiltonFlowII}
[(q_x \partial_\xi - q_\xi \partial_x) - i \lambda ( q - \xi \cdot q_\xi)] w &= -ig, \\
\label{eq:GradientFlowII}
[(q_x \partial_x + q_\xi \partial_\xi) + \lambda ( q-i \xi \cdot q_x)] w &= g,
\end{align}
see \cite[Eq.~(3.15), (3.16)]{Tataru2002}. Again, the first equation is an ODE along the Hamilton flow of $q$, while the second is an ODE along the gradient curves of $q$. Moreover, \eqref{eq:HamiltonFlowII} is used to derive estimates on the cone $K$, and \eqref{eq:GradientFlowII} off the cone. The $L^1 L^\infty$-bound on $\partial^2_x\varepsilon$ is needed for the analysis of the Hamilton flow. We first sketch the estimate away from $K$, which works for Lipschitz coefficients. 

Taking the inner product with $qw$ of \eqref{eq:GradientFlowII} and integrating by parts, we obtain
\begin{equation*}
\lambda \| q w \|^2_{2} = \frac{1}{2} \langle (( |\nabla q |^2 + q \Delta q) w, w \rangle + \Re \langle qw, g \rangle.
\end{equation*}
The boundedness of $U$ and $\nabla q$ and the inequality $|\Delta q | \lesssim \lambda^{1/2}$ yield
\begin{equation*}
\lambda^{1/2} \| q w \|_2 \lesssim \| w \|_2 + \lambda^{-1/2} \| g \|_2.
\end{equation*}
Away from the cone, i.e., $|q| \geq c$,  this inequality gains half of a derivative allowing us to prove Strichartz estimates by Sobolev's embedding as before. In the following we thus suppose that $w$ and $g$ are supported in a neighborhood $N$ of $K\cap U$.

\subsubsection{Gradient flow decomposition}
We use the gradient flow equation to decompose $w$ into two parts,
\begin{equation*}
w = w_1 + w_2,
\end{equation*}
where $w_j$ solves the inhomogeneous and the homogenous equations
\begin{align}
\label{eq:Definitionw1}
[(q_x \partial_x + q_\xi \partial_\xi) + \lambda (q - i q_x \cdot \xi)] w_1 &= -ig, \quad w_1 \vert_K = 0, \\
\label{eq:Definitionw2}
[(q_x \partial_x + q_\xi \partial_\xi) + \lambda (q - i q_x \cdot \xi)] w_2 &= 0, \quad w_2 \vert_K = w.
\end{align}
We split $S_\lambda u$ correspondingly, i.e.,
\begin{equation*}
u_i = T_\lambda^* \Phi^{1/2} b(x,\xi) w_i,
\end{equation*}
where $b$ is a smooth cut-off for $N$.  The estimate for $w_1$ follows as on p.~431 of \cite{Tataru2002}.

To estimate $w_2$, we have to analyze the regularity of the gradient flow. Let $(x,\xi)$ be initial data  on $K\cap U=\{ q=0 \} \cap U$. The flow $(x_t,\xi_t)$ is given by
\begin{equation*}
\left\{ \begin{array}{cl}
\partial x_t &= q_x(x_t,\xi_t), \quad x(0) = x, \\
\partial \xi_t &= q_\xi(x_t,\xi_t), \quad \xi(0) = \xi.
\end{array} \right.
\end{equation*}

Due to the support of $w$, it is enough to analyze the regularity of the gradient flow in $U$. Here we have the same estimates as in Theorem~3.2 in \cite{Tataru2002}.
\begin{theorem}
For $q(x,\xi)$ given by \eqref{eq:HalfWaveSymbol},  the following estimates hold on $U$:
\begin{align*}
|\partial_x^\alpha \partial_\xi^\beta x_t| &\leq c_{\alpha, \beta} \lambda^{\frac{|\alpha|-1}{2}} e^{c_{\alpha,\beta} \sqrt{\lambda} |t|}, \qquad |\alpha| + |\beta| > 0,\\
|\partial_x^\alpha \partial_\xi^{\beta} (\xi_t-\xi)| &\leq c_{\alpha,\beta} \lambda^{\frac{|\alpha|-1}{2}} e^{c_{\alpha,\beta} \sqrt{\lambda} |t|}.
\end{align*}
\end{theorem}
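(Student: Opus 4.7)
The plan is to prove both bounds simultaneously by induction on $N=|\alpha|+|\beta|$, driven by pointwise estimates for the derivatives of the symbol $q(x,\xi)=\xi_0-(\tilde\varepsilon^{ij}_{\lambda^{1/2}}(x)\xi_i\xi_j)^{1/2}$ on the neighborhood $U$. First I would combine the assumption $\partial_x^2\tilde\varepsilon\in L^1 L^\infty$ with Bernstein's inequality in the time variable, using that $\tilde\varepsilon_{\lambda^{1/2}}$ has Fourier support in $\{|\xi|\leq\lambda^{1/2}\}$, to obtain
\begin{equation*}
\|\partial_x^\alpha\tilde\varepsilon_{\lambda^{1/2}}\|_{L^\infty}\lesssim\lambda^{(|\alpha|-1)_+/2},\qquad |\alpha|\geq 0.
\end{equation*}
Plugging this into a Faà di Bruno computation for $(\tilde\varepsilon^{ij}\xi_i\xi_j)^{1/2}$, and using $|\xi'|\sim 1$ together with $(\tilde\varepsilon^{ij}\xi_i\xi_j)^{1/2}\sim 1$ on $U$, yields
\begin{equation*}
|\partial_x^\alpha\partial_\xi^\beta q(x,\xi)|\lesssim\lambda^{(|\alpha|-1)_+/2},\qquad|\alpha|+|\beta|\geq 1.
\end{equation*}

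For the base case $N=1$, I would differentiate the gradient flow equations in the initial data, obtaining the linear variational system $\dot J_t=H(x_t,\xi_t)J_t$ with $J_0=I$, whose block Hessian $H$ has entries of size $|H_{xx}|\lesssim\sqrt\lambda$ and $|H_{x\xi}|,|H_{\xi x}|,|H_{\xi\xi}|\lesssim 1$. Running Grönwall on the weighted scalar $A(t)=\sqrt\lambda\,|\partial_\ast x_t|+|\partial_\ast\xi_t|$, with the two choices $\ast=x,\xi$ of initial-data derivative, produces the required bound. The extra factor $\lambda^{-1/2}$ that appears when $|\alpha|=0$ comes from the vanishing of $\partial_\xi x_0=0$ and $\partial_\xi(\xi_0-\xi)\big|_{t=0}=0$ together with a single integration of $e^{c\sqrt\lambda\tau}$ against the integrating factor $\lambda^{-1/2}$.

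For the induction step $N\geq 2$, I would differentiate the flow equations $N$ times by the multi-index chain rule to obtain
\begin{equation*}
\partial_t(\partial_x^\alpha\partial_\xi^\beta x_t)=q_{xx}(x_t,\xi_t)\,\partial_x^\alpha\partial_\xi^\beta x_t+q_{x\xi}(x_t,\xi_t)\,\partial_x^\alpha\partial_\xi^\beta\xi_t+F_{\alpha,\beta}(t),
\end{equation*}
together with an analogous identity for $\xi_t$. The remainder $F_{\alpha,\beta}$ is a finite linear combination of products of one derivative $(\partial_x^{\alpha'}\partial_\xi^{\beta'}q_\bullet)(x_t,\xi_t)$ with $1\leq|\alpha'|+|\beta'|\leq N$ and strictly lower-order flow derivatives whose $x$-orders sum to $|\alpha|-|\alpha'|$. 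Inserting the symbol bound above and the induction hypothesis into each factor, the powers of $\lambda$ telescope to $\lambda^{(|\alpha|-1)/2}$ while the exponentials combine into a single $e^{c_N\sqrt\lambda|t|}$, and a final use of the weighted Grönwall from the base case closes the induction.

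The main obstacle is the combinatorial bookkeeping in the induction step: checking that each term in the multi-index chain rule for $q(x_t,\xi_t)$ is bounded by $\lambda^{(|\alpha|-1)/2}e^{c_N\sqrt\lambda|t|}$ hinges on how the $x$-derivatives distribute across the symbol factor and the flow factors, and on verifying that the worst distribution still telescopes to the claimed exponent. A cleaner alternative, implicit in the approach of \cite{Tataru2002}, is to rescale $(x,t)\mapsto(\sqrt\lambda\,x,\sqrt\lambda\,t)$ so that the truncated symbol becomes a $C^\infty$ symbol with uniformly bounded derivatives, reducing the problem to standard smooth ODE regularity on the time interval $[0,\sqrt\lambda\,|t|]$.
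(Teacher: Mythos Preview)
Your approach is correct and is exactly what the paper does: it simply observes that the derivatives of $q$ and of the full wave symbol $p$ satisfy the same bounds on $U$ and then defers to Theorem~3.2 of \cite{Tataru2002}, whose proof is the induction/Gr\"onwall argument you outline (the rescaling shortcut you mention at the end is also the cleanest way to organize it).

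One presentational point in the base case: your weighted Gr\"onwall alone does not deliver the $\xi_t-\xi$ bound when $|\alpha|\geq 1$. For $\ast=x$ you have $A(0)=\sqrt\lambda$, hence $A(t)\leq\sqrt\lambda\,e^{c\sqrt\lambda|t|}$, which yields only $|\partial_x\xi_t|\leq\sqrt\lambda\,e^{c\sqrt\lambda|t|}$ rather than the required $e^{c\sqrt\lambda|t|}$. The integration trick you invoke for $|\alpha|=0$---using $\partial_\ast(\xi_0-\xi)=0$ together with $|q_{\xi x}|,|q_{\xi\xi}|\lesssim 1$---is in fact needed for the second estimate for \emph{every} $\alpha$, not only $|\alpha|=0$: feeding the already-established $x_t$ bound into $\partial_t(\partial_\ast\xi_t)=q_{\xi x}\,\partial_\ast x_t+q_{\xi\xi}\,\partial_\ast\xi_t$ and integrating produces the factor $\int_0^{|t|} e^{c\sqrt\lambda s}\,ds\lesssim\lambda^{-1/2}e^{c\sqrt\lambda|t|}$, which recovers the correct power. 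The same remark applies at every step of the induction for the $\xi_t-\xi$ estimate.
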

The proof from \cite{Tataru2002} for the symbol $p$ works also for $q$ since in $U$ the derivatives of $q$ and $p$ satisfy the same bounds.

\subsubsection{Reduction to oscillatory integral estimate}
 In the next result $u_2$ is expressed in terms of trace of $w$ on the cone.
\begin{proposition}
Assume $q$ is of the form \eqref{eq:HalfWaveSymbol}. Then, we have
\begin{equation*}
u_2 = \lambda^{-1/2} V_\lambda w \vert_K,
\end{equation*}
where $V_\lambda$ is the integral operator
\begin{equation*}
V_\lambda w = \lambda^{\frac{3(n+1)}{4}} \int_K e^{i \lambda \xi (x-y)} G(x,y,\xi) w dx d\xi
\end{equation*}
with $G$ satisfying
\begin{equation*}
|\partial_x^\alpha \partial_\xi^\beta G(x,y,\xi)| \leq c_{\alpha,\beta} \lambda^{\frac{|\alpha|}{2}} e^{-c \lambda(x-y)^2}.
\end{equation*}
\end{proposition}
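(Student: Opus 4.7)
The plan is to solve the transport equation \eqref{eq:Definitionw2} explicitly along the gradient flow, insert the result into the adjoint FBI representation of $u_2$, and integrate out the transverse variable $r$ so that only the boundary values of $w$ on $K$ remain, with the outcome packaged into the amplitude $G$.

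First, let $(\zeta, r) \mapsto (x_r(\zeta), \xi_r(\zeta))$ denote the gradient-flow parametrization of a neighborhood of $K \cap U$ with $\zeta \in K$. Integrating \eqref{eq:Definitionw2} along this flow yields
\[
w_2(x_r(\zeta), \xi_r(\zeta)) = w(\zeta) \exp\Big(-\lambda \int_0^r (q - i q_x \cdot \xi)(x_s, \xi_s)\, ds\Big).
\]
Substituting into the adjoint FBI formula and collecting the $\Phi$-weights, so that the resulting kernel reduces to a Gaussian $e^{-\lambda(x-y)^2/2}$ times an oscillatory factor, one arrives at
\[
u_2(y) = c_{n+1} \lambda^{\frac{3(n+1)}{4}} \int_N e^{i \lambda \xi(y-x)} e^{-\lambda(x-y)^2/2} b(x,\xi) w_2(x,\xi)\, dx\, d\xi.
\]

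Next I change variables from $(x,\xi) \in N$ to $(\zeta, r)$ via the gradient flow, whose Jacobian $J(\zeta, r)$ has the regularity supplied by the theorem preceding this proposition. Since $q = 0$ on $K$ and $|\nabla q| \gtrsim 1$, Taylor expansion of $q$ along the gradient curves gives
\[
\int_0^r q(x_s, \xi_s)\, ds = \tfrac{r^2}{2}|\nabla q(\zeta)|^2 + O(r^3 \lambda^{1/2}),
\]
so the real part of the accumulated exponent is $-\tfrac{\lambda r^2}{2}|\nabla q|^2 + O(\lambda^{1/2} r^3)$, providing Gaussian decay on the scale $|r| \lesssim \lambda^{-1/2}$. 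Combining this with $e^{-\lambda(x_r - y)^2/2}$ and using the expansion $x_r = x_0 + r q_\xi + O(r^2)$, I complete the square in $r$, evaluate the resulting Gaussian $r$-integral (which yields the prefactor $\lambda^{-1/2}$), and package $J$, $b$, and the remaining factors into an amplitude $G(x,y,\xi)$ depending only on $\zeta = (x,\xi) \in K$ and $y$. The oscillatory factor $e^{i\lambda \xi_r(y - x_r)}$ reduces to $e^{i\lambda \xi(x-y)}$ up to a sign convention, with the residual phase corrections absorbed into $G$.

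The main obstacle is the pointwise control of $G$ and its derivatives. The Gaussian factor $e^{-c\lambda(x-y)^2}$ emerges from the completed quadratic form, whose real part is strictly positive on $U$ thanks to the ellipticity of $\tilde\varepsilon$ and the lower bound $|\nabla q| \gtrsim 1$ on $K \cap U$. To estimate $\partial_{x,\xi}^\alpha G$ I use the gradient-flow derivative bounds from the preceding theorem (giving $\lambda^{(|\alpha|-1)/2}$ on $|r| \lesssim 1$, hence at most $\lambda^{1/2}$ loss per additional $x$-derivative), together with the standard estimate $|\partial_{x,\xi}^\alpha e^{-c\lambda(x-y)^2}| \lesssim \lambda^{|\alpha|/2} e^{-c'\lambda(x-y)^2}$. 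The delicate point is that $\partial_x^2 q$ lies only in $L^1 L^\infty$ rather than $L^\infty$, so the Taylor expansion of $q$ along the flow has to be carried out uniformly over $|r| \lesssim 1$, exploiting the specific form \eqref{eq:HalfWaveSymbol} with $\xi_0$ linear and the smooth $\xi$-dependence of $t(x,\xi^\prime)$.
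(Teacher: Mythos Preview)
Your proposal is correct and follows essentially the same route the paper invokes: the paper itself does not supply a detailed argument here but simply observes that the proof of Theorem~3.3 in \cite{Tataru2002} carries over to the half-wave symbol once the gradient-flow regularity bounds from the preceding theorem are available on the support of $w_2$. Your outline---solving the transport ODE \eqref{eq:Definitionw2} explicitly along the gradient flow, passing to $(\zeta,r)$ coordinates, integrating out the Gaussian in $r$ to produce the $\lambda^{-1/2}$ factor, and absorbing the Jacobian, cutoff, and residual phase into $G$ with the stated derivative bounds---is exactly the content of that argument, and your identification of the $L^1L^\infty$ regularity of $\partial_x^2 q$ as the delicate point is the correct emphasis.
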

The proof of the corresponding Theorem~3.3 in \cite{Tataru2002} is based on the estimates from the previous lemma. Hence, the argument 
can be transfered to the half-wave symbol, as long as the estimates for the gradient flow apply. This is guaranteed by the support condition of $w_2$. The estimate for $u_2$ is thus reduced to the oscillatory integral bound
\begin{equation*}
\| V_\lambda w \|_{L^p L^q} \lesssim \lambda^{\rho + 1/4} \| H_q w \|_{L^1_{x_0} L^2_{x^\prime \xi^\prime}(K)}
\end{equation*}
for $w$ supported in $K \cap U$.

Such an estimate is proved in Theorem~3.4 of \cite{Tataru2002}. The analysis from \cite{Tataru2002} applies due to the Lipschitz equivalence of $p$ and $q$ on $U$. We record this result.
\begin{theorem}
Let $a(x,\xi)$ be a smooth compactly supported function, which is $0$ near $\xi = 0$ and $1$ in $\{1/4 \leq |\xi^\prime| \leq 4, \; |\xi_0| \leq 4 \}$. Then,
\begin{equation*}
\| V_\lambda a(x,\xi) L \|_{L^2(K \cap \{x_0 = 0 \}) \to L^p L^q} \lesssim \lambda^{\rho + 1/4},
\end{equation*}
where $L$ is the transport operator along the Hamilton flow given by
\begin{equation*}
(Lw)(x,\xi) = 
\begin{cases}
0, \quad \text{ if } x_0 < 0,\\
w(x_t,\xi_t), \quad \text{ if } x_{t0} = 0, x_0 \geq 0.
\end{cases}
\end{equation*}
\end{theorem}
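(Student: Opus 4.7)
The plan is to adapt Tataru's proof of Theorem~3.4 in \cite{Tataru2002} by exploiting the Lipschitz equivalence of $p$ and $q$ on the relevant region $U$. The strategy decomposes into three main steps: a $TT^*$ reduction, a dispersive kernel estimate, and complex interpolation in the spirit of Keel--Tao.

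First, I would apply a $TT^*$ argument. Since $L$ propagates data on $K \cap \{x_0 = 0\}$ to the full cone $K$ along the Hamilton flow, $L L^*$ admits a representation as an average over translations $F^t$ along the flow, for $t \ge 0$. The operator $V_\lambda a L L^* a^* V_\lambda^*$ then becomes an integral $\int_0^\infty Z^t\, dt$, where
\begin{equation*}
Z^t = V_\lambda\, a\, F^t\, \delta_{q=0}\, a^*\, V_\lambda^*.
\end{equation*}
The squared operator-norm bound $\lesssim \lambda^{2\rho + 1/2}$ for $V_\lambda a L$ would follow from a suitably integrated kernel estimate for $Z^t$ on a time interval $|t| \lesssim 1$ (the effective Hamilton-flow scale relative to the support of $a$).

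Second, the crucial dispersive bound for the kernel $H^t$ of $Z^t$ should read
\begin{equation*}
|H^t(y,\tilde{y})| \lesssim \lambda^{n+1}\, e^{-c \lambda(\tilde{y}_0 - y_0 - t)^2}\, (1 + \lambda |y - \tilde{y}|)^{-(n-1)/2},
\end{equation*}
in perfect analogy with Proposition~\ref{prop:H} of the $C^2$ case. This is proved by (non)stationary phase using the Hamilton-flow regularity estimates of the preceding theorem. The symbol $q$ in \eqref{eq:ExplicitHWS} is $C^1$-equivalent to the wave symbol $p$ on $U$, and the Hamilton flow generated by $q$ obeys the same bounds as that generated by $p$, so the phase analysis of \cite[\S{}3]{Tataru2002} transfers to our setting.

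Third, interpolating the dispersive bound with the trivial $L^2 \to L^2$ estimate for $Z^t$, and integrating in $t$ using the Gaussian factor as a localizer in $x_0$, yields the $L^{p'}L^{q'} \to L^pL^q$ bound with the stated power of $\lambda$ for all Strichartz pairs. The main obstacle I anticipate is controlling the exponential factors $e^{c\sqrt{\lambda}|t|}$ that appear in the $L^1 L^\infty$ Hamilton-flow estimates; however, since $a$ and the phase localization of $V_\lambda$ confine the effective time variable to a small neighborhood of $K \cap U$, these factors remain bounded on the relevant scale $|t| \lesssim 1$, and the argument of \cite{Tataru2002} applies essentially verbatim.
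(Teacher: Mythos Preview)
Your approach is essentially the same as the paper's: both defer to Tataru's Theorem~3.4 in \cite{Tataru2002} and argue that the analysis transfers because the half-wave symbol $q$ and the wave symbol $p$ are Lipschitz-equivalent on $U$, so the Hamilton-flow regularity (Lemmas~\ref{lem:HamiltonFlowI} and~\ref{lem:HamiltonFlowII}) and the ensuing (non)stationary phase kernel bound carry over verbatim.

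One correction to your last paragraph: the exponential factors $e^{c\sqrt{\lambda}|t|}$ you worry about belong to the \emph{gradient}-flow estimates (the paper's analogue of \cite[Theorem~3.2]{Tataru2002}), not to the Hamilton-flow estimates that feed into the oscillatory integral bound. The relevant Hamilton-flow bounds are the polynomial ones $(1+t\sqrt{\lambda})^{|\alpha|-1}$ from Lemmas~\ref{lem:HamiltonFlowI}--\ref{lem:HamiltonFlowII}, and your proposed resolution (``bounded on $|t|\lesssim 1$'') would in any case fail for an exponential in $\sqrt{\lambda}$. This does not affect the validity of the overall strategy, only the identification of which flow estimate is at play.
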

The above estimate is a consequence of the regularity of the Hamilton flow. As proved in \cite[p.~434--436]{Tataru2002}, the estimates for $C^2$ coefficients stated in Lemma \ref{lem:HamiltonFlowI} and \ref{lem:HamiltonFlowII} remain valid. As pointed out above, these estimates are a consequence of estimates for the derivatives, which are equivalent for the wave and half-wave symbol in a suitable neighbourhood of the cone away from the origin. This finishes the proof of \eqref{eq:HalfWaveII} and thus of  Theorem \ref{thm:StrichartzEstimatesL1LinfCoefficients}.

\section{Estimates for less regular coefficients}
\label{section:Consequences}
In this section we first prove the weaker estimates stated in Theorems \ref{thm:StrichartzEstimatesMaxwell2dCs} and \ref{thm:MaxwellStrichartzEstimatesBesovRegularity} for $\varepsilon$ without a second derivative. 
Since the results are clear for $s=0$ by Sobolov's embedding, we assume that $s\in(0,2)$.

\begin{proof}[Proof of Theorem \ref{thm:StrichartzEstimatesMaxwell2dCs}]

We follow the argument from \cite{Tataru2001}. By rescaling we can assume that $\| \varepsilon^{ij}\|_{\dot{C}^s}\leq 1$ 
and $\kappa = 1$. Note this transfers to $\tilde\varepsilon$, up to a constant.  As in Paragraph~\ref{subsub:cube}, the low frequencies are estimated by Sobolev's embedding.
Here we need that $\|\partial_k(\varepsilon_{ij} S_0u)\|_{H^{-\sigma}}\lesssim\|u\|_{L^2} $, which is true since $1-\sigma<s$. 
Hence we will suppose that the Fourier transform of $u$ vanishes near the origin.  We can then localize $u$ to a 
cube of sidelength $1$. Indeed, take a smooth partition of unity $(\chi_j)_{j \in \Z^3}$,
localizing to cubes with sidelength $1$. Compared to Paragraph~\ref{subsub:cube}, the inequality
\begin{equation*}
\begin{split}
\sum_j& \big( \| \chi_j u \|_{L^2}^2 + \| P(x,D) (\chi_j u) \|_{H^{-\sigma}}^2 + \| \langle D' \rangle^{-\frac{1}{2}-\frac{\sigma}{2}} \rho_j \|_{L^2}^2 \big) \\
 &\lesssim \| u \|_{L^2}^2 + \| P(x,D) u \|^2_{H^{- \sigma}} + \| \langle D \rangle^{-\frac{1}{2}-\frac{\sigma}{2} } \rho_e \|_{L^2}^2
 \end{split}
\end{equation*}
needs a bit more care. The estimate for the first term is clear. The summands in the second look like
\begin{equation*}
\partial_i (\varepsilon_{kl} \chi_j u) = \chi_j \partial_i (\varepsilon_{kl} u) + (\partial_i \chi_j) \varepsilon_{kl} u.
\end{equation*}
Here the second term on the right-hand side can be estimated in $L^2$. For the first one we note that
\[ \sum_j \| \chi_j v\|^2_{H^{-\sigma}}\lesssim  \|v\|_{H^{-\sigma}}^2\]
follows by duality and interpolation from $\|\sum_j\chi_j u_j\|_{H^k}^2\lesssim\sum_j\|u_j\|_{H^k}^2$ for $k\in\{0,1\}$. The third term can be estimated as previously.

We next claim that \eqref{eq:StrichartzEstimatesMaxwell2dCs} is a consequence of the dyadic estimate
\begin{equation}
\label{eq:DyadicEstimateCsCoefficients}
\lambda^{- \rho } \lambda^{- \frac{\sigma}{2}} \| S_\lambda u \|_{L^p L^q} \lesssim \| S_\lambda u \|_{L^2} + \| P^\lambda S_\lambda u \|_{H^{-\sigma}} + \lambda^{-\frac{1}{2}-\frac{\sigma}{2}} \| S_\lambda \rho_e \|_{L^2},
\end{equation}
where $P^\lambda$ is the operator $P$ after Fourier-truncation of $\varepsilon_{ij}$ at $\nu = \lambda^{\frac{2}{2+s}}$.
To recover \eqref{eq:StrichartzEstimatesMaxwell2dCs} from the above display, it suffices to establish
\begin{equation*}
\sum_{\lambda \in 2^{\mathbb{N}_0}} \| (P^\lambda S_\lambda - S_\lambda P) u \|^2_{H^{-\sigma}} \lesssim \| u \|^2_{L^2}.
\end{equation*}
Observe that also the frequencies of $P^\lambda S_\lambda u$ are localized to $\lambda$, since the coefficients are 
truncated at $\nu \ll \lambda$. Taking out $\partial_k$, we thus have to show
\begin{equation*}
\sum_{\lambda \in 2^{\mathbb{N}_0}} \lambda^{2-2\sigma} \| (\varepsilon_{ij}^{\lesssim \nu} S_\lambda - S_\lambda \varepsilon_{ij}) u\|^2_{L^2} \lesssim \| u \|^2_{L^2}.
\end{equation*}
which follows from
\begin{align*}
\| [ \varepsilon_{ij}^{\lesssim \nu}, S_\lambda] v \|_{L^2} &\lesssim \lambda^{\sigma - 1-\delta} \| v \|_{L^2}, \\
\sum_{\lambda \in 2^{\mathbb{N}_0}} \lambda^{2-2\sigma} \| S_\lambda ( \varepsilon_{ij}^{\gtrsim \nu} v) \|_{L^2}^2 &\lesssim \| v \|_{L^2}^2.
\end{align*}
for some $\delta>0$. These estimates can be proved as on p.~417 of \cite{Tataru2001}  using  $1-\sigma<s$.

Finally, since the frequencies of $P^\lambda S_\lambda u$
are located at $\lambda$, inequality \eqref{eq:DyadicEstimateCsCoefficients} becomes
\begin{equation*}
\lambda^{- \rho} \| S_\lambda u \|_{L^p L^q} \lesssim \lambda^{\frac{\sigma}{2}} \| S_\lambda u \|_{L^2} + \lambda^{-\frac{\sigma}{2}} \| P^\lambda S_\lambda u \|_{L^2} + \lambda^{-\frac{1}{2}} \| S_\lambda \rho_e \|_{L^2}.
\end{equation*}
This is a special case of \eqref{eq:StrichartzEstimatesMaxwell2d} with $\kappa:= \lambda^{\sigma/2}$. 
Because the Fourier transform of the coefficients $\varepsilon_{ij}^{\lesssim \nu}$ of $P^\lambda$ are supported in $\{ | \xi| \leq \nu=\lambda^{\frac{2}{2+s}} \}$, we find
\begin{equation*}
\| \partial^2_x \hat\varepsilon^{ij}_{\lesssim \nu} \|_{L^\infty} 
\lesssim \| \partial^2_x \varepsilon_{ij}^{\lesssim \nu} \|_{L^\infty} 
\lesssim \nu^{2-s} \| \varepsilon_{ij} \|_{\dot{C}^s}
\lesssim \lambda^{2 \sigma} \| \varepsilon^{ij} \|_{\dot{C}^s} \lesssim \lambda^{2 \sigma}.
\end{equation*}
(Note that  $ (\hat\varepsilon^{ij}_{\lesssim \nu}):= (\varepsilon_{ij}^{\lesssim \nu})^{-1}$
enters the statement of  Theorem \ref{thm:StrichartzEstimatesMaxwell2d} for $P^\lambda$.)
Therefore, Theorem \ref{thm:StrichartzEstimatesMaxwell2d} with $\kappa= \lambda^{\sigma/2}$ yields \eqref{eq:DyadicEstimateCsCoefficients}, and Theorem \ref{thm:StrichartzEstimatesMaxwell2dCs} is proven.
\end{proof}

Next, we turn to the proof of Theorem \ref{thm:MaxwellStrichartzEstimatesBesovRegularity}.
\begin{proof}[Proof of Theorem \ref{thm:MaxwellStrichartzEstimatesBesovRegularity}]
To show Theorem \ref{thm:MaxwellStrichartzEstimatesBesovRegularity}, we note that the proof of Theorem \ref{thm:StrichartzEstimatesL1LinfCoefficients} yields moreover the estimate
\begin{equation*}
\begin{split}
\| |D|^{-\rho} u \|_{L^p(0,T;L^q)} &\lesssim\kappa^{\frac{1}{p}} \| u \|_{L^\infty L^2} +\kappa^{-\frac{1}{p'}} \| P(x,D) u \|_{L^1 L^2} \\
&\quad + T^{\frac{1}{2}} \big( \sum_{\lambda \geq 1} \lambda^{-1} \| S_\lambda \rho_e \|_{L^\infty L^2}^2 + \sum_{\lambda \geq 1} \lambda^{-1} \| S_\lambda \partial_t \rho_e \|_{L^1 L^2}^2 \big)^{\frac{1}{2}}.
\end{split}
\end{equation*}

By rescaling we can suppose that $T=1$ and $\| \varepsilon \|^2_{\mathcal{X}^s} \leq\kappa^{2+s}$. 
We have to transfer the condition on $\varepsilon$ to $\tilde\varepsilon$. For this we 
characterize $\mathcal{X}^s$ by differences as for usual Besov spaces, cf.\ Theorems~2.36 and 2.37 of  
\cite{BahouriCheminDanchin2011}. Then one can proceed as for $\varepsilon\in C^s$.
After reducing the claim to dyadic estimate and truncating the coefficients at frequency $\kappa^{\frac{1}{2}} \lambda^{\frac{2}{2+s}}$, we will infer the estimates from Theorem \ref{thm:StrichartzEstimatesL1LinfCoefficients}.

We start with a first frequency localization. The low frequencies $\lambda\le \max\{1,\kappa^{-\frac1\sigma}\!\}$ are estimated by Sobolev's embedding as above. For $\lambda \geq \max\{1,\kappa^{-\frac1\sigma}\}$, we truncate $\varepsilon_{ij}$ at frequencies $\lambda/16$ and let $P^\lambda$ denote $P$ with these coefficients $\varepsilon_{ij}^\lambda$. We claim that it suffices to prove the dyadic estimates
\begin{equation}
\label{eq:DyadicEstimateXs}
\begin{split}
\lambda^{-\rho - \frac{\sigma}{p}} \| S_\lambda u \|_{L^p L^q} &\lesssim\kappa^{\frac{1}{p}} \| S_\lambda u \|_{L^\infty L^2} + \lambda^{-\sigma}\kappa^{-\frac{1}{p^\prime}} \| P^\lambda S_\lambda u \|_{L^1 L^2} \\
&\quad + \lambda^{-\frac{1}{2}-\frac{\sigma}{p}} \| S_\lambda \rho_e \|_{L^\infty L^2} + \lambda^{-\frac{1}{2}-\frac{\sigma}{p}} \| S_\lambda \partial_t \rho_e \|_{L^1 L^2}.
\end{split}
\end{equation}
Indeed, this inequality implies \eqref{eq:GeneralBesovRegularityEstimates} provided that
\begin{equation*}
\| |D|^{-\sigma} (S_\lambda P - P^\lambda S_\lambda) u \|^2_{L^1 L^2} \lesssim\kappa^2 \| \tilde S_\lambda u \|^2_{L^\infty L^2}.
\end{equation*}
holds. Factoring out the derivatives, we thus have to show 
\begin{equation}\label{eq:DyadicEstimateXs-1}
\| (S_\lambda \varepsilon_{ij} - \varepsilon_{ij}^\lambda S_\lambda) u \|^2_{L^1 L^2} \lesssim\kappa^2 \lambda^{2(\sigma - 1)} \| \tilde S_\lambda u \|^2_{L^\infty L^2}.
\end{equation}
Because $u$ is compactly supported in time, it suffices to take  compactly supported $\varepsilon_{ij}$. Then, we know that
\begin{equation*}
\| \varepsilon_{ij} \|_{L^1 L^\infty} \lesssim 1, \quad \| \varepsilon_{ij} \|_{\mathcal{X}^s} \lesssim\kappa^{\frac{2+s}{2}}
\end{equation*}
(using also the boundedness of $\varepsilon_{ij}$),  
which gives
\begin{equation*}
\| S_\lambda \varepsilon_{ij} \|_{L^1 L^\infty} \lesssim \min(\kappa^{\frac{2+s}{2}} \lambda^{-s}, 1).
\end{equation*}
As $1-\sigma= 2s/(2+s)$, we infer
\begin{align*}
\| \varepsilon_{ij} \|_{\dot{B}^{1 \infty 1}_{1-\sigma}} &= \sum_{\lambda} \lambda^{1-\sigma} \| S_\lambda \varepsilon_{ij} \|_{L^1 L^\infty} \\
&\lesssim \sum_{\lambda \lesssim\kappa^{\frac{2+s}{2s}}} \lambda^{1-\sigma} + \sum_{\lambda \gtrsim\kappa^{\frac{2+s}{2s}}} \lambda^{1-\sigma}\kappa^{\frac{2+s}{2}} \lambda^{-s} \lesssim\kappa.
\end{align*}
At this point, \eqref{eq:DyadicEstimateXs-1} follows from inequality (4.4) of \cite{Tataru2002}.

Next, we further restrict the Fourier support of $\varepsilon^\lambda_{ij}$  to $\kappa^{\frac12} \lambda^{\frac{2}{2+s}}$.
This is possible since we can control the error in the second term in \eqref{eq:DyadicEstimateXs} by means of
\begin{align*}
\| \partial_x (U_{\kappa^{\frac{1}{2}} \lambda^{\frac{2}{2+s}}} \varepsilon^\lambda_{ij} ) S_\lambda u \|_{L^1 L^2} &\lesssim \lambda \| U_{\kappa^{\frac{1}{2}} \lambda^{\frac{2}{2+s}}} \varepsilon_{ij} \|_{L^1 L^\infty} \| S_\lambda u \|_{L^\infty L^2} \\
&\lesssim \lambda \big( \kappa^{\frac{1}{2}} \lambda^{\frac{2}{2+s}} \big)^{-s} \| \varepsilon_{ij} \|_{\mathcal{X}^s} \| S_\lambda u \|_{L^\infty L^2} \\
&\lesssim \kappa \lambda^{\sigma} \| S_\lambda u \|_{L^\infty L^2},
\end{align*}
where the ultimate inequality follows from the $\mathcal{X}^s$-bound for $\varepsilon$ and $U_\nu$ was defined in \eqref{eq:U}. To apply Theorem \ref{thm:StrichartzEstimatesL1LinfCoefficients}, we also note that
\begin{equation*}
\| \partial^2_x (S_{\lesssim\kappa^{\frac{1}{2}} \lambda^{\frac{2}{2+s}}} \varepsilon^\lambda_{ij})  \|_{L^1 L^\infty} \lesssim \kappa^{\frac{2-s}{2}} \lambda^{\frac{2(2-s)}{2+s}} \| \varepsilon_{ij} \|_{\mathcal{X}^s}
 \lesssim (\kappa \lambda^\sigma)^2,
\end{equation*}
where $\kappa\lambda^\sigma\ge1$.
Now, \eqref{eq:DyadicEstimateXs} is a consequence of Theorem \ref{thm:StrichartzEstimatesL1LinfCoefficients}. 
\end{proof}

Finally, we discuss how  the inhomogeneous estimates given in Theorem \ref{thm:MaxwellInhomogeneousStrichartz} 
can be deduced from Theorem \ref{thm:StrichartzEstimatesMaxwell2d} and the analysis in  \cite{Tataru2001}.
\begin{proof}[Proof of Theorem \ref{thm:MaxwellInhomogeneousStrichartz}]
By rescaling it is enough to consider the case
\begin{equation*}
\kappa = 1, \qquad \| \partial^2_x \varepsilon \|_\infty \leq 1.
\end{equation*}
Then we can modify the arguments in the proof of Theorem \ref{thm:StrichartzEstimatesMaxwell2d} to reduce 
the desired inequality to $u$ with support in a cube of size $1$,  to localize it 
to a dyadic estimate at frequency $\lambda\gtrsim 1$ and for the region $\{|\xi_0|\lesssim|\xi'|\}$, and
to truncate the coefficients at frequency $\lambda^{\frac{1}{2}}$. Since the diagonalization $P = \mathcal{M} \mathcal{D} \mathcal{N}$ essentially respects frequency localization and $L^p$-properties, 
we finally  reduce to the inequality
\begin{equation}
\label{eq:DyadicEstimateInhomogeneous}
\lambda^{-\rho} \| S_\lambda u \|_{L^p L^q} \lesssim \| S_\lambda u \|_{L^2} + \| \tilde{f}^\lambda_1 \|_{L^2} + \lambda^\rho \| \tilde{f}^{\lambda}_2 \|_{L^{p^\prime} L^{q^\prime}} + \lambda^{-\frac{1}{2}} \| S_\lambda \rho_e \|_{L^2},
\end{equation}
where $\mathcal{D}(x,D) S_\lambda u = \tilde{f}^\lambda_1 + \tilde{f}^\lambda_2$. The degenerate component of $\mathcal{D}$ 
is handled as in Paragraph~\ref{subsubsection:ReductionHalfWaveEquations}. We sketch the reduction to the above display. 
Using Proposition~\ref{prop:MDN-P}, we can write up to error terms of order $O_{L^2}(\lambda^{-N})$:\footnote{We suppress frequency localization and slightly enlarged variants between the pseudo-differential operators to lighten the notation.}
\begin{equation*}
P S_\lambda = \mathcal{M} \mathcal{D} \mathcal{N} S_\lambda + E_\lambda
\end{equation*}
with $\| E_\lambda \|_{L^2 \to L^2} \lesssim 1$. Let $v$ denote the original function and $S_\lambda \mathcal{N} S_\lambda v = \tilde{S}_\lambda u$. We let $\mathcal{M} S_\lambda \mathcal{D} S_\lambda \mathcal{N} S_\lambda v = f_1^\lambda + f_2^\lambda + g^\lambda$ with $\| g^\lambda \|_{L^2} \lesssim \| S_\lambda v \|_{L^2}$. This requires 
\begin{equation*}
\mathcal{M} \tilde{f}_1^\lambda = f_1^\lambda + g^\lambda, \qquad \mathcal{M} \tilde{f}_2^\lambda = f_2^\lambda.
\end{equation*}
Multiplication with $\mathcal{N}$ gives (cf. Proposition \ref{prop:KohnNirenberg})
\begin{equation*}
\tilde{f}_1^\lambda + E_1^\lambda \tilde{f}_1^\lambda = \mathcal{N} f_1^\lambda + \mathcal{N}g^\lambda, \quad \tilde{f}_2^\lambda + E_2^\lambda \tilde{f}_2^\lambda = \mathcal{N} f_2^\lambda,
\end{equation*}
where we have
\begin{equation*}
\| E_1^\lambda \|_{L^2 \to L^2} \lesssim \lambda^{-1} \quad \text{ and } \quad \| E_2^\lambda \|_{L^{p'} L^{q'} \to L^{p'} L^{q'}} \lesssim \lambda^{-1}
\end{equation*}
 by Lemma \ref{lem:L2BoundednessRoughSymbols}.
Hence, the equations for $\tilde{f}_1^\lambda$ and $\tilde{f}_2^\lambda$ can be solved in $L^2$ and $L^{p'} L^{q'}$, 
respectively, by the Neumann series, yielding the estimates 
\begin{equation*}
\| \tilde{f}_1^\lambda \|_{L^2} \lesssim \| f_1^\lambda \|_{L^2} + \| S_\lambda v \|_{L^2}, \quad \| \tilde{f}_2^\lambda \|_{L^{p'} L^{q'}} \lesssim \| f_2^\lambda \|_{L^{p'} L^{q'}}.
\end{equation*}
We abbreviate $\tilde{f}_i = \tilde{f}_i^\lambda$ in the following.

 As before, let
\begin{equation*}
v_\lambda = T_\lambda S_\lambda u_i, \ \ i =2,3, \quad \text{ and } \quad q(x,\xi) = \xi_0 - \big(\tilde \varepsilon^{ij}_{\lambda^{\frac{1}{2}}} (x) \xi_i \xi_j \big)^{1/2}.
\end{equation*}
An application of Theorem \ref{thm:ApproximationTheorem} shows that (cf.\ \cite[p.~418]{Tataru2001})
\begin{equation*}
v_{\lambda} \in L^2_{\Phi}, \qquad [\lambda q + 2 (\bar{\partial} q)(\partial - i \lambda \xi)] v_{\lambda} - T_{\lambda} \tilde{f}_2 \in L^2_{\Phi}.
\end{equation*}
Now set
\begin{equation*}
w = \Phi^{1/2} v_\lambda, \qquad g = \Phi^{1/2} T_\lambda \tilde{f}_2.
\end{equation*}
The properties of the ODE along the Hamilton flow yield
\begin{equation*}
\lambda^{-1/4} w \in L^2(K \cap U), \qquad \lambda^{-1/4} (H_q w - g) \in L^2(K \cap U).
\end{equation*}
As above, we use the ODE along the gradient flow of $q$ to decompose $w = w_1 + w_2$, where
\begin{align*}
[(q_x \partial_\xi + q_\xi \partial_x) + \lambda (q- \xi \cdot q_\xi )] w_1 &=f, \qquad w_1 \big|_{K} = 0,\\
[(q_x \partial_x + q_\xi \partial_\xi) + \lambda ( q - i\xi \cdot q_x ] w_2 &=0,  \qquad w_2 \big|_{K} = w.
\end{align*}
The function $w_2$ can further be decomposed into ``good" and ``bad" parts,
\begin{equation*}
w_2 = w_2^g + w_2^b,
\end{equation*}
whose traces on the cone $K \cap U$ satisfy
\begin{equation*}
\lambda^{-1/4} H_q w_2^g \in L^2, \qquad H_q w_2^b = g.
\end{equation*}
The map $w_2^g$ can be treated as $w_2$ in Paragraph~\ref{subsec:halfwave-C2}. In a similar way $w_2^b$ can be controlled
by $g$, and thus $\tilde{f}_2$, as indicated on p.~419 of \cite{Tataru2001}. Analogously, we decompose $w_1$ as
\begin{equation*}
w_1 = w_1^g + w_1^b,
\end{equation*}
and solve
\begin{equation*}
(q_x \partial_x + q_\xi \partial_\xi + \lambda(q-iq_x \cdot \xi)) w_1^g \in L^2,  \qquad w_1^g \big|_{K} = 0,
\end{equation*}
respectively,
\begin{equation*}
(q_x \partial_x + q_\xi \partial_\xi + \lambda(q-i q_x \cdot \xi)) w_1^b = g,  \qquad  w_1^b \big|_{K}=0.
\end{equation*}
Again, $w_1^g$ can be treated as in  Paragraph~\ref{subsec:halfwave-C2}. For the estimate of $w_1^b$ we refer to \cite[pp.~419--422]{Tataru2001}. The argument applies due to the Lipschitz equivalence of wave and half-wave symbol in the phase space region of interest, which was already discussed above.
\end{proof}

We next point out how homogeneous Strichartz estimates yield inhomogeneous Strichartz estimates. For this we invoke the 
following consequence of the Christ--Kiselev lemma (cf. \cite{ChristKiselev2001}).
\begin{lemma}[{\cite[Lemma~8.1]{HassellTaoWunsch2006}}]
\label{lem:ChristKiselev}
Let $X$ and $Y$ be Banach spaces and for all $s,t \in \R$ let $K(s,t): X \to Y$ be an operator-valued kernel from $X$ to $Y$. Suppose we have the estimate
\begin{equation*}
\| \int_{\R} K(s,t) f(s) ds \|_{L^q(\R,Y)} \leq A \| f \|_{L^p(\R,X)}
\end{equation*}
for some $A > 0$ and $1 \leq p < q \leq \infty$, and $f \in L^p(\R;X)$. Then, we have

\begin{equation*}
\| \int_{s<t} K(s,t) f(s) ds \|_{L^q(\R,Y)} \leq C_{p,q} A \| f \|_{L^p(\R,X)}.
\end{equation*}
\end{lemma}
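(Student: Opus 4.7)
The plan is to establish this via a mass-adapted dyadic decomposition, which is the standard Christ--Kiselev scheme. First, I would normalize $\|f\|_{L^p(\R; X)} = 1$ by homogeneity. Next, define the nondecreasing, absolutely continuous function
\begin{equation*}
\Lambda(t) = \int_{-\infty}^{t} \|f(s)\|_X^p\, ds, \qquad \Lambda:\R\to[0,1].
\end{equation*}
For each $n\ge 0$ and $0\le j < 2^n$, I would choose measurable sets $I_n^j\subseteq\R$ whose $\Lambda$-image is the dyadic interval $[j2^{-n},(j+1)2^{-n}]$. These form a nested dyadic tree with $I_n^j = I_{n+1}^{2j}\sqcup I_{n+1}^{2j+1}$, and each $I_n^j$ carries $L^p$-mass exactly $2^{-n/p}$.

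The geometric heart of the argument is the disjoint decomposition (up to a null set)
\begin{equation*}
\{(s,t)\in\R^2 : s<t\} \;=\; \bigsqcup_{n=0}^{\infty}\bigsqcup_{j=0}^{2^n - 1} I_{n+1}^{2j} \times I_{n+1}^{2j+1},
\end{equation*}
obtained by selecting, for each pair $s<t$, the coarsest dyadic level at which the two points first fall into distinct sibling intervals (necessarily $s$ in the left sibling, $t$ in the right). With this in hand, writing $T$ for the full operator $f\mapsto\int K(s,\cdot)f(s)\,ds$, I decompose
\begin{equation*}
\int_{s<t} K(s,t)f(s)\,ds \;=\; \sum_{n\ge 0}\sum_{j=0}^{2^n-1} \chi_{I_{n+1}^{2j+1}}(t)\, T\!\left(f\chi_{I_{n+1}^{2j}}\right)(t).
\end{equation*}

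For each fixed level $n$, the right siblings $I_{n+1}^{2j+1}$ are pairwise disjoint, so taking the $L^q(\R;Y)$ norm and applying the hypothesized $L^p\to L^q$ bound to each $f\chi_{I_{n+1}^{2j}}$ yields
\begin{equation*}
\Bigl\|\sum_j \chi_{I_{n+1}^{2j+1}}\, T(f\chi_{I_{n+1}^{2j}})\Bigr\|_{L^q(\R;Y)}
\;\le\; \Bigl(\sum_j A^q\,\|f\chi_{I_{n+1}^{2j}}\|_{L^p}^{q}\Bigr)^{1/q}
\;=\; A\cdot 2^{n/q}\cdot 2^{-(n+1)/p},
\end{equation*}
since each of the $2^n$ blocks has $L^p$-mass $2^{-(n+1)/p}$. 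Summing over $n$ by the triangle inequality in $L^q$ produces the geometric series $\sum_n 2^{n(1/q-1/p)}$, which converges precisely because $p<q$. Thus the total is controlled by $C_{p,q}A$.

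I expect the main obstacle to be purely technical: the construction of the intervals $I_n^j$ when $\Lambda$ has plateaus (i.e.\ $f$ vanishes on a set of positive measure), which requires choosing a right-continuous pseudo-inverse of $\Lambda$ or approximating $f$ by strictly positive profiles; this is bookkeeping and contributes nothing conceptual. The only real structural ingredient is the matching between the $L^p$-based splitting of the source and the disjointness in the $L^q$-side, which is exactly what forces the hypothesis $p<q$.
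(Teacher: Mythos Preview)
The paper does not prove this lemma; it is stated with a citation to \cite{HassellTaoWunsch2006} (and the original \cite{ChristKiselev2001}) and used as a black box in the proof of Corollary~\ref{cor:InhomogeneousStrichartzC1Coefficients}. Your argument is the standard Christ--Kiselev dyadic-martingale scheme and is correct: the mass-adapted Whitney-type decomposition of $\{s<t\}$, the disjointness of the right siblings giving the $\ell^q$ sum at each level, and the geometric gain $2^{n(1/q-1/p)}$ from $p<q$ are exactly the ingredients in the literature proofs. The technical caveat you flag about plateaus of $\Lambda$ is handled just as you indicate and is irrelevant to the substance.
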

We are ready for the proof of Corollary \ref{cor:InhomogeneousStrichartzC1Coefficients}.
\begin{proof}[Proof of Corollary \ref{cor:InhomogeneousStrichartzC1Coefficients}]
By well-posedness in $L^2$ for $C^1$-coefficients, let $(U(t,s))_{t,s \in \R}$ denote the propagator of $P$ in $L^2$. So $U(t,s) u_0$ is the solution at time $t$ to
\begin{equation*}
\left\{ \begin{aligned}
Pu &= 0, \\
u(s) &= u_0 \in L^2.
\end{aligned} \right.
\end{equation*}
We let $Pu(t) = (\partial_t 1_{3 \times 3} - A(t)) u(t)$ so that $A(t)$ denotes the time-dependent generator.
The full solution is given by Duhamel's formula
\begin{equation*}
u(t) = U(t,0) u_0 + \int_0^t U(t,s) f(s) ds.
\end{equation*}

Let $\mathcal{T}:L^2 \to L^p([0,T],L^q)$, $\mathcal{T} f = \langle D' \rangle^{-\rho - \frac{\sigma}{2}} U(t,0) f$. The estimates from Theorem \ref{thm:StrichartzEstimatesMaxwell2dCs} applied to homogeneous solutions for $\varepsilon \in C^s$, $1 \leq s \leq 2$, yield
\begin{equation*}
\| \mathcal{T} u_0 \|_{L^p(0,T;L^q)} \lesssim_{\delta} \| U(t,0) u_0 \|_{L^2} \lesssim_{T,\| \varepsilon \|_{C^1}} \| u(0) \|_{L^2}.
\end{equation*}
Indeed, the full derivatives can be replaced by the purely spatial derivatives as a consequence of microlocal estimates 
since the coefficients of $P$ belong to $C^1$, and the energy estimate holds true because $\| \partial_x \varepsilon \|_{L^1 L^\infty} \lesssim_T \| \varepsilon \|_{C^1}$.
We aim to estimate the Duhamel term
\begin{equation*}
\langle D' \rangle^{-\rho-\frac{\sigma}{2}} \int_0^t U(t,s) f(s) ds = \langle D' \rangle^{-\rho- \frac{\sigma}{2}} U(t,0) \int_0^t U(0,s) f(s) ds.
\end{equation*}
By invoking Lemma \ref{lem:ChristKiselev}, it suffices to estimate
\begin{equation}
\label{eq:ModifiedDuhamelTerm}
\| \langle D' \rangle^{-\rho-\frac{\sigma}{2}} U(t,0) \int_0^T U(0,s) f(s) ds \|_{L^p L^q} \lesssim \| \langle D' \rangle^{\tilde{\rho} + \frac{\sigma}{2}} f \|_{L^{\tilde{p}'} L^{\tilde{q}'}}.
\end{equation}

To prove boundedness of \eqref{eq:ModifiedDuhamelTerm}, we use the $\mathcal{T} \mathcal{T}^*$-argument with duality with respect to $L^2$. Note that
\begin{equation*}
\mathcal{T}^* F = \int_0^T U(s,0)^* \big( \langle D' \rangle^{-\rho-\frac{\sigma}{2}} f(s) \big) ds.
\end{equation*}
Hence, to derive estimates for $\int_0^T U(0,s) f(s) ds$, it suffices to show 
\begin{equation*}
\| \langle D' \rangle^{-\tilde{\rho} - \frac{\sigma}{2}} U(0,t)^* g \|_{L^{\tilde{p}}(0,T;L^{\tilde{q}})} \lesssim \| g \|_{L^2(\R^2)}.
\end{equation*}
Since $L^2$-duality respects divergence-free vector fields, $(u_1,u_2)$ is divergence-free, and so is $(f_1,f_2)$, we can suppose that $(g_1,g_2)$ is divergence-free.

We compute the time-dependent generator for $U(0,t)^*$  by
\begin{equation*}
\partial_t \langle U(0,t)^* u, v \rangle = \partial_t \langle u, U(0,t) v \rangle = \langle u , - U(0,t) A(t) v \rangle = - \langle A(t)^* U(0,t)^* u, v \rangle
\end{equation*}
in the general case $\varepsilon = (\varepsilon^{ij})_{i,j=1,2}$. We thus find for the full operator
\begin{equation*}
P^* = 
\begin{pmatrix}
- \partial_t & 0 & \varepsilon_{11} \partial_2 - \varepsilon_{12} \partial_1 \\
0 & - \partial_t & - \varepsilon_{22} \partial_1 + \varepsilon_{12} \partial_2 \\
\partial_2 & - \partial_1 & - \partial_t 
\end{pmatrix}
= - P^t + E
\end{equation*}
with $\| E \|_{L^2 \to L^2} \lesssim 1$ for Lipschitz coefficients. Hence, it suffices to prove estimates for $P^t$. These are a consequence of the proof of Theorem \ref{thm:StrichartzEstimatesMaxwell2dCs}. The only difference happens in the diagonalization: Transposing \eqref{eq:DiagonalSymbol}-\eqref{eq:InverseEigenvectorMatrix} yields
\begin{equation*}
(m^{-1})^t(x,\xi) d(x,\xi) m^t(x,\xi) = p^t(x,\xi).
\end{equation*}
Note
\begin{equation*}
m^t(x,\xi) = 
\begin{pmatrix}
- \xi_1^* \varepsilon_{22}(x) + \xi_2^* \varepsilon_{12}(x) & \xi_1^* \varepsilon_{12}(x) - \xi_2^* \varepsilon_{11}(x) & 0 \\
\xi_2^* &  -\xi_1^* & 1 \\
-\xi_2^* & \xi_1^* & 1
\end{pmatrix}
,
\end{equation*}
and analogously for the operators in \eqref{eq:DiagonalMatrixOperator}--\eqref{eq:InverseEigenvectorOperators}.
The estimate for $P^t$ corresponding of Theorem \ref{thm:StrichartzEstimatesMaxwell2dCs}  is
\begin{equation*}
\| \langle D' \rangle^{-\tilde\rho- \frac{\sigma}{2}} u \|_{L^{\tilde p} L^{\tilde q}} \lesssim_{T, \kappa} \| u \|_{L^2} + \| P^t u \|_{L^2} + \| |D'|^{-\frac{1}{2}-\frac{\sigma}{2}} \rho_e^* \|_{L^2}
\end{equation*}
with $\rho_e^* = \nabla \cdot(\tilde{ \varepsilon}^{-1} \tilde{u})$, $\tilde{u} = (u_1,u_2)$, and the adjugate
$\tilde{\varepsilon}^{-1} = \varepsilon \cdot \det (\varepsilon^{-1})$  of $\varepsilon^{-1}$.

Consequently, for $U(0,t)^* u_0$ we find
\begin{equation*}
\begin{split}
\| \langle D' \rangle^{-\tilde \rho-\frac{\sigma}{2}} U(0,t)^* u_0 \|_{L^{\tilde p} L^{\tilde q}} &\lesssim \| U(0,t)^* u_0 \|_{L^2} + \| (P^* - E) U(0,t)^* u_0 \|_{L^2} \\
&\qquad + \| |D'|^{-\frac{1}{2}-\frac{\sigma}{2}} \nabla \cdot(\tilde{\varepsilon}^{-1} U(0,t)^* u_0) \|_{L^2} \\
&\lesssim \| u_0 \|_{L^2} + \| |D'|^{-\frac{1}{2}-\frac{\sigma}{2}} \rho_e^* \|_{L^2},
\end{split}
\end{equation*}
which follows from the energy estimate in $L^2$. For low frequencies, the energy estimate also allow to dominate the second 
term in the last line  by the first. For high frequencies, we can take advantage that $\tilde{u}_0$ is 
divergence free, that $\rho_e^*(t)= \nabla\cdot (\tilde{ \varepsilon}^{-1}\tilde{u}_0)$, and that
$\varepsilon$ is isotropic. It follows
\begin{equation*}
\| S_{\gtrsim 1}' ( |D'|^{-\frac{1}{2}-\frac{\sigma}{2}} \nabla \cdot (\tilde{\varepsilon}^{-1} \tilde{u})) \|_{L^2} \lesssim_{\| \varepsilon \|_{C^1}} \| u(0) \|_{L^2}.
\end{equation*}
We have proved for  divergence-free $(f_1,f_2)$ that
\begin{equation*}
\| \langle D' \rangle^{-\rho - \frac{\sigma}{2}} \int_0^t U(t,s) f(s) ds \|_{L^p(0,T;L^q)} \lesssim \| \langle D' \rangle^{\tilde{\rho} + \frac{\sigma}{2}} f \|_{L^{\tilde{p}'}(0,T;L^{\tilde{q}'})},
\end{equation*}
where $(\rho,p,q,2)$, $(\tilde{\rho},\tilde{p},\tilde{q},2)$ are Strichartz pairs. Note that the case $p = \tilde{p} =2$ cannot be covered by Lemma \ref{lem:ChristKiselev}, but $p=2$ is not wave-admissible in two dimensions anyway.
We finish the proof by Duhamel's formula and the triangle inequality.
\end{proof}

In the following we derive Strichartz estimates only with spatial derivatives for $\| \partial_x \varepsilon \|_{L^2 L^\infty} \lesssim 1$. 
This will become useful in the quasilinear case where we shall control $\| \nabla_{x'} u \|_{L^4 L^\infty}$ and have
$\| \partial_x (\varepsilon(u)) \|_{L^2 L^\infty} \lesssim_{\| u \|_{L^\infty}} \| \nabla_{x'} u \|_{L^2 L^\infty} \lesssim_T \| \nabla_{x'} u \|_{L^4 L^\infty}$. Also note  that
\begin{equation*}
\| \partial_x \varepsilon \|_{L^2 L^\infty} \lesssim \| \varepsilon \|_{\mathcal{X}^s}
\end{equation*}
by Sobolev's embedding for $s > \frac{3}{2}$. Actually, this condition is only needed in our proof of the microlocal estimate, for which we use the following commutator estimate for the FBI transform, also due to Tataru \cite{Tataru2000}.
Let $X = L^2 L^\infty$ and $X^1 = \{ u \in X : \partial_x u \in X \}$.
\begin{theorem}[{\cite[Theorem~2]{Tataru2000}}]
\label{thm:FBIL2Linfty}
Assume that $a \in X^1 C^\infty_c$. Then,
\begin{equation*}
\| R_{\lambda,a}  \|_{L^\infty L^2 \to L^2_\Phi} = \| T_\lambda A_\lambda - a T_\lambda \|_{L^\infty L^2 \to L^2_\Phi} \lesssim_{\| a \|_{X^1 C^\infty_c}} \lambda^{-\frac{1}{2}}.
\end{equation*}
\end{theorem}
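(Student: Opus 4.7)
The plan is to reduce the problem to a tractable kernel estimate and exploit the natural scale $\lambda^{-1/2}$ of the FBI Gaussian against the $L^2 L^\infty$ regularity of $\partial_x a$. First I would separate the $x$ and $\xi$ dependence of $a$ by the Fourier series trick used already in the proof of Proposition~\ref{prop:MultiplierProposition}: write $a(x,\xi) = \beta(\xi) \sum_{k \in \Z^m} \hat{a}_k(x) e^{ik\xi}$ on the support of $a$, with rapidly decaying coefficients $\hat{a}_k(x)$ whose spatial derivatives inherit (with the $(1+|k|)^{-\ell}$ gain) the $X^1$-norm of $a$. This reduces matters to the model case $a(x,\xi) = b(x) c(\xi)$ with $b \in X^1$ and $c$ smooth compactly supported, uniformly in $k$.

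Next I would write out the kernel of $R_{\lambda,b c}(y) = T_\lambda(b(y') c(D)f)(z) - b(x) c(\xi) T_\lambda f(z)$. Since $c$ is a multiplier, $c(D)$ commutes with $T_\lambda$ up to replacing $c(\xi)$ by $c$ evaluated on the FBI side; the real content is the commutator of $T_\lambda$ with multiplication by $b$. A direct computation using the holomorphy of $T_\lambda f$ and the identity $T_\lambda(yf)(z) = (x + (-i\lambda)^{-1}(\partial_\xi - \lambda \xi)) T_\lambda f$ yields
\begin{equation*}
\bigl(T_\lambda b - b(x) T_\lambda\bigr) f(z) = C_m \lambda^{3m/4} \int e^{-\frac{\lambda}{2}(z-y')^2} \bigl(b(y') - b(x)\bigr) f(y')\, dy',
\end{equation*}
so that the estimate hinges on controlling this kernel when $b(y') - b(x)$ is replaced by an integral of $\partial_x b$ along the segment from $x$ to $y'$. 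The Gaussian $e^{-\lambda|x-y'|^2/2}$ forces $|x-y'| \lesssim \lambda^{-1/2}$, which heuristically gives a factor $\lambda^{-1/2} |\partial_x b|$.

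The main obstacle is that $\partial_x b$ is only $L^2 L^\infty$, so one cannot simply take a pointwise $L^\infty$ bound and must integrate the $L^2$-in-time factor against the FBI kernel. I would therefore split variables as $y' = (y_0', y_r')$ (with $y_0'$ the time coordinate) and write
\begin{equation*}
b(y') - b(x) = (y_0'-x_0)\int_0^1 (\partial_{x_0} b)(\gamma_t) \, dt + (y_r'-x_r)\cdot\int_0^1 (\partial_{x_r} b)(\gamma_t) \, dt
\end{equation*}
along the segment $\gamma_t$ from $x$ to $y'$. After factoring one power of $(y'-x)$ and combining it with a factor $\lambda^{-1/2}$ gained by writing $(y'-x) e^{-\lambda(z-y')^2/2} = -\lambda^{-1}\partial_{y'}e^{-\lambda(z-y')^2/2} + \ldots$ (integration by parts absorbing the imaginary part of $z$), the remaining kernel is a Gaussian at scale $\lambda^{-1/2}$ acting on $\partial_x b(\gamma_t) f(y')$. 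Using Young's inequality in the spatial variables followed by Cauchy--Schwarz in time, the $L^\infty_{x_r}$ bound on $\partial_x b$ pairs with the $L^2_{x_r}$ bound on $f$, while the $L^2_{x_0}$ bound on $\partial_x b$ pairs with $L^\infty_{x_0} L^2_{x_r}$ of $f$ to yield an $L^2$ bound in time, and after integrating the Gaussian in $\xi$ against $\Phi$ one lands in $L^2_\Phi$ with the asserted $\lambda^{-1/2}$ gain.

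The technical crux — and the place I expect the most work — is the time integration: one must show that the Gaussian localization in $(x_0 - y_0')$ at scale $\lambda^{-1/2}$ is genuinely compatible with the $L^2_{x_0}$ norm of $\partial_{x_0} b$ and the $L^\infty_{x_0}$ norm of $f$, with no mismatch that would cost powers of $\lambda^{1/2}$. Handling this cleanly requires either a Schur test with the kernel $\lambda^{1/2} e^{-\lambda(x_0-y_0')^2/2}$ (which is of unit $L^1_{y_0'}$ norm) or a direct Cauchy--Schwarz after noting that one copy of $\lambda^{-1/2}$ exactly compensates the $\lambda^{1/2}$ coming from the Gaussian's $L^2_{y_0'}$ normalization, leaving the desired $\lambda^{-1/2}$. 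The other components, such as the contribution of derivatives of $c(\xi)$ in $k$-summation and the boundary terms from integration by parts, are standard and routine, as in \cite{Tataru2000}.
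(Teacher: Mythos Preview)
The paper does not prove this theorem; it is quoted verbatim from \cite[Theorem~2]{Tataru2000} and used as a black box in the proof of Corollary~\ref{cor:StrichartzEstimatesL2LinfCoefficients}. So there is no ``paper's own proof'' to compare against.

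That said, your sketch is broadly along the lines of Tataru's original argument and captures the essential mechanism: the Gaussian kernel of $T_\lambda$ localizes $|x-y'|$ to scale $\lambda^{-1/2}$, and the difference $a(y',\cdot)-a(x,\cdot)$ is then controlled by an integral of $\partial_x a$ over that scale, with the $L^2_{x_0}L^\infty_{x'}$ structure of $\partial_x a$ matching the $L^\infty_{x_0}L^2_{x'}$ input and the $L^2_\Phi$ output. The Fourier-series reduction to $a(x,\xi)=b(x)c(\xi)$ is a legitimate simplification in the spirit of Proposition~\ref{prop:MultiplierProposition}.

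One point deserves more care than you give it. In the tensor case the remainder splits as
\[
R_{\lambda,bc}f = \bigl[T_\lambda b - b(x)T_\lambda\bigr]\,c(D/\lambda)f \;+\; b(x)\bigl[T_\lambda c(D/\lambda) - c(\xi)T_\lambda\bigr]f,
\]
and your statement that ``$c(D)$ commutes with $T_\lambda$ up to replacing $c(\xi)$'' glosses over the second term. Since $c$ is smooth and independent of $x$, Theorem~\ref{thm:ApproximationTheorem} gives $\|R_{\lambda,c}\|_{L^2\to L^2_\Phi}\lesssim \lambda^{-N}$ for any $N$; but you still multiply by $b(x)$, which is only $L^2_{x_0}L^\infty_{x'}$, not $L^\infty$. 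You need a \emph{fixed-time} bound on $R_{\lambda,c}$ (i.e., $L^\infty_{x_0}L^2_{x'}\to L^\infty_{x_0}L^2_{x',\xi,\Phi}$) so that the $L^2_{x_0}$ norm of $b$ can be paired with it via Cauchy--Schwarz in $x_0$. This is true for the purely $\xi$-dependent symbol $c$ (the kernel is a convolution in $y'$ uniformly in $x_0$), but it should be stated explicitly rather than folded into ``the real content is the commutator with $b$.'' Apart from this, the Schur/Cauchy--Schwarz bookkeeping you outline for the time variable is the right endgame.
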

We can now show Corollary \ref{cor:StrichartzEstimatesL2LinfCoefficients}.
\begin{proof}[Proof of Corollary \ref{cor:StrichartzEstimatesL2LinfCoefficients}]
Let $S_\lambda^{\ll \tau}$ denote the smooth frequency localization to regions $\{ |\xi| \sim \lambda \wedge |\xi'| \ll \lambda \}$. In the following we derive a favorable estimate of $\| \langle D' \rangle^{-\rho -\frac{\sigma}{p}} S_\lambda^{\ll \tau}u \|_{L^p L^q}$ for $\lambda \gg 1$, $(\rho,p,q,2)$ a Strichartz pair, and $p \neq \infty$. As symbol at unit frequencies we consider 
\begin{equation*}
p(x,\xi) = i \begin{pmatrix}
\xi_0 & 0 & -\xi_2 \\
0 & \xi_0 & \xi_1 \\
-\varepsilon_{11} (x) \xi_2 + \varepsilon_{21}(x) \xi_1 & \varepsilon_{22}(x) \xi_1 - \varepsilon_{12}(x) \xi_2 & \xi_0
\end{pmatrix}
s_0(\xi)
\end{equation*}
with $s_0 \in C^\infty_c(B(0,2))$, $s_0 = 1$ for $1 \sim |\xi| \sim |\xi_0| \gg |\xi'|$ such that $S_\lambda^{\ll \tau} = s_0(\xi/\lambda)$.

 By H\"older's inequality and Sobolev embedding, we find
\begin{equation*}
\begin{split}
\| \langle D' \rangle^{-\rho - \frac{\sigma}{p}} S_\lambda^{\ll \tau} u \|_{L^p L^q} &\lesssim \lambda^{\frac{1}{2}-\frac{\sigma}{p}} \| S_\lambda^{\ll \tau} u \|_{L^2} \\
&= \lambda^{\frac{1}{2}-\frac{\sigma}{p}} \| T^*_\lambda p^{-1}(x,\xi) T_\lambda \, T_\lambda^* p(x,\xi) T_\lambda \, S_\lambda^{\ll \tau} u \|_{L^2} \\
&\lesssim \lambda^{\frac{1}{2}-\frac{\sigma}{p}} \| p(x,\xi) T_\lambda S_\lambda^{\ll \tau} u \|_{L^2} \\
&\lesssim \lambda^{\frac{1}{2}-\frac{\sigma}{p}} \| R_{\lambda,p} \|_{L^\infty L^2 \to L^2} \| S_\lambda^{\ll \tau} u \|_{L^\infty L^2} \\
&\quad + \lambda^{-\frac{1}{2}-\frac{\sigma}{p}}\big( \| P(x,D) S_\lambda^{\ll \tau} u \|_{L^2} 
   +  \| \partial_x \varepsilon \|_{L^2 L^\infty} \| S_\lambda^{\ll \tau} u \|_{L^\infty L^2}\big).
\end{split}
\end{equation*}
Since $\| R_{\lambda,p} \|_{L^\infty L^2 \to L^2} \lesssim_{\| \partial_x \varepsilon \|_{L^2 L^\infty}} \lambda^{-\frac{1}{2}}$ by Theorem \ref{thm:FBIL2Linfty}, the first term is acceptable. The third term comes from recovering divergence form and is clearly admissible. Also note that we omitted an error term from localizing $T_\lambda S_\lambda^{\ll \tau} u$ to $\{|\xi_0| \sim 1, \; |\xi'| \ll 1\}$ as this gains arbitrarily many derivatives. We turn to the second term. For this purpose let $P^\lambda(x,D)$ denote $P$ with coefficients  frequency truncated at $\lambda$. 
The coefficients of $P^\lambda$ have Lipschitz norm $\lesssim \lambda^{\frac{1}{2}}$. 
Note that
\begin{equation*}
\begin{split}
\| P(x,D) S_\lambda^{\ll \tau} u \|_{L^2} &\leq \| P^\lambda(x,D) S_\lambda^{\ll \tau} u \|_{L^2} + \| P^{\gtrsim \lambda}(x,D) S_\lambda^{\ll \tau} u \|_{L^2} \\
&\lesssim \| P^\lambda(x,D) S_\lambda^{\ll \tau} u \|_{L^2} + \| \partial_x \varepsilon \|_{L^2 L^\infty} \| u \|_{L^\infty L^2}.
\end{split}
\end{equation*}
Moreover, a kernel estimate yields
\begin{equation*}
\begin{split}
\| P^{\lambda} S_\lambda^{\ll \tau} u \|_{L^2} &\leq \| S_{\lambda}^{\ll \tau} P^{ \lambda} u \|_{L^2} + \| [P^{\lambda}, S_\lambda^{\ll \tau}] u \|_{L^2} \\
&\lesssim \| S_\lambda^{\ll \tau} P^\lambda u \|_{L^2} + \lambda^{\frac{1}{2}} \| u \|_{L^2} \\
&\lesssim \| S_\lambda^{\ll \tau} P u \|_{L^2} + \| S_\lambda^{\ll \tau} P^{\gtrsim \lambda} u \|_{L^2} + \lambda^{\frac{1}{2}} \| u \|_{L^2}.
\end{split}
\end{equation*}
For the first term we use Bernstein's inequality in time
\begin{equation*}
\| S_\lambda^{\ll \tau} P u \|_{L^2} \lesssim \lambda^{\frac{1}{2}} \| S_\lambda^{\ll \tau} P u \|_{L^1 L^2}
\end{equation*}
and note that it is still summable in $\lambda$ due to the additional factor $\lambda^{-\frac{\sigma}{p}}$
and since we only want to estimate $\| \langle D' \rangle^{-\alpha} u \|_{L^p L^q}$ for $\alpha > \rho + \frac{\sigma}{p}$
in Corollary \ref{cor:StrichartzEstimatesL2LinfCoefficients}.
 Likewise, the third term can be summed. For the second term we note
\begin{equation*}
\| S_\lambda^{\ll \tau} P^{\gtrsim \lambda} u \|_{L^2} \lesssim \| \partial_x \varepsilon \|_{L^2 L^\infty} \| u \|_{L^\infty L^2}.
\end{equation*}
This handles the part with spatial frequencies much smaller than temporal ones.

For the dyadic frequency blocks $\{|\tau| \sim |\xi'| \sim \lambda \}$ we can use Theorem \ref{thm:MaxwellStrichartzEstimatesBesovRegularity} with $P^\lambda$ to recover the dyadic estimate \eqref{eq:DyadicEstimateXs}:
\begin{equation*}
\begin{split}
\lambda^{-\rho - \frac{\sigma}{p}} \| S_\lambda S_\lambda' u \|_{L^p L^q} &\lesssim \| S_\lambda u \|_{L^\infty L^2} + \lambda^{-\sigma} \| P^\lambda S_\lambda S'_\lambda u \|_{L^1 L^2} \\
&\quad + \lambda^{-\frac{1}{2}-\frac{\sigma}{p}} \| S'_\lambda \rho_e(0) \|_{L^2} + \lambda^{-\frac{1}{2} - \frac{\sigma}{p}} \| S'_\lambda \partial_t \rho_e \|_{L^1 L^2} \\
&\lesssim \| u \|_{L^\infty L^2} + \lambda^{-\sigma} \| S_\lambda P u \|_{L^1 L^2} \\
&\quad + \lambda^{-\frac{1}{2}-\frac{\sigma}{p}} \| S'_\lambda \rho_e(0) \|_{L^2} + \lambda^{-\frac{1}{2}-\frac{\sigma}{p}} \| S'_\lambda \partial_t \rho_e \|_{L^1 L^2}.
\end{split}
\end{equation*}
We used the same commutator considerations as in the proof of Theorem \ref{thm:MaxwellStrichartzEstimatesBesovRegularity} and the fundamental theorem of calculus together with Minkowski's inequality. By the energy estimate, we conclude for $\alpha>\rho + \frac{\sigma}{p}$
\begin{equation*}
\begin{split}
\| \langle D' \rangle^{-\alpha} u \|_{L^p(0,T;L^q)} &\lesssim \| u_0 \|_{L^2} + \| Pu \|_{L^1 L^2} \\
&\quad + \| \langle D' \rangle^{-\frac{1}{2}-\frac{\sigma}{p}} \rho_e(0) \|_{L^2} + \| \langle D' \rangle^{-\frac{1}{2}-\frac{\sigma}{p}} \partial_t \rho_e \|_{L^1 L^2},
\end{split}
\end{equation*}
and the proof is complete.
\end{proof}

\section{Local well-posedness for a 2d quasilinear Maxwell system}
\label{section:QuasilinearMaxwell}

This section is devoted to the proof of Theorem \ref{thm:LocalWellposednessQuasilinearMaxwell}. Recall that the system under consideration is given by
\begin{equation}
\label{eq:QuasilinearMaxwellEquationsSection}
\left\{ \begin{array}{cl}
\partial_t u_1 &= \partial_2 u_3, \qquad u(0) = u_0 \in H^s(\R^2;\R)^3, \\
\partial_t u_2 &= - \partial_1 u_3, \qquad \partial_1 u_1 + \partial_2 u_2 = 0, \\
\partial_t u_3 &= \partial_2 (\varepsilon^{-1}(u) u_1) - \partial_1 (\varepsilon^{-1}(u) u_2),
\end{array} \right.
\end{equation}
where $\varepsilon^{-1}(u) = \psi(|u_1|^2+|u_2|^2)$ and $\psi:\R_{\geq 0} \to \R_{\geq 0}$ is smooth, monotone increasing and satisfies $\psi(0)=1$. We denote $\tilde{u} = (u_1,u_2)$.

Without using  dispersive properties, energy methods yield local well-posedness in $H^s(\R^2)$ for $s>2$. To improve on this by Strichartz estimates, we follow the arguments from Ifrim--Tataru \cite{IfrimTataru2020}. Let $A= \sup_{0 \leq t' \leq t} \| u(t') \|_{L^\infty_{x^\prime}}$ and $B(t) = \| \nabla_{x'} u(t) \|_{L^\infty_{x^\prime}}$. In the following we take local existence of smooth solutions for granted, which follows by classical arguments, e.g., parabolic regularization. We focus on proving estimates in rough norms.
The argument consists of three steps:
\begin{enumerate}
\item[1)] We prove energy estimates 
\begin{equation}
\label{eq:EnergyEstimatesSolutions}
E^s(u(t)) \lesssim e^{c(A) \int_0^t B(t') dt'} E^s(u(0)),
\end{equation}
for smooth solutions $u$, where $E^s(u) \approx_A \| u \|_{H^s}$ and $s \geq 0$.
\item[2)]  We show $L^2$-Lipschitz bounds for differences of solutions $v= u^1-u^2$
\begin{equation}
\label{eq:L2LipschitzBoundsDifferencesSolutions}
\| v(t) \|^2_{L^2} \lesssim e^{c(A) \int_0^t B(t') dt'} \| v(0) \|^2_{L^2},
\end{equation}
 where $u^1$ and $u^2$ solve \eqref{eq:QuasilinearMaxwellEquationsSection}  and 
\begin{align*}
A &= \sup_{0 \leq t' \leq t}  \| u^1(t') \|_{L^\infty_{x^\prime}} + \sup_{0 \leq t' \leq t} \| u^2(t') \|_{L^\infty_{x^\prime}}, \\
B(t) &= \| \nabla_{x'} u^1(t) \|_{L^\infty_{x^\prime}} + \| \nabla_{x'} u^2(t) \|_{L^\infty_{x^\prime}}.
\end{align*}
\item[3)]  We conclude the proof of Theorem \ref{thm:LocalWellposednessQuasilinearMaxwell}, using frequency envelopes. These were introduced by Tao in the context of wave maps \cite{Tao2001} and turned out as very useful to treat quasilinear evolution equations.
\end{enumerate}

We start with energy estimates.
\begin{proposition}
\label{prop:EnergyEstimatesSolutions}
Let $s \geq 0$. Then, we find \eqref{eq:EnergyEstimatesSolutions} to hold. Let  $s>11/6$. For $u_0 \in H^s$,
there is a time $T=T(\|u_0\|_{H^s})$ such that $T$ is lower semicontinuous and 
\begin{equation*}
\sup_{t \in [0,T]} \| u(t) \|_{H^s} \lesssim \| u_0 \|_{H^s}.
\end{equation*}
\end{proposition}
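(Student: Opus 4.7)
The plan is to prove the energy estimate \eqref{eq:EnergyEstimatesSolutions} by a symmetrizer argument and then promote it to a uniform $H^s$ bound for $s > 11/6$ via a bootstrap fed by the Strichartz estimates of Corollary \ref{cor:StrichartzEstimatesL2LinfCoefficients}.

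\emph{Step 1: the energy estimate.} I would choose the modified energy
\begin{equation*}
E^s(u)^2 = \|u\|_{L^2}^2 + \int_{\R^2} \Bigl( \psi(|\tilde u|^2)\,|\langle D' \rangle^s \tilde u|^2 + |\langle D' \rangle^s u_3|^2 \Bigr)\,dx',
\end{equation*}
which is comparable to $\|u\|_{H^s}^2$ with constants depending on $A$ because $\psi\ge 1$ and $\psi(|\tilde u|^2)\lesssim c(A)$. Differentiating in time, inserting \eqref{eq:QuasilinearMaxwellEquationsSection}, and integrating by parts, the principal (highest-derivative) contributions cancel: this is the exact analogue at level $s$ of the cancellation witnessed in the conservation of the basic energy $\int(\Psi(|\tilde u|^2)+u_3^2)\,dx'$ (with $\Psi'=\psi$), and it expresses the fact that $\mathrm{diag}(\psi,\psi,1)$ symmetrizes the Maxwell principal symbol \eqref{eq:SymbolP}. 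The surviving error terms are (i) the commutator $[\langle D'\rangle^s,\psi(|\tilde u|^2)]\,\partial_i u$, which by the Kato--Ponce inequality together with Moser's composition bound is controlled by $c(A)\,B(t)\,\|u\|_{H^s}$; and (ii) the time derivative of the weight, bounded pointwise by $c(A)|\tilde u||\partial_t\tilde u|\lesssim c(A)B(t)$. This yields $\tfrac{d}{dt}E^s(u)^2 \lesssim c(A)B(t)\,E^s(u)^2$, and Gronwall gives \eqref{eq:EnergyEstimatesSolutions}.

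\emph{Step 2: uniform bound for $s>11/6$.} I would work with smooth approximating initial data, so that the energy estimate of Step 1 is available at arbitrary smoothness. The task is to show that on a time interval $T=T(\|u_0\|_{H^s})$ one has $\int_0^T B(t')\,dt' = O(1)$ with an implicit constant depending only on $\|u_0\|_{H^s}$. Picking the sharp 2d Strichartz pair $(\rho,p,q,n)=(3/4,4,\infty,2)$ and applying Corollary \ref{cor:StrichartzEstimatesL2LinfCoefficients}, using that $\rho_e\equiv 0$ because of the divergence constraint, one obtains
\begin{equation*}
\|\nabla_{x'} u\|_{L^4(0,T;L^\infty)} \lesssim_T \|u_0\|_{H^s} \qquad \text{provided } s > 1 + \tfrac{3}{4} + \tfrac{\sigma(\tilde s)}{4},
\end{equation*}
where $\sigma(\tilde s) = (2-\tilde s)/(2+\tilde s)$ and $\tilde s\in[1,2)$ measures the $\mathcal{X}^{\tilde s}$-regularity of $\varepsilon^{-1}(u)$ inherited from $u$. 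By H\"older's inequality, $\int_0^T B(t')\,dt' \leq T^{3/4}\|\nabla_{x'} u\|_{L^4 L^\infty}$ is sublinear in $T$, so the energy estimate closes on a nontrivial time interval. Optimizing the threshold in $\tilde s$ yields the exponent $s>11/6$, matching Tataru's result \cite{Tataru2002} for the scalar 2d quasilinear wave equation. The lower semicontinuity of $T$ is standard: the bootstrap condition is an open condition in $H^s$.

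\emph{Main obstacle.} The delicate point is not Step 1 but the bootstrap balance of Step 2, and in particular the verification that $\varepsilon^{-1}(u)=\psi(|\tilde u|^2)$ attains the $\mathcal{X}^{\tilde s}$-regularity required by Corollary \ref{cor:StrichartzEstimatesL2LinfCoefficients} with a $\tilde s$ large enough to allow $s>11/6$. A naive Sobolev embedding of $L^\infty_t H^s$ into $\mathcal{X}^{\tilde s}$ gives only $\tilde s\approx s-1$, which falls short; one must instead exploit that $u$ itself satisfies a Strichartz bound with gain of one derivative relative to the energy estimate, and feed this back through Moser-type composition estimates in the mixed Besov--Strichartz scale to gain regularity of $\varepsilon^{-1}(u)$ in $\mathcal{X}^{\tilde s}$. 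This self-improving mechanism, implemented via frequency envelopes on the dyadic pieces $S_\lambda u$, is the technical heart of the argument and is where the precise numerology $11/6$ appears. The $L^2$-Lipschitz bound \eqref{eq:L2LipschitzBoundsDifferencesSolutions} then gives uniqueness and continuous dependence, completing local well-posedness.
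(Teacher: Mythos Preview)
Your Step~1 contains a genuine gap: the symmetrizer $\mathrm{diag}(\psi,\psi,1)$ does \emph{not} symmetrize the quasilinear system. It symmetrizes only the linear operator $P(x,D)$ in \eqref{eq:RoughSymbolMaxwell2d} where $\varepsilon^{-1}$ is a fixed coefficient. In the quasilinear case $\varepsilon^{-1}(u)=\psi(|\tilde u|^2)$, passing to non-divergence form $\partial_t u=\mathcal{A}^j(u)\partial_j u$ produces extra $\psi'$ terms in $\mathcal{A}^j$, because $\partial_j(\psi(|\tilde u|^2)u_k)$ hits $\psi$ through its dependence on $u$. Concretely, with your energy one is left, after the cancellations you describe, with the term
\[
2\int \langle D'\rangle^s u_3\cdot \langle D'\rangle^s\bigl[(\partial_2\psi)u_1-(\partial_1\psi)u_2\bigr],
\]
and $(\partial_j\psi)u_k=2\psi'(|\tilde u|^2)\,u_k\,\tilde u\cdot\partial_j\tilde u$ is a \emph{principal} term (one derivative on $u$), not a lower-order one. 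Estimating $\|\langle D'\rangle^s(F(u)\partial_j u)\|_{L^2}$ by Kato--Ponce unavoidably produces $\|F(u)\|_{L^\infty}\|\partial_j u\|_{H^s}\sim c(A)\|u\|_{H^{s+1}}$, one derivative too many. The paper fixes this by choosing the full symmetrizer
\[
C(u)=\begin{pmatrix}
\psi+2\psi' u_1^2 & 2\psi' u_1 u_2 & 0\\
2\psi' u_1 u_2 & \psi+2\psi' u_2^2 & 0\\
0 & 0 & 1
\end{pmatrix},
\]
which makes $C(u)\mathcal{A}^j(u)$ symmetric; then the only commutator that appears is $[\langle D'\rangle^s,\mathcal{A}^j(u)]\partial_j u$, which Kato--Ponce controls by $c(A)B\|u\|_{H^s}$. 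Your conserved quantity $\int\Psi(|\tilde u|^2)+u_3^2$ is correct at the $L^2$ level, but its naive $H^s$ analogue does not inherit the cancellation.

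Your ``Main obstacle'' in Step~2 is a phantom. One does not need $\mathcal{X}^{\tilde s}$ regularity with $\tilde s>1$, nor any self-improving frequency-envelope mechanism. The paper simply bootstraps $\|\nabla_{x'}u\|_{L^4(0,T;L^\infty)}\le K$; this (via the equation for $\partial_t u$ and H\"older in time) yields $\|\partial_x\varepsilon(u)\|_{L^1L^\infty}\lesssim_A T^{3/4}K\le 1$, hence $\|\varepsilon(u)\|_{\mathcal{X}^1}\lesssim 1$, and Corollary~\ref{cor:StrichartzEstimatesL2LinfCoefficients} applies directly with $\tilde s=1$, $\sigma=1/3$, giving the threshold $s>1+\tfrac34+\tfrac{1}{12}=\tfrac{11}{6}$. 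Frequency envelopes enter later in the paper, for continuous dependence, not here.
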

\begin{proof}
We consider energy norms
\begin{equation*}
\| u \|^2_{E^s} = \langle \langle D^\prime \rangle^s u, C(u) \langle D^\prime \rangle^s u \rangle \approx_A \| u \|^2_{H^s},
\end{equation*}
where we define the smooth map $C$ in \eqref{eq:ScalarProductMatrix} below. We rewrite \eqref{eq:QuasilinearMaxwellEquationsSection} as 
\begin{equation}
\label{eq:MaxwellEquationsTimeDerivative}
\partial_t u = \mathcal{A}^j(u) \partial_j u,
\end{equation}
 with the coefficient matrices
\begin{align*}
\mathcal{A}^1(u) &= 
\begin{pmatrix}
0 & 0 & 0 \\
0 & 0 & -1 \\
-2 \psi^\prime(|\tilde{u}|^2) u_1 u_2 & - 2 \psi^\prime(|\tilde{u}|^2) u_2^2 - \psi(|\tilde{u}|^2) & 0
\end{pmatrix}
, \\
\mathcal{A}^2(u) &= 
\begin{pmatrix}
0 & 0 & 1 \\
0 & 0 & 0\\
2 \psi^\prime(|\tilde{u}|^2) u_1^2 + \psi(|\tilde{u}|^2) & 2 \psi^\prime(|\tilde{u}|^2) u_1 u_2 & 0
\end{pmatrix}
.
\end{align*}
For the time derivative we find
\begin{align}\label{eq:energy}
\frac{d}{dt} \| u \|^2_{E^s} &= \langle \langle D^\prime \rangle^s ( \mathcal{A}^j(u) \partial_j u), C(u) \langle D^\prime \rangle^s u \rangle + \langle \langle D^\prime \rangle^s u, C(u) \langle D^\prime \rangle^s ( \mathcal{A}^j(u) \partial_j u) \rangle \\
&\quad + \langle \langle D' \rangle^s u,C'(u) (\partial_t u) \langle D' \rangle^s u \rangle,\notag
\end{align}
at time $t$, which is suppressed. Because of \eqref{eq:MaxwellEquationsTimeDerivative}, the last term is  estimated by
\begin{equation*}
| \langle D' \rangle^s u, \tilde{C}(u) (\partial_t u) \langle D' \rangle^s u \rangle | \lesssim c(A) B \|u\|^2_{H^s}.
\end{equation*}
The first term in \eqref{eq:energy} can be expressed as
\begin{align*}
 \langle \langle D^\prime \rangle^s& (\mathcal{A}^j(u) \partial_j u), C(u) \langle D^\prime \rangle^s u \rangle \\
&= \langle \mathcal{A}^j(u) ( \langle D^\prime \rangle^s \partial_j u) + (\langle D^\prime \rangle^s  \mathcal{A}^j(u) - \mathcal{A}^j(u) \langle D^\prime \rangle^s) \partial_j u, C(u) \langle D^\prime \rangle^s u \rangle \\
&= \langle \mathcal{A}^j(u) (\langle D^\prime \rangle^s \partial_j u), C(u) \langle D^\prime \rangle^s u \rangle + II.
\end{align*}
Paraproduct/Moser estimates yield
\begin{equation*}
II \leq \| (\langle D^\prime \rangle^s \mathcal{A}^j(u)  - \mathcal{A}^j(u) \langle D^\prime \rangle^s) \partial_j u \|_{L^2} \| C(u) \langle D^\prime \rangle^s u \|_{L^2} \lesssim_A B \| u \|^2_{H^s}.
\end{equation*}
Integrating by parts, the other summand becomes
\begin{equation*}
\begin{split}
 \langle \mathcal{A}^j(u)& \langle D^\prime \rangle^s \partial_j u, C(u) \langle D^\prime \rangle^s u \rangle_{L^2}\\
&= - \langle \langle D^\prime \rangle^s u,\partial_j (\mathcal{A}_j(u)^* C(u) \langle D^\prime \rangle^s u ) \rangle_{L^2} \\
&= - \langle \langle D^\prime \rangle^s u, \mathcal{A}^j(u)^* C(u) \langle D^\prime \rangle^s \partial_j u \rangle_{L^2} + \mathcal{O}_A(B \| u \|^2_{H^s}).
\end{split}
\end{equation*}
The second summand in \eqref{eq:energy} can be treated analogously. We seek to cancel the highest-order terms.
This gives the condition
\begin{equation*}
\mathcal{A}^j(u)^* C(u) = C(u)^* \mathcal{A}^j(u).
\end{equation*}
Solving the system of equations and setting $C_{33} = 1$, we find
\begin{equation}
\label{eq:ScalarProductMatrix}
C(u) =
\begin{pmatrix}
\psi + 2 \psi^\prime \cdot u_1^2 & 2 \psi^\prime \cdot u_1 u_2 & 0 \\
2 \psi^\prime \cdot u_1 u_2 & \psi + 2 \psi^\prime \cdot u_2^2 & 0 \\
0 & 0 & 1
\end{pmatrix}
.
\end{equation}
We thus conclude
\begin{equation*}
\frac{d}{dt} \| u(t) \|^2_{E^s} \lesssim_A B(t) \| u(t) \|^2_{E^s},
\end{equation*}
and  Gr\o nwall's lemma implies \eqref{eq:EnergyEstimatesSolutions}. To check that
\begin{equation*}
\| u \|^2_{H^s} \approx_A \| u \|^2_{E^s},
\end{equation*}
it is enough to show that $C$ is uniformly elliptic. This follows from  Young's inequality
in the computation 
\begin{align*}
\Big\langle \begin{pmatrix}
\xi_1 \\ \xi_2
\end{pmatrix}&
,
\begin{pmatrix}
\psi+ 2 \psi^\prime \cdot u_1^2 &  2 \psi^\prime \cdot u_1 u_2 \\
2 \psi^\prime \cdot u_1 u_2 & \psi + 2 \psi^\prime \cdot u_2^2
\end{pmatrix}
\begin{pmatrix}
\xi_1 \\ \xi_2
\end{pmatrix}
\Big\rangle \\
&\geq (\psi + 2 \psi^\prime \cdot u_1^2) \xi_1^2 - 4 \psi^\prime \cdot (\frac{u_1^2 \xi_1^2}{2} + \frac{u_2^2 \xi_2^2}{2} ) + (\psi + 2 \psi^\prime \cdot u_2^2) \xi_2^2 \\
&= \psi \cdot |\xi|^2.
\end{align*} 

The second claim of the proposition is a consequence of Sobolev's embedding for $s>2$ . To improve on this, we use 
Strichartz estimates and show
\begin{equation}\label{eq:L4Linfty}
\| \nabla_{x'} u \|_{L^4(0,T; L^\infty)} \lesssim \| u_0 \|_{H^s}
\end{equation}
for $s>11/6$. To bootstrap, we require that $\|\nabla_{x'} u\|_{L^4(0,T_0; L^\infty)}\le K$ for a fixed number $K>0$ and 
a maximally defined time $T_0>0$. Take $T\in(0,T_0)$ with $\|\partial_x \varepsilon\|_{L^2(0,T;L^\infty)}\lesssim_A T^{1/4}K\le 1$ and $\| \partial_x \varepsilon \|_{L^1(0,T;L^\infty)} \lesssim_A T^{\frac{3}{4}} K \leq 1$. 
We thus have uniform constants in the energy inequality \eqref{eq:EnergyEstimatesSolutions} and in the Strichartz estimate 
\begin{equation}
\label{eq:StrichartzEstimate}
 \| \langle D' \rangle^{-\alpha} w \|_{L^p(0,T; L^q)} \lesssim \|w_0 \|_{L^2} + \|P(x,D) w \|_{L^1 L^2},
\end{equation}
for $\alpha > \rho + \frac{1}{3p}$ from Corollary \ref{cor:StrichartzEstimatesL2LinfCoefficients} with $\tilde{s} = 1$, if $\partial_1 w_1+\partial_2 w_2=0$.
For low frequencies, Bernstein's  inequality and \eqref{eq:EnergyEstimatesSolutions} yield
\begin{equation*}
\| S^\prime_{\lesssim 1} \nabla_{x'} u \|_{L^4 L^\infty} \lesssim_A T^{\frac{1}{4}} \| u_0 \|_{L^2}.
\end{equation*}
 For high frequencies, we define the auxiliary function $v = \langle D^\prime \rangle^s u$ satisfying $\partial_1 v_1+\partial_2 v_2=0$.
Similar as above, a fixed time estimate  gives
\begin{equation}
\label{eq:FixedTimeEstimate}
\| P(x,u,D) v(t) \|_{L^2} = \| [P(x,u,D), \langle D^\prime \rangle^s] u(t)  \|_{L^2} \lesssim_A \| \nabla u(t) \|_{L_{x^\prime}^\infty} \| u(t) \|_{H^s},
\end{equation}
where $P(x,u,D)$ has coefficients $\varepsilon(u)^{-1}$.
For the Strichartz pair $(3/4,4,\infty,2)$, inequality \eqref{eq:StrichartzEstimate} implies
\begin{align*}
\| S_{\ge 1}^\prime \nabla_{x'} u \|_{L^4 L^\infty} &\lesssim \| \langle D' \rangle^{1-s} v \|_{L^4 L^\infty} 
 \lesssim \| v_0 \|_{L^2} + \| P(x,u,D) v \|_{L^1 L^2},
\end{align*}
since $s>1+\rho+\frac{1}{3p} = \frac{11}6$ by our assumption.
By virtue of \eqref{eq:FixedTimeEstimate} and \eqref{eq:EnergyEstimatesSolutions}, we conclude
\begin{align*}
\| S_{\ge 1}^\prime \nabla_{x'} u \|_{L^4 L^\infty} &\lesssim_A \| u_0 \|_{H^s} + \| \nabla_{x'} u \|_{L^1 L^\infty_{x^\prime}} \| u \|_{L^\infty H^s} \\
&\lesssim \|u_0 \|_{H^s} + T^{\frac{4}{3}} \| \nabla_{x'} u \|_{L^4 L_{x^\prime}^\infty} \| u \|_{L^\infty H^s}\\
&\lesssim \|u_0 \|_{H^s} + T^{\frac{4}{3}} \| \nabla_{x'} u \|_{L^4 L_{x^\prime}^\infty} \| u_0 \|_{H^s}
\end{align*}
on $[0,T ]$ also using the equivalence $\| u(t) \|_{E^s} \approx_A \| u(t) \|_{H^s}$.
Starting with a sufficiently large $K$, we can now fix a small $ T_1=T_1(\|u_0 \|_{H^s})\in (0,T_0)$ such that
$\|\nabla_{x'} u \|_{L^4(0,T_1; L^\infty)}\lesssim  \|u_0 \|_{H^s} < K$. 
\end{proof}

We turn to the $L^2$-bound for differences.
\begin{proposition}
\label{prop:L2BoundDifferencesSolutions}
Let $u^1$ and $u^2$ be two solutions to \eqref{eq:QuasilinearMaxwellEquationsSection} with finite $A$ and $B$,  
and set $v=u^1-u^2$. Then, we find \eqref{eq:L2LipschitzBoundsDifferencesSolutions} to hold. Moreover, if $s>11/6$, 
there is a time $T=T(\|u^i(0)\|_{H^s})$ such that $T$ is lower semicontinuous and 
\begin{equation}
\label{eq:L2LipschitzBoundsDifferencesSolutionsII}
\sup_{t \in [0,T]} \| v(t) \|_{L^2} \lesssim_{\| u^i(0) \|_{H^s}} \| v(0) \|_{L^2}.
\end{equation}
\end{proposition}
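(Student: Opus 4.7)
The plan is to adapt the symmetrizer argument of Proposition \ref{prop:EnergyEstimatesSolutions} to the difference $v=u^1-u^2$. First I would derive the equation for $v$: subtracting \eqref{eq:MaxwellEquationsTimeDerivative} for $u^2$ from that for $u^1$ gives
\begin{equation*}
\partial_t v = \mathcal{A}^j(u^1)\,\partial_j v + \bigl[\mathcal{A}^j(u^1)-\mathcal{A}^j(u^2)\bigr]\,\partial_j u^2.
\end{equation*}
Since $\mathcal{A}^j$ is smooth, the mean value theorem gives $|\mathcal{A}^j(u^1)-\mathcal{A}^j(u^2)|\le c(A)|v|$, so the inhomogeneity is pointwise bounded by $c(A)B(t)|v|$. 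In addition $(v_1,v_2)$ is divergence-free, which will allow us to apply the Strichartz and energy machinery.

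Next I would introduce the $L^2$ energy
\begin{equation*}
\|v\|^2_{E^0} := \langle v, C(u^1)v\rangle,
\end{equation*}
with the symmetrizer $C$ from \eqref{eq:ScalarProductMatrix}. By the uniform ellipticity of $C$ established in the previous proof, $\|v\|_{E^0}\approx_A \|v\|_{L^2}$. Differentiating in $t$ yields three kinds of terms. The leading term
$\langle \mathcal{A}^j(u^1)\partial_j v,\,C(u^1)v\rangle + \langle v,\,C(u^1)\mathcal{A}^j(u^1)\partial_j v\rangle$
is integrated by parts using the symmetrization identity $\mathcal{A}^j(u^1)^*C(u^1)=C(u^1)\mathcal{A}^j(u^1)$, leaving only an $L^\infty$-bracket against $\partial_j C(u^1)$, which is controlled by $c(A)B(t)\|v\|_{L^2}^2$. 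The term involving $\partial_t C(u^1)=C'(u^1)\mathcal{A}^j(u^1)\partial_j u^1$ is also $O(c(A)B(t)\|v\|_{L^2}^2)$. Finally the contribution of the inhomogeneity $[\mathcal{A}^j(u^1)-\mathcal{A}^j(u^2)]\partial_j u^2$ is estimated by Cauchy--Schwarz and the pointwise bound above, giving another $c(A)B(t)\|v\|_{L^2}^2$. Combining these and applying Gr\o nwall's lemma delivers \eqref{eq:L2LipschitzBoundsDifferencesSolutions}.

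For the refined bound \eqref{eq:L2LipschitzBoundsDifferencesSolutionsII} at $s>11/6$, I would invoke the Strichartz estimate \eqref{eq:L4Linfty} from Proposition \ref{prop:EnergyEstimatesSolutions}, applied to both $u^1$ and $u^2$. This yields $\|\nabla_{x'}u^i\|_{L^4(0,T;L^\infty)}\lesssim \|u^i(0)\|_{H^s}$ on a time interval $T=T(\|u^i(0)\|_{H^s})$, and by H\"older
\begin{equation*}
\int_0^T B(t')\,dt' \lesssim T^{3/4}\sum_i \|u^i(0)\|_{H^s}.
\end{equation*}
Inserted into \eqref{eq:L2LipschitzBoundsDifferencesSolutions}, this gives the desired conclusion, and the lower semicontinuity of $T$ follows from the same property in Proposition \ref{prop:EnergyEstimatesSolutions}.

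The main obstacle is not the cancellation of the top-order derivatives -- that is automatic from the symmetrizer -- but rather ensuring that all remainder terms are controlled by $B(t)\|v\|_{L^2}^2$ with no derivative loss in $v$. The delicate point is that $C(u^1)$ and $\mathcal{A}^j(u^1)$ depend on $u^1$, so derivatives falling on them must be absorbed by $B(t)$, while the inhomogeneity $[\mathcal{A}^j(u^1)-\mathcal{A}^j(u^2)]\partial_j u^2$ already carries one derivative of $u^2$ and so only admits a crude pointwise bound -- fortunately the factor $|v|$ it gains is exactly what is needed to close the estimate in $L^2$. No higher-regularity information on $v$ is used, which is the reason the statement holds at the $L^2$ level for any solutions with finite $A$ and $B$.
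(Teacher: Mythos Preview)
Your proposal is correct and follows essentially the same route as the paper: derive the equation for $v$, use the symmetrizer $C(u^1)$ from \eqref{eq:ScalarProductMatrix} to define an equivalent $L^2$-energy, differentiate, cancel the top-order terms via the identity $\mathcal{A}^j(u^1)^*C(u^1)=C(u^1)\mathcal{A}^j(u^1)$, bound the remainder and inhomogeneity by $c(A)B(t)\|v\|_{L^2}^2$, and apply Gr\o nwall; the second claim then follows from \eqref{eq:L4Linfty} and H\"older exactly as you indicate.
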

\begin{proof}
We observe that $v$ solves the equation
\begin{align*}
\partial_t v &= \mathcal{A}^j(u^1) \partial_j v + [ \mathcal{A}^1(u^1) - \mathcal{A}^1(u^2)] \partial_1 u^2 + [ \mathcal{A}^2(u^1) - \mathcal{A}^2(u^2)] \partial_2 u^2 \\
&= \mathcal{A}^j(u^1) \partial_j v + \mathcal{B}^j(u^1, u^2)(v, \partial_j u^2).
\end{align*}
To prove the claim, we work with the equivalent norm
\begin{equation*}
\| v \|_{2,u^1}^2 = \langle v, C(u^1) v \rangle_2
\end{equation*}
with $C$ from the previous proof. For fixed $t$, we calculate
\begin{align*}
\frac{1}{2} \frac{d}{dt} \| v \|^2_{2,u^1} &= \langle \mathcal{A}^j(u^1) \partial_j v, C(u^1) v \rangle + \langle v, C(u^1) \mathcal{A}^j(u^1) \partial_j v \rangle \\
&\quad + \langle \mathcal{B}^j(u^1,u^2)(v,\partial_j u^2), C(u^1) v \rangle + \langle v, C(u^1) \mathcal{B}^j(u^1,u^2)(v,\partial_j u^2) \rangle \\
&\quad + \mathcal{O}_A(B \| v \|_2^2),
\end{align*}
where the error accounts for the contribution of the time derivative of $C(u^1)$.
Furthermore,
\begin{align*}
 | \langle \mathcal{B}^j(u^1,u^2)(v,\partial_j u^2), C(u^1) v \rangle | 
&\leq \| \mathcal{B}^j(u^1,u^2)(v, \partial_j u^2) \|_2 \,\| C(u^1) v \|_2 
\lesssim_A B \| v \|_2^2.
\end{align*}
The key estimate
\begin{equation*}
| \langle \mathcal{A}^j(u^1) \partial_j v, C(u^1) v \rangle + \langle v, C(u^1) \mathcal{A}^j(u^1) \partial_j v \rangle | \lesssim_A B \| v \|_2^2
\end{equation*}
is carried out as in the proof of Proposition \ref{prop:EnergyEstimatesSolutions}, employing our choice 
of $C$. Estimate \eqref{eq:L4Linfty} now implies \eqref{eq:L2LipschitzBoundsDifferencesSolutionsII}.
\end{proof}

In the third step, we show continuous dependence by means of the frequency envelope argument detailed in Ifrim--Tataru \cite{IfrimTataru2020}. The envelopes represent the dyadically localized Sobolev energy. We use the following tailored version, where $P_k$ is the standard Littlewood--Paley projector at frequency $2^k$. 
\begin{definition}
$(c_k)_{k \geq 0} \in \ell^2$ is called a \emph{frequency envelope} for a function $u$ in $H^s$ if it has the following properties:
\begin{itemize}
\item[a)] Energy bound:
\begin{equation*}
\| P_k u\|_{H^s} \leq c_k.
\end{equation*}
\item[b)] Slowly varying: There is $\delta > 0$ such that for all $j,k \in \mathbb{N}$
\begin{equation*}
\frac{c_k}{c_j} \lesssim 2^{-\delta |j-k|}.
\end{equation*}
\end{itemize}
The envelopes are called \emph{sharp}
if they also  $\| u \|^2_{H^s} \approx \sum_k c_k^2$ for a family of functions.
\end{definition}

As noted in \cite{IfrimTataru2020} such envelopes always exist.
The idea is to show that for a solution not only the $H^s$-norm is propagated, but also the frequency envelope for a time depending on the control parameters. This allows to infer continuous dependence. We give the details.

\emph{Regularization.} Let $u_0 \in H^s(\mathbb{R}^2)$ and $(c_k)_{k \geq 0}$ be a sharp frequency envelope for $u_0$ in $H^s$. The regularized initial data  $u_0^n = P_{\leq n} u_0$ for $n \in \mathbb{N}$ have the following properties.
\begin{itemize}
\item[i)] Uniform bounds:
\begin{equation*}
\| P_k u_0^n \|_{H^s} \lesssim c_k,
\end{equation*}
\item[ii)] High frequency bounds:
\begin{equation*}
\| u_0^n \|_{H^{s+j}} \lesssim 2^{jn} c_n,
\end{equation*}
\item[iii)] Difference bounds:
\begin{equation*}
\| u_0^{n+1} - u_0^n \|_{L^2} \lesssim 2^{-sn} c_n,
\end{equation*}
\item[iv)] Limit as $n \to \infty$:
\begin{equation*}
u_0 = \lim_{n \to \infty} u_0^n \text{ in } H^s.
\end{equation*}
\end{itemize}
The regularized initial data give rise to a family of smooth solutions.

\textit{Uniform bounds:} Propositions \ref{prop:EnergyEstimatesSolutions} and \ref{prop:L2BoundDifferencesSolutions}
now yield a time interval of length $T=T(\| u_0 \|_{H^s})$ on which the solutions exist, and also $L^2$-bounds their differences:
\begin{itemize}
\item[i)] High frequency bounds:
\begin{equation*}
\| u^n \|_{C([0,T],H^{s+j})} \lesssim 2^{nj} c_n,
\end{equation*}
\item[ii)] Difference bounds:
\begin{equation*}
\| u^{n+1} - u^n \|_{C([0,T],L^2)} \lesssim 2^{-sn} c_n.
\end{equation*}
\end{itemize}
Interpolation implies
\begin{equation*}
\| u^{n+1} - u^n \|_{C([0,T],H^m)} \lesssim c_n 2^{-(s-m)n}
\end{equation*}
for $m\ge0$. By the $L^2$-bound, we have
\begin{equation*}
\| u - u^n \|_{C([0,T],L^2)} \lesssim 2^{-sn}.
\end{equation*}
This gives convergence in $L^2$ since
\begin{equation*}
u - u^n = \sum_{j=n}^\infty (u^{j+1} - u^j).
\end{equation*}
The frequency localization of the summands at $2^{j+1}$ and the error bound show that
\begin{equation*}
\| u - u^n \|_{C([0,T],H^s)} \lesssim c_{\geq n} = \big( \sum_{j \geq n} c_j^2 \big)^{1/2}, 
\end{equation*}
which gives convergence in $C([0,T],H^s)$. We can now  show Theorem \ref{thm:LocalWellposednessQuasilinearMaxwell}.
\begin{proof}[Proof of Theorem \ref{thm:LocalWellposednessQuasilinearMaxwell}]
 Consider a sequence of initial data
\begin{equation*}
u_{0k} \rightarrow u_0 \text{ \ in } H^s(\mathbb{R}^2), \quad s > \frac{11}{6}.
\end{equation*}
Propositions \ref{prop:EnergyEstimatesSolutions} and \ref{prop:L2BoundDifferencesSolutions} show that the
solutions $u_k$ with inital values $u_{0,k}$  exist on a common time interval $[0,T]$ with $T=T(\| u_0 \|_{H^s})$
and that $u_k$ coverges to $u$ in $C([0,T],L^2)$. For the solutions $u^n_k$ and $u^n$ with regularized initial data 
$u^n_{0k}$ and $u^n$, we have $u^n_{0,k} \to u^n_{0}$ in $H^s$ and hence
\begin{align*}
u^n_k \to u^n\text{ \ in } C([0,T],H^s)
\end{align*}
as $k\to\infty$ by the above reasoning. We then derive
\begin{align*}
\| u_k - u \|_{C([0,T],H^s)} &\lesssim \| u^n_k - u^n \|_{C([0,T],H^s)} + \| u^n - u \|_{C([0,T],H^s)} \\
&\quad + \| u^n_k - u_k \|_{C([0,T],H^s)} \\
&\lesssim \| u_k^n - u^n \|_{C([0,T],H^s)} + c_{\geq n} + c^k_{\geq n}.
\end{align*}
The convergence $u_{0k} \to u_0$ in $H^s$ allows us to choose a sequence of frequency envelopes $c^k \to c$ in $\ell^2$.
We can thus take $n$ uniformly in $k$ such that the second and third term become arbitrarily small, and then let $k\to\infty$.
\end{proof}

\section{Sharpness of derivative loss}
\label{section:Sharpness}
Next, we connect the Maxwell system with a wave equation to infer the sharpness of derivative loss in Theorem \ref{thm:MaxwellStrichartzEstimatesBesovRegularity} for permittivity coefficients $\varepsilon^{ij} \in C^s$ with $1 \leq s \le 2$. For this purpose, we recall the counterexamples in Smith--Tataru \cite{SmithTataru2002}. Smith and Tataru constructed time-independent $C^s$-metrics $g$ for $0 \leq s \le 2$ and solutions $u$ to the corresponding wave equations, which exhaust a derivative loss in Strichartz estimates analogous to that proved in Theorem \ref {thm:MaxwellStrichartzEstimatesBesovRegularity}. They treated the second-order hyperbolic operator 
\begin{equation*}
Q(t,x,\partial_t,\partial_x) = \partial_t^2 - \partial_i g^{ij}(t,x) \partial_j
\end{equation*}
on $[0,1] \times \R^n$ with $g^{ij} \in C^s$ for $n \geq 2$ and $0\le s \le 2$, and the Strichartz estimates
\begin{equation*}
\begin{split}
\| u \|_{L^p (0,1; L^q)} &\lesssim\| u \|_{L^\infty(0,1; H^{\rho^\prime})} + \| \partial_t u \|_{L^\infty(0,1;H^{\rho^\prime - 1})} + \| Q u \|_{L_t^1(0,1;H^{\rho^\prime - 1})},
\end{split}
\end{equation*}
where $(\rho,p,q,n)$ is a Strichartz pair and
\begin{equation*}
\rho^\prime = \rho + \frac{\sigma}{p}, \quad \sigma = \frac{2-s}{2+s}, \quad \delta=\frac2{2+s}=\frac{1+\sigma}2.
\end{equation*}
In \cite{SmithTataru2002} it is proved that $\rho^\prime$ cannot be lowered. For $2/3\le s\le 2$, the authors 
at first look at the  equation
\begin{equation}
\label{eq:SharpMetric}
Q_\lambda(y,\partial_t,\partial_x,\partial_y) = \partial_t^2 - g_\lambda(y) \partial_x^2 - \Delta_y, 
\end{equation}
with $(x,y) \in \R \times\R^{n-1}$ and $g_\lambda(y) = 1+\lambda^{2 \sigma} |y|^2$. Later $g_\lambda$ is replaced by 
$\tilde{g}_\lambda(y)=1+\lambda^{2 \sigma-2\delta} a(\lambda^\delta |y|)$ for a smooth $a\ge0$ 
supported in $[0,2)$ with $a(r)=r^2$ for $r\in[0,1]$. The $C^s$ norm of $\tilde{g}_\lambda$ is uniformly bounded in 
$\lambda$.

We consider smooth solutions to Maxwell equations in two spatial dimensions 
\begin{equation}
\label{eq:MaxwellCounterexample}
\left\{ \begin{array}{cl}
\partial_t \cD &= \nabla_\perp \cH, \qquad \partial_1 \cD_1 + \partial_2 \cD_2 = \rho_e,\\
\partial_t \cH &= - \nabla \times \cE = \partial_2 \cE_1 - \partial_1 \cE_2,
\end{array} \right.
\end{equation}
with the rough permittivity $\varepsilon_\lambda$ given by
\begin{equation*}
\varepsilon_\lambda^{-1}(x,y) = 
\begin{pmatrix}
1 & 0 \\
0 & 1+\lambda^{2 \sigma} y^2
\end{pmatrix}
\end{equation*}
with $\lambda\ge2$. Corresponding to the Strichartz estimate above, we look at
\begin{equation}\label{eq:strich-gamma}
\| (\cD,\cH) \|_{L^p L^q} \lesssim \| (\cD,\cH) \|_{L^\infty H^\gamma} + \| P (\cD,\cH) \|_{L^1 H^{\gamma-\sigma}} + \| \rho_e \|_{L^\infty H^{\gamma-\frac{1}{2}-\frac{\sigma}{p}}}
\end{equation}
with $P$ as in \eqref{eq:RoughSymbolMaxwell2d}, omitting the time interval $(0,1)$. We want to show that this estimate  
can only hold for all $u$ if $\gamma\ge\rho+\frac{\sigma}p$, cf.\ Theorem~\ref{thm:MaxwellStrichartzEstimatesBesovRegularity}.

The field $\cH$ from \eqref{eq:MaxwellCounterexample} solves the wave equation
\begin{align*}
\partial_t^2 \cH &=  (1+\lambda^{2 \sigma} y^2) \partial_1^2 \cH + \partial_2^2 \cH =: \Delta_{\varepsilon_\lambda^{-1}} \cH.
\end{align*}
As in \cite{SmithTataru2002} we choose 
\begin{equation}
\label{eq:HFieldExample}
\cH(t,x,y)= \cH^\lambda(t,x,y) = \frac{1}{(\log \lambda)^2} \int \beta((\log \lambda)^{-2} r) u^\lambda_r(t,x,y) dr
\end{equation}
for the function
\begin{equation}
\label{eq:Ulambda}
u^\lambda_r(t,x,y) = e^{ir \lambda (t-x) - \frac{i}{2}\lambda^\sigma t - \frac{1}{2} r \lambda^{2 \sigma} y^2}
\end{equation}
with  $0\neq\beta \in C^\infty_c(\R)$ satisfying $\operatorname{supp} \beta \subseteq [1,2]$ and $ \beta \geq 0$. As noted in \cite{SmithTataru2002}, the function $\cH^\lambda$ is essentially supported in $K_\lambda^t=\{|t-x| \leq \lambda^{-1} (\log \lambda)^{-2}, |y| \leq \lambda^{-\delta} (\log \lambda)^{-1} \}$. 

To solve \eqref{eq:MaxwellCounterexample}, we set
\begin{equation}
\label{eq:Dlambda}
\cD^\lambda(t,x,y) =  \int_0^t \nabla_\perp \cH^\lambda(s,x,y) ds + C^\lambda(x,y) + \varepsilon_\lambda \nabla \psi^\lambda(x,y)
\end{equation}
with $C^\lambda$ and $\psi^\lambda$ to be determined.

Below we choose $C^\lambda$ with support in
\begin{equation*}
K_\lambda^0=\{|x| \leq \lambda^{-1} (\log \lambda)^{-2}, |y| \leq \lambda^{-\delta} (\log \lambda)^{-1} \}.
\end{equation*}
In view of the second line of \eqref{eq:MaxwellCounterexample}, we compute
\begin{equation*}
\partial_t \cH^\lambda + \nabla \times (\varepsilon^{-1}_\lambda  \cD^\lambda) = \partial_t \cH^\lambda(t,x,y) - \int_0^t \Delta_{\varepsilon_\lambda^{-1}} \cH^\lambda(s,x,y) ds + \nabla \times (\varepsilon_\lambda^{-1} C^\lambda).
\end{equation*}
Inserting the equation
\begin{equation*}
\partial_t^2 \cH^\lambda - \Delta_{\varepsilon_\lambda^{-1}} \cH^\lambda = -  \tfrac{1}{4} \lambda^{2 \sigma} \cH^\lambda
\end{equation*}
from \cite{SmithTataru2002}, we obtain
\begin{equation*}
\partial_t \cH^\lambda(t,x,y) - \int_0^t \Delta_{\varepsilon_\lambda^{-1}} \cH^\lambda(s,x,y) ds
    = \partial_t \cH^\lambda(0,x,y) -  \frac{\lambda^{2 \sigma}}{4} \int_0^t \cH^\lambda(s,x,y) ds.
\end{equation*}
Because of \eqref{eq:HFieldExample} and \eqref{eq:Ulambda}, we can calculate the time integral. Compared to \eqref{eq:Hgamma} it gives
 a lower-order term on the right-hand side \eqref{eq:strich-gamma} as $\lambda\to\infty$. (Here we use that $2\sigma\le 1$ since $s\ge 2/3$.)
Hence, we seek $\nabla \times (\varepsilon_\lambda^{-1} C^\lambda) \approx \partial_t \cH^\lambda(0,x,y)$. We set
\begin{equation*}
C_1^\lambda =0, \quad C_2^\lambda(x,y) = \varphi(x,y)g_\lambda(y)^{-1} \int_{-\infty}^x \partial_t \cH^\lambda(0,x^\prime,y) dx^\prime,
\end{equation*}
with $\varphi$ localized to $B(0,1/2)$, which contains the essential support of $\partial_t \cH^\lambda(0,x,y)$ for $\lambda \gg 1$.  
Due to \eqref{eq:HFieldExample} and \eqref{eq:Ulambda}, one can check that also the term
\begin{equation*}
r(x,y)=\nabla \times(\varepsilon_\lambda^{-1} C^\lambda) - \partial_t \cH^\lambda(0)
\end{equation*}
gives a lower-order term. Using (5) and (6) in \cite{SmithTataru2002}, we have
\begin{align}
\| \cH^\lambda \|_{L^p L^q} &\gtrsim \lambda^{-(1+\delta)/q} (\log \lambda)^{-3/q}, \notag\\
\| \cH^\lambda(t) \|_{H^\gamma(\R^2)} &\lesssim \lambda^{\gamma - (1+\delta)/2} (\log \lambda)^{2 \gamma -3/2}. \label{eq:Hgamma}
\end{align}
If we can also prove that
\begin{equation}\label{eq:DH}
\| \rho^{\lambda}_{e} \|_{H^{\gamma-\frac12-\frac{\sigma}p}}\lesssim \|\cH^\lambda \|_{H^\gamma} \quad \text{ and } \quad
\| \cD^\lambda \|_{H^\gamma} \lesssim \| \cH^\lambda \|_{H^\gamma},
\end{equation}
letting $\lambda\to\infty$
we see that the validity of \eqref{eq:strich-gamma} implies $\gamma\ge\rho+\frac{\sigma}p$. This remains
true if we  modify $1+\lambda^{2 \sigma} |y|^2$
to $\tilde{g}_\lambda(y)$ in $\varepsilon_\lambda^{-1}$, since $\cH^\lambda$ decays fast enough outside $K_\lambda$. 
(See p.~202 in \cite{SmithTataru2002}.)

To make the charge as small as possible, in view of \eqref{eq:Dlambda} we solve approximately
\begin{equation}
\label{eq:gauge}
\partial_1 C_1^\lambda + \partial_2 C_2^\lambda 
 = -(\partial_1^2 + \partial_2(1+ \lambda^{2 \sigma} y^2)^{-1} \partial_2) \psi^\lambda 
 = - \Delta_{\tilde{\varepsilon}_\lambda^{-1}} \psi^\lambda.
\end{equation}
For this purpose, we solve \eqref{eq:gauge} in $B(0,1)$ with Dirichlet boundary conditions. Let $\psi^\lambda_0$ denote the solution in $B(0,1)$. Let $\chi: [0,\infty) \rightarrow [0,1]$ be a smooth, monotone decreasing function with $\chi(r) = 1$ for $r \in [0,\frac{1}{2}]$ and $\chi(r) = 0$ for $r \geq \frac{3}{4}$. We set $\psi^\lambda(x,y) = \chi(|(x,y)|) \psi^\lambda_0(x,y)$.

We can now show \eqref{eq:DH}. We treat $\cD^\lambda$ by estimating the contributions of the terms 
in \eqref{eq:Dlambda} separately: From \eqref{eq:Ulambda} follows that the contribution of 
$\int_0^t \nabla_{\perp} \cH^\lambda ds$ is acceptable because the anti-derivative in $t$ cancels the derivative loss. The same argument bounds the contribution of $C^\lambda$, so that  $\| C^\lambda \|_{H^\gamma} \lesssim \| \cH^\lambda \|_{H^\gamma}$. By elliptic regularity, we thus obtain the bound $\| \psi^\lambda \|_{H^{\gamma+1}} \lesssim \| C^\lambda \|_{H^\gamma}\lesssim \| \cH^\lambda \|_{H^\gamma}$ since the coefficients in $\Delta_{\tilde{\varepsilon}_\lambda^{-1}}$  are uniformly bounded in $C^s$. 
Here we need the additional assumption $1 \leq s \leq 2$, which is not necessary in \cite{SmithTataru2002}.

Lastly, we compute for the charge 
\begin{equation*}
\rho_e^\lambda= \nabla \cdot \cD^\lambda= \nabla \cdot C^\lambda + \Delta_{\tilde{\varepsilon}_\lambda^{-1}} \psi^\lambda.
\end{equation*}
Hence, we find $\rho_e^\lambda = 0$ in $B(0,1/2)$ and $B(0,3/4)^c$. In $B(0,3/4) \backslash B(0,1/2)$, we find
\begin{equation*}
\rho^{\lambda}_{e} = 2 (\varepsilon_\lambda \nabla \chi) (\nabla \psi^\lambda_0) + (\partial_2 g_\lambda^{-1}) (\partial_2 \psi^\lambda_0) \chi + \text{l.o.t.}.
\end{equation*}
Since $\| \psi^\lambda \|_{H^{\gamma+1}} \lesssim \| \cH^\lambda \|_{H^\gamma}$ and $g_\lambda^{-1}\in C^s$ with $s\ge1$, 
only the second summand in the display is problematic. First we handle $\gamma - \frac{1}{2} - \frac{\sigma}{p} \le0$, in which case the estimate
\begin{equation*}
\| \rho_e^\lambda \|_{H^{\gamma - \frac{1}{2} - \frac{\sigma}{p}}} \lesssim \| \cH^\lambda \|_{H^\gamma}
\end{equation*}
is straight-forward.
For $\gamma - \frac{1}{2} - \frac{\sigma}{p} > 0$, observe that $\|g_\lambda^{-1}\|_{C^\theta(K^0_\lambda)}\lesssim \lambda^{2\sigma-(2-\theta)\delta}$. We refer to \cite{SmithTataru2002} for the true
coefficient $\tilde{g}_\lambda$ with $\| \tilde{g}^{-1}_\lambda \|_{C^\theta(B(0,1))} \lesssim \| g_\lambda^{-1} \|_{C^\theta(K^0_\lambda)}$. Suppose that $0<\gamma < \rho + \frac{\sigma}{p}$. Hence, 
$\|\partial_2 g_\lambda^{-1}\|_{C^{\gamma- \frac{1}{2} - \frac{\sigma}{p}}}\lesssim \lambda^\alpha$ with 
$\alpha= 2\sigma-(\frac32 -\gamma +\frac{\sigma}p) \delta$. Furthermore,
\begin{equation*}
\begin{split}
\| (\partial_2 g_\lambda^{-1} ) (\partial_2 \psi^\lambda_0) \chi \|_{H^{\gamma - \frac{1}{2} - \frac{\sigma}{p}}} &\lesssim \| \partial_2 g_\lambda^{-1} \|_{C^{\gamma - \frac{1}{2} - \frac{\sigma}{p}}} \| \partial_2 \psi^\lambda_0 \|_{L^2} + \| \partial_2 \psi_0^\lambda \|_{H^{\gamma - \frac{1}{2} - \frac{\sigma}{p}}} \\
&\lesssim \lambda^\alpha \lambda^{- \frac{1+\delta}{2}} (\log \lambda)^{- \frac{3}{2}} + \| \cH^\lambda \|_{H^\gamma}.
\end{split}
\end{equation*}
It remains to check
$\alpha <\gamma$, which follows from $(1-\delta)\gamma> \frac54 \sigma - \frac34$. The latter inequality holds as $\gamma > 0$,
$\delta \le \frac23$, and $\sigma \le \frac13$ because of $s\ge1$.  \hfill $\Box$

\subsection*{Acknowledgement}

Funded by the Deutsche Forschungsgemeinschaft (DFG, German Research Foundation) -- Project-ID 258734477 -- SFB 1173.

\end{document}